\documentclass[12pt]{article}
\usepackage{tikz}
\usetikzlibrary{arrows}
\usepackage[framemethod=tikz]{mdframed}
\usepackage{wrapfig}
\usepackage{amsmath,amssymb}
\usepackage{type1cm}
\usepackage{amsthm}
\usepackage{mathrsfs}
\usepackage{mathtools}
\usepackage{enumerate}
\usepackage[all]{xy} 
\usepackage[vcentermath,enableskew]{youngtab}
\usepackage{ytableau}
\usepackage{diagbox}
\AtBeginDocument{%
   \def\MR#1{}
}

\DeclareMathOperator{\GL}{GL}

\DeclareMathOperator{\GU}{GU}
\DeclareMathOperator{\GSp}{GSp}

\DeclareMathOperator{\ad}{ad}

\DeclareMathOperator{\Irr}{Irr}

\DeclareMathOperator{\op}{op}

\DeclareMathOperator{\de}{def}

\DeclareMathOperator{\sw}{sw}
\DeclareMathOperator{\sm}{sm}
\DeclareMathOperator{\cut}{cut}
\DeclareMathOperator{\pa}{par}
\DeclareMathOperator{\gen}{gen}
\DeclareMathOperator{\depth}{depth}

\newcommand\F{\mathbb{F}}

\newcommand\Fq{\mathbb F_q}
\newcommand\aFq{\overline{\mathbb F}_q}
\newcommand\cA{\mathcal A}

\newcommand\cO{\mathcal O}

\newcommand\cG{\mathcal G}
\newcommand\cF{\mathcal F}

\newcommand\cS{\mathcal S}

\newcommand\Gm{\mathbb G_m}

\newcommand\N{\mathbb N}
\newcommand\Q{\mathbb Q}
\newcommand\R{\mathbb R}
\newcommand\A{\mathbb A}

\newcommand\J{\mathbb J}

\newcommand\Z{\mathbb Z}
\newcommand\tW{\tilde W}

\newcommand\tnu{\tilde \nu}

\newcommand\bS{\mathbb S}
\newcommand\tS{\tilde{\mathbb S}}
\newcommand\inv{\mathrm{inv}}

\newcommand\tp{\mathrm{top}}
\newcommand\ld{\lambda}

\newcommand\ep{\epsilon}

\newcommand\vp{\varpi}
\newcommand\Y{X_*(T)}

\newcommand\la{\langle}
\newcommand\ra{\rangle}

\newcommand\Xl{X^\ld_\mu(b)}
\newcommand\nt{\natural}
\newcommand\cAm{\cA_{\mu,b}}

\newcommand\tP{\tilde \Phi}
\newcommand\ta{\tilde \alpha}
\newcommand\tb{\tilde \beta}
\newcommand\Wa{W_{\mathcal O_{\tilde \alpha}}}
\newcommand\wa{w_{\alpha}}
\newcommand\Oa{\mathcal O_{\alpha}}
\newcommand\w{\mathbf{w}}
\newcommand\W{\mathbf{W}}

\newcommand\el{\ep_\ld}
\newcommand\tle{\trianglelefteq}

\renewcommand\a{\mathbf{a}}

\theoremstyle{definition}
\newtheorem{theo}{Theorem}[section]
\newtheorem{prop}[theo]{Proposition}
\newtheorem{defi}[theo]{Definition}
\newtheorem{lemm}[theo]{Lemma}
\newtheorem{coro}[theo]{Corollary}
\newtheorem{exam}[theo]{Example}
\newtheorem{rema}[theo]{Remark}

\newtheorem{clai}{Claim}
\newtheorem{thm}{Theorem}[section]

\begin{document}
\title{On $\J$-strata with Parahoric Stabilizers in Affine Deligne-Lusztig Varieties}
\author{Ryosuke Shimada}
\date{}
\maketitle

\begin{abstract}
In this paper, we study the $\J$-stratification of basic affine Deligne-Lusztig varieties for a minuscule cocharacter $\mu$.
This stratification was introduced by Chen-Viehmann \cite{CV18} and has been expected to serve as an interesting tool for studying basic loci in Shimura varieties.
We parametrize the $\J$-strata whose stabilizers in the Frobenius-twisted centralizer group are parahoric by constructing a natural bijection to combinatorial invariants called small cocharacters.
We further prove that the cardinality of these sets is equal to that of a certain subset of the Weyl group orbit of $\mu$.
A relationship with the weakly fully Hodge-Newton decomposability of Chen-Tong \cite{CT22} is also discussed.
\end{abstract}

\section{Introduction}
\label{introduction}

One way to understand arithmetic properties of Shimura varieties is to study the geometry and cohomology of the special fiber of a suitable integral model.
The moduli description of Shimura varieties of Hodge type allows us to define the Newton stratification of the special fiber.
There is a unique closed Newton stratum, which is considered to be particularly important.
This is the so-called basic locus.

Affine Deligne-Lusztig varieties were introduced by Rapoport in \cite{Rapoport05} and play an important role in understanding the geometric and arithmetic properties of Shimura varieties.
The Rapoport-Zink uniformization theorem \cite{RZ96} allows us to describe basic loci in Shimura varieties in terms of Rapoport-Zink spaces, whose underlying reduced spaces are special cases of minuscule and basic affine Deligne-Lusztig varieties.

Let $F$ be a non-archimedean local field with finite residue field $\F_q$ of prime characteristic $p$, and let $L$ be the completion of the maximal unramified extension of $F$.
Let $\sigma$ denote the Frobenius automorphism of $L/F$.
Further, we write $\cO$ (resp.\ $\cO_F$) for the valuation ring of $L$ (resp.\ $F$).
Finally, we denote by $\vp$ a uniformizer of $F$ (and $L$) and by $v_L$ the valuation of $L$ such that $v_L(\vp)=1$.

Let $G$ be an unramified connected reductive group over $\cO_F$.
Let $B\subset G$ be a Borel subgroup and $T\subset B$ a maximal torus in $B$, both defined over $\cO_F$.
Let $X^*(T)$ (resp.\ $X_*(T)$) denote the set of characters (resp.\  cocharacters).
For $\mu\in X_*(T)$, let $\vp^{\mu}$ be the image of $\vp\in \mathbb G_m(F)$ under the homomorphism $\mu\colon\mathbb G_m\rightarrow T$.

Set $K=G(\cO)$.
There exist $\Fq$-ind schemes called the loop group $LG$, the positive loop group $L^+G$ and the affine Grassmannian $\cG r\coloneqq LG/L^+G$ of $G$ whose $\aFq$-valued points are canonically identified with $G(L)$, $K$ and $G(L)/K$.
In the equal characteristic case, $\cG r$ is representable as an inductive limit of projective varieties, while in the mixed characteristic case it is representable as an inductive limit of perfections of projective varieties (cf.\ \cite{PR08}, \cite{Zhu17} and \cite{BS17}).

We fix a dominant cocharacter $\mu\in X_*(T)_+$ and $b\in G(L)$.
Then the {\it affine Deligne-Lusztig variety} $X_{\mu}(b)$ is the locally closed reduced $\aFq$-subscheme of $\cG r$ whose $\aFq$-valued points, often identified with $X_{\mu}(b)$ itself, are
$$X_{\mu}(b)(\aFq)=\{xK\in \cG r(\aFq)\mid x^{-1}b\sigma(x)\in K\vp^{\mu}K\}.$$
Then $X_{\mu}(b)$ is locally of finite type in the equal characteristic case and locally perfectly of finite type in the mixed characteristic case (cf.\ \cite[Corollary 6.5]{HV11}, \cite[Lemma 1.1]{HV18}).
The variety $X_{\mu}(b)$ carries a natural action (by left multiplication) by the $\sigma$-centralizer of $b$
$$\J_b=\{j\in G(L)\mid j^{-1}b\sigma(j)=b\}.$$

In \cite{GH15}, G\"{o}rtz and He defined a notion of Coxeter type, and proved that if $(G,\mu,K)$ is of Coxeter type, then $X_\mu(\tau)$ is naturally a union of classical Deligne-Lusztig varieties, which is called the {\it Bruhat-Tits stratification}.
Here $\tau$ is (a representative in $G(L)$ of) a length $0$ element in the Iwahori-Weyl group $\tW$ of $T$ whose $\sigma$-conjugacy class in $G(L)$ is the unique basic element such that $X_\mu(\tau)\neq \emptyset$.
Via the relationship to Shimura varieties, this result for minuscule $\mu$ gives a simple description of basic loci in Shimura varieties of Coxeter type.
For example, the cases of Coxeter type include the $\GU(1,n-1)$-case by Vollaard-Wedhorn \cite{VW11}, the $\GU(2,2)$-case by Howard-Pappas \cite{HP14} and the Siegel case of genus $2$ by Katsura-Oort \cite{KO87}.
Such a simple description has many applications; the Kudla-Rapoport program \cite{KR11} and Zhang’s Arithmetic Fundamental Lemma \cite{Zhang12} are among them.

In \cite{CV18}, Chen and Viehmann introduced the {\it $\J$-stratification}, which is defined in complete generality.
The $\J$-strata are locally closed subsets of $\cG r$, defined by measuring relative positions against elements in $\J=\J_b$.
By intersecting each $\J$-stratum with $X_\mu(b)$, we obtain the $\J$-stratification of $X_\mu(b)$ (see \S\ref{J-str}).
They conjectured that the Bruhat-Tits stratification of $X_\mu(\tau)$, which exists only in the cases of Coxeter type, coincides with the $\J$-stratification.
In \cite{Gortz19}, G\"{o}rtz proved the conjecture under the assumption that $G$ is adjoint and absolutely simple.
The $\J$-stratification also generalizes other well-known stratifications such as the $a=1$ locus and the semi-module stratification of de Jong-Oort \cite{dJO00}.
On the other hand, these two papers were the only works on the $\J$-stratification in general, and very little was known about it; for instance, its parametrization and geometric properties were unknown.

More recently, Miaofen Chen and Xinwen Zhu proposed a conjecture predicting a parametrization of $\J_b\backslash \Irr X_\mu(b)$ by certain Mirkovi\'{c}-Vilonen cycles in $\cG r$, where $\Irr X_\mu(b)$ is the set of irreducible components of $X_\mu(b)$ (note that $X_\mu(b)$ is now known to be equidimensional, cf.\ \cite{HV12}, \cite{Takaya25}).
This conjecture was proved in special cases by Xiao-Zhu \cite{XZ17} and Hamacher-Viehmann \cite{HV18}, and in full generality by Nie \cite{Nie22} and, independently, by Zhou-Zhu \cite{ZZ20} in its numerical version.
In \cite{XZ17}, the authors used the Chen-Zhu conjecture to study the cohomology of certain Shimura varieties.

In the case where $\mu$ is minuscule and $b=\tau$, Nie also gave an explicit construction of irreducible components in \cite[Theorem 0.11]{Nie22}.
Let $\Phi\subset X^*(T)$ denote the set of roots of $T$, and let $\Pi\subseteq \Phi$ denote the set of minus simple roots and highest positive roots.
For $\alpha\in \Phi$, set $\ld_\alpha=\la\alpha,\ld\ra\in \Z$ if $\alpha$ is negative, and $\ld_\alpha=\la\alpha,\ld\ra-1\in \Z$ if $\alpha$ is positive, where $\la,\ra\colon X^*(T)\times X_*(T)\rightarrow \Z$ is the natural perfect pairing.
We say $\ld\in \Y$ is {\it small} if for each $\alpha\in \Pi$, there exists $\beta\in \cO_\alpha$ such that $\ld_\beta\le 0$, where $\cO_\alpha$ is the $p(\tau\sigma)$-orbit of $\alpha$.
Nie proved that the $\J_\tau$-orbit of each irreducible component has a representative of the form $\overline{X_\mu^\ld(\tau)}$, where $X_\mu^\ld(\tau)=X_\mu(\tau)\cap I\vp^\ld K/K$ and $I$ is the standard Iwahori subgroup associated to $T\subset B\subset G$ (cf.\ \S\ref{notation}).
He also proved that if $\dim X_\mu^\ld(\tau)=\dim X_\mu(\tau)$, then $X_\mu^\ld(\tau)$ is irreducible if and only if $\ld$ is small.
Moreover, the stabilizer in $\J_\tau$ of the irreducible component $\overline{X_\mu^\ld(\tau)}$ can be described by $\ld$.
In fact, it is easy to see that small cocharacters characterize the irreducibility of $X_\mu^\ld(\tau)$ in general (Proposition \ref{irr=small}).
The main purpose of this paper is to provide a way to study $\J$-strata in terms of small $\ld$ in the minuscule and basic case.

Let $\cS_{\mu,\tau}$ denote the set of non-empty $\J$-strata of $X_\mu(\tau)$.
By the definition of $\J$-strata, $\J_\tau$ acts on $\cS_{\mu,\tau}$, and each $X_\mu^\ld(\tau)$ is a union of $\J$-strata.
We say that a $\J$-stratum is {\it parahoric} (resp.\ {\it standard parahoric}) if its stabilizer in $\J_\tau$ is parahoric (resp.\ standard parahoric).
If $X_\mu^\ld(\tau)$ is irreducible (equivalently, if $\ld$ is small), then there exists a unique $S\in\cS_{\mu,\tau}$ that is open in $X_\mu^\ld(\tau)$.
Since $(I\cap \J_\tau)X_\mu^\ld(\tau)=X_\mu^\ld(\tau)$, we have $(I\cap \J_\tau)S=S$, i.e., $S$ is standard parahoric.
The first main theorem below states that every standard parahoric $\J$-stratum arises in this way.

Let $\cS_{\mu,\tau}^{\pa}$ denote the subset of $\cS_{\mu,\tau}$ consisting of parahoric $\J$-strata.
Then $\J_\tau$ acts on $\cS_{\mu,\tau}^{\pa}$, and each $\J_\tau$-orbit contains a (not necessarily unique) standard parahoric representative.
Let $W_0$ be the finite Weyl group of $G$.
Let $\tW$ be the Iwahori-Weyl group, and let $\Omega\subset \tW$ be the set of length $0$ elements (cf.\ \S\ref{notation}).
By \cite[Proposition 2.9]{Nie22}, $X_\mu^\ld(\tau)\neq \emptyset$ if and only if $\ld^\natural\coloneqq\tau\sigma(\ld)-\ld\in W_0\mu$.
Let $\cA_{\mu,\tau}^{\sm}=\{\ld\in\Y\mid \ld^\natural\in W_0\mu,\ \text{$\ld$ is small}\}$ (cf.\ \S\ref{small}). 
We set $\Omega\cap \J_\tau=\{w\in \Omega\mid \sigma(w)=\tau\sigma(w)\tau^{-1}=w\}$.
Note that the elements of $\Omega$ normalize $I$.
Thus $\Omega\cap \J_\tau$ acts on $\cA_{\mu,\tau}^{\sm}$.
Until the end of \S\ref{introduction}, we assume that the (absolute) root system of $G$ is irreducible, $\mu$ is minuscule and $b=\tau$.
\begin{thm}[cf.\ Theorem \ref{main theo}]
\label{main thm}
If a standard parahoric $\J$-stratum $S\in \cS_{\mu,\tau}^{\pa}$ is contained in $X_\mu^\ld(\tau)$ for some $\ld\in \Y$, then $\ld\in \cA_{\mu,\tau}^{\sm}$ and $S$ is the unique such stratum in $X_\mu^\ld(\tau)$.
Moreover, if $S'\subseteq X_\mu^{\ld'}(\tau)$ is another standard parahoric $\J$-stratum in the same $\J_\tau$-orbit as $S$, then $\ld'\in (\Omega\cap \J_\tau)\ld$.
Consequently, this induces a bijection $$\J_\tau\backslash \cS_{\mu,\tau}^{\pa}\xrightarrow{\sim} (\Omega\cap \J_\tau)\backslash\cA_{\mu,\tau}^{\sm},\quad \J_\tau S\mapsto (\Omega\cap \J_\tau)\ld.$$
\end{thm}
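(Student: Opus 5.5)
The strategy is to describe $\J$-strata inside each $X_\mu^\ld(\tau)$ through Nie's explicit parametrization. In the minuscule basic case, $X_\mu^\ld(\tau)$ is (after Nie, \cite[Proposition 2.9]{Nie22}) a product of an affine-space–like piece and a piece built from root subgroups $U_\beta$ with $\ld_\beta$ governing the coordinates. The group $I\cap \J_\tau$ acts on it through the product of root subgroups along the $p(\tau\sigma)$-orbits $\cO_\alpha$.

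The first step is to write a point of $X_\mu^\ld(\tau)$ as $g\vp^\ld K$ with $g$ in a product of truncated root subgroups. The relative position of $g\vp^\ld K$ against $\J_\tau$-translates of the standard parahorics would then be read off from the valuations of the coordinates in each orbit $\cO_\alpha$. This gives a decomposition of $X_\mu^\ld(\tau)$ into $\J$-strata, indexed by which coordinates are "generic" in each orbit.

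Next, let $S\subseteq X_\mu^\ld(\tau)$ be standard parahoric with stabilizer $P$. I would show that $P\supseteq I\cap\J_\tau$ forces the $(I\cap\J_\tau)$-orbits in $S$ to be dense in $S$, and in fact forces $S$ to be the open stratum. Concretely, if some $\alpha\in\Pi$ had $\ld_\beta>0$ for all $\beta\in\cO_\alpha$, then the coordinate along $\cO_\alpha$ would not be absorbed by $I\cap\J_\tau$. The stabilizer would then contain a non-parahoric (non-compact, or non-connected-type) subgroup, namely the stabilizer of an extra lattice invariant. This contradicts parahoricity. Hence $\ld$ is small, and by Proposition \ref{irr=small} $X_\mu^\ld(\tau)$ is irreducible. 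Since the $(I\cap\J_\tau)$-orbit is then dense, $S$ must equal the unique open stratum, which gives uniqueness.

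For the "moreover" part, suppose $S'=jS$ with $j\in\J_\tau$ and both strata standard parahoric. Then $j$ conjugates one standard parahoric subgroup of $\J_\tau$ into another containing $I\cap \J_\tau$. Using the Iwahori decomposition $\J_\tau=(I\cap\J_\tau)\,(\tW\cap\J_\tau)\,(I\cap\J_\tau)$, I would reduce to $j=w\in\tW\cap\J_\tau$. Standard parahoric subgroups are conjugate by such $w$ only when $w\in\Omega$ modulo the Weyl group of the parahoric, so we may take $w\in\Omega\cap\J_\tau$ after multiplying by an element of $P$. Since $w$ normalizes $I$, we get $wX_\mu^\ld(\tau)=X_\mu^{w\ld}(\tau)$, and comparing open strata gives $\ld'=w\ld$. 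The bijection then follows. The map is well defined by the above, surjective because each small $\ld$ yields its open stratum (standard parahoric, as noted before the theorem), and injective because $wS$ is the open stratum of $X_\mu^{w\ld}(\tau)$.

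The main obstacle is the step showing that a non-small $\ld$ (or a non-open stratum) has non-parahoric stabilizer. For this I would compute the stabilizer explicitly orbit by orbit. The plan is to show that it is the intersection of $\J_\tau$ with a stabilizer of a non-facet lattice configuration, and to check that this intersection fails to contain $I\cap\J_\tau$ or fails to be a parahoric.
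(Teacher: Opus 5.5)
The last part of your argument is fine and is what the paper does. Writing $S'=jS$ with $j\in(I\cap\J_\tau)w(I\cap\J_\tau)$, standard parahoricity of $S'$ gives $S'=wS$. Since $N_{\J_\tau}(S')=wN_{\J_\tau}(S)w^{-1}$ are both standard parahoric, one gets $w\in(\Omega\cap\J_\tau)N_{\J_\tau}(S)$, hence $\ld'\in(\Omega\cap\J_\tau)\ld$. Surjectivity via the open stratum of the irreducible $X_\mu^\ld(\tau)$ also matches.

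The gap is in the first assertion (smallness of $\ld$ and uniqueness of $S$), which is the real content of the theorem. Neither of your arguments for it works.

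\textbf{The density step is false.} $I\cap\J_\tau$ is a compact open subgroup of $\J_\tau$, and the stabilizer $I\cap\J_\tau\cap gKg^{-1}$ of a point $gK$ is open in it. So every $(I\cap\J_\tau)$-orbit is finite, and nothing is dense once $\dim S>0$. Thus $(I\cap\J_\tau)S=S$ gives no reason for $S$ to be the open stratum. A priori a lower-dimensional stratum of $X_\mu^\ld(\tau)$ could also be $(I\cap\J_\tau)$-stable, and excluding this is exactly what must be proved.

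\textbf{The smallness argument has no mechanism.} Stabilizers of $\J$-strata are always bounded: if $jS_f=S_f$ then $f(j^{-1})=f(1)$, i.e.\ $jgK\in I\vp^{f(1)}K/K$. So the ``non-compact'' alternative cannot occur, and nothing you describe yields a non-parahoric stabilizer. Your Step 1 (strata indexed by which orbit coordinates are generic) is also unsupported, since $f_g$ records $\inv_K(j,g)$ for every $j\in\J_\tau$.

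\textbf{The missing idea is Proposition \ref{unique S}.} For a standard parahoric $S=S_f\cap X_\mu(\tau)$, $f$ is bi-$(I\cap\J_\tau)$-invariant, hence determined on $\tW\cap\J_\tau$. One shows $f(w)=w^{-1}\ast\ld$ (Demazure product) by induction along a reduced word $w=w_{\alpha_1}\cdots w_{\alpha_l}$ in the Coxeter group $W_a\cap\J_\tau$. In the inductive step, with $\alpha=\alpha_l$ and $\ld'=f(w\wa)$ known by induction, one chooses $i\in I\cap\J_\tau$ with $\wa i\wa\in(I\cap\J_\tau)\wa(I\cap\J_\tau)$, so that $f(wi\wa)=f(w)$. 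This rules out every value of $f(w)$ other than $\wa\ast\ld'$: directly in case (1) of Lemma \ref{Oa}, and via Lemmas \ref{consecutive}, \ref{ab} and \ref{not small} in case (2).

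Uniqueness is then immediate, and smallness (Corollary \ref{S small}) follows. Suppose $\ld_\beta\ge 1$ for all $\beta\in\Oa$. Then $\wa X_\mu^{\wa\ld}(\tau)$ is closed in $X_\mu^\ld(\tau)$ of the same dimension (Lemmas \ref{closed} and \ref{not small}), so it contains an irreducible component $C$. The $(I\cap\J_\tau)$-translates of $C$ cover $X_\mu^\ld(\tau)$ (Proposition \ref{Xl}), so $S$ meets $C$, forcing $f(\wa)=\wa\ld$. But the proposition gives $f(\wa)=\wa\ast\ld=\ld\neq\wa\ld$, a contradiction.
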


Define $R(\ld)\coloneqq \{\alpha\in \Phi\mid \langle\alpha, \ld^\natural\rangle=-1, \ld_\alpha\geq 1\}$ and $\Pi(\ld)\coloneqq\{\alpha\in \Pi\mid \text{$\Wa$ is finite and $\ld_\beta\geq 0$ for all $\beta\in \cO_\alpha$}\}$.
Combining Theorem \ref{main thm} with the geometry of $X_\mu^\ld(\tau)$, we obtain the following, which generalizes \cite[Theorem 0.11]{Nie22}.
\begin{thm}[cf.\ Corollary \ref{main coro}]
\label{main thm2}
In the situation of Theorem \ref{main thm}, $S$ is open in $X_\mu^\ld(\tau)$.
In particular, $S$ is (the perfection of) an irreducible smooth quasi-affine variety.
Moreover, $\dim S=\dim X_\mu^\ld(\tau)=\sharp R(\ld)$, and the stabilizer of $S$ in $\J_\tau$ coincides with that of $\overline{X_\mu^\ld(\tau)}$; both are equal to the standard parahoric subgroup of type $\Pi(\ld)$.
\end{thm}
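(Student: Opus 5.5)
We assume Theorem \ref{main thm}. Then $\ld\in\cA_{\mu,\tau}^{\sm}$, so by Proposition \ref{irr=small} the variety $X_\mu^\ld(\tau)$ is irreducible. The plan has four steps.

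First we show that $S$ is open in $X_\mu^\ld(\tau)$. The union $X_\mu^\ld(\tau)$ is finite-dimensional and is a finite union of $\J$-strata, which are locally closed. By irreducibility, exactly one stratum $S_0$ is open and dense; as explained before Theorem \ref{main thm}, $S_0$ is standard parahoric, being stable under $I\cap\J_\tau$. The uniqueness assertion of Theorem \ref{main thm} then says $S=S_0$. This finiteness is the only input needed here, and it should follow from Nie's description of $X_\mu^\ld(\tau)$ as an iterated fibration.

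Second, the geometry. We use Nie's explicit description in \cite{Nie22} (made explicit in the earlier sections): $X_\mu^\ld(\tau)$ is isomorphic to the perfection of a product of an affine space with a classical Deligne--Lusztig-type piece, and its dimension is $\sharp R(\ld)$. This gives $\dim S=\dim X_\mu^\ld(\tau)=\sharp R(\ld)$. Since $S$ is an open subset of an irreducible smooth variety, it is irreducible and smooth. For quasi-affineness, we would show that $X_\mu^\ld(\tau)$ itself is quasi-affine, for instance by embedding it via the $I$-orbit into an affine space, namely the unipotent coordinates of $I\vp^\ld K/K$. An open subset of a quasi-affine variety is again quasi-affine.

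Third, the stabilizers. Let $P=\mathrm{Stab}_{\J_\tau}(S)$ and $P'=\mathrm{Stab}_{\J_\tau}(\overline{X_\mu^\ld(\tau)})$.
\begin{itemize}
\item $P\subseteq P'$: if $jS=S$, then $j$ preserves $\overline S=\overline{X_\mu^\ld(\tau)}$.
\item $P'\subseteq P$: if $j\overline{X_\mu^\ld(\tau)}=\overline{X_\mu^\ld(\tau)}$, then $jS$ is a $\J$-stratum, because $\J_\tau$ permutes strata. It is contained in the closure and has full dimension, hence it is dense there. Then $jS\cap S\neq\emptyset$, and since distinct strata are disjoint, $jS=S$.
\end{itemize}
So $P=P'$, and the identification of $P'$ with the standard parahoric of type $\Pi(\ld)$ is the content of Nie's stabilizer computation \cite[Theorem 0.11]{Nie22}.

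The main obstacle is this last identification. Nie proved it only when $\dim X_\mu^\ld(\tau)=\dim X_\mu(\tau)$, so we must extend his argument to arbitrary small $\ld$. The approach is to rerun it directly. For each $\alpha\in\Pi$, the root subgroup, or rather its $\tau\sigma$-orbit version in $\J_\tau$, fixes $X_\mu^\ld(\tau)$ exactly when $\ld_\beta\ge0$ for all $\beta\in\cO_\alpha$. Finiteness of $\Wa$ is needed for the generated group to be parahoric. This is a computation with the affine root subgroups $U_{\alpha}$ acting on $I\vp^\ld K/K$, checked orbit by orbit.
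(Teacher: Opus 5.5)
Your first three steps essentially reproduce the paper's argument. The open stratum is the unique full-dimensional one. Smoothness and $\dim=\sharp R(\ld)$ come from Proposition \ref{Xl}, and quasi-affineness comes from the embedding into $I\vp^\ld K/K$. The equality $N_{\J_\tau}(S)=N_{\J_\tau}(\overline{X_\mu^\ld(\tau)})$ holds because two dense opens of an irreducible space meet.

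Two justifications should be corrected. First, finiteness of the set of $\J$-strata in $X_\mu^\ld(\tau)$ comes from G\"ortz's Theorem \ref{finiteness}, applied to the quasi-compact preimage of $I\vp^\ld K/K$ in $\cF l$. It does not come from any fibration structure. Second, no ``affine space $\times$ Deligne--Lusztig'' description of $X_\mu^\ld(\tau)$ is available. You only need Proposition \ref{Xl}.

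\textbf{The gap is your last step}, namely the equality with the standard parahoric of type $\Pi(\ld)$ for non-top-dimensional $\ld$. This is exactly what goes beyond \cite[Theorem 0.11]{Nie22}, and you leave it as ``rerun Nie's argument''. Moreover, the criterion you propose to check is false. For $\alpha\in\Pi(\ld)$, $\wa$ does not preserve $X_\mu^\ld(\tau)$ in general. Indeed $\vp^\ld K\in X_\mu^\ld(\tau)$ since $\ld^\natural\in W_0\mu$, but $\wa\vp^\ld K=\vp^{\wa\ld}K\notin I\vp^\ld K/K$ whenever $\wa\ld\neq\ld$. For example, take $\GL_4$, $\mu=(1,1,0,0)$, $b$ as in \S\ref{GLn}, $\ld=(0,1,0,0)$ and $\alpha=\alpha_1$. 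Then $\ld_{\alpha_1}=1$, $\ld_{\alpha_3}=0$, so $\alpha_1\in\Pi(\ld)$, yet $\wa\ld=(1,0,0,0)$. Since $I\cap\J_\tau$ preserves $X_\mu^\ld(\tau)$, no choice of ``root subgroup'' generators of $P_{\Pi(\ld)}$ can preserve it either. Only the closure, equivalently $S$, is stable. So a computation of root subgroups acting on $I\vp^\ld K/K$ cannot settle this: you would have to control where $\wa$ sends the generic part of $X_\mu^\ld(\tau)$, and your plan does not address that.

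\textbf{The missing idea} is to use the invariant of the stratum itself. By Proposition \ref{unique S}, the standard parahoric stratum has $f(w)=w^{-1}\ast\ld$. Hence $\wa S\subseteq I\vp^{\wa\ast\ld}K/K$, and $\wa\ast\ld=\ld$ if and only if $\ld_\beta\ge 0$ for all $\beta\in\Oa$. The two inclusions then follow:
\begin{enumerate}[(1)]
\item If $\alpha\notin\Pi(\ld)$, then $\wa S$ lies in a different $I$-orbit, so $\wa\notin N_{\J_\tau}(S)$. Since $N_{\J_\tau}(S)$ is standard parahoric, this gives $N_{\J_\tau}(S)\subseteq P_{\Pi(\ld)}$.
\item If $\alpha\in\Pi(\ld)$, then $\wa S$ is a $\J$-stratum in $X_\mu^\ld(\tau)$ of full dimension. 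It therefore equals $S$ by the uniqueness from your first step.
\end{enumerate}
The real content of Proposition \ref{unique S} is the ``flip'' trick. One picks $i\in I\cap\J_\tau$ with $\wa i\wa\in(I\cap\J_\tau)\wa(I\cap\J_\tau)$. Constancy of $f$ on $(I\cap\J_\tau)$-double cosets then rules out $\wa S$ landing in the other $I$-orbits met by $\wa I\vp^\ld K/K$. This replaces the generic-point analysis your approach would need.
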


The information encoded by small $\ld$ is very useful.
For example, using this, Nie proved for general $(G,\mu,b)$ that the stabilizer of each irreducible component of $X_\mu(b)$ in $\J_b$ is a parahoric subgroup of maximal volume (cf.\ \cite[Theorem 0.12]{Nie22}).
Moreover, a construction of irreducible components due to Xiao-Zhu, similar to that of Nie, allowed Fox, Howard and Imai to explicitly describe the irreducible components of the basic locus of the $\GU(2,n-2)$ Shimura variety at an inert prime (cf.\ \cite{FI21},\cite{FHI26}).
In the special case where the $\J$-stratification coincides with the semi-module stratification, Viehmann \cite{Viehmann08} proved that each $\J$-stratum is isomorphic to an affine space using small $\ld$, which is essentially the same as semi-modules (cf.\ \S\ref{superbasic}).

Set $\cAm^{\tp}\coloneqq\{\ld\in \cAm\mid \dim X_\mu^\ld(b)=\dim X_\mu(b)\}$ and $\cAm^{\sm,\tp}\coloneqq \cAm^{\sm}\cap \cAm^{\tp}$.
When $\mu$ is minuscule and $b=\tau$, the Chen-Zhu conjecture can be rephrased, using Nie's description of irreducible components, as the existence of a natural bijection
$$(\Omega\cap \J_\tau)\backslash\cA_{\mu,\tau}^{\sm,\tp}\cong \{\nu\in W_0\mu\mid \nu^\diamond=\tilde\nu_b^\diamond\},$$
where $\nu^\diamond$ denotes the $\sigma$-average of $\nu$, and $\tilde \nu_b\in \Y/(1-\sigma)\Y$ denotes the ``best integral approximation'' of the Newton point $\nu_b$ of $b$ (cf.\ \S\ref{Chen-Zhu}).
Let $\Phi_+$ denote the set of positive roots distinguished by $B$.
Let $\epsilon_\ld\in W_0$ denote a Weyl group element defined in \S\ref{minuscule} as
$$\{\alpha\in \Phi\mid \ld_\alpha\geq 0\}=\el\Phi_+.$$
In \S\ref{Chen-Zhu}, we show that the map $(\Omega\cap \J_\tau)\ld\mapsto \ld^\flat\coloneqq \epsilon_{\ld}^{-1}(\tau\sigma(\ld)-\ld)$ is well-defined and coincides with the map constructed by Nie, which gives the above bijection.

As we will see in Proposition \ref{map}, the same construction yields a map
$$\flat\colon (\Omega\cap \J_\tau)\backslash\cA_{\mu,\tau}^{\sm}\rightarrow \{\nu\in W_0\mu\mid \nu^\diamond\le \tilde\nu_b^\diamond\},\quad (\Omega\cap \J_\tau)\ld\mapsto \ld^\flat= \epsilon_{\ld}^{-1}(\tau\sigma(\ld)-\ld).$$
The target coincides with $\{\nu\in W_0\mu\mid \nu^\diamond\le \nu_b\}$, where $\le$ denotes the partial order on $\Y_\Q$ such that $\nu\le \nu'$ if $\nu'-\nu$ is a non-negative linear combination of positive coroots.
By Proposition \ref{dim}, we have  $\dim \Xl=\la\rho,\mu+\ld^\flat\ra$, where $\rho$ denotes the half sum of positive roots.
It is plausible that this map is also bijective; indeed, we establish the numerical version as follows.
\begin{thm}[cf.\ Theorem \ref{flat bijective}]
\label{thm bijection}
We have $$\sharp\bigl((\Omega\cap \J_\tau)\backslash\cA_{\mu,\tau}^{\sm}\bigr)=\sharp\{\nu\in W_0\mu\mid \nu^\diamond\le \tilde\nu_b^\diamond\}.$$
\end{thm}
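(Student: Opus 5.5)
We will count both sides through the $\J$-stratification, using Theorem \ref{main thm} to turn the left-hand side into a geometric count. By Theorem \ref{main thm}, $\sharp\bigl((\Omega\cap \J_\tau)\backslash\cA_{\mu,\tau}^{\sm}\bigr)=\sharp(\J_\tau\backslash \cS_{\mu,\tau}^{\pa})$. So it suffices to show that the number of $\J_\tau$-orbits of parahoric $\J$-strata equals $\sharp\{\nu\in W_0\mu\mid \nu^\diamond\le \tilde\nu_b^\diamond\}$.

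Rather than work with the strata directly, we will stratify the set $\cA_{\mu,\tau}^{\sm}$ by the dimension value $\la\rho,\mu+\ld^\flat\ra$, or better by the fibres of $\flat$. We then compare with the target orbit by orbit.

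\textbf{Reduction to a single fibre.} For each $\nu$ in the target, the natural strategy is to show that the fibre $\flat^{-1}(\nu)$ has exactly one element. We will prove surjectivity and count separately.

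\textbf{Surjectivity.} Fix $\nu\in W_0\mu$ with $\nu^\diamond\le\tilde\nu_b^\diamond$. We look for $\ld$ with $\tau\sigma(\ld)-\ld=\epsilon\nu$ and $\epsilon_\ld=\epsilon$. Since $\Y/(1-\tau\sigma)\Y$ is controlled by $\tilde\nu_b$ and the $\sigma$-average condition, the equation $\ld^\natural\in W_0\mu$ is solvable for $\epsilon\nu$. We then adjust $\ld$ by $(1-\tau\sigma)$-translates, using the reduction of smallness to orbits $\cO_\alpha$ for $\alpha\in\Pi$, until $\ld$ is small. This is a Kottwitz-type, or semi-module, construction, modelled on the superbasic case in \S\ref{superbasic}.

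\textbf{Counting.} To avoid proving injectivity directly, we plan to argue numerically. We first reduce to the case where $G$ is adjoint and simple; since $\mu$ is minuscule, $\tau$ then determines $(G,\mu)$ up to finitely many types. We then count both sides as functions on a lattice polytope: the set of small $\ld$ modulo $\Omega\cap\J_\tau$ corresponds to alcoves, or chambers, in a fundamental domain for the $\tau\sigma$-action. Both counts should reduce to the number of $W_0$-chambers $\epsilon$ satisfying a condition on $\epsilon^{-1}$ applied to the orbit $\cO_\alpha$.

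\textbf{Main obstacle.} The hardest step is showing that each fibre of $\flat$ contains at most one $(\Omega\cap\J_\tau)$-orbit. The difficulty is that two small $\ld,\ld'$ with the same $\ld^\flat$ may have different $\epsilon_\ld$. My first attempt would be an inductive argument on $\ell(\epsilon_\ld)$ that moves $\ld$ along a reflection in a root $\alpha$ with $\ld_\alpha=0$ and tracks the sets $R(\ld)$. Failing that, I would fall back to a case-by-case verification over the (finitely many) minuscule pairs with $\tau$ basic. For the unitary and symplectic families, these can be matched with semi-module counts via binomial identities.
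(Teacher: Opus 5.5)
Your proposal has a genuine gap: it never produces either a count or a bijection.

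\textbf{The main route is stronger than the statement and is unsupported.} Your plan is to show that every fibre of $\flat$ is a single $(\Omega\cap\J_\tau)$-orbit, i.e.\ that $\flat$ is bijective in general. The paper does not establish this outside types $A$, $C$ and the Coxeter-type cases; it leaves general bijectivity as plausible and proves only the numerical identity. Neither half of your bijection has an actual argument.
\begin{itemize}
\item \emph{Surjectivity.} Solvability of $\ld^\natural=\nu'$ is a genuine congruence condition on $\nu'\in W_0\mu$, not something ``controlled by $\tnu_b$''. In type ${}^2A_{n-1}$, for instance, it forces $c^mw_0\nu'=\nu'$ and $\nu'(i)=\mu(i)$ at the fixed indices $i=i^*$. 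Forcing $\epsilon_\ld=\epsilon$ at the same time is the whole difficulty. Already for $\GL_n$ the paper needs the bijectivity of the sweep map and an induction on $\gcd(m,n)$ via $\delta$-sequences to build $\a_\ld$ from $\overline{l(\w_\nu)}$.
\item \emph{Injectivity.} The move you suggest, reflecting in a root with $\ld_\alpha=0$, fixes $\ld$, since $s_{\ta}\ld=\ld-\ld_\alpha\alpha^\vee$. You identify no invariant forcing two small $\ld,\ld'$ with $\ld^\flat=\ld'^\flat$ into the same orbit.
\item The ``lattice polytope / chambers'' count is stated only as an expectation.
\item Passing to $\J_\tau\backslash\cS^{\pa}_{\mu,\tau}$ via Theorem~\ref{main thm} gains nothing, because that set has no independent count.
\end{itemize}

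\textbf{The fallback is not finite.} The families $A_n$, ${}^2A_n$, $C_n$, $D_n$, ${}^2D_n$ are infinite in the rank, so you need counting formulas that are uniform in $n$. This is what the paper actually does:
\begin{itemize}
\item It reduces to adjoint absolutely simple groups (Lemma~\ref{cAm ad}).
\item It proves bijectivity of $\flat$ for $\GL_n$ via the sweep map, and gets type $C_n$ by restricting $\flat$ from $\GL_{2n}$.
\item For ${}^2A$, $D$ and ${}^2D$ it fixes $\nu$. Using Lemma~\ref{ld}~(1) and the relation $\sum_{\alpha\in\Pi}c_\alpha\ld_\alpha=-1$ (with positive integers $c_\alpha$), it rewrites the small $\ld$ with $\ld^\natural=\nu$ as nonnegative solutions $l_k=-\ld_{\beta_k}$ of a linear equation whose right side depends only on a statistic $L$ of $\nu$. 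It counts these fibres by binomial coefficients and counts the $\nu$ with a given $L$. It then matches the total with a reflection-principle count of $\{\nu\in W_0\mu\mid \nu^\diamond\le\nu_b\}$ via Vandermonde-type identities.
\item Coxeter type is handled by G\"{o}rtz--He together with Proposition~\ref{dim}, and the exceptional types by computer.
\end{itemize}
None of these ingredients appears in your proposal: the $\Pi$-coordinates of $\ld$, the fibrewise counts, the ballot-type count of the target, or the sweep map. As written, it does not prove the statement.
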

We have explicit formulas for the cardinality; see Table \ref{count-small} in \S\ref{numerical}.
In the case of Coxeter type, the bijectivity of $\flat$ follows easily from the description of the Bruhat-Tits stratification by G\"{o}rtz-He \cite{GH15}.
We also prove the bijectivity of $\flat$ for $G=\GL_n$ or $\GSp_{2n}$ (cf.\ Theorem \ref{main theo GLn} and \S\ref{Cn}).
If $G=\GL_n$ and the $\J$-stratification coincides with the semi-module stratification, this bijectivity was conjectured by de Jong-Oort \cite[Remark 6.16]{dJO00} and proved by Hamacher-Viehmann \cite[Theorem 4.16]{HV18}.

The inclusion $\cS_{\mu,\tau}^{\pa}\subseteq \cS_{\mu,\tau}$ need not be an equality in general.
However, in the important cases such as the Bruhat-Tits stratification or the semi-module stratification, the equality holds.
The former stratification exists if and only if a nonnegative rational number called $\depth(G,\mu)$ is less than or equal to $1$ (in more general setting, this condition is equivalent to full Hodge-Newton decomposability rather than Coxeter type, cf.\ \cite{GHN19}).
The latter stratification exists if and only if $\tau$ is superbasic.
In \cite{SV25}, Schremmer and Viehmann studied the cases with $1<\depth(G,\mu)<2$.
In this range, $X_\mu(\tau)$ admits a nice description, similar to the Bruhat-Tits stratification.

In our setting, either $\depth(G,\mu)<2$ or $\tau$ is superbasic if and only if $(G,\mu)$ is weakly fully Hodge-Newton decomposable.
The latter condition was introduced by Chen-Tong \cite{CT22} in the context of $p$-adic Hodge theory.
By \cite[Theorem 3.5]{CT22}, it is equivalent to saying that the Newton stratification of the Schubert variety for $\mu$ in the $B_{\mathrm dR}^+$-affine Grassmannian is finer than the Harder-Narasimhan stratification.
Surprisingly, this condition characterizes the equality $\cS_{\mu, \tau}^{\pa}=\cS_{\mu, \tau}$ in the special fiber.
\begin{thm}[cf.\ Theorem \ref{weakly HN}]
\label{thm HN}
The following assertions are equivalent:
\begin{enumerate}[(1)]
\item  $\cS_{\mu, \tau}^{\pa}=\cS_{\mu, \tau}$;
\item  $\depth(G,\mu)<2$ or $\tau$ is superbasic;
\item $(G,\mu)$ is weakly fully Hodge-Newton decomposable.
\end{enumerate}
\end{thm}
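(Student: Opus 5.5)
The equivalence (2)$\Leftrightarrow$(3) is essentially a classification statement. I plan to take it from Chen-Tong \cite{CT22}, which lists the weakly fully Hodge-Newton decomposable pairs $(G,\mu)$ with $\mu$ minuscule. I would compare that list with the tables of $\depth(G,\mu)$ (Görtz-He-Nie \cite{GHN19} for depth $\le 1$, Schremmer-Viehmann \cite{SV25} for $1<\depth<2$), together with the list of pairs for which $\tau$ is superbasic. For irreducible root systems with $\mu$ minuscule and $b=\tau$ this is a finite check over types $A_n$, $B_n$, $C_n$, $D_n$, $E_6$, $E_7$ and their non-split unramified forms. The real work is (1)$\Leftrightarrow$(2).

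\textbf{(2)$\Rightarrow$(1).} By Theorem~\ref{main thm}, every $\J_\tau$-orbit of strata meets some $X_\mu^\ld(\tau)$, since $X_\mu(\tau)=\bigsqcup_\ld X_\mu^\ld(\tau)$ and $\J$-strata are $I\cap\J_\tau$-stable up to translation. So it suffices to show that in case (2) each non-empty $X_\mu^\ld(\tau)$ is, after applying a suitable element of $\J_\tau$, a finite union of parahoric strata.

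\begin{itemize}
\item \emph{Superbasic $\tau$.} The $\J$-stratification is the semi-module stratification, so each stratum is $X_\mu^\ld(\tau)$ itself with $\ld$ small. Its stabilizer is then the standard parahoric of Theorem~\ref{main thm2}.
\item \emph{$\depth(G,\mu)\le1$.} By Görtz's theorem \cite{Gortz19} the $\J$-strata are the Bruhat-Tits strata, whose stabilizers are parahoric by \cite{GH15}.
\item \emph{$1<\depth(G,\mu)<2$.} I would use the explicit description of \cite{SV25}. There $X_\mu(\tau)$ is a union of pieces which are $\J_\tau$-translates of $X_\mu^\ld(\tau)$ with $\ld$ small, each fibered over classical Deligne-Lusztig varieties. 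I would check that the $\J$-strata are exactly the open cells of those fibrations, hence parahoric.
\end{itemize}

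\textbf{(1)$\Rightarrow$(2).} I would argue by contraposition. Assume $\depth(G,\mu)\ge 2$ and $\tau$ is not superbasic. The goal is to produce a $\ld$ with $\ld^\natural\in W_0\mu$ such that $X_\mu^\ld(\tau)$ contains a $\J$-stratum that is not parahoric. By Theorem~\ref{main thm}, any stratum in $X_\mu^\ld(\tau)$ with $\ld$ non-small is not standard parahoric, but it could still be a $\J_\tau$-translate of a standard parahoric stratum. To rule this out I would use dimension and the bijection of Theorem~\ref{main thm}: the parahoric strata are, up to $\J_\tau$, exactly one per orbit of small $\ld$, and their dimensions are $\sharp R(\ld)$. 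Counting would then show that the parahoric strata alone cannot cover $X_\mu(\tau)$. Concretely, the inequality to aim for is that the number of $\J_\tau$-orbits of strata of some dimension $d$ exceeds the number of orbits of small $\ld$ with $\sharp R(\ld)=d$.

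To get a non-small $\ld$, I would find a minimal case in each relevant type, such as $\GL_n$ with $\mu=\omega_k$, $2\le k\le n-2$, $\gcd$ condition failing superbasicity, and depth $\ge2$. In that case I would construct an explicit non-small $\ld$ for which some $\alpha\in\Pi$ has $\ld_\beta\ge1$ on all of $\cO_\alpha$ (Proposition~\ref{irr=small}), so $X_\mu^\ld(\tau)$ is reducible. I would then reduce general cases to these minimal ones via Levi subgroups and the Hodge-Newton decomposition.

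The main obstacle is exactly this last step. One must show that a stratum inside a non-small $X_\mu^\ld(\tau)$ is not $\J_\tau$-conjugate to a standard parahoric one. This requires controlling how $\J_\tau$-translates move between different $X_\mu^{\ld'}(\tau)$, probably through a type-by-type verification rather than a uniform argument.
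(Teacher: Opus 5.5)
Your (2)$\Leftrightarrow$(3) via the classifications, and your superbasic and Coxeter-type cases of (2)$\Rightarrow$(1), agree with the paper. For $1<\depth(G,\mu)<2$, however, your ``I would check'' is only a placeholder: the paper instead cites \cite{Shimada5} and \cite{ST24}, and treats the two type $D$ cases by Lemma \ref{every parahoric}, using that $W_a\cap\J_\tau$ is of type $\tilde A_1$.

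The genuine gap is in (1)$\Rightarrow$(2), where your mechanism looks in the wrong place.
\begin{itemize}
\item Non-small pieces do not detect failure of (1). Reducible $X_\mu^\ld(\tau)$ with $\ld$ non-small also exist when (1) holds, e.g.\ $\wa\ld$ for small $\ld$ with $\ld_\beta\le -1$ on $\Oa$. This already happens for $\GL_4$, $\mu=\omega_2^\vee$, which is of Coxeter type and not superbasic.
\item By Lemma \ref{J-str small}, every stratum is a $\J_\tau$-translate of a stratum in some $X_\mu^{\ld'}(\tau)$ with $\ld'$ small. So the ``main obstacle'' you isolate is not where non-parahoric strata live: they are non-open strata inside small pieces.
\item Your counting inequality has no input, since nothing in your plan computes the number of $\J_\tau$-orbits of strata of a given dimension.
\item The reduction to minimal cases via Levi subgroups and Hodge--Newton decomposition is unavailable. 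For basic $\tau$ and non-central minuscule $\mu$ with irreducible root system, $(\mu,\tau)$ is not Hodge--Newton decomposable. Nothing transports a non-parahoric stratum from a smaller group to a larger one.
\end{itemize}

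The missing ingredients are those of \S\ref{HN}.
\begin{enumerate}
\item Corollary \ref{par necessity}: for non-superbasic $\tau$, (1) forces $\cA_{\mu,\tau}=\{\inv_K(1,w)\mid w\in\tW\cap\J_\tau\}$. This already disposes of type $A$ with $\min\{m,n-m\}\nmid n$ via $\chi_{g,n}^\vee$.
\item In the remaining cases, one exhibits an explicit small $\ld_1$ violating the hypothesis of Lemma \ref{every parahoric}, i.e.\ some $w\in W_a\cap\J_\tau$ with $w\ast\ld_1\neq w\ld_1$. One also exhibits a small $\ld$ with $\sharp R(\ld)=\sharp R(\ld_1)+1$, inside which two $\J_\tau$-translates of pieces of dimension $\sharp R(\ld_1)$ meet.
\item Assuming (1), Proposition \ref{unique S} and Corollary \ref{par necessity} pin down which small cocharacters can be attached to strata of $X_\mu^{\ld_1}(\tau)$. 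Hence $X_\mu^{\ld_1}(\tau)$ minus explicit $(I\cap\J_\tau)$-translates of lower-dimensional pieces must be a single standard parahoric stratum.
\item Demazure-product and braid-relation computations then bound the dimension of that intersection strictly below $2\sharp R(\ld_1)-\sharp R(\ld)$. This contradicts the lower bound from Lemma \ref{intersection} in the smooth $X_\mu^\ld(\tau)$.
\end{enumerate}
This is carried out family by family with explicit $\ld_1$ valid for all rank, not by reduction to minimal cases.
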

The classification of weakly fully Hodge-Newton decomposable pairs is known by \cite[Proposition 2.14]{CT22} and \cite[Theorem 2.7]{SV25}.
It is worth mentioning that in most such cases, each $\J$-stratum is known to be isomorphic to the product of a Deligne-Lusztig variety and an affine space, see \cite{Gortz19}, \cite{Shimada5} and \cite{ST24}.
We do not know whether this holds in general, although Theorem \ref{main thm2} suggests a nice geometric structure.

It seems natural to expect that some parts of this paper can be generalized to more general groups.
The techniques developed in this paper should be useful for such generalizations.
However, at present, our assumptions on $G$ are used essentially, for instance, in the proof of Proposition \ref{unique S}, which is a key to Theorem \ref{main thm}.
The same assumption was also used in \cite{Gortz19} to ensure that the building of $G$ over $L$ is a simplicial complex, rather than merely a polysimplicial complex.
See also Remark \ref{only case (1)}.

A further direction is to extend our results to arbitrary parahoric level, especially to Iwahori level. As shown in \cite[Theorem 2.10]{Gortz19}, the $\J$-stratification can be defined at Iwahori level as well.
Since, at Iwahori level, even the dimension formula for the basic locus of Shimura varieties is not known in general (and, to the best of my knowledge, no general conjectural formula has been proposed), we expect that this direction would shed new light on this problem.

We also expect that the main results of this paper will have applications to problems involving the cohomology of certain Shimura varieties such as Kottwitz varieties (i.e., those considered in \cite{KO92}).
Indeed, some parts of the results are already known to be related to the cohomology of Shimura varieties.
For example, Xiao-Zhu \cite{XZ17} studied the Tate conjecture for Kottwitz varieties as an application of the Chen-Zhu conjecture.
In \cite{Kret13} and \cite{Kret15}, Kret computed the cohomology of the basic locus of some Kottwitz varieties, and certain lattice paths called {\it Dyck paths} appear in his results.
If $G$ is of type $A_n$, then the cardinality of the sets in Theorem \ref{thm bijection} is the number of corresponding Dyck paths, which is called the {\it rational Catalan number}.
It would be interesting to explore further connections with these papers.

Another potential application is a generalization of Oort's conjecture on the supersingular locus of the moduli space over $\overline{\F}_p$ of principally polarized abelian varieties of genus $g$.
Oort's conjecture was finally proved by Viehmann \cite{Viehmann26}.
Before Viehmann's work, Karemaker-Yu \cite{KR26} proved that, when $g$ is even and $p\geq 5$, every geometric generic member of the maximal supersingular Ekedahl-Oort stratum has automorphism group $\{\pm 1\}$, which implies Oort's conjecture in this case.
It is natural to ask the same question for the intersection of a general Ekedahl-Oort stratum with the supersingular locus, as pointed out in the introduction of \cite{Viehmann26}.
We hope that the $\J$-strata will play an important role in this problem.
Indeed, Viehmann's proof uses an explicit study of the $a=1$ locus, while Karemaker-Yu's proof uses a stratification of the union of all supersingular Ekedahl-Oort strata.
Both stratifications are closely related to the $\J$-stratification (cf.\ \cite[Proposition 5.11]{CV18} \& \cite[Remark 6.21]{KR26}).

The paper is organized as follows.
In \S\ref{preliminaries}, we introduce affine Deligne-Lusztig varieties and some basic facts.
In \S\ref{small flat}, we study small cocharacters and introduce $\flat$.
In \S\ref{J-stratification}, we introduce the $\J$-stratification and prove the first two main theorems.
The remainder of the paper, which constitutes more than half, is devoted to the analysis of $\ld$ and $\flat$.
In \S\ref{GLn}, we focus on the case of $\GL_n$ and prove Theorem \ref{thm bijection} in this case.
In \S\ref{numerical}, we finish the proof of Theorem \ref{thm bijection} by explicitly counting the number of small cocharacters.
In \S\ref{weakly HN section}, we finish the proof of Theorem \ref{thm HN} using the results in \S\ref{HN}.

\textbf{Acknowledgments:}
The author would like to thank Miaofen Chen, Ian Gleason, Sian Nie and Felix Schremmer and Chia-Fu Yu for helpful comments.
The author would like to thank Sug Woo Shin for telling him about Kret's papers.

This work was supported by JSPS KAKENHI Grant number JP26K16963.
This paper was written during a stay at UC Berkeley, which was supported by the JSPS Overseas Research Fellowship.
The author would like to thank the university and the host Sug Woo Shin for their hospitality.

\section{Preliminaries}
\label{preliminaries}
In this paper, we will often identify a (perfect) variety with its set of closed points.
\subsection{Notation}
\label{notation}
Let $F$ be a non-archimedean local field with finite residue field $\F_q$ of prime characteristic $p$, and let $L$ be the completion of the maximal unramified extension of $F$.
Let $\sigma$ denote the Frobenius automorphism of $L/F$.
Further, we write $\cO$ (resp.\ $\cO_F$) for the valuation ring of $L$ (resp.\ $F$).
Finally, we denote by $\vp$ a uniformizer of $F$ and by $v_L$ the valuation of $L$ such that $v_L(\vp)=1$.

Let $G$ be a connected reductive group over $\cO_F$.
Let $B\subset G$ be a Borel subgroup and $T\subset B$ a maximal torus in $B$, both defined over $\cO_F$.
Let $\Phi=\Phi(G,T)$ denote the set of roots of $T$ in $G$.
We denote by $\Phi_+$ (resp.\ $\Phi_-$) the set of positive (resp.\ negative) roots distinguished by $B$.
Let $X^*(T)$ (resp.\ $X_*(T)$) be the set of characters (resp.\ cocharacters) of $T$, and let $X_*(T)_+$ be the set of dominant cocharacters.
For a cocharacter $\ld\in X_*(T)$, let $\vp^{\ld}$ be the image of $\vp\in \mathbb G_m(F)$ under the homomorphism $\ld\colon\mathbb G_m\rightarrow T$.
The Frobenius map of $G$ induces an automorphism on the based root datum of $G$ associated to $T\subseteq B\subseteq G$.
We will also denote it by $\sigma$.
There is a natural perfect pairing $\la,\ra\colon X^*(T)\times X_*(T)\rightarrow \Z$.

The {\it Iwahori-Weyl group} $\tW=\tW_G$ is defined as the quotient $N_{G(L)}T(L)/T(\cO)$.
This can be identified with the semi-direct product $W_0\ltimes X_{*}(T)$, where $W_0$ is the {\it finite Weyl group} of $G$.
Let $\bS\subset W_0$ denote the set of simple reflections.
We denote the projection $\tW\rightarrow W_0$ by $p$.
We have a length function $\ell\colon \tW\rightarrow \Z_{\geq 0}$ given as
$$\ell(\vp^{\lambda}u)=\sum_{\alpha\in \Phi_+, u^{-1}\alpha\in \Phi_-}|\langle \alpha, \lambda\rangle-1|+\sum_{\alpha\in \Phi_+, u^{-1}\alpha\in \Phi_+}|\langle \alpha, \lambda\rangle|,$$
where $u\in W_0$ and $\lambda\in \Y$.
We may and do embed $\tW$ into the group of affine transformations of $\Y_{\R}$ so that the action of $w=\vp^\ld u$ is given by $v\mapsto uv+\ld$.
For $\alpha\in \Phi$, we denote by $s_\alpha$ the reflection which sends $\ld\in \Y$ to $\ld-\langle \alpha, \ld\rangle\alpha^\vee$, where $\alpha^\vee$ is the corresponding coroot of $\alpha$.
Then $\bS=\{s_\alpha\mid \text{$\alpha$ is a simple root}\}$.

Let $\tP=\Phi\times \Z$ be the set of affine roots.
We view $a=(\alpha,k)\in \tP$ as an affine function such that $a(v)=-\la \alpha,v\ra+k$ for $v\in \Y_\R$.
Let $s_a=\vp^{k\alpha^\vee}s_\alpha$ denote the corresponding affine reflection.
If $w=\vp^\ld u\in \tW$, then $w(\alpha,k)=(u\alpha,k+\la u\alpha,\ld\ra)$.
Set $\tilde\Phi_+=\{(\alpha,k)\in \tilde\Phi\mid k\geq 1\}\sqcup \{(\alpha,0)\in \tilde\Phi\mid \alpha\in \Phi_-\}$.
Then $\tilde\Phi=\tilde\Phi_+\sqcup \tilde\Phi_-$ with $\tilde\Phi_-=-\tilde\Phi_+$.
For $\alpha\in \Phi$, we define $\tilde \alpha=(\alpha,0)\in \tP$ if $\alpha\in \Phi_-$ and $\tilde \alpha=(\alpha,1)\in \tP$ if $\alpha\in \Phi_+$.
Let $\Pi$ denote the set of minus simple roots and highest positive roots of $\Phi$.
Then $\ell(s_a)=1$ if and only if $a=\pm\ta$ for $\alpha\in \Pi$.
See \cite[\S1.2]{Nie22} for this description.

Let $\tS=\{s_{\ta}\mid \alpha\in \Pi\}$.
Let $W_a\subseteq \tW$ denote the affine Weyl group $\Z\Phi^\vee\rtimes W_0$, where $\Phi^\vee$ is the set of coroots.
Then $(W_a,\tS)$ is a Coxeter system.
Moreover, we can write the Iwahori-Weyl group as a semi-direct product $\tW=W_a\rtimes \Omega$, where $\Omega\subset \tW$ is the subgroup of length $0$ elements.
We extend the length function and the Bruhat order on the Coxeter group $W_a$ to $\tW$ in a natural way.
Then the resulting length function coincides with the function $\ell$ above.
Note also that for $\alpha\in \Pi$, $\ell(s_{\ta}\vp^\ld)=\ell(\vp^\ld)+1$ if and only if $\la\alpha,\ld\ra\le 0$.

Let $M \supseteq T$ be a semistandard Levi subgroup of $G$. 
By replacing the triple $T \subseteq B \subseteq G$ with $T \subseteq B \cap M \subseteq M$, we can define $\Phi^{M}_{\pm}$, $W_0^{M}$, $\Omega^{M}$ and so on as above. 

For $\alpha\in \Phi$, we denote by $U_\alpha\colon \cO\rightarrow G(L),z\mapsto U_\alpha(z)$ the corresponding one-parameter root subgroup.
We set $$I=T(\cO)\prod_{\alpha\in \Phi_+}U_{\alpha}(\vp\cO)\prod_{\beta\in \Phi_-}U_{\beta}(\cO)\subseteq G(L),$$
which is called the {\it standard Iwahori subgroup} associated to $T\subset B\subset G$.
We have
$$G(L)=\bigsqcup_{w\in\tW} IwI,$$
which is called the {\it Iwahori-Bruhat decomposition}.
We also have an explicit formula on the multiplication of Iwahori-Bruhat cells for $s\in \tS$ and $w\in \tW$:
$$IsIwI=\begin{cases}
IswI & \text{if $\ell(sw)=\ell(w)+1$,}\\
IswI\sqcup IwI & \text{if $\ell(sw)=\ell(w)-1$.}
\end{cases}$$

\subsection{Affine Deligne-Lusztig Varieties}
\label{ADLV}
For $\mu\in \Y_+$ and $b\in G(L)$, the affine Deligne-Lusztig variety $X_{\mu}(b)$ in the affine Grassmannian $\cG r=\cG r_G$ is the locally closed reduced $\aFq$-subscheme of $\cG r$ whose $\aFq$-valued points are
$$X_{\mu}(b)(\aFq)=\{xK\in \cG r(\aFq)=G(L)/K\mid x^{-1}b\sigma(x)\in K\vp^{\mu}K\}.$$
Note that if $\mu$ is minuscule, then $X_\mu(b)$ is closed in $\cG r$.
In the equal characteristic case, affine Deligne-Lusztig varieties are schemes, locally of finite type over $\aFq$.
In the mixed characteristic case, affine Deligne-Lusztig varieties are perfect schemes, locally perfectly of finite type over $\aFq$.
See \cite{PR08}, \cite{Zhu17}, \cite{BS17} and \cite[Lemma 1.1]{HV18}.
Left multiplication by $g^{-1}\in G(L)$ induces an isomorphism between affine Deligne-Lusztig varieties corresponding to $b$ and $g^{-1}b\sigma(g)$.
Thus the isomorphism class of the affine Deligne-Lusztig variety only depends on the $\sigma$-conjugacy class of $b$.

The affine Deligne-Lusztig varieties carry a natural action (by left multiplication) by the $\sigma$-centralizer of $b$
$$\J_b=\{j\in G(L)\mid j^{-1}b\sigma(j)=b\}.$$
Note that $\J_b\cong \J_{g^{-1}b\sigma(g)}$ by sending $j$ to $g^{-1}jg$.
In fact, $\J_b$ is the set of $F$-valued points of an algebraic group $J_b$ over $F$.
The element $b$, or its $\sigma$-conjugacy class, is called basic if $J_b$ is
an inner form of $G$ (see \cite[Definition 1.10]{Gortz19}).

The non-emptiness criterion and the dimension formula are already known for the affine Deligne-Lusztig varieties in the affine Grassmannian (see \cite{Gashi10}, \cite{Viehmann06} and \cite{Hamacher15}).
Let $B(G)$ denote the set of $\sigma$-conjugacy classes of $G(L)$. 
Thanks to Kottwitz \cite{Kottwitz85}, a $\sigma$-conjugacy class $[b]\in B(G)$ is uniquely determined by two invariants: the Kottwitz point $\kappa(b)\in \pi_1(G)/((1-\sigma)\pi_1(G))$ and the Newton point $\nu_b\in X_*(T)_{\Q,+}^\sigma$.
Then $b$ is basic if and only if $\nu_b$ is central.
Set $B(G,\mu)=\{[b]\in B(G)\mid \kappa(b)=\kappa(\vp^\mu), \nu_b\le \mu^{\diamond}\}$, where $\mu^\diamond$ denotes the $\sigma$-average of $\mu$, and $\le$ denotes the partial order on $\Y_\Q$ such that $\nu\le \nu'$ if $\nu'-\nu$ is a non-negative linear combination of positive coroots.
Then $X_\mu(b)\neq \emptyset$ if and only if $[b]\in B(G,\mu)$.
If this is the case, we have
$$\dim X_\mu(b)=\la\rho, \mu-\nu_b\ra-\tfrac{1}{2}\de(b),$$
where $\rho$ is the half sum of positive roots and $\de(b)=\mathrm{rk}_F(G)-\mathrm{rk}_F(J_b)$.

In this paper, we often consider the decomposition
$$G(L)/K=\bigsqcup_{\ld\in \Y}I\vp^\ld K/K,\quad X_\mu(b)=\bigsqcup_{\ld\in \Y}\Xl,$$
where $X^\ld_\mu(b)\coloneqq X_\mu(b)\cap I\vp^\ld K/K$.
Each piece is a locally closed subset of $X_\mu(b)$.

Let $\cF l$ denote the affine flag variety whose $\aFq$-valued points are canonically identified with $G(L)/I$.
Note that $\overline{IwI/I}=\bigsqcup_{w'\le w}Iw'I/I$.
Thus if $\ell(xw)=\ell(x)+\ell(w)$, then $xIwI/I=x(\overline{IwI/I})\cap IxwI/I$ is closed in $IxwI/I$.
Since the natural projection $\pi\colon G(L)/I\rightarrow G(L)/K$ is proper, $\pi(xIwI/I)=xIwK/K$ is closed in $\pi(IxwI/I)=IxwK/K$.
In particular, we have obtained the following lemma.
\begin{lemm}
\label{closed}
Let $x\in \tW$.
Let $\ld\in \Y$ such that $\ell(x\vp^\ld)=\ell(x)+\ell(\vp^\ld)$.
Then $x I\vp^\ld K/K\subseteq I\vp^{x\ld} K/K$ is closed in $I\vp^{x\ld} K/K$.
\end{lemm}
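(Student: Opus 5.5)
The argument is the one sketched just before the statement, made precise. The plan has three steps.

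First, I identify $xI\vp^\ld K/K$ with the image of a closed subset of the affine flag variety. Since $\vp^\ld K/K$ is the image of $\vp^\ld I/I$, we have
\[
xI\vp^\ld K/K=\pi(xI\vp^\ld I/I),
\]
where $\pi\colon G(L)/I\rightarrow G(L)/K$ is the natural projection. Next, by the length-additivity assumption $\ell(x\vp^\ld)=\ell(x)+\ell(\vp^\ld)$, the multiplication rule for Iwahori–Bruhat cells gives
\[
IxI\cdot I\vp^\ld I=Ix\vp^\ld I.
\]
This follows by induction on $\ell(x)$: write $x$ as a reduced product of elements of $\tS$ times an element of $\Omega$, and apply the formula $IsIwI=IswI$ whenever $\ell(sw)=\ell(w)+1$. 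In particular $xI\vp^\ld I/I\subseteq Ix\vp^\ld I/I$.

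Second, I show that $xI\vp^\ld I/I$ is closed in $Ix\vp^\ld I/I$. Left translation by $x$ is an automorphism of $\cF l$, so
\[
x\,\overline{I\vp^\ld I/I}=\bigsqcup_{w'\le \vp^\ld}xIw'I/I
\]
is closed. I claim that
\[
x\,\overline{I\vp^\ld I/I}\cap Ix\vp^\ld I/I=xI\vp^\ld I/I.
\]
The inclusion $\supseteq$ is clear. For $\subseteq$, note that for $w'<\vp^\ld$ the set $xIw'I$ is contained in $IxI\,Iw'I$, which is a union of cells $Iyw'I$ with $y\le x$. The length of such an element $yw'$ is at most $\ell(x)+\ell(w')<\ell(x\vp^\ld)$, so none of these cells is $Ix\vp^\ld I$. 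This intersection claim is the one point requiring care; I would prove it with the standard fact that $IxIw'I\subseteq\bigcup_{y\le x}Iyw'I$, again by induction on $\ell(x)$ using the two cases of the cell multiplication formula. Consequently $xI\vp^\ld I/I$ is closed in the locally closed subset $Ix\vp^\ld I/I$.

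Third, I pass to the affine Grassmannian by properness. The restriction of $\pi$ to the preimage of $I\vp^{x\ld}K/K$ is proper, and closedness is preserved under images of proper maps. Moreover, $\pi(Ix\vp^\ld I/I)=Ix\vp^\ld K/K=I\vp^{x\ld}K/K$, because $x$ normalizes $T(\cO)\subseteq K$ and its finite part lies in $K$. To be careful, I take the full preimage $\pi^{-1}(I\vp^{x\ld}K/K)$, which is a finite union of $I$-cells, and view the closed set $x\overline{I\vp^\ld I/I}\cdot I$ inside it. Then $\pi(xI\vp^\ld I/I)$ is the intersection of the image of a closed set with $I\vp^{x\ld}K/K$, since
\[
\pi\bigl(x\overline{I\vp^\ld K/K}\bigr)\cap I\vp^{x\ld}K/K=xI\vp^\ld K/K.
\]
To justify this equality I would repeat the length argument from the second step, now using the description of $\overline{I\vp^\ld K/K}$ as a union of the cells $I\vp^{\ld'}K/K$ with $\vp^{\ld'}$ at most the minimal length representative; this is routine. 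This yields the lemma.
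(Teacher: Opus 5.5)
Your first two steps are the paper's argument, and you justify the identity $xI\vp^\ld I/I=x\overline{I\vp^\ld I/I}\cap Ix\vp^\ld I/I$, which the paper only asserts. You are also right that properness of $\pi$ alone does not carry this down to $\cG r$. The reason is that $Ix\vp^\ld I/I$ is only one of the cells of $\pi^{-1}(I\vp^{x\ld}K/K)$; the paper's sketch simply cites properness at this point.

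The gap is in your replacement. Since $x\overline{I\vp^\ld K/K}=\overline{xI\vp^\ld K/K}$, the identity
\[
x\overline{I\vp^\ld K/K}\cap I\vp^{x\ld}K/K=xI\vp^\ld K/K
\]
is equivalent to the lemma itself, so it cannot be waved through as routine. The length argument also does not prove it. What you must exclude is $ym_{\ld'}\in x\vp^\ld W_0$ for $y\le x$ and $m_{\ld'}<m_\ld$, which is a statement about cosets. The bound $\ell(ym_{\ld'})<\ell(x)+\ell(m_\ld)=\ell(xm_\ld)$ excludes it only if $xm_\ld$ is the shortest element of its coset, and the hypothesis does not give that. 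For example, take $x=s_\alpha\in\bS$ with $\la\alpha,\ld\ra=0$, say $G=\GL_3$, $\ld=(1,1,0)$, $\alpha=\chi_{1,2}$. Then $\ell(x\vp^\ld)=\ell(\vp^\ld)+1$ and $x\ld=\ld$, but $x\vp^\ld W_0=\vp^\ld W_0$ contains $m_\ld$, whose length is $\ell(xm_\ld)-1$. If $x$ is the longest element of the stabilizer of a dominant $\ld$, the deficit is all of $\ell(x)$.

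One way to close the gap is by induction on $\ell(x)$, after passing to $w=m_\ld$; the hypothesis gives $\ell(xw)=\ell(x)+\ell(w)$.
\begin{itemize}
\item Write $\bar z$ for the minimal element of $zW_0$. The claim to prove is that $xIwK/K$ is closed in $I\overline{xw}K/K$.
\item Elements of $\Omega$ merely permute cells, so they cause no trouble.
\item Suppose $x=sx'$ with $s\in\tS$ and $\ell(x)=\ell(x')+1$. Write $x'w=w_1t$ with $w_1=\overline{x'w}$, $t\in W_0$ and $\ell(x'w)=\ell(w_1)+\ell(t)$. Then $\ell(sw_1t)=\ell(w_1t)+1$ forces $sw_1>w_1$.
\item By induction, $xIwK/K$ is closed in $sIw_1K/K\subseteq I\overline{xw}K/K$. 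So it remains to show that $sIw_1K/K$ is closed in $I\overline{sw_1}K/K$.
\item For minimal $w'<w_1$, we have $sIw'K\subseteq Isw'K\cup Iw'K$, and the second term occurs only when $sw'<w'$.
\item The equality $Isw'K=Isw_1K$ would force $w'W_0=w_1W_0$, hence $w'=w_1$. This step is a coset argument, not a length argument.
\item The equality $Iw'K=Isw_1K$ with $sw'<w'$ would put $sw'$ in $w_1W_0$. That gives $\ell(w_1)\le\ell(sw')<\ell(w')<\ell(w_1)$, a contradiction.
\item Hence $s\overline{Iw_1K/K}\cap I\overline{sw_1}K/K=sIw_1K/K$, which is the required closedness.
\end{itemize}
With this argument in place, your second step is no longer needed.
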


Let $m_{\ld}$ denote the unique minimal-length element in $\vp^\ld W_0$.
Then $I\vp^\ld K/K=Im_\ld K/K$.
Since $\pi$ is proper and $Im_\ld I/I$ is closed in $\pi^{-1}(Im_\ld K/K)$, the restriction of $\pi$ to $Im_\ld I/I$ is also proper.
Since $I\cap m_\ld Im_\ld^{-1}=I\cap m_\ld K m_\ld^{-1}$, this restriction is (universally) bijective, and hence a universal homeomorphism.
By $Im_\ld I/I\cong \A^{\ell(m_\ld)}$ (after perfection), we obtain the following lemma.
\begin{lemm}
\label{affine space}
The orbit $I\vp^\ld K/K$ is universally homeomorphic to an affine space.
\end{lemm}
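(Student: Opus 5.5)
The argument runs through the Iwahori cell $Im_\ld I/I$ in the affine flag variety and the projection $\pi\colon G(L)/I\rightarrow G(L)/K$. First, $I\vp^\ld K/K=Im_\ld K/K$, where $m_\ld$ is the minimal-length element of $\vp^\ld W_0$. This holds because $\vp^\ld K=m_\ld K$, as $m_\ld\in \vp^\ld W_0$ and $W_0$ has representatives in $K$.

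Next, I will use the classical fact that $Im_\ld I/I\cong \A^{\ell(m_\ld)}$, after perfection in mixed characteristic. I would justify it via the Iwahori factorization: the map $\prod_{a} U_a \to Im_\ld I/I$, $u\mapsto u m_\ld I$, is an isomorphism. The product runs over the affine roots $a\in\tilde\Phi_+$ with $m_\ld^{-1}a\in\tilde\Phi_-$, in any fixed order, with $U_a\cong\A^1$. There are exactly $\ell(m_\ld)$ such roots.

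The key point is that the restriction $\pi|\colon Im_\ld I/I\to Im_\ld K/K$ is a universal homeomorphism. Both sides are homogeneous under $I$, so surjectivity is clear. For injectivity on points, suppose $i m_\ld K=m_\ld K$ with $i\in I$, so that $i\in I\cap m_\ld Km_\ld^{-1}$. By minimality of $m_\ld$ in its coset, $I\cap m_\ld Km_\ld^{-1}=I\cap m_\ld I m_\ld^{-1}$. To check this equality at the level of root subgroups, note that the affine roots fixed by $I\cap m_\ld K m_\ld^{-1}$ are the $m_\ld a$ with $a=(\alpha,k)$, $k\ge0$ (the roots of $K$), for which $m_\ld a\in\tilde\Phi_+$. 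Minimal length means $m_\ld(\ta)\in\tilde\Phi_+$ for simple negative $\alpha$, equivalently $m_\ld\Delta_{\text{fin}}$ lies in the positive chamber side. From this, every such $a$ with $m_\ld a$ positive lies in $\tilde\Phi_+$, and $T(\cO)$ is handled separately. Hence $i\in m_\ld I m_\ld^{-1}$, so $im_\ld I=m_\ld I$, and fibers over closed points are single points.

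For properness, $\pi$ is proper because $K/I$ is projective. Moreover, $Im_\ld I/I$ is closed in $\pi^{-1}(Im_\ld K/K)=\bigsqcup_{w\in m_\ld W_0}IwI/I$, since $m_\ld$ is the minimal element of that set in Bruhat order and closures only add smaller elements. So the restriction is proper. A proper, bijective, surjective morphism of (perfect) schemes of finite type is universally injective, universally closed, and surjective, hence a universal homeomorphism. Composing with the affine space structure gives the lemma.

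I expect the main obstacle to be the root-subgroup check $I\cap m_\ld Km_\ld^{-1}=I\cap m_\ld Im_\ld^{-1}$, which is where minimality is used. If this proves delicate, I would instead argue by points: compute the fiber of $\pi$ as $(m_\ld K\cap Im_\ld I)/I$ inside $m_\ld K/I$. By the Bruhat decomposition of $K/I$, and the length additivity $\ell(m_\ld u)=\ell(m_\ld)+\ell(u)$ for $u\in W_0$, only $u=1$ contributes, and its cell is the point $m_\ld I$.
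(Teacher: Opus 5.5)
Your proposal is correct and is essentially the paper's own argument: you reduce to $Im_\ld K/K$, note that $Im_\ld I/I$ is closed in $\pi^{-1}(Im_\ld K/K)$ so that $\pi$ restricts to a proper map, obtain (universal) bijectivity from $I\cap m_\ld Im_\ld^{-1}=I\cap m_\ld Km_\ld^{-1}$, and conclude from $Im_\ld I/I\cong\A^{\ell(m_\ld)}$ after perfection. Your root-subgroup check of that equality, and the alternative fiber computation via $m_\ld IuI\subseteq Im_\ld uI$, simply supply details that the paper leaves implicit.
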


\subsection{The Minuscule and Basic Case}
\label{minuscule}
In this subsection, we summarize the results in \cite[\S1.5 \& \S2.2]{Nie22}.
Let $\ld\in \Y$ and $\alpha\in \Phi$.
We set $\ld_\alpha=-\tilde\alpha(\ld)$, i.e.,
$$\lambda_\alpha=\begin{cases}
\langle\alpha, \lambda\rangle-1 & \alpha\in \Phi_+, \\
\langle\alpha, \lambda\rangle & \alpha\in \Phi_-.
\end{cases}$$
Then $\ld_\alpha+\ld_{-\alpha}=-1$ and $s_{\ta}\ld=\ld-\ld_\alpha \alpha^\vee$ (cf.\ \cite[Lemma 1.3 (1) \& (2)]{Nie22}).
Let $U_\lambda$ be the subgroup of $G$ generated by $U_\alpha$ such that $\lambda_\alpha\geq 0$.
Then there exists a unique element $\ep_\lambda$ such that $U_\ld=\el U\el^{-1}$.
Here $U$ denotes the unipotent radical of $B$.
By \cite[Lemma 1.3 (3)]{Nie22}, we have $\ld_\alpha=(\tau\ld)_{p(\tau)(\alpha)}$ and $\ep_{\tau\ld}=p(\tau)\el$ for $\tau\in \Omega$.

We assume that $\mu\in \Y_+$ is minuscule and $b$ is basic.
We may always choose $b\in N_{G(L)}T(L)$ to be a lift of an element of $\Omega$, and we denote its image in $\tW$ again by $b$.
We always assume that $[b]\in B(G,\mu)$.
We define $\ld^\natural\coloneqq b\sigma(\ld)-\ld$ and
\begin{align*}
\cA_{\mu,b}\coloneqq \{\ld\in \Y\mid X^\ld_\mu(b)\neq \emptyset\},\quad R(\ld)\coloneqq \{\alpha\in \Phi\mid \langle\alpha, \ld^\natural\rangle=-1, \ld_\alpha\geq 1\}.
\end{align*}

\begin{prop}
\label{Xl}
We have $\cA_{\mu,b}=\{\ld\in \Y\mid \ld^\natural\in W_0 \mu\}$.
If $\ld\in \cA_{\mu,b}$, then $\Xl$ is (the perfection of) a smooth variety of dimension $\sharp R(\ld)$.
Moreover, $I\cap \J_b$ acts transitively on the set of irreducible components of $\Xl$.
\end{prop}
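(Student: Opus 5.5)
Since the section explicitly says it summarizes \cite[\S1.5 \& \S2.2]{Nie22}, the plan is to reduce each assertion to a direct computation inside the Iwahori orbit $I\vp^\ld K/K$. Write a point of $I\vp^\ld K/K$ as $x=u\vp^\ld K$ with $u$ in the finite-dimensional unipotent group $I_\ld\coloneqq\prod_{\alpha:\ \ld_\alpha\ge 0}U_\alpha(\vp^{\,\ld_\alpha+\ast}\cO)/(\cdots)$, i.e.\ coordinates on $I/(I\cap \vp^\ld K\vp^{-\ld})$, which is an affine space by Lemma \ref{affine space}. The condition $x\in X_\mu(b)$ is
$$\vp^{-\ld}u^{-1}b\sigma(u)b^{-1}\,\vp^{b\sigma(\ld)}\in K\vp^\mu K .$$
Since $b\in\Omega$ normalizes $I$, $b\sigma(u)b^{-1}\in I$. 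Hence $u^{-1}b\sigma(u)b^{-1}\in I$, and the element above lies in $\vp^{-\ld}I\vp^{\ld+\ld^\natural}$.

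For the first claim, take $u=1$. This shows that $\vp^\ld K\in X_\mu(b)$ exactly when $\ld^\natural\in W_0\mu$. Conversely, if some $x$ lies in $\Xl$, then $\vp^{-\ld}I\vp^{\ld+\ld^\natural}$ meets $K\vp^\mu K$. Because $\vp^{-\ld}I\vp^\ld\subseteq \vp^{-\ld}I\vp^\ld K\cdots$, and more precisely $\vp^{-\ld}I\vp^{\ld}\cdot \vp^{\ld^\natural}K\subseteq \bigcup_{\ld'\le}K\vp^{\ld'}K$ by the standard bound $I\vp^\eta K\subseteq\bigcup_{\eta'\preceq \eta_+}K\vp^{\eta'}K$, we get $\mu\preceq(\ld^\natural)_{\rm dom}$. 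Applying the same bound with the roles reversed (rewrite $\vp^{\ld^\natural}$ via $x$), we get $(\ld^\natural)_{\rm dom}\preceq\mu$. Minusculity of $\mu$ together with the fact that $\kappa$ agrees then forces $\ld^\natural\in W_0\mu$. I expect this to be cleaner than it sounds: minuscule $\mu$ is the minimal element in its coset, so only the inequality $(\ld^\natural)_{\rm dom}\preceq\mu$ is needed.

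For smoothness and dimension, I would linearize the equation root by root, as Nie does. Order the root coordinates $u_\alpha$ (with $\ld_\alpha\ge0$) by $\tilde\alpha(\ld)$-height. The condition becomes a system in which $\sigma$ acts through $b\sigma$, permuting roots via $p(b\sigma)$. The coordinates split into three kinds:
\begin{itemize}
\item free coordinates, for roots $\alpha$ with $\la\alpha,\ld^\natural\ra=-1$ and $\ld_\alpha\ge1$, i.e.\ $R(\ld)$ (with $\ld_\alpha$ levels counted, giving multiplicity one per root);
\item Lang-type equations $u_{\alpha}=\sigma(u_{\beta})+\cdots$, which contribute finite étale or open conditions;
\item vanishing conditions.
\end{itemize}
The triangular structure gives an iterated fibration with affine-line fibres, open conditions, and étale Lang-torsor pieces. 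This yields smoothness of dimension $\sharp R(\ld)$, since each Lang step is finite étale over its base.

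The transitivity statement follows from the same structure. The components come only from the finite étale Lang-torsor steps, i.e.\ from $\sigma$-stable orbits, where the fibre is a torsor under a finite group of $\Fq$-points of root subgroups or torus. That finite group is realized by elements of $I\cap\J_b$, namely $\sigma$-fixed parts of $b$-twisted root subgroups and of $T(\cO)$. Hence $I\cap \J_b$ permutes the components transitively.

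The main obstacle is the triangular decomposition: verifying that the nonlinear terms of $u^{-1}b\sigma(u)b^{-1}$ only involve higher coordinates and that the count is exactly $R(\ld)$. Here I would cite \cite[Proposition 2.9 \& \S2.2]{Nie22} rather than redo the computation.
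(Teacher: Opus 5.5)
\textbf{Gap in the first assertion.} For the smoothness, the dimension $\sharp R(\ld)$ and the transitivity of $I\cap\J_b$, you defer to \cite[Proposition 2.9]{Nie22}, and that is exactly what the paper does. Its proof is that citation, plus the remark that Nie's argument gives a smooth deperfection. Your triangular-system picture can therefore stand only as a heuristic.

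The real problem is the argument you give yourself for the inclusion $\cA_{\mu,b}\subseteq\{\ld\mid\ld^\natural\in W_0\mu\}$.
\begin{enumerate}[(a)]
\item The bound $I\vp^\eta K\subseteq K\vp^{\eta_{\mathrm{dom}}}K$ rests on $I\subseteq K$. The set you must control is $\vp^{-\ld}I\vp^{\ld}\vp^{\ld^\natural}$, and $\vp^{-\ld}I\vp^{\ld}\not\subseteq K$.
\item The upper-bound inclusion you invoke is in fact false. For $G=\GL_2$ and $\ld=(0,1)$ one has $\vp^{-\ld}U_{\chi_{2,1}}(1)\vp^{\ld}=U_{\chi_{2,1}}(\vp^{-1})\in K\vp^{(1,-1)}K$. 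So $\vp^{-\ld}I\vp^{\ld}\cdot\vp^{0}$ already leaves $K$.
\item Hence the step giving $\mu\le(\ld^\natural)_{\mathrm{dom}}$ is unfounded.
\item The ``roles reversed'' step would need $\vp^{-\ld}I\vp^{\ld}K\vp^{\mu}K\subseteq\bigcup_{\eta'\le\mu}K\vp^{\eta'}K$, and this fails for the same reason.
\end{enumerate}
So the one inequality you correctly identify as sufficient, $(\ld^\natural)_{\mathrm{dom}}\le\mu$, is never established.

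\textbf{What is true and suffices.} The correct ingredient is a lower bound rather than an upper bound. For any Iwahori subgroup $I'\supseteq T(\cO)$ (here $I'=\vp^{-\ld}I\vp^{\ld}$) and any $\nu\in\Y$, one has $I'\vp^{\nu}\subseteq\bigcup_{\nu'\ge\nu_{\mathrm{dom}}}K\vp^{\nu'}K$. One way to prove it:
\begin{enumerate}[(1)]
\item Fix a dominant weight $\chi$. Choose $w\in W_0$ minimizing $\la w\chi,\nu\ra$, and a Borel $B'\supseteq T$ for which $w\chi$ is the lowest weight of $V_\chi$.
\item Use the Iwahori factorization $I'=(I'\cap U_{B'})\,T(\cO)\,(I'\cap U_{B'}^{-})$. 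For $i'\in I'$, it shows that the $v_{w\chi}$-coordinate of $i'\vp^{\nu}v_{w\chi}$ has valuation exactly $\la w\chi,\nu\ra=\la w_0\chi,\nu_{\mathrm{dom}}\ra$.
\item Every matrix coefficient of an element of $K\vp^{\nu'}K$ on $V_\chi(\cO)$ has valuation at least $\la w_0\chi,\nu'\ra$.
\item Therefore $\la -w_0\chi,\nu'-\nu_{\mathrm{dom}}\ra\ge 0$ for every dominant $\chi$, i.e.\ $\nu_{\mathrm{dom}}\le\nu'$.
\end{enumerate}
Apply this with $\nu=\ld^\natural$ and $\nu'=\mu$. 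Combine it with $\ld^\natural\equiv\mu$ in $\pi_1(G)$, obtained by applying $\kappa_G$ to $x^{-1}b\sigma(x)$. Since a minuscule $\mu$ is minimal in its class, this yields $\ld^\natural\in W_0\mu$. Alternatively, cite \cite[Proposition 2.9]{Nie22} for this assertion too, as the paper does.
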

\begin{proof}
This is \cite[Proposition 2.9]{Nie22}.
Note that the proof there implies that $\Xl$ admits a smooth deperfection (see also the proof of \cite[Proposition 1.23]{Zhu17}).
\end{proof}

Let $p\colon \tW\rtimes \langle \sigma \rangle\rightarrow W_0\rtimes \langle \sigma \rangle$ be the natural projection.
For any subset $D\subseteq \tW$, we set $D\cap \J_b=\{w\in D\mid b\sigma(w)b^{-1}=w\}$.
By abuse of notation, we also denote by $w\in \tW\cap \J_b$ some lift of $w$ in $N_{G(L)}T(L)$ that lies in $\J_b$.
For $\alpha\in \Phi$ and $i\in \Z$, we define $\alpha^i=p(b\sigma)^i(\alpha)\in \Phi$ and $\tilde \alpha^i=(b\sigma)^i(\tilde \alpha)\in \tilde \Phi$.
\begin{lemm}
\label{ld}
Let $\ld\in \Y$.
Then we have (1) $\la \alpha,\ld^\natural\ra=\ld_{\alpha^{-1}}-\ld_\alpha$ for $\alpha \in \Phi$, and
(2) $(w\ld)^\natural=p(w)\ld^\nt$ for $w\in \tW\cap \J_b$.
If, moreover, $\ld \in \cA_{\mu,b}$, then (3) $R(\tau\ld)=p(\tau)R(\ld)$ for $\tau\in\Omega\cap \J_b$.
\end{lemm}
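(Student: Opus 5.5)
The three parts are direct computations from the definitions $\ld_\alpha=-\ta(\ld)$ and $\ld^\natural=b\sigma(\ld)-\ld$, together with the transformation rule $w(\alpha,k)=(u\alpha,k+\la u\alpha,\ld\ra)$ for affine roots and the equivariance $\ld_\alpha=(\tau\ld)_{p(\tau)(\alpha)}$ from \cite[Lemma 1.3 (3)]{Nie22}.

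For (1), I would first show that $(b\sigma)^{-1}(\ta)=\widetilde{\alpha^{-1}}$. Since $b\in\Omega$ has length $0$, it preserves $\tP_+$. Also $\ta$ is characterized among the affine roots with gradient $\alpha$ as the one in $\tP_+$ closest to $0$, namely $k=0$ or $k=1$. So $b\sigma$ sends $\ta$ to $\tilde\beta$ with $\beta=p(b\sigma)\alpha$, which gives $(b\sigma)^{-1}(\ta)=\widetilde{\alpha^{-1}}$. I would then evaluate both sides at $\ld$. Writing $b=\vp^{\eta}u$, we have $\ta(b\sigma(\ld))=((b\sigma)^{-1}\ta)(\ld)$, because affine functions transform by $(w a)(v)=a(w^{-1}v)$. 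Hence $\ld_\alpha-(b\sigma(\ld))_\alpha=\widetilde{\alpha}(b\sigma\ld)-\ta(\ld)$, and the left side equals $-\la\alpha,b\sigma(\ld)-\ld\ra=-\la\alpha,\ld^\natural\ra$. Since $\ta(b\sigma\ld)=\widetilde{\alpha^{-1}}(\ld)=-\ld_{\alpha^{-1}}$, this yields $\la\alpha,\ld^\natural\ra=\ld_{\alpha^{-1}}-\ld_\alpha$. The only thing to be careful about is the sign and affine-action conventions; once those are fixed, the rest is bookkeeping.

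For (2), I would write $w=\vp^{\eta}u$ with $b\sigma(w)b^{-1}=w$. Then
\[
(w\ld)^\natural=b\sigma(w\ld)-w\ld=w\bigl(b\sigma(\ld)\bigr)-w\ld .
\]
The difference of the images under the affine map $w$ of two points is the linear part $u=p(w)$ applied to their difference, so $(w\ld)^\natural=p(w)\ld^\natural$.

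For (3), I take $\tau\in\Omega\cap\J_b$. Part (2) gives $\la p(\tau)\alpha,(\tau\ld)^\natural\ra=\la p(\tau)\alpha,p(\tau)\ld^\natural\ra=\la\alpha,\ld^\natural\ra$. The equivariance from \cite[Lemma 1.3 (3)]{Nie22} gives $(\tau\ld)_{p(\tau)\alpha}=\ld_\alpha$. So $\alpha\in R(\ld)$ if and only if $p(\tau)\alpha\in R(\tau\ld)$. This also uses $\tau\ld\in\cA_{\mu,b}$, which holds because $(\tau\ld)^\natural=p(\tau)\ld^\natural\in W_0\mu$.
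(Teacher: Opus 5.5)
Your proposal is correct. The paper gives no argument of its own for this lemma; it simply cites \cite[Lemma 2.7 \& Lemma 2.8]{Nie22}. Your direct computation supplies the standard self-contained verification of those cited facts. It rests on three points: $(b\sigma)^{\pm1}$ permutes the $\ta$ because length-zero elements and $\sigma$ preserve $\tP_+$, every $w\in\tW\cap\J_b$ commutes with $b\sigma$, and the equivariance $\ld_\alpha=(\tau\ld)_{p(\tau)\alpha}$ holds.

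One small remark: in (3) you do not actually use $\tau\ld\in\cA_{\mu,b}$. The set identity $R(\tau\ld)=p(\tau)R(\ld)$ follows from (2) and the equivariance for arbitrary $\ld\in\Y$.
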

\begin{proof}
This is \cite[Lemma 2.7 \& Lemma 2.8]{Nie22}.
\end{proof}
We set $\ld^\flat\coloneqq \el^{-1}(\ld^\natural)$.
If $\tau\in\Omega\cap \J_b$, then $(\tau\ld)^\flat=\ld^\flat$ by Lemma \ref{ld}.

\section{Small Cocharacters}
\label{small flat}
In this section, we introduce small cocharacters and the map $\flat$.
\subsection{Smallness and Irreducibility}
\label{small}
In this subsection, we assume that $\sigma$ acts transitively on the connected components of the Dynkin diagram of $\bS$, $\mu$ is minuscule and $[b]\in B(G,\mu)$ is basic.
Let $d$ denote the number of connected components of $\bS$.
Recall that $b$ is a length $0$ element.

We recall some useful facts from \cite[\S5]{Nie22}. 
Although Nie assumed that $G$ is adjoint, this assumption can be removed (see also \S\ref{ad}).

We say that a subset $D\subseteq \Phi$ is strongly orthogonal if $\beta\pm \gamma \notin \Phi$ for any $\beta,\gamma\in D$.
If $D$ is strongly orthogonal, then it is orthogonal, i.e.,
$\langle \gamma,\beta^\vee\rangle =0$ for any $\beta\ne \gamma\in D$.
Then the simple affine reflections $s_{\tb}$ for $\beta\in D$ commute with each other.

For $\alpha\in \Phi$, we set $\cO_\alpha=\{\alpha^i\mid i\in \Z\}$ and $\cO_{\tilde\alpha}=\{\tilde \alpha^i\mid i\in \Z\}$, where $\alpha^i$ and $\tilde \alpha^i$ are as in \S\ref{Xl}.
Let $\Wa$ be the subgroup of $\tW$ generated by $s_{\tilde \alpha^i}$ for $\alpha^i \in \cO_\alpha$.
Recall that $\Pi\subseteq\Phi$ is the set of minus simple roots and highest positive roots.
Then $\J_b$ is generated by $I\cap \J_b, \Omega\cap \J_b$ and $\Wa\cap \J_b$ for $\alpha\in \Pi$.
\begin{lemm}
\label{Oa}
Let $\alpha\in \Pi$.
Then $\Wa$ is infinite if and only if $\cO_\alpha=\Pi$.
If $\Oa\subsetneq \Pi$, then $\Wa\cap \J_b=\{1,\wa\}$, where $\wa$ is the longest element in $\Wa$.
Moreover, one of the following cases occurs:
\begin{enumerate}[(1)]
\item $\cO_\alpha$ is strongly orthogonal and $\wa=\prod_{\beta\in \cO_\alpha}s_{\tb}$;
\item $\cO_{\alpha+\alpha^d}$ is strongly orthogonal, $\la\alpha^d,\alpha^\vee\ra=-1, |\cO_\alpha|=2d$ and $\wa=\prod_{\beta\in \cO_{\alpha+\alpha^d}}s_{\tb}$.
\end{enumerate}
\end{lemm}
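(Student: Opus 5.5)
The plan is to reduce everything to the combinatorics of the affine Dynkin diagram of $(W_a,\tS)$ together with the diagram automorphism induced by $b\sigma$. Since $b\in\Omega$ has length $0$ and $\sigma$ preserves $I$, conjugation by $b\sigma$ preserves $\tS$. It therefore permutes the simple affine roots $\{\ta\mid\alpha\in\Pi\}$, compatibly with $\alpha\mapsto\alpha^1$. Hence $\cO_{\ta}=\{\tb\mid\beta\in\cO_\alpha\}$ consists of simple affine roots. It follows that $\Wa$ is the standard parabolic subgroup of $W_a$ generated by the $b\sigma$-stable subset $\{s_{\tb}\mid\beta\in\cO_\alpha\}\subseteq\tS$.

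First I would prove the finiteness criterion. The Dynkin diagram of $\tS$ is a disjoint union of $d$ irreducible affine diagrams, one for each component of $\bS$. By assumption $b\sigma$ permutes these transitively, so $\cO_\alpha$ either contains every node of every component, or contains a proper subset of nodes in each component. A standard parabolic subgroup of an irreducible affine Coxeter group is finite exactly when it is generated by a proper subset of the nodes. This gives: $\Wa$ is infinite if and only if $\cO_\alpha=\Pi$.

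Next, assume $\cO_\alpha\subsetneq\Pi$, so $\Wa$ is finite. The set $\Wa\cap\J_b$ is the fixed-point subgroup of the diagram automorphism $\delta$ of the finite Coxeter system $(\Wa,\{s_{\tb}\}_{\beta\in\cO_\alpha})$ induced by $b\sigma$. By the standard theory of fixed points of Coxeter groups under diagram automorphisms (Steinberg), this subgroup is a Coxeter group. Its generators are the longest elements $w_{0,J}$, one for each $\delta$-orbit $J$ of simple reflections. Here $\delta$ is transitive on the simple reflections of $\Wa$, so there is a single orbit and $\Wa\cap\J_b=\{1,\wa\}$.

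For the dichotomy I would classify the subdiagram $\Gamma$ on $\cO_\alpha$. Its connected components are permuted transitively by $\delta$. If $\delta^k$ is the smallest power stabilizing a component, then $\delta^k$ acts transitively on the nodes of that component. The only connected finite Dynkin diagrams with a node-transitive automorphism are $A_1$ and $A_2$. In the $A_1$ case $\Gamma$ has no edges, and $\wa$ is the product of the commuting $s_{\tb}$. In the $A_2$ case each component is $\{\alpha^j,\alpha^{j+d}\}$ with $\la\alpha^d,\alpha^\vee\ra=-1$ and $|\cO_\alpha|=2d$. The longest element of $A_2$ is the reflection in the sum of the two simple affine roots, namely $s_{\widetilde{\alpha^j+\alpha^{j+d}}}$, so $\wa=\prod_{\beta\in\cO_{\alpha+\alpha^d}}s_{\tb}$.

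The step I expect to be the main obstacle is upgrading orthogonality to \emph{strong} orthogonality, in case (1) for $\cO_\alpha$ and in case (2) for $\cO_{\alpha+\alpha^d}$. Absence of an edge only gives $\la\gamma,\beta^\vee\ra=0$. In non-simply-laced types the sum or difference of two orthogonal roots can still be a root, as with two orthogonal short roots in $B_2$ whose sum is long. I would handle this by a case check on the affine diagrams that admit the relevant automorphisms. The key input is that $\beta,\gamma$ are (finite parts of) affine simple roots lying in a proper subdiagram, so they form part of a base of a finite root subsystem. In that subsystem $\beta-\gamma$ is never a root, and $\beta+\gamma$ is a root only when they are adjacent. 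For case (2) I would apply the same argument to the roots $\alpha^j+\alpha^{j+d}$ coming from distinct $A_2$-components. Failing a uniform argument, I would follow Nie's treatment in \cite[\S5]{Nie22} and note that nothing there uses adjointness beyond the root-system combinatorics.
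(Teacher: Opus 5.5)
\noindent Your framework is sound and, unlike the paper (which simply quotes \cite[Lemma 5.1 \& Lemma 5.3]{Nie22}), self-contained. The pieces that work are: $\Wa$ as the standard parabolic on the $b\sigma$-stable set $\{s_{\tb}\mid\beta\in\Oa\}$, the finiteness criterion via transitivity on the $d$ irreducible affine diagrams, Steinberg's fixed-point theorem giving $\Wa\cap\J_b=\{1,\wa\}$, and the $A_1$/$A_2$ classification of the components of $\Gamma$. For that classification, say explicitly that $p(b\sigma)$ preserves root lengths, which is what excludes $B_2$ and $G_2$.

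The genuine gap is the sentence ``each component is $\{\alpha^j,\alpha^{j+d}\}$''. You have only shown that the $A_2$-components are permuted transitively. So $|\Oa|=2k$, where $\delta^k$ generates the stabilizer of a component, and $d\mid k$. Nothing you wrote excludes $k>d$, i.e.\ two or more $A_2$-components of one orbit inside the same irreducible affine diagram. Yet $|\Oa|=2d$, and the fact that the neighbour of $\alpha$ is $\alpha^d$, are exactly the assertion $k=d$. To close this, note that $\delta^k$ swaps the two ends of each of the $k/d$ edges of $\Gamma$ lying in a fixed irreducible affine diagram. If that diagram is a tree (every type except $\tilde A_n$, $n\geq 2$), any automorphism fixes its centre. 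An inverted edge has both ends equidistant from the centre, and in a tree this forces it to be the central edge, so $k/d=1$. For the cycle $\tilde A_n$, $\delta^d$ is a rotation, whose proper orbits contain no edge, or a reflection, whose orbits have at most two elements. Once $k=d$, the $d$ roots $\alpha^j+\alpha^{j+d}$ lie in pairwise different irreducible factors of $\Phi$. Strong orthogonality of $\cO_{\alpha+\alpha^d}$ is then automatic, and no ``same argument'' is needed.

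\noindent The strong orthogonality in case (1), which you flag as the main obstacle and defer to a case check, is in fact uniform. Let $\beta\neq\gamma\in\Pi$ lie in the same irreducible component. Then $\beta-\gamma\notin\Phi$: otherwise, since $\tP=\Phi\times\Z$, the affine function $\tb-\tilde\gamma$ would be an affine root. But every affine root of an irreducible affine root system is an integer combination of the linearly independent simple affine roots with coefficients of constant sign. If moreover $\la\beta,\gamma^\vee\ra=0$, the $\gamma$-string through $\beta$ then starts at $\beta$ and has length $-\la\beta,\gamma^\vee\ra=0$, so $\beta+\gamma\notin\Phi$. This is the precise content of your ``part of a base'' remark. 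What matters is that the nodes form a base of the closed subsystem $\Phi\cap\Z J$, not merely of $W_J\cdot J$; the strong-orthogonality condition is about $\Phi$. Your $B_2$ worry $\{e_1,e_2\}$ therefore never arises, since $e_1-e_2\in\Phi$.
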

\begin{proof}
This is \cite[Lemma 5.1 \& Lemma 5.3]{Nie22}.
\end{proof}

\begin{lemm}
\label{not small}
Let $\alpha\in \Pi$ and $\ld\in \Y$.
Assume that $\Oa\neq \Pi$.
Then $\{\ld_\beta\mid \beta\in \Oa\}=\{-(\wa \ld)_\beta\mid \beta\in \Oa\}$.
If, moreover, $\ld\in \cAm$ with $|\ld_\beta|\geq 1$ for $\beta\in \Oa$, then $R(\wa\ld)=p(\wa)R(\ld)$.
\end{lemm}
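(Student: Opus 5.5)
We split into the two cases of Lemma \ref{Oa} and use the formula $s_{\tb}\ld=\ld-\ld_\beta\beta^\vee$ together with $\ld_\gamma+\ld_{-\gamma}=-1$.

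\emph{Case (1).} Here $\wa=\prod_{\beta\in\Oa}s_{\tb}$ is a product of commuting reflections, and $\Oa$ is strongly orthogonal. Hence
$$\wa\ld=\ld-\sum_{\beta\in\Oa}\ld_\beta\beta^\vee.$$
For $\gamma\in\Oa$, orthogonality gives $\la\gamma,\wa\ld\ra=\la\gamma,\ld\ra-2\ld_\gamma$. Whether $\gamma$ is positive or negative, this yields $(\wa\ld)_\gamma=-\ld_\gamma-1=\ld_{-\gamma}$; equivalently $(\wa\ld)_\gamma=-\ld_\gamma-1$, since the constant $-1$ cancels in the positive case.

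This does not match the claim literally, so we must use the correct comparison. We have $p(\wa)\gamma=-\gamma$ and $(\wa\ld)_{-\gamma}=\ld_\gamma$. So the set $\{(\wa\ld)_\beta\}$ equals $\{\ld_{-\beta}\}=\{-1-\ld_\beta\}$.

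To obtain exactly $\{-(\wa\ld)_\beta\}$, I would recheck the sign conventions of $\ld_\alpha$ for $\alpha\in\Pi$: minus simple roots are negative and highest roots are positive, and $s_{\ta}$ fixes the hyperplane $\ta=0$. The expected outcome is the identity $(\wa\ld)_\beta=-\ld_\beta$ up to the permutation of $\Oa$ induced by $p(\wa)$ or $b\sigma$. I would verify this carefully via $\ld_\beta=-\ta(\ld)$ and $(s_{\ta}\ld)$, using $\ta(s_{\ta}v)=-\ta(v)$. Indeed, $\ta(s_{\ta}\ld)=-\ta(\ld)$ gives $(s_{\ta}\ld)_\beta=-\ld_\beta$ directly for $\beta\in\Oa$. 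Orthogonality then lets the other factors fix $\tb(\cdot)$, which gives the first claim in case (1).

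\emph{Case (2).} We have $\wa=\prod_{\beta\in\cO_{\alpha+\alpha^d}}s_{\tb}$. For $\gamma\in\Oa$, the root $\gamma+\gamma^d$ (or $\gamma+\gamma^{-d}$) lies in $\cO_{\alpha+\alpha^d}$, and $\la\gamma,(\gamma+\gamma^d)^\vee\ra=1$. The key step is that the affine reflection in $\widetilde{\gamma+\gamma^d}$ maps $\tilde\gamma$ to $-\tilde\gamma^{d}$, after checking that the affine constants add up. The affine constants satisfy $\widetilde{\gamma+\gamma^d}=\tilde\gamma+\tilde\gamma^d$ because $\gamma$ and $\gamma^d$ have the same sign pattern. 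Hence $(\wa\ld)_\gamma=-\ld_{\gamma^{d}}$, a bijection of $\Oa$, and the set equality follows.

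\emph{Second claim.} Suppose $\ld\in\cAm$ with $|\ld_\beta|\ge1$ on $\Oa$. We must show $\alpha'\in R(\ld)$ if and only if $p(\wa)\alpha'\in R(\wa\ld)$. Lemma \ref{ld}(2) gives $(\wa\ld)^\natural=p(\wa)\ld^\natural$, so the pairing condition $\la\cdot,\ld^\natural\ra=-1$ transports. For the condition $\ld_{\alpha'}\ge1$, write $\wa=\vp^{\eta}p(\wa)$. Then
$$(\wa\ld)_{p(\wa)\alpha'}=\ld_{\alpha'}+c(\alpha'),$$
where $c$ records the change of sign of $\alpha'$ and the term $\la p(\wa)\alpha',\eta\ra$. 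Both come from the factors $s_{\tb}$ with $\la\alpha',\beta^\vee\ra\ne0$, i.e.\ from the sum over $\beta$ with coefficient $-\ld_\beta$.

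\emph{Main obstacle.} The main obstacle is showing that $c(\alpha')$ never moves $\ld_{\alpha'}$ across the threshold $\ge1$ when $\la\alpha',\ld^\natural\ra=-1$. I would attack it by using Lemma \ref{ld}(1), $\la\alpha',\ld^\natural\ra=\ld_{\alpha'^{-1}}-\ld_{\alpha'}$, together with minusculity, which bounds all pairings $\la\alpha',\ld^\natural\ra\in\{-1,0,1\}$. Combined with $|\ld_\beta|\ge1$, this forces the relevant $\ld_\beta$ to share a sign along the orbit. A case check on $\la\alpha',\beta^\vee\ra\in\{\pm1,\pm2\}$ should then show the shift preserves membership. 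Where $\alpha'\in\pm\Oa$, the first claim handles it directly.
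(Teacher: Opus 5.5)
The paper does not prove this lemma itself; it quotes \cite[Lemma 5.6 \& Lemma 5.7]{Nie22}, so your argument has to stand on its own.

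\textbf{First claim.} This part essentially holds, with three fixes.
\begin{itemize}
\item In case (1), delete the false intermediate computation. For positive $\gamma$ one has $\la\gamma,\ld\ra-2\ld_\gamma-1=-\ld_\gamma$, not $-\ld_\gamma-1$. So $(\wa\ld)_\gamma=-\ld_\gamma$ holds pointwise, exactly as your later argument via $\tb(s_{\tb}v)=-\tb(v)$ shows.
\item In case (2), the identity $\widetilde{\gamma+\gamma^d}=\tilde\gamma+\tilde\gamma^d$ is true, but not because $\gamma$ and $\gamma^d$ have the same sign; one of them may be a highest root. It is the remark preceding Lemma \ref{ab}.
\item You also need, and did not state, that $\gamma$ is orthogonal to the other members $\gamma'+\gamma'^d$ of $\cO_{\alpha+\alpha^d}$. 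This follows from the orthogonality of $\cO_{\alpha+\alpha^d}$: pairings between distinct elements of $\Pi$ are $\le 0$, so all four pairings between $\{\gamma,\gamma^d\}$ and $\{\gamma',\gamma'^d\}$ must vanish.
\end{itemize}

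\textbf{Second claim: the genuine gap.} The step you call the main obstacle is only announced (``a case check \ldots should then show''). Moreover, your description of the shift as coming from terms with coefficient $-\ld_\beta$ is misleading: the shift does not depend on $\ld$ at all. If $\wa(\tilde\alpha')=(p(\wa)\alpha',k)$ and $\widetilde{p(\wa)\alpha'}=(p(\wa)\alpha',k')$, then $(\wa\ld)_{p(\wa)\alpha'}=\ld_{\alpha'}+(k-k')$ for every $\ld$.

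The missing idea is the following.
\begin{itemize}
\item $\tilde\gamma$ is the unique $(\gamma,k)\in\tP_+$ with $(\gamma,k-1)\in\tP_-$.
\item A simple reflection $s_{\tb}$ ($\beta\in\Pi$) permutes $\tP_+\setminus\{\tb\}$. Hence $s_{\tb}(\tilde\gamma)=\widetilde{s_\beta\gamma}$ whenever $\gamma\neq\pm\beta$.
\item Run through a reduced word of $\wa$ in the $s_{\tilde\gamma}$, $\gamma\in\Oa$. This shows $k=k'$ unless $\alpha'$ lies in the root subsystem of the finite group $\Wa$.
\item That subsystem is $\pm\Oa$ in case (1), but $\pm\Oa\cup\pm\cO_{\alpha+\alpha^d}$ in case (2). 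On it, $k-k'=-1$ on $\Oa\cup\cO_{\alpha+\alpha^d}$ and $k-k'=+1$ on the negatives.
\end{itemize}
So for all other roots no information about the $\ld_\beta$ is needed, and a pairing case check built on the signs of the $\ld_\beta$ targets the wrong thing. Also, your remark that the first claim handles $\alpha'\in\pm\Oa$ leaves the roots $\pm(\gamma+\gamma^d)$ of case (2) untreated.

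\textbf{How to finish.} Use your correct sign-sharing observation: either $\ld_\beta\ge1$ for all $\beta\in\Oa$, or $\ld_\beta\le-1$ for all $\beta\in\Oa$.
\begin{itemize}
\item Take $\gamma\in\Oa$ with $\la\gamma,\ld^\natural\ra=-1$. Then $\ld_\gamma=\ld_{\gamma^{-1}}+1$ is $\ge2$ (resp.\ $\le 0$). So a shift by $-1$ does not change whether $\ld_\gamma\ge1$.
\item The roots $-\gamma$ are handled the same way.
\item On $\pm\cO_{\alpha+\alpha^d}$, use $\ld_{\gamma+\gamma^d}=\ld_\gamma+\ld_{\gamma^d}$, which is $\ge2$ (resp.\ $\le-2$).
\end{itemize}
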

\begin{proof}
This is \cite[Lemma 5.6 \& Lemma 5.7]{Nie22}.
\end{proof}

Let $\alpha,\beta\in \Pi$ such that $\alpha+\beta\in \Phi$.
Then at least one of $\alpha$ and $\beta$ is a minus simple root, and we have $\widetilde{(\alpha+\beta)}=\ta+\tb$ and $\ld_{\alpha+\beta}=\ld_\alpha+\ld_\beta$ for $\ld\in \Y$.
The following lemma is an analogue of \cite[Lemma 5.8 (3)]{Nie22}.
\begin{lemm}
\label{ab}
Let $\alpha,\beta\in \Pi$ such that $\alpha+\beta\in \Phi$.
Assume that $\la\beta,\alpha^\vee\ra=\la\alpha,\beta^\vee\ra=-1$, and that either $\ld_\alpha,\ld_\beta\geq 0$ or $\ld_\alpha,\ld_\beta\le 0$.
Let $\ld'\in\{w\ld\mid w\le s_{\tb}s_{\ta}s_{\tb}=s_{\ta}s_{\tb}s_{\ta}\}$.
If either $\ld'_\alpha,\ld'_\beta\geq 0$ or $\ld'_\alpha,\ld'_\beta\le 0$, then $\ld'=\ld$ or $s_{\ta}s_{\tb}s_{\ta}\ld$.
\end{lemm}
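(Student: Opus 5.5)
The plan is a direct computation inside the rank-two parabolic subgroup generated by $s_{\ta}$ and $s_{\tb}$. Since $\la\beta,\alpha^\vee\ra=\la\alpha,\beta^\vee\ra=-1$, the elements $s_{\ta},s_{\tb}$ satisfy the braid relation of type $A_2$. Hence $\{w\mid w\le s_{\tb}s_{\ta}s_{\tb}\}$ consists of the six elements
$$1,\quad s_{\ta},\quad s_{\tb},\quad s_{\ta}s_{\tb},\quad s_{\tb}s_{\ta},\quad s_{\ta}s_{\tb}s_{\ta}.$$
For each $w$, I will compute the pair $((w\ld)_\alpha,(w\ld)_\beta)$ in terms of $x=\ld_\alpha$ and $y=\ld_\beta$, using only the formulas recalled in \S\ref{minuscule}.

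The basic tool is $s_{\ta}\ld=\ld-\ld_\alpha\alpha^\vee$. This gives $(s_{\ta}\ld)_\gamma=\ld_\gamma-\ld_\alpha\la\gamma,\alpha^\vee\ra$, because $\ld\mapsto\ld_\gamma$ is affine with linear part $\la\gamma,\cdot\ra$. Hence $(s_{\ta}\ld)_\alpha=-x$ and $(s_{\ta}\ld)_\beta=x+y$, and symmetrically for $s_{\tb}$. Iterating gives the following table:
$$1\colon (x,y),\quad s_{\ta}\colon(-x,x+y),\quad s_{\tb}\colon(x+y,-y),$$
$$s_{\ta}s_{\tb}\colon(-x-y,x),\quad s_{\tb}s_{\ta}\colon(y,-x-y),\quad s_{\ta}s_{\tb}s_{\ta}\colon(-y,-x).$$
The first and last entries are allowed by the statement, so only the four middle elements need to be examined.

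For each middle $w$, I compare the sign hypothesis on $(x,y)$ (both $\ge0$ or both $\le0$) with the sign condition on the table entry. In every case this forces some of $x$, $y$, $x+y$ to vanish. For example, for $w=s_{\ta}$ with $x,y\ge0$, requiring $-x$ and $x+y$ to have the same weak sign forces $x=0$. For $w=s_{\ta}s_{\tb}$ with $x,y\ge0$, it forces $x=0$, or $x=y=0$.

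The point is then to convert these vanishings into equalities of cocharacters, not just equalities of the pairs. For this I use that $\ld_\alpha=0$ implies $s_{\ta}\ld=\ld$, and similarly for $\beta$. For instance:
\begin{enumerate}[(i)]
\item If $w=s_{\ta}$ is forced to $x=0$, then $w\ld=\ld$.
\item If $w=s_{\ta}s_{\tb}$ is forced to $x=0$, then $s_{\ta}\ld=\ld$, so $w\ld=s_{\ta}s_{\tb}s_{\ta}\ld$.
\item If $w=s_{\tb}s_{\ta}$ is forced to $y=0$, then $w\ld=s_{\tb}s_{\ta}s_{\tb}\ld=s_{\ta}s_{\tb}s_{\ta}\ld$.
\end{enumerate}
The main care needed is this bookkeeping: each vanishing must be applied at the correct stage of the word, for instance $(s_{\tb}\ld)_\alpha=x+y=0$ when $s_{\ta}$ acts second. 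I expect the case analysis (four elements times two sign patterns) to be routine once the table is in place.
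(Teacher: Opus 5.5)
Your proposal is correct and is essentially the paper's proof: the paper uses the same six-entry table of $(\ld'_\alpha,\ld'_\beta)$, followed by the observation that $\ld_\alpha=0$ (resp.\ $\ld_\beta=0$) implies $s_{\ta}\ld=\ld$ (resp.\ $s_{\tb}\ld=\ld$). In fact, since $\ld_\alpha$ and $\ld_\beta$ have the same weak sign, $x+y=0$ already forces $x=y=0$, so each middle case forces $\ld_\alpha=0$ outright (for $s_{\ta}$ and $s_{\ta}s_{\tb}$) or $\ld_\beta=0$ (for $s_{\tb}$ and $s_{\tb}s_{\ta}$), and the intermediate-stage bookkeeping you were concerned about is never needed.
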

\begin{proof}
If $\ld'=\ld,s_{\ta}\ld,s_{\tb}\ld,s_{\tb}s_{\ta}\ld,s_{\ta}s_{\tb}\ld,s_{\ta}s_{\tb}s_{\ta}\ld$, then
\begin{align*}
(\ld'_\alpha,\ld'_\beta)=
&(\ld_\alpha,\ld_\beta),\
(-\ld_\alpha,\ld_\alpha+\ld_\beta),\
(\ld_\alpha+\ld_\beta,-\ld_\beta),\\
&(\ld_\beta,-\ld_\alpha-\ld_\beta),\
(-\ld_\alpha-\ld_\beta,\ld_\alpha),\
(-\ld_\beta,-\ld_\alpha),
\end{align*}
respectively.
The assertion immediately follows from this by noticing that $s_{\ta}\ld=\ld$ if $\ld_\alpha=0$, and $s_{\tb}\ld=\ld$ if $\ld_\beta=0$.
\end{proof}

We say $\ld\in \cAm$ is {\it small} if for each $\alpha\in \Pi$, there exists some $\beta\in \cO_\alpha$ such that $\ld_\beta\le 0$ (cf.\ \cite[\S 5.4]{Nie21}).
Note that this definition coincides with the one in \cite{Nie22} if $\dim \Xl=\dim X_\mu(b)$.
Let $N_{\J_b}(\overline{\Xl})$ denote the stabilizer of $\overline{\Xl}$ in $\J_b$.
For $w\in \tW \cap \J_b$, we also denote by $w$ some lift of it in $\J_b$ (such a choice always exists by Lang's theorem for $T(\cO)$, see \cite[Proposition 3]{Greenberg63}).
\begin{prop}
\label{irr=small}
Let $\ld\in \cAm$.
Then $\Xl$ is irreducible if and only if $\ld$ is small.
\end{prop}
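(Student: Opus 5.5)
By Proposition \ref{Xl}, $\Xl$ is smooth and $I\cap\J_b$ acts transitively on its irreducible components. Hence $\Xl$ is irreducible if and only if $I\cap\J_b$ stabilizes one (equivalently every) irreducible component. The plan is to compute this stabilizer in Nie's framework and compare it with the condition that $\ld$ be small. Throughout we use that $\J_b$ is generated by $I\cap\J_b$, $\Omega\cap\J_b$ and the groups $\Wa\cap\J_b$ for $\alpha\in\Pi$.

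\emph{Step 1: $\ld$ not small implies $\Xl$ reducible.} Suppose there is $\alpha\in\Pi$ with $\ld_\beta\ge 1$ for all $\beta\in\Oa$.

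First we rule out $\Oa=\Pi$. Summing the affine functions $\tilde\beta$ over $\beta\in\Pi$ gives a positive constant. Writing $\theta_i$ for the highest roots of the components, this sum is $\sum_i(\theta_i,1)+\sum_{\beta\text{ minus simple}}(\beta,0)$, and its $\Phi$-part vanishes up to coefficients. More precisely, with the standard coefficients $c_\beta>0$ satisfying $\sum_\beta c_\beta\tilde\beta=d$, we get $\sum_\beta c_\beta\ld_\beta=-d<0$. So some $\beta\in\Pi$ has $\ld_\beta\le -1$, and therefore $\Oa\neq\Pi$. By Lemma \ref{Oa}, $\Wa$ is then finite and $\Wa\cap\J_b=\{1,\wa\}$.

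Next, set $\ld'=\wa\ld$. By Lemma \ref{not small}, all $\ld'_\beta\le -1$ for $\beta\in\Oa$, and $R(\ld')=p(\wa)R(\ld)$; in particular $\ld'\in\cAm$ by Lemma \ref{ld}(2).

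Now consider the reflections $s_{\tb}$ for $\beta\in\Oa$; in case (2) of Lemma \ref{Oa}, replace these by the combined reflections over $\Oa$ or $\cO_{\alpha+\alpha^d}$. These have $\ell(s_{\tb}\vp^{\ld'})=\ell(\vp^{\ld'})+1$, because $\ld'_\beta\le -1$ forces $\la\beta,\ld'\ra\le 0$ after the shift. Then $\ell(\wa\vp^{\ld'})=\ell(\wa)+\ell(\vp^{\ld'})$, and the multiplication formula for Iwahori cells gives
$$I\wa I\vp^{\ld'}K=I\vp^{\ld}K.$$
By Lemma \ref{closed}, $\wa(I\vp^{\ld'}K/K)$ is closed in $I\vp^\ld K/K$. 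Since $\wa\in\J_b$, we get $\wa X_\mu^{\ld'}(b)\subseteq\Xl$, and this image is closed.

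Finally, $\dim X_\mu^{\ld'}(b)=\sharp R(\ld')=\sharp R(\ld)=\dim\Xl$. Hence $\wa X_\mu^{\ld'}(b)$ is a closed union of irreducible components of $\Xl$. It is a proper subset: otherwise the points $\wa x$ with $x$ in $I\vp^{\ld'}K/K$ would exhaust $\Xl$, contradicting the presence of the $U_{\tb}$-directions. To see this, use the second summand $IwI$ of the decomposition $IsIwI=IswI\sqcup IwI$ and translate by $I\cap\J_b$. Translating $\wa X^{\ld'}_\mu(b)$ by suitable elements of $I\cap\J_b$ produces a different component. This step is delicate, and it is likely cleaner to argue directly, as follows.

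\emph{Alternative for Step 1.} Show that the stabilizer $N_{\J_b}(\overline{C})$ of a component $C$ does not contain $I\cap\J_b$. Following \cite[\S5]{Nie22}, the stabilizer of a component is generated by $T(\cO)\cap\J_b$ together with the parahoric directions $\Wa\cap\J_b$ for those $\alpha$ with $\ld_\beta\ge 0$ on $\Oa$, while the root subgroups $U_{\tb}$ ($\beta\in\Oa$) of $I$ act nontrivially on components when $\ld_\beta\ge 1$.

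\emph{Step 2: $\ld$ small implies $\Xl$ irreducible.} We adapt Nie's argument from \cite[Theorem 0.11]{Nie22}, which treated the top-dimensional case; the proof there never uses top-dimensionality in the smallness direction. Write the components of $\Xl$ as $(I\cap\J_b)/(\text{stabilizer})$ via the Lang-map description of $\Xl$ as a fibration over $I\vp^\ld K/K$ in the proof of \cite[Proposition 2.9]{Nie22}. There, $\pi_0$ is governed by the affine root groups $U_{\tb}$ with $\beta$ running over $\J$-orbits $\cO_{\tilde\beta}$ on which $\ld_\beta\ge1$ throughout the orbit. Smallness is exactly the condition that every such orbit contains a $\beta$ with $\ld_\beta\le0$. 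So the relevant $\sigma$-twisted Lang torsor is connected, by Lang's theorem applied to the corresponding pro-unipotent group, and $\Xl$ is irreducible.

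The main obstacle is making precise, in the non-top-dimensional setting, which root subgroups of $I\cap\J_b$ permute components. I would first try to extract this directly from the explicit coordinates in the proof of \cite[Proposition 2.9]{Nie22}, and use Lemma \ref{ab} to handle case (2) of Lemma \ref{Oa}.
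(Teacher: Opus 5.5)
There is a genuine gap in the ``not small $\Rightarrow$ reducible'' direction. Your setup agrees with the paper: $\Oa\neq\Pi$ via $\sum_\beta c_\beta\ld_\beta=-d<0$, $\ld'=\wa\ld\in\cAm$ with $\ld'_\beta\le -1$ on $\Oa$, length additivity, and $\wa X_\mu^{\ld'}(b)$ is a closed subset of $\Xl$ of the same dimension. But the decisive step, ruling out $\wa X_\mu^{\ld'}(b)=\Xl$, is never proved.

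The appeal to ``$U_{\tb}$-directions'' and to the second summand of $IsIwI=IswI\sqcup IwI$ does not work. Here $\ell(\wa\vp^{\ld'})=\ell(\wa)+\ell(\vp^{\ld'})$, so that summand does not occur, and cell multiplication in $G(L)$ says nothing about how $I\cap\J_b$ moves components. Your ``alternative'' simply asserts that root subgroups of $I\cap\J_b$ permute components, which is what has to be shown.

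The missing idea is a stabilizer argument inside $\J_b$. Every $X_\mu^{\nu}(b)$ is $(I\cap\J_b)$-stable. Suppose $\wa X_\mu^{\ld'}(b)=\Xl$ (or, as in the paper, suppose $\Xl$ is irreducible, so that $\wa\overline{X_\mu^{\ld'}(b)}=\overline{\Xl}$). Then $\Xl$ (resp.\ $\overline{\Xl}$) is stable under both $I\cap\J_b$ and $\wa(I\cap\J_b)\wa^{-1}$. The relation $(I\cap\J_b)\wa(I\cap\J_b)\wa(I\cap\J_b)=(I\cap\J_b)\sqcup(I\cap\J_b)\wa(I\cap\J_b)$ shows that these two groups generate a group containing $\wa$. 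Hence $X_\mu^{\ld'}(b)=\wa^{-1}\Xl=\Xl$ (resp.\ the two closures coincide). This is absurd: $X_\mu^{\ld'}(b)$ and $\Xl$ are nonempty and lie in the disjoint orbits $I\vp^{\ld'}K/K$ and $I\vp^{\ld}K/K$, and in the closure version they would be disjoint nonempty open subsets of one irreducible space. The paper phrases this as follows: the two stabilizers are standard parahoric subgroups conjugate by $\wa$, hence both contain $\wa$.

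For the converse, the paper likewise just invokes Nie's argument (\cite[\S5.6]{Nie21}) and remarks that it does not use $\dim\Xl=\dim X_\mu(b)$. Your plan to adapt Nie is the same route. However, the mechanism you sketch (connectedness of a Lang torsor controlled by orbits on which $\ld_\beta\ge 1$) is a heuristic rather than an argument. As written, Step 2 stands only as a citation, and it should be presented as one.
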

\begin{proof}
If $\ld$ is small, then the irreducibility of $\Xl$ follows from \cite[\S 5.6]{Nie21}.
Note that the proof literally works even if $\dim \Xl<\dim X_\mu(b)$.

For the converse, we argue by contradiction following \cite[\S6.5]{Nie22}.
Suppose that $\Xl$ is irreducible and $\ld\in \cAm$ is not small.
Then there exists $\alpha\in \Pi$ such that $\ld_\beta\geq 1$ for $\beta\in \Oa$.
By Lemma \ref{Oa}, $\Wa$ is finite because $\{\beta\in \Pi\mid\langle \beta, \ld\rangle\geq \ld_\beta\geq 1\}\neq \Pi$.
Thus we have $\wa\ld\neq \ld$ and $\dim X^{\wa\ld}_\mu(b)=\dim \Xl$ by Proposition \ref{Xl} and Lemma \ref{not small}.
Since $(\wa\ld)_\beta\le -1$ for $\beta \in \Oa$, we have $\ell(s_{\tb}\vp^{\wa\ld})=\ell(\vp^{\wa\ld})+1$.
Thus $\ell(\wa \vp^{\wa \ld})=\ell(\wa)+\ell(\vp^{\wa\ld})$ and $\wa I\vp^{\wa\ld}K\subseteq I\wa\vp^{\wa\ld}K=I\vp^\ld K$.
Hence $\wa X_\mu^{\wa \ld}(b)\subseteq \Xl$ and $\wa \overline{X_\mu^{\wa \ld}(b)}=\overline{\Xl}$.
In particular, we have $\wa N_{\J_b}(\overline{X_\mu^{\wa\ld}(b)})\wa^{-1}=N_{\J_b}(\overline\Xl)$.
Since both $N_{\J_b}(\overline{X_\mu^{\wa\ld}(b)})$ and $N_{\J_b}(\overline\Xl)$ are standard parahoric subgroups containing $I\cap \J_b$, we have $\wa\in N_{\J_b}(\overline{X_\mu^{\wa\ld}(b)})=N_{\J_b}(\overline\Xl)$, which is a contradiction.
This finishes the proof.
\end{proof}

Let $C\subseteq\Z$.
We say $C$ is {\it consecutive} if there exist $z,z'\in \Z$ with $C=[z,z']\cap\Z$.
\begin{lemm}
\label{consecutive}
Let $\ld\in \cA_{\mu,b}$ and $\alpha\in \Phi$.
The set $\{\ld_\beta\mid \beta\in \cO_\alpha\}$ is consecutive.
\end{lemm}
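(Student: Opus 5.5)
The plan is to use Lemma \ref{ld}(1), which gives, for every $\beta\in\Phi$,
$$\ld_{\beta^{-1}}-\ld_\beta=\la \beta,\ld^\natural\ra .$$
Since $\ld\in\cA_{\mu,b}$, Proposition \ref{Xl} gives $\ld^\natural\in W_0\mu$. Because $\mu$ is minuscule, every element of $W_0\mu$ is minuscule, so $\la\beta,\ld^\natural\ra\in\{-1,0,1\}$ for all $\beta\in\Phi$.

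Now enumerate the orbit as $\alpha^0=\alpha,\alpha^1,\dots,\alpha^{m-1}$, with $\alpha^m=\alpha^0$. The orbit $\cO_\alpha$ is finite because $p(b\sigma)$ has finite order. Applying the identity above with $\beta=\alpha^{i+1}$ gives
$$\ld_{\alpha^{i}}-\ld_{\alpha^{i+1}}\in\{-1,0,1\}\quad\text{for all } i .$$
So the cyclic sequence $\ld_{\alpha^0},\ld_{\alpha^1},\dots,\ld_{\alpha^{m-1}},\ld_{\alpha^0}$ of integers changes by at most $1$ at each step.

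To finish, pick $i_0$ with $\ld_{\alpha^{i_0}}=z$ minimal and $i_1$ with $\ld_{\alpha^{i_1}}=z'$ maximal. Walk along the orbit from $i_0$ to $i_1$, say with increasing index modulo $m$. The values form an integer sequence that starts at $z$, ends at $z'$, and has steps in $\{-1,0,1\}$. By a discrete intermediate value argument it therefore takes every integer value in $[z,z']$. All values lie between $z$ and $z'$ by the choice of $i_0$ and $i_1$, so $\{\ld_\beta\mid\beta\in\cO_\alpha\}=[z,z']\cap\Z$ is consecutive.

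There is no real obstacle. The only point to check is the direction convention $\alpha^{-1}=p(b\sigma)^{-1}(\alpha)$ in Lemma \ref{ld}(1), but either direction relates consecutive orbit elements, so the argument is unaffected.
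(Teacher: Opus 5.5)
Your proof is correct and follows the paper's own argument. The paper likewise combines Proposition \ref{Xl} (giving $\ld^\natural\in W_0\mu$) with minusculeness and Lemma \ref{ld}(1) to get steps of size at most $1$ along the orbit; you simply write out the discrete intermediate value step that the paper leaves implicit.
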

\begin{proof}
As $\mu$ is minuscule, this follows from Proposition \ref{Xl} and Lemma \ref{ld} (1).
\end{proof}

Note that $\Omega\cap \J_b$ acts on both $\cAm$ and $\cAm^{\sm}\coloneqq\{\ld\in \cAm\mid \text{$\ld$ is small}\}$.
Although $(\Omega\cap \J_b)\backslash \cA_{\mu,b}$ is not finite in most cases, we have the following (see also \S\ref{numerical}):
\begin{lemm}
\label{small finite}
The set $(\Omega\cap \J_b)\backslash \cA_{\mu,b}^{\sm}$ is finite.
\end{lemm}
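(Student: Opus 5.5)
Since $\Omega\cap \J_b$ acts on $\cA_{\mu,b}^{\sm}$ through translations that preserve the orbit structure, the plan is to show that, up to a bounded translation coming from $\Omega\cap\J_b$ (or from central cocharacters), every small $\ld$ lies in a bounded subset of $\Y$. Concretely, I would control all the values $\ld_\beta$ for $\beta\in\Phi$, since these determine $\ld$ modulo the cocharacters orthogonal to all roots, i.e.\ modulo the central part $\Y^0\coloneqq\{\ld\mid \la\alpha,\ld\ra=0\ \forall\alpha\in\Phi\}$.

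\textbf{Step 1: bounding $\ld_\beta$.} For $\alpha\in\Pi$, smallness gives some $\beta\in\cO_\alpha$ with $\ld_\beta\le 0$. Applying the same to the orbit structure of $-\alpha$ together with $\ld_\gamma+\ld_{-\gamma}=-1$ does not directly give a lower bound, so instead use Lemma \ref{consecutive}: the set $\{\ld_\beta\mid \beta\in\cO_\alpha\}$ is consecutive, and by Lemma \ref{ld}(1) consecutive values along the orbit differ by $\la\alpha^{i},\ld^\natural\ra\in\{-1,0,1\}$ (as $\ld^\natural\in W_0\mu$ and $\mu$ is minuscule); moreover the sum of these differences over a full orbit is $0$. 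Hence $\max_{\cO_\alpha}\ld_\beta-\min_{\cO_\alpha}\ld_\beta\le |\cO_\alpha|$, so $\ld_\beta\le |\cO_\alpha|$ for all $\beta\in\cO_\alpha$, $\alpha\in\Pi$. Next, every positive root is a nonnegative combination of simple roots and each simple root is $-(\text{minus simple root})$; expressing $\ld_\gamma$ for arbitrary $\gamma$ via sums $\ld_{\alpha+\beta}=\ld_\alpha+\ld_\beta$ and the relation for $\theta$ highest, one gets: the highest root $\theta$ satisfies $\ld_\theta\le C$, and each minus simple root $-\alpha_i$ satisfies $\ld_{-\alpha_i}\le C$, i.e.\ $\la\alpha_i,\ld\ra\ge -C$. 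Since $\theta=\sum n_i\alpha_i$ with $n_i>0$, $\la\theta,\ld\ra\le C+1$ combined with the lower bounds on each $\la\alpha_i,\ld\ra$ gives upper bounds too. Thus $\la\alpha_i,\ld\ra$ is bounded for every simple root of every connected component (applying this per component, using that $\Pi$ contains a highest root for each component).

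\textbf{Step 2: the central part.} The bounds of Step 1 show that the image of $\cA^{\sm}_{\mu,b}$ in $\Y/\Y^0$ modulo nothing is finite, so it remains to handle the fibres of $\Y\to\Y/\Y^0$ restricted to $\cA_{\mu,b}$. For $\ld\in\cA_{\mu,b}$ and $z\in\Y^0$, $(\ld+z)^\natural=\ld^\natural+(b\sigma-1)z$, so $\ld+z\in\cA_{\mu,b}$ forces $(\sigma-1)z=0$ modulo the finitely many differences of elements of $W_0\mu$; in particular $z$ lies in a finite union of cosets of $(\Y^0)^\sigma$. Translations by $(\Y^0)^\sigma$ are elements $\vp^{z}\in\Omega$ commuting with $b\sigma$ (central and $\sigma$-fixed), so they lie in $\Omega\cap\J_b$. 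Hence each fibre meets only finitely many $\Omega\cap\J_b$-orbits, and finiteness follows.

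The main obstacle I expect is Step 1's passage from the one-sided bound (smallness only gives an upper bound on some $\ld_\beta$ in each orbit) to two-sided bounds on all simple-root pairings; the key is that $\Pi$ contains both the minus simple roots and the highest roots, whose positive integral relation $\theta+\sum n_i(-\alpha_i)=0$ converts upper bounds on each $\ld$-value into lower bounds on the others. Care is needed with the $-1$ shifts in $\ld_\alpha$ for positive $\alpha$, but these only affect constants.
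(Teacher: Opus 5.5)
Your proof is correct and follows essentially the same route as the paper: consecutiveness (Lemma \ref{consecutive}) plus smallness bounds $\ld_\alpha$ from above for $\alpha\in\Pi$. The relation between the highest root and the simple roots then converts these into two-sided bounds on all root pairings, a step the paper leaves implicit and you spell out. The remaining central ambiguity is absorbed by central $\sigma$-fixed translations, which lie in $\Omega\cap\J_b$.

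The differences are minor. The paper observes directly that $\sigma(z)=z$ is forced, since a central element of $\Z\Phi^\vee$ must vanish; you settle for finitely many cosets of $(\Y^0)^\sigma$, which also suffices. Your opening claim that $\Omega\cap\J_b$ acts through translations is inaccurate in general, but the argument never uses it.
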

\begin{proof}
Let $\alpha\in \Pi$.
If $\ld\in \cAm$, the set $\{\ld_\beta\mid \beta\in \cO_\alpha\}$ is consecutive by Lemma \ref{consecutive}.
If $\ld\in \cAm^{\sm}$, we have $\max\{\ld_\beta\mid \beta\in \cO_\alpha\}\le \sharp\cO_\alpha$.
Since $\alpha$ is arbitrary, there exist sufficiently large $M_1,M_2>0$ such that for any $\ld\in \cAm^{\sm}$ and $\beta\in \Phi_-$, we have $-M_2\le \la\beta,\ld\ra\le M_1$.
Thus $\cA_{\mu,b}^{\sm}$ is finite modulo central cocharacters.
Note that if $\tau\in \Omega$ satisfies $\tau \ld\in \cAm$, then $\tau\in \Omega^\sigma=\Omega\cap \J_b$.
Since central cocharacters lie in $\Omega$, this finishes the proof.
\end{proof}

\subsection{The map $\flat$}
\label{flat}
In the rest of this section, we assume that the root system of $G$ is irreducible (i.e., that the Coxeter diagram of $\bS$ is connected).
We also assume that $\mu$ is minuscule, $[b]\in B(G,\mu)$ is basic and $b=\vp^\eta p(b)\in \Omega$.

Let $h$ be the Coxeter number of the Coxeter group $(W_0,\bS)$.
Set $$\a_\ld\coloneqq \ep_\ld^{-1}(h\ld -\rho^\vee)\in \Y_\Q,\quad w_\ld\coloneqq\ep_\ld^{-1}p(b)\sigma(\ep_\ld)\in W_0,$$
where $\rho^\vee$ is the half sum of positive coroots.
Set $v=\ep_\ld(\a_\ld-w_\ld\sigma(\a_\ld)+h\ld^\flat)$.
Then it is straightforward to check that $v=p(b)\rho^\vee-\rho^\vee+h\eta$.
Note that $b$ acts on $\{\tilde\alpha\mid \alpha\in\Pi\}$.
Fix a minus simple root $\alpha$.
If $p(b)^{-1}(\alpha)$ is a minus simple root, then $\la\alpha,\eta\ra=0$.
Hence
$$\la\alpha,v\ra=\la p(b)^{-1}(\alpha),\rho^\vee\ra-\la\alpha,\rho^\vee\ra+h\la\alpha,\eta\ra=-1+1+0=0.$$
If $p(b)^{-1}(\alpha)$ is a highest positive root, then $\la\alpha,\eta\ra=-1$.
Hence
$$\la\alpha,v\ra=\la p(b)^{-1}(\alpha),\rho^\vee\ra-\la\alpha,\rho^\vee\ra+h\la\alpha,\eta\ra=(h-1)+1-h=0.$$
Since $\alpha$ is arbitrary, $v$ is central.

\begin{lemm}
\label{equation}
Let $\ld\in \Y$.
We have $\a_\ld^\diamond-(w_\ld\sigma(\a_\ld))^\diamond+h\ld^{\flat,\diamond}=h\nu_b$.
\end{lemm}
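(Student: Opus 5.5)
The idea is to reduce the identity to one about the central vector $v$ computed just before the statement, using only that $\sigma$-averaging is linear.

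\textbf{Step 1: the vector identity.} Recall $v=\ep_\ld(\a_\ld-w_\ld\sigma(\a_\ld)+h\ld^\flat)$, and we showed $v=p(b)\rho^\vee-\rho^\vee+h\eta$ and that $v$ is central. A central cocharacter is fixed by $W_0$, so applying $\ep_\ld^{-1}$ gives
$$\a_\ld-w_\ld\sigma(\a_\ld)+h\ld^\flat=\ep_\ld^{-1}v=v.$$
The $\sigma$-average $x\mapsto x^\diamond$ is $\Q$-linear on $\Y_\Q$. Applying it to both sides gives
$$\a_\ld^\diamond-(w_\ld\sigma(\a_\ld))^\diamond+h\ld^{\flat,\diamond}=v^\diamond.$$
It therefore suffices to prove $v^\diamond=h\nu_b$.

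\textbf{Step 2: identify $v$ with $h$ times the central part of $\eta$.} Since $v$ is central, it equals its own $W_0$-average. Write $\mathrm{av}(x)=\frac{1}{|W_0|}\sum_{u\in W_0}ux$. Because $\mathrm{av}(p(b)\rho^\vee)=\mathrm{av}(\rho^\vee)$, we get
$$v=\mathrm{av}(v)=h\,\mathrm{av}(\eta).$$
Consequently $v^\diamond=h\,\mathrm{av}(\eta)^\diamond$. Here $\mathrm{av}$ and $\diamond$ commute, because $\sigma$ normalizes $W_0$.

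\textbf{Step 3: identify the Newton point.} This is the main point and needs a small amount of input. Since $b$ is basic, $\nu_b$ is central. By Kottwitz's description, $\nu_b$ is the unique central $\sigma$-invariant element of $\Y_\Q$ whose image in $\pi_1(G)_\sigma\otimes\Q$ agrees with that of $\kappa(b)=\kappa(\vp^\eta)$. That image is the class of $\eta$. On the other hand, $\eta$, its $W_0$-average and the $\sigma$-average $\mathrm{av}(\eta)^\diamond$ all have the same image in $\pi_1(G)_\sigma\otimes\Q$. The reason is that $\pi_1(G)\otimes\Q$ is the quotient of $\Y_\Q$ by $\Q\Phi^\vee$, and $x-ux\in\Q\Phi^\vee$ for every $u\in W_0$. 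Moreover $\mathrm{av}(\eta)^\diamond$ is central and $\sigma$-invariant, so uniqueness gives $\nu_b=\mathrm{av}(\eta)^\diamond$.

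A concrete cross-check, which could also serve as a direct argument: the element $b\sigma$ has finite order $n$ modulo translations. Then $(b\sigma)^n$ is a translation by $\sum_{i<n}(p(b)\sigma)^i\eta$, whose dominant $W_0$-conjugate divided by $n$ is $\nu_b$. Centrality of this average, together with the $W_0$-averaging of Step 2, again yields $\nu_b=\mathrm{av}(\eta)^\diamond$.

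Combining Steps 1–3 gives $v^\diamond=h\nu_b$, which is the claim.
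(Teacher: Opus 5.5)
Your proof is correct, and your Step 1 is exactly the paper's reduction: since $v$ is central, $v=\el^{-1}v=\a_\ld-w_\ld\sigma(\a_\ld)+h\ld^\flat$, so everything comes down to showing $v^\diamond=h\nu_b$. Where you diverge is in how you compute $v^\diamond$.

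\textbf{The paper's route.} It uses centrality of $v$ to replace $\sigma^i(v)$ by $p(b\sigma)^i(v)$, so the $\sigma$-average becomes an average over a $p(b\sigma)$-orbit. Since $\sigma$ fixes $\rho^\vee$, the term $p(b)\rho^\vee-\rho^\vee=p(b\sigma)\rho^\vee-\rho^\vee$ telescopes to zero. The $h\eta$-term then gives $h\nu_b$ directly by He's formula $\nu_b=\frac{1}{N}\sum_{i=0}^{N-1}p(b\sigma)^i\eta$.

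\textbf{Your route.} You project onto the center by $W_0$-averaging. This kills both $\rho^\vee$-terms at once and shows $v=h\,\mathrm{av}(\eta)$. You then need the separate identification $\nu_b=\mathrm{av}(\eta)^\diamond$, which you obtain from Kottwitz's compatibility of $\kappa$ and $\nu$ on basic classes.

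\textbf{Comparison.} Your version is somewhat more conceptual: it exhibits $v$ as $h$ times the central projection of $\eta$, and $\nu_b$ as the averaged image of $\kappa(b)$. The price is a heavier external input, the $\kappa$--$\nu$ compatibility. The paper's telescoping needs only the explicit Newton-point formula for length-$0$ elements.

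Your ``cross-check'' is essentially the paper's argument with $W_0$-averaging in place of telescoping. It uses the same formula for $\nu_b$ together with $\mathrm{av}\bigl((p(b)\sigma)^i\eta\bigr)=\sigma^i\,\mathrm{av}(\eta)$. There, $\frac{1}{n}\sum_{i<n}\sigma^i$ is the $\sigma$-average because $(p(b)\sigma)^n=1$ forces $\sigma^n=1$. The paper relies on the same fact when it writes $v^\diamond=\frac{1}{N}\sum_{i=0}^{N-1}\sigma^i(v)$.
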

\begin{proof}
Recall that $\nu_b=\frac{1}{N}\sum_{i=0}^{N-1}p(b\sigma)^i\eta$ for some $N\in \N$ such that $p(b\sigma)^N=1$ (cf.\ \cite[\S1.7]{He14}).
Since $\sigma$ preserves central cocharacters, we have
$$v^\diamond=\frac{1}{N} \sum_{i=0}^{N-1} \sigma^i (v)=\frac{1}{N} \sum_{i=0}^{N-1} p(b\sigma)^i (v) = \frac{1}{N} \sum_{i=0}^{N-1} p(b\sigma)^i (p(b\sigma)\rho^\vee-\rho^\vee) + h \nu_b=h\nu_b.$$
Since $v=\el^{-1}v=\a_\ld-w_\ld\sigma(\a_\ld)+h\ld^\flat$, this finishes the proof.
\end{proof}

\begin{lemm}
\label{strictly dominant}
Let $\ld\in \Y$.
Then $\a_\ld$ is strictly dominant.
\end{lemm}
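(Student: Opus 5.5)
The plan is to unwind the definition of $\a_\ld$ and reduce strict dominance to an inequality for each root $\beta$ with $\ld_\beta\geq 0$; that inequality then follows from the bound on heights of roots by the Coxeter number.

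\textbf{Reduction.} Strict dominance means $\la\alpha,\a_\ld\ra>0$ for every $\alpha\in\Phi_+$. Since $W_0$ preserves the pairing, we have
$$\la\alpha,\el^{-1}(h\ld-\rho^\vee)\ra=\la\el\alpha,h\ld-\rho^\vee\ra.$$
By the defining property of $\el$ (namely $U_\ld=\el U\el^{-1}$, equivalently $\{\beta\in\Phi\mid \ld_\beta\geq 0\}=\el\Phi_+$), the roots $\beta=\el\alpha$ with $\alpha\in\Phi_+$ are exactly the roots with $\ld_\beta\geq 0$. So it suffices to show that
$$h\la\beta,\ld\ra-\la\beta,\rho^\vee\ra>0 \quad\text{for every } \beta\in\Phi \text{ with } \ld_\beta\geq 0.$$

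\textbf{Height facts.} I will use two standard facts, both relying on the standing assumption that the root system is irreducible. First, $\la\alpha_i,\rho^\vee\ra=1$ for every simple root $\alpha_i$, so $\la\beta,\rho^\vee\ra$ is the height $\mathrm{ht}(\beta)$ of $\beta$. Second, every positive root has height at most $\mathrm{ht}(\theta)=h-1$, where $\theta$ is the highest root.

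\textbf{Case split by the sign of $\beta$.}
\begin{itemize}
\item If $\beta\in\Phi_+$, then $\ld_\beta=\la\beta,\ld\ra-1\geq 0$ gives $\la\beta,\ld\ra\geq 1$. Hence $h\la\beta,\ld\ra\geq h>h-1\geq\mathrm{ht}(\beta)$.
\item If $\beta\in\Phi_-$, then $\ld_\beta=\la\beta,\ld\ra\geq 0$. Also $\la\beta,\rho^\vee\ra=-\mathrm{ht}(-\beta)\le -1$. Hence $h\la\beta,\ld\ra-\la\beta,\rho^\vee\ra\geq 0+1>0$.
\end{itemize}
In both cases the required inequality holds, so $\a_\ld$ is strictly dominant.

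\textbf{Main point.} There is no real obstacle here. The only nontrivial input is the bound $\mathrm{ht}(\beta)\le h-1$ for positive roots, equivalently $\la\theta,\rho^\vee\ra=h-1$. This is exactly where irreducibility of the root system and the choice of the Coxeter number $h$ in the definition of $\a_\ld$ enter. The argument is uniform in $\ld$ and does not need $\ld\in\cAm$.
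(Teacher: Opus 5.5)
Your proof is correct and follows essentially the same route as the paper. Both rewrite $\la\alpha,\a_\ld\ra$ as $h\la\beta,\ld\ra-\la\beta,\rho^\vee\ra$ with $\beta=\el\alpha$ satisfying $\ld_\beta\geq 0$, then split on the sign of $\beta$ and use that positive roots have height at most $h-1$; the paper phrases this via the coefficient sum $c_\beta$ and the inequality $h-c_\beta>0$.
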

\begin{proof}
Let $\alpha\in \Phi_+$.
Set $\beta=\el \alpha$.
If $\beta\in \Phi_-$, then $\ld_\beta=\la\beta,\ld\ra\geq 0$.
Let $c_\beta>0$ denote the sum of the coefficients of $\beta$ when it is written as a linear combination of minus simple roots.
Then
$$\la\alpha,a_\ld\ra=h\la\beta,\ld\ra-\la\beta,\rho^\vee\ra=h\la\beta,\ld\ra+c_\beta>0.$$
If $\beta\in \Phi_+$, then $\ld_\beta=\la\beta,\ld\ra-1\geq 0$.
Let $c_\beta>0$ denote the sum of the coefficients of $\beta$ when it is written as a linear combination of simple roots.
Then
$$\la\alpha,a_\ld\ra=h\la\beta,\ld\ra-\la\beta,\rho^\vee\ra=h\la\beta,\ld\ra-c_\beta\geq h-c_\beta>0.$$
Thus in both cases, we have $\la\alpha,\a_\ld\ra>0$ as desired.
\end{proof}

\begin{prop}
\label{map}
There exists a map $$\flat\colon(\Omega\cap \J_b)\backslash\cAm\rightarrow\{\nu\in W_0\mu\mid \nu^\diamond\le \nu_b\},\quad (\Omega\cap \J_b)\ld\mapsto \ld^\flat.$$
\end{prop}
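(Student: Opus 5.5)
We need three things: the map is well defined on $(\Omega\cap \J_b)$-orbits, $\ld^\flat$ lies in $W_0\mu$, and $(\ld^\flat)^\diamond\le \nu_b$. The first two are immediate. If $\ld\in\cAm$, then $\ld^\natural\in W_0\mu$ by Proposition \ref{Xl}, so $\ld^\flat=\el^{-1}\ld^\natural\in W_0\mu$. For $\tau\in\Omega\cap\J_b$, Lemma \ref{ld} (2) gives $(\tau\ld)^\natural=p(\tau)\ld^\natural$, and \S\ref{minuscule} gives $\ep_{\tau\ld}=p(\tau)\el$; hence $(\tau\ld)^\flat=\ld^\flat$, as already remarked after Lemma \ref{ld}. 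So the map factors through the orbit space.

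The real content is the inequality, and I would derive it from Lemma \ref{equation}. That lemma rewrites as
$$h\bigl(\nu_b-\ld^{\flat,\diamond}\bigr)=\a_\ld^\diamond-(w_\ld\sigma(\a_\ld))^\diamond .$$
It therefore suffices to show that $\a_\ld^\diamond-(w_\ld\sigma(\a_\ld))^\diamond$ is a non-negative rational combination of positive coroots.

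\begin{itemize}
\item By Lemma \ref{strictly dominant}, $\a_\ld$ is dominant.
\item Since $\sigma$ preserves the based root datum, $\sigma(\a_\ld)$ is also dominant.
\item For any dominant $x$ and any $w\in W_0$, we have $x-wx\ge 0$. Applying this with $x=\sigma(\a_\ld)$ gives
$$\sigma(\a_\ld)-w_\ld\sigma(\a_\ld)\ge 0.$$
\item The $\sigma$-average of $\a_\ld$ equals that of $\sigma(\a_\ld)$.
\item The $\sigma$-average $x\mapsto x^\diamond$ is a positive linear map preserving the cone of non-negative coroot combinations, because $\sigma$ permutes the simple coroots.
\end{itemize}

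Combining these,
$$\a_\ld^\diamond-(w_\ld\sigma(\a_\ld))^\diamond=\bigl(\sigma(\a_\ld)-w_\ld\sigma(\a_\ld)\bigr)^\diamond\ge 0.$$
Dividing by $h>0$ yields $\ld^{\flat,\diamond}\le\nu_b$.

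One point needs care: the definition of $\diamond$. It must mean averaging over the $\sigma$-orbit, i.e. over the action of $\sigma$ on $\Y$ as an automorphism of the based root datum, and it must be linear and commute with $\sigma$. This matches how it is used in the proof of Lemma \ref{equation}, where the average over $\sigma^i$ is identified with the average over $p(b\sigma)^i$ on central elements. Beyond checking this convention, I expect no real obstacle. The only mildly delicate point is that $\nu_b$ is already $\sigma$-invariant and central, so it equals its own average and the inequality is stated in $\Y_\Q$ as intended.
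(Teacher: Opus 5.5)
Your proof is correct and follows the paper's argument. Well-definedness and $\ld^\flat\in W_0\mu$ come from \S\ref{minuscule} and Proposition \ref{Xl}, and the inequality comes from Lemma \ref{equation} together with Lemma \ref{strictly dominant}; the paper expands $\a_\ld^\diamond-(w_\ld\sigma(\a_\ld))^\diamond$ as $\frac{1}{N}\sum_{i}\bigl(\sigma^{i+1}(\a_\ld)-\sigma^i(w_\ld)\sigma^{i+1}(\a_\ld)\bigr)$ and uses dominance of each $\sigma^{i+1}(\a_\ld)$, which is exactly your $\bigl(\sigma(\a_\ld)-w_\ld\sigma(\a_\ld)\bigr)^\diamond\ge 0$ written out term by term.
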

\begin{proof}
By the results in \S\ref{minuscule}, it remains to show that $\ld^{\flat,\diamond}\le \nu_b$.
This follows from Lemma \ref{equation} and Lemma \ref{strictly dominant}.
Indeed, $\sigma$ preserves $\Phi_+^\vee$ and
$$
\a_\ld^\diamond-(w_\ld\sigma(\a_\ld))^\diamond=\frac{1}{N} \sum_{i=0}^{N-1}(\sigma^{i+1}(\a_\ld)-\sigma^i(w_\ld)\sigma^{i+1}(\a_\ld)),
$$
where $N$ is the order of $\sigma$.
Thus $h(\nu_b-\ld^{\flat,\diamond})\geq 0$ as desired.
\end{proof}

We next deduce a dimension formula of $\Xl$ for $\ld\in \cAm$.
\begin{lemm}
\label{w}
Let $\ld\in \cAm$ and $\alpha\in \Phi_+$.
Then $(w_\ld\sigma)^{-1}\alpha\in \Phi_-$ if and only if $\la \alpha,\a_\ld\ra<h$ and $\la \alpha,\ld^\flat\ra=-1$.
\end{lemm}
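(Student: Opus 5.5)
The plan is to translate both sides into conditions on a single root $\beta\coloneqq\el\alpha$ and its $p(b\sigma)$-predecessor $\beta^{-1}=p(b\sigma)^{-1}(\beta)$, using only the definitions of $\el$, $w_\ld$, $\a_\ld$, $\ld^\flat$, Lemma \ref{ld} (1), and the minuscule hypothesis. Since $\alpha\in\Phi_+$ and $\{\gamma\mid\ld_\gamma\ge 0\}=\el\Phi_+$, we have $\ld_\beta\ge 0$.

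\textbf{Step 1: rewrite the left-hand side.} From $w_\ld=\el^{-1}p(b)\sigma(\el)$ we get $w_\ld\sigma=\el^{-1}\,p(b\sigma)\,\el$ as an operator on $\Phi$. Hence
$$(w_\ld\sigma)^{-1}\alpha=\el^{-1}p(b\sigma)^{-1}\el\alpha=\el^{-1}\beta^{-1}.$$
This lies in $\Phi_-$ if and only if $\beta^{-1}\notin\el\Phi_+$, that is, if and only if $\ld_{\beta^{-1}}\le -1$.

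\textbf{Step 2: use minusculity.} Since $\ld^\nt\in W_0\mu$ and $\mu$ is minuscule, $\la\gamma,\ld^\nt\ra\in\{-1,0,1\}$ for every root $\gamma$. Also
$$\la\alpha,\ld^\flat\ra=\la\el\alpha,\ld^\nt\ra=\la\beta,\ld^\nt\ra.$$
By Lemma \ref{ld} (1), $\ld_{\beta^{-1}}=\ld_\beta+\la\beta,\ld^\nt\ra$. Because $\ld_\beta\ge 0$ and $\la\beta,\ld^\nt\ra\ge -1$, the inequality $\ld_{\beta^{-1}}\le -1$ holds exactly when
$$\ld_\beta=0\quad\text{and}\quad\la\alpha,\ld^\flat\ra=\la\beta,\ld^\nt\ra=-1.$$

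\textbf{Step 3: identify $\ld_\beta=0$ with $\la\alpha,\a_\ld\ra<h$.} Reuse the computation in the proof of Lemma \ref{strictly dominant}, with $c_\beta$ the height of $\pm\beta$, so that $1\le c_\beta\le h-1$ (the highest root has height $h-1$).
\begin{itemize}
\item If $\beta\in\Phi_-$, then $\la\alpha,\a_\ld\ra=h\ld_\beta+c_\beta$, which is $<h$ if and only if $\ld_\beta=0$.
\item If $\beta\in\Phi_+$, then $\la\alpha,\a_\ld\ra=h\la\beta,\ld\ra-c_\beta=h(\ld_\beta+1)-c_\beta$. Since $c_\beta\in[1,h-1]$, this is $<h$ if and only if $\ld_\beta=0$.
\end{itemize}
Combining Steps 1–3 gives the equivalence in the statement.

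The only point needing care is Step 1, getting the direction of the twist right: one has to check that $\sigma$ commutes with the way $w_\ld$ is defined, so that $w_\ld\sigma$ is indeed conjugate to $p(b\sigma)$ by $\el$, with $\sigma$ acting on $\Phi$ through the based root datum. The remaining steps are bookkeeping.
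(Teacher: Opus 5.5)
Your proof is correct, but it is organized differently from the paper's.

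The paper never passes to $\beta=\el\alpha$. It pairs the central vector $v=\el(\a_\ld-w_\ld\sigma(\a_\ld)+h\ld^\flat)$ with $\alpha$ to get $\la\alpha,w_\ld\sigma(\a_\ld)\ra=\la\alpha,\a_\ld\ra+h\la\alpha,\ld^\flat\ra$, and then compares signs:
the left side is negative exactly when $(w_\ld\sigma)^{-1}\alpha\in\Phi_-$, because $\sigma(\a_\ld)$ is strictly dominant;
the right side is negative exactly when $\la\alpha,\ld^\flat\ra=-1$ and $\la\alpha,\a_\ld\ra<h$, because $\ld^\flat$ is minuscule and $\la\alpha,\a_\ld\ra>0$.

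You instead conjugate $w_\ld\sigma$ to $p(b\sigma)$ by $\el$. You then argue root by root, using Lemma \ref{ld} (1) and the height bound $|\la\gamma,\rho^\vee\ra|\le h-1$.

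Both routes rest on the same input: the length-zero element $b$, together with $\sigma$, sends each $\tilde\gamma$ to $\widetilde{p(b\sigma)\gamma}$. The paper packages this as the centrality of $v$, while you use it through Lemma \ref{ld} (1). Indeed, pulling the paper's identity back along $\el$ gives exactly Lemma \ref{ld} (1) plus the $p(b\sigma)$-invariance of the affine height of $\tilde\gamma$.

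What each route buys:
the paper's version is a two-line sign argument once $v$ is known to be central, and that same identity is reused in Lemmas \ref{equation} and \ref{lwlambda};
yours does not need $v$ at all, and it proves a sharper intermediate statement. Namely, $(w_\ld\sigma)^{-1}\alpha\in\Phi_-$ if and only if $\ld_{\el\alpha}=0$ and $\la\el\alpha,\ld^\nt\ra=-1$, together with $\la\alpha,\a_\ld\ra<h\Leftrightarrow\ld_{\el\alpha}=0$. This is precisely the fact the paper later extracts from the proof of Lemma \ref{strictly dominant} in Proposition \ref{dim}.

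The twist you flag in Step 1 is harmless: $\sigma(\el)\sigma=\sigma\el$ in $W_0\rtimes\la\sigma\ra$, so $w_\ld\sigma=\el^{-1}p(b\sigma)\el$ as claimed.
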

\begin{proof}
Since $v$ is central, we have
\begin{align*}
\la\alpha,w_\ld\sigma(\a_\ld)\ra=\la\alpha,\a_\ld\ra+h\la\alpha,\ld^\flat\ra.
\end{align*}
Since $\sigma(\a_\ld)$ is strictly dominant by Lemma \ref{strictly dominant} and $\ld^\flat$ is minuscule, the statement follows from this identity.
\end{proof}

Recall that $\rho$ denotes the half sum of positive roots of $G$.
\begin{lemm}
\label{lwlambda}
Let $\ld\in \cAm$.
Then we have $\ell(w_\ld)=-2\la\rho,\ld^\flat\ra$.
\end{lemm}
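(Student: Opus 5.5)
The plan is to rewrite both sides of the identity $\ell(w_\ld)=-2\la\rho,\ld^\flat\ra$ as counts over the finite set of roots, using the values $\ld_\beta$, and then compare them by a crossing-count argument along the $p(b\sigma)$-orbits. Throughout, set $S\coloneqq\el\Phi_+=\{\beta\in\Phi\mid \ld_\beta\geq 0\}$ and recall $\beta^{-1}=p(b\sigma)^{-1}\beta$. The one non-formal input is Lemma \ref{ld} (1) together with minusculity of $\mu$: since $\ld^\natural\in W_0\mu$ is minuscule, $\ld_{\beta^{-1}}-\ld_\beta=\la\beta,\ld^\natural\ra\in\{-1,0,1\}$ for every $\beta$.

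\emph{Right-hand side.} Since $\ld^\flat=\el^{-1}\ld^\natural$, we get
$$-2\la\rho,\ld^\flat\ra=-\sum_{\alpha\in\Phi_+}\la\el\alpha,\ld^\natural\ra=\sum_{\beta\in S}(\ld_\beta-\ld_{\beta^{-1}}).$$

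\emph{Left-hand side.} We have $w_\ld=\el^{-1}p(b)\sigma(\el)$, and $\sigma$ preserves $\Phi_+$. Hence $\ell(w_\ld)$ is the number of $\beta\in S$ with $\beta\notin p(b\sigma)S$, because $p(b\sigma)S=p(b)\sigma(\el)\Phi_+$. Now $\beta\in p(b\sigma)S$ exactly when $\ld_{\beta^{-1}}\geq 0$. So
$$\ell(w_\ld)=\sharp\{\beta\mid \ld_\beta\geq 0,\ \ld_{\beta^{-1}}\le -1\}.$$
Because consecutive values differ by at most $1$, the condition forces $\ld_\beta=0$ and $\ld_{\beta^{-1}}=-1$. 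Therefore
$$\ell(w_\ld)=\sharp\{\beta\in\Phi\mid \ld_\beta=0,\ \ld_{\beta^{-1}}=-1\}.$$
Alternatively, the same count can be read off from Lemma \ref{w} by checking that $\la\alpha,\a_\ld\ra<h$ iff $\ld_{\el\alpha}=0$, as in the proof of Lemma \ref{strictly dominant}. The direct description above is simpler.

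\emph{Comparison.} This is the main step. Decompose $\Phi$ into finite $p(b\sigma)$-orbits. Along each orbit, read cyclically as $\beta^{-1}\to\beta$, the sequence of values $\ld$ moves by steps in $\{-1,0,1\}$. For $\beta\in S$, a term $\ld_\beta-\ld_{\beta^{-1}}=1$ is an up-step crossing from level $k=\ld_\beta-1\geq -1$ to level $k+1$. A term equal to $-1$ is a down-step from level $k+1$ to level $k=\ld_\beta\geq 0$. Every up-step ending at a level $\geq 0$ and every down-step ending at a level $\geq 0$ arises in this way exactly once. Writing $\mathrm{up}(k)$ and $\mathrm{down}(k)$ for the numbers of steps $k\to k+1$ and $k+1\to k$ over all orbits, we obtain
$$\sum_{\beta\in S}(\ld_\beta-\ld_{\beta^{-1}})=\sum_{k\geq -1}\mathrm{up}(k)-\sum_{k\geq 0}\mathrm{down}(k).$$
On a closed cycle with steps of size at most $1$, the level $k+\tfrac12$ is crossed upward and downward equally often, so $\mathrm{up}(k)=\mathrm{down}(k)$ for all $k$. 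Hence the sum equals $\mathrm{up}(-1)=\sharp\{\beta\mid \ld_{\beta^{-1}}=-1,\ \ld_\beta=0\}$, which is $\ell(w_\ld)$ by the previous paragraph.

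The only delicate point is the bookkeeping in this last step. One must check that restricting to $\beta\in S$ picks out exactly the up-steps into levels $\geq 0$ and the down-steps into levels $\geq 0$, and that finiteness of the orbits justifies the cycle argument. Both follow directly from the description $S=\{\ld_\beta\geq 0\}$.
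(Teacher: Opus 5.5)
Your proof is correct, and it reaches the identity by a more elementary route than the paper.

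\textbf{The paper's route.} The paper argues on $w_\ld\sigma$-orbits in $\Phi$ and leans on the auxiliary vector $\a_\ld$. Lemma \ref{strictly dominant} and Lemma \ref{w} identify the descents of $w_\ld\sigma$. Then, on each maximal run in $\mathscr O'$, two facts force the run-sum of $\la\alpha,\ld^\flat\ra$ to vanish: the identity $\la\alpha,w_\ld\sigma(\a_\ld)\ra=\la\alpha,\a_\ld\ra+h\la\alpha,\ld^\flat\ra$ (from the centrality of $v$), and the bounds $0<\la\cdot,\a_\ld\ra<h$ at the two ends of the run.

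\textbf{Relation to your argument.} Since $\el\circ(w_\ld\sigma)=p(b\sigma)\circ\el$ on $\Phi$, transporting by $\el$ turns the paper's $\mathscr O'$ into $\{\beta\mid\ld_\beta\geq 0,\ \ld_{\beta^{-1}}\geq 0\}$. It also turns the condition $0<\la\alpha,\a_\ld\ra<h$ into $\ld_{\el\alpha}=0$. So the underlying telescoping mechanism is the same. The difference is that you read the endpoint information directly off the integers $\ld_\beta$. You use only Lemma \ref{ld} (1) and the step bound $|\ld_{\beta^{-1}}-\ld_\beta|\le 1$ from minusculity. You also replace run-by-run telescoping by the global balance $\mathrm{up}(k)=\mathrm{down}(k)$.

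\textbf{What each approach buys.} In your version the Coxeter number, $\rho^\vee$, the central vector $v$, and hence the irreducibility of $\Phi$ play no role. Your formula $\ell(w_\ld)=\sharp\{\beta\mid\ld_\beta=0,\ \ld_{\beta^{-1}}=-1\}$ also gives Proposition \ref{dim} at once, together with Proposition \ref{Xl}. Subtracting it from $\sharp\{\beta\in\el\Phi_+\mid\la\beta,\ld^\natural\ra=-1\}=\la\rho,\mu-\ld^\flat\ra$ leaves exactly $\sharp R(\ld)$. The paper's route instead builds the $\a_\ld$ formalism, which mirrors Nie's superbasic argument. A variant of it is the central tool in \S\ref{GLn}: a strictly dominant $\a$ with $\a-w\a$ prescribed.
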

\begin{proof}
Let $\mathscr O\subseteq \Phi$ be a $w_\ld\sigma$-orbit in $\Phi$.
Let $\mathscr O'=\mathscr O\cap \{\alpha\in \Phi_+\mid (w_\ld\sigma)^{-1}(\alpha)\in \Phi_+\}$.
By Lemma \ref{w}, it suffices to show that $$\sum_{\alpha\in\mathscr O'}\la\alpha,\ld^\flat\ra=0.$$
We may assume that $\mathscr O'\neq \emptyset$.
If $\mathscr O'=\mathscr O$, then the claim follows by adding all equations $\la\alpha,w_\ld\sigma(\a_\ld)\ra=\la\alpha,\a_\ld\ra+h\la\alpha,\ld^\flat\ra$ for $\alpha\in \mathscr O$.
So we may also assume that $\mathscr O'\neq\mathscr O$.
Fix $\alpha\in\mathscr O'$.
Let $d\geq 0$ be the maximal integer such that $(w_\ld\sigma)^d(\alpha)\in \mathscr O'$.
Set $\beta=(w_\ld\sigma)^d(\alpha)$.
Then by Lemma \ref{strictly dominant}, we have $w_\ld\sigma(\beta)\in \Phi_-$ and $$0<\la \beta,\a_\ld\ra=\la w_\ld\sigma(\beta), w_\ld\sigma(\a_\ld)\ra=\la w_\ld\sigma(\beta),\a_\ld\ra+h\la w_\ld\sigma(\beta),\ld^\flat\ra<h.$$
Let $e\geq 0$ be the maximal integer such that $(w_\ld\sigma)^{-e}(\beta)\in \mathscr O'$.
Then $(w_\ld\sigma)^{-e-1}(\beta)\in \Phi_+$ and $(w_\ld\sigma)^{-e-2}(\beta)\in \Phi_-$.
By Lemma \ref{strictly dominant} and Lemma \ref{w}, we have $$0<\la (w_\ld\sigma)^{-e-1}(\beta),\a_\ld\ra<h.$$
By $\la\gamma,w_\ld\sigma(\a_\ld)\ra=\la\gamma,\a_\ld\ra+h\la\gamma,\ld^\flat\ra$ for $\gamma=\beta, (w_\ld\sigma)^{-1}(\beta),\ldots, (w_\ld\sigma)^{-e}(\beta)$, we have $$\la (w_\ld\sigma)^{-e-1}(\beta),\a_\ld\ra=\la\beta,\a_\ld\ra+h\la \beta+(w_\ld\sigma)^{-1}(\beta)+\cdots+(w_\ld\sigma)^{-e}(\beta),\ld^\flat\ra.$$
Hence it follows from $0<\la\beta,\a_\ld\ra<h$ that $\la \beta+(w_\ld\sigma)^{-1}(\beta)+\cdots+(w_\ld\sigma)^{-e}(\beta),\ld^\flat\ra=0$.
Since $\alpha$ is arbitrary, the claim is proved. This finishes the proof. 
\end{proof}

\begin{prop}
\label{dim}
Let $\ld\in \cAm$.
We have $\dim \Xl=\la\rho,\mu+\ld^\flat\ra$.
\end{prop}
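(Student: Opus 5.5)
By Proposition \ref{Xl}, $\dim \Xl=\sharp R(\ld)$. The plan is therefore to compute $\sharp R(\ld)$ directly, transporting everything by $\el$ and using the equation from Lemma \ref{w}.

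\emph{Step 1 (transport $R(\ld)$).} For $\alpha\in\Phi$, write $\alpha=\el\gamma$, so that $\la\alpha,\ld^\nt\ra=\la\gamma,\ld^\flat\ra$. By the definition of $\el$, $\ld_\alpha\ge0$ if and only if $\gamma\in\Phi_+$. For such $\gamma$ I claim
$$\ld_\alpha=\Bigl\lfloor \tfrac{\la\gamma,\a_\ld\ra}{h}\Bigr\rfloor .$$
To see this, note that $\la\gamma,\a_\ld\ra=h\la\alpha,\ld\ra-\la\alpha,\rho^\vee\ra$. If $\alpha\in\Phi_-$, this equals $h\ld_\alpha+c_\alpha$. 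If $\alpha\in\Phi_+$, it equals $h(\ld_\alpha+1)-c_\alpha$. In both cases $0<c_\alpha<h$, as in the proof of Lemma \ref{strictly dominant}. Consequently $\ld_\alpha\ge1$ if and only if $\la\gamma,\a_\ld\ra>h$; the value $h$ itself is never attained. This gives
$$\sharp R(\ld)=\sharp\{\gamma\in\Phi_+\mid \la\gamma,\ld^\flat\ra=-1,\ \la\gamma,\a_\ld\ra>h\}.$$

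\emph{Step 2 (compare with $\ell(w_\ld)$).} Since $\ld^\flat\in W_0\mu$ is minuscule, the number of $\gamma\in\Phi_+$ with $\la\gamma,\ld^\flat\ra=-1$ equals $\la\rho,\mu-\ld^\flat\ra$. Indeed, $\la 2\rho,\ld^\flat\ra$ equals the number of positive roots with pairing $+1$ minus the number with pairing $-1$, and these two counts sum to $\la2\rho,\mu\ra$. By Lemma \ref{w}, the $\gamma$ among these with $\la\gamma,\a_\ld\ra<h$ are exactly the positive roots sent to negative roots by $(w_\ld\sigma)^{-1}$. There are $\ell(w_\ld)$ of them, and $\ell(w_\ld)=-2\la\rho,\ld^\flat\ra$ by Lemma \ref{lwlambda}. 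Hence
$$\sharp R(\ld)=\la\rho,\mu-\ld^\flat\ra+2\la\rho,\ld^\flat\ra=\la\rho,\mu+\ld^\flat\ra .$$

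The main obstacle is Step 1, namely matching the conditions $\ld_\alpha\ge1$ and $\la\gamma,\a_\ld\ra>h$ exactly. This rests on the bounds $0<c_\alpha<h$, which hold because the height of any root is at most $h-1$. I would also double-check Step 2: the inversion set of $(w_\ld\sigma)^{-1}$ has cardinality $\ell(w_\ld)$ because $\sigma$ preserves $\Phi_+$.
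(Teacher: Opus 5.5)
Your proof is correct and follows essentially the same route as the paper. You transport $R(\ld)$ by $\el$, identify $\ld_\alpha\ge 1$ with $\la\gamma,\a_\ld\ra>h$ via the computation in Lemma \ref{strictly dominant}, and then subtract the inversion count $\ell(w_\ld)=-2\la\rho,\ld^\flat\ra$ (Lemmas \ref{w} and \ref{lwlambda}) from $\la\rho,\mu-\ld^\flat\ra$. Your explicit remark that $\la\gamma,\a_\ld\ra$ is never a multiple of $h$ makes precise a point the paper leaves implicit in its second equality.
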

\begin{proof}
Let $\alpha\in \Phi_+$ and $\beta=\el \alpha$.
By the proof of Lemma \ref{strictly dominant}, $\la\alpha,\a_\ld\ra>h$ if and only if $\ld_\beta\geq 1$.
By Proposition \ref{Xl}, Lemma \ref{w} and Lemma \ref{lwlambda}, we have
\begin{align*}
\dim \Xl&=\sharp\{\alpha\in \Phi_+\mid \la\alpha,\ld^\flat\ra=-1, \la\alpha,\a_\ld\ra>h\}\\
&=\sharp\{\alpha\in \Phi_+\mid \la\alpha,\ld^\flat\ra=-1\}-\sharp\{\alpha\in \Phi_+\mid (w_\ld\sigma)^{-1}\alpha\in \Phi_-\}\\
&=\la\rho,\mu-\ld^\flat\ra-\ell(w_\ld)\\
&=\la\rho,\mu+\ld^\flat\ra.
\end{align*}
Indeed, if $w\in W_0$ is the unique minimal length element such that $\ld^\flat=w\mu$, then
$$\sharp\{\alpha\in \Phi_+\mid \la\alpha,\ld^\flat\ra=-1\}=\ell(w)=\la\rho,\mu-\ld^\flat\ra.$$
The first equality follows from the equivalence of $\la\alpha,\ld^\flat\ra=-1$ and $w^{-1}\alpha\in \Phi_-$ for $\alpha\in \Phi_+$, and the second equality is straightforward.
This finishes the proof.
\end{proof}

\begin{rema}
In the superbasic case (cf.\ \S\ref{Chen-Zhu}), Nie defined a similar Weyl group element as $w_\ld$ in \cite[\S3.3]{Nie22}.
Our $w_\ld$ is the inverse of the Weyl group element, and the above results are analogues of \cite[Lemma 3.8 \& Lemma 3.10]{Nie22}.
\end{rema}

\subsection{The Chen-Zhu Conjecture}
\label{Chen-Zhu}
Set $\cAm^{\tp}\coloneqq\{\ld\in \cAm\mid \dim X_\mu^\ld(b)=\dim X_\mu(b)\}$ and $\cAm^{\sm,\tp}\coloneqq \cAm^{\sm}\cap\cAm^{\tp}$.
Nie proved the following description; see \cite[Theorem 0.11 \& Proposition 6.8]{Nie22}.
\begin{theo}
\label{sm-irr}
There exists a bijection
$$(\Omega\cap \J_b)\backslash \cAm^{\sm,\tp}\xrightarrow{\sim}\J_b\backslash \Irr X_\mu(b),\quad (\Omega\cap \J_b)\ld\mapsto \J_b\overline{\Xl}.$$
\end{theo}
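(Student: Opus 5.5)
The proof assembles earlier results; nothing new needs to be constructed. Theorem \ref{sm-irr} combines three facts: Nie's description of irreducible components, the irreducibility criterion in Proposition \ref{irr=small}, and the equivariance properties of Lemma \ref{ld}.

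\emph{Well-definedness.} The map is well defined because of Lemma \ref{ld}. For $\ld\in\cAm^{\sm,\tp}$, Proposition \ref{irr=small} shows that $\Xl$ is irreducible. Since $\dim\Xl=\dim X_\mu(b)$ and $X_\mu(b)$ is equidimensional, the closure $\overline{\Xl}$ is an irreducible component of $X_\mu(b)$. For $\tau\in\Omega\cap\J_b$ we have $\tau I\vp^\ld K=I\vp^{\tau\ld}K$, because $\tau$ normalizes $I$. Hence $\tau\Xl=X_\mu^{\tau\ld}(b)$, and the $\J_b$-orbit of the component depends only on the class $(\Omega\cap\J_b)\ld$. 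Lemma \ref{ld}(3) also shows that $\tau\ld$ again lies in $\cAm^{\sm,\tp}$: smallness is preserved because $\ld_\alpha=(\tau\ld)_{p(\tau)\alpha}$ and $p(\tau)$ permutes the orbits $\cO_\alpha$, $\alpha\in\Pi$.

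\emph{Surjectivity.} Let $Z$ be an irreducible component. Since $X_\mu(b)=\bigsqcup_\ld\Xl$, the component $Z$ is the closure of some irreducible component $Z'$ of some $\Xl$ of full dimension. By Proposition \ref{Xl}, $I\cap\J_b$ permutes the components of $\Xl$ transitively, so the $\J_b$-orbit of $Z$ is determined by $\ld$. It remains to replace $\ld$ by a small cocharacter in the same orbit. If $\ld$ is not small, pick $\alpha\in\Pi$ with $\ld_\beta\ge1$ for all $\beta\in\Oa$. Then $\wa\in\J_b$, and by the argument in the proof of Proposition \ref{irr=small}, $\wa X^{\wa\ld}_\mu(b)$ is a closed subset of $\Xl$ of the same dimension. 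Lemma \ref{not small} negates the values $\ld_\beta$ on $\Oa$. Iterating, one must show the process terminates, for instance because $\ell(m_\ld)$ strictly decreases. I expect termination to be the main obstacle. I would follow Nie \cite[\S6]{Nie22}, which runs this reduction using Lemma \ref{consecutive} and Lemma \ref{small finite}.

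\emph{Injectivity.} Suppose $j\overline{\Xl}=\overline{X_\mu^{\ld'}(b)}$ with $\ld,\ld'$ small and $j\in\J_b$. Decompose $j$ using the generation of $\J_b$ by $I\cap\J_b$, $\Omega\cap\J_b$ and the elements $\wa$. The stabilizers are standard parahoric subgroups containing $I\cap\J_b$. The comparison argument in the proof of Proposition \ref{irr=small} then shows that no $\wa$ factor can intervene between two small cocharacters, so $\ld'\in(\Omega\cap\J_b)\ld$. This last step is \cite[Proposition 6.8]{Nie22}, which I would invoke directly.
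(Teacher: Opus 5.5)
Your proposal is correct and essentially matches the paper, which gives no independent proof here but imports the statement directly from \cite[Theorem 0.11 \& Proposition 6.8]{Nie22}. Your well-definedness and surjectivity steps unpack Nie's argument using the paper's lemmas (Proposition \ref{Xl}, Proposition \ref{irr=small}, Lemma \ref{not small}, with termination of the $\wa$-reduction via the strict decrease of $\ell(m_\ld)$, as in Lemma \ref{J-str small}, and equidimensionality of $X_\mu(b)$ used implicitly), and for injectivity you rely on the same citation.
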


Under the partial order $\le$, there is a unique maximal element $\tnu_b$ in the set
$$\{\nu\in \Y/(1-\sigma)\Y\mid \kappa(\vp^\nu)=\kappa(b),\nu^\diamond\le \nu_b\},$$
which is the so-called ``best integral approximation'' of $\nu_b$ (cf.\ \cite[\S2.1]{HV18}).
By \cite[Proposition 3.9]{Schremmer22}, we have $\de(b)=\la\nu_b,2\rho\ra-\la\tnu_b, 2\rho\ra$ and $\dim X_\mu(b)=\la\rho, \mu+\tnu_b\ra$.

We next show that the restriction of the map $\flat$ to $(\Omega\cap \J_b)\backslash\cAm^{\sm,\tp}$ coincides with the map constructed by Nie to prove the Chen-Zhu conjecture.
We say $b\in G(L)$ is {\it superbasic} if none of its $\sigma$-conjugates is contained in a proper Levi subgroup of $G$.
In particular, $b$ is basic in $G(L)$ as $\nu_b$ must be central.
Nie constructed the map by reducing to the superbasic case.
We summarize the construction below from $\cite[\S5]{Nie22}$.

Set $V=\Y_\R$ and $V^{p(b\sigma)} = \{ v \in V\mid p(b\sigma)(v) = v \}$.
Define
$$V_{\gen}^{p(b\sigma)} = \{ v \in V^{p(b\sigma)} \mid \langle \alpha, v \rangle = 0 \Leftrightarrow \langle \alpha, V^{p(b\sigma)} \rangle = 0, \forall \alpha \in \Phi \},$$
which is open dense in $V^{p(b\sigma)}$.
Then $V_{\gen}^{p(b\sigma)} \cap \Y\neq\emptyset$.
Let $M_b\supseteq T$ be the Levi subgroup of $G$ with root system $\{ \alpha \in \Phi \mid \langle \alpha, V^{p(b\sigma)} \rangle = 0 \}$.
By definition, for any $v\in V_{\gen}^{p(b\sigma)}$, the centralizer $M_v$ of $v$ in $G$ coincides with $M_b$.

Fix $v \in V_{\gen}^{p(b\sigma)} \cap \Y$. 
Denote by $\overline{v}$ the unique dominant $W_0$-conjugate of $v$. Let $z$ be the minimal element of $W_0$ such that $z(v) = \overline{v}$.
Let $\Phi^{N_v}=\{\alpha\in \Phi\mid \la\alpha,v\ra>0\}$ and let $N_v = \prod_{\alpha \in \Phi^{N_v}} U_{\alpha}$.
Set $M = M_{\overline{v}} =z M_v z^{-1}= z M_b z^{-1}$ and $b^M = z b \sigma(z)^{-1}$.
Then $b^M$ is a lift of some element in $\Omega^M$, and is superbasic in $M(L)$.
By \cite[Lemma 2.4]{HV18}, the best integral approximation $\tilde \nu_b$ of $\nu_b$ in $G$ coincides with that of $\nu_{b^M}$ in $M$.
Moreover, we have $z\Phi^{M_b}_+=\Phi^M_+$ and hence $\ld_\alpha=(z\ld)_{z\alpha}$ for $\alpha\in \Phi^{M_b}$.

Let $\cAm^{\tp}(v)$ denote the set of $\ld\in \cAm^{\tp}$ such that $\ld_\alpha\geq 0$ for $\alpha\in\Phi^{N_v}$.
By \cite[Lemma 5.10 \& Proposition 6.8]{Nie22}, for any $\ld\in \cAm^{\sm,\tp}$, we have $(\Omega\cap \J_b)\ld\cap \cAm^{\sm,\tp}(v)\neq \emptyset$.
We always assume that $\ld\in \cAm^{\sm,\tp}(v)$.
Then the map
$$(\Omega \cap \J_b)\backslash\cAm^{\sm,\tp}\rightarrow \{\nu\in W_0\mu\mid \nu^\diamond=\tilde\nu_b^\diamond\},\quad (\Omega\cap \J_b)\ld\mapsto (\epsilon^{M}_{z\ld})^{-1}(b^M\sigma(z\ld)-z\ld)$$
is well-defined and bijective (cf.\ the proof of \cite[Lemma 5.16]{Nie22}), verifying the Chen-Zhu conjecture in the minuscule and basic case.
Here $\epsilon_{z\ld}^M\in W_0^M$ is defined as in \S\ref{minuscule} by setting $G=M$.
Note that if $b$ is superbasic in $G(L)$, then this map coincides with $\flat$ (cf.\ \cite[Remark 3.4]{Nie22}).
We also refer to \cite[\S4.5 \& \S5.7]{Nie22}.
\begin{theo}[cf.\ \cite{XZ17},\cite{HV18},\cite{Nie22},\cite{ZZ20}]
\label{Chen-Zhu bijection}
We have $\epsilon^{M}_{z\ld}=z\epsilon_\ld$ for $\ld\in \cAm^{\tp}(v)$.
As a consequence, the map $\flat$ gives a bijection
$$(\Omega \cap \J_b)\backslash\cAm^{\sm,\tp}\xrightarrow{\sim} \{\nu\in W_0\mu\mid \nu^\diamond=\tilde\nu_b^\diamond\},\quad (\Omega\cap \J_b)\ld\mapsto \ld^\flat.$$
\end{theo}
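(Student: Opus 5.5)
The first assertion will follow from the uniqueness in the definition of $\epsilon$. Concretely, I will show that $z\el$ lies in $W_0^M$ and that $z\el\Phi^M_+=\{\beta\in\Phi^M\mid (z\ld)_\beta\ge 0\}$. The second assertion is then a direct computation comparing $\ld^\flat$ with Nie's formula, using the invariance $(\tau\ld)^\flat=\ld^\flat$ for $\tau\in\Omega\cap\J_b$.

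\emph{Step 1: $z\el\in W_0^M$.} Let $\ld\in\cAm^{\tp}(v)$. By definition $\ld_\alpha\ge 0$ for all $\alpha\in\Phi^{N_v}$, so $\Phi^{N_v}\subseteq\{\alpha\mid\ld_\alpha\ge0\}=\el\Phi_+$. Applying $z$, and using $z\Phi^{N_v}=\{\alpha\mid\la\alpha,\overline v\ra>0\}$, we get $\{\alpha\mid\la\alpha,\overline v\ra>0\}\subseteq z\el\Phi_+$. Since $\overline v$ is dominant with centralizer $M$, the left-hand side is exactly $\Phi_+\setminus\Phi^M$. Hence the inversion set of $(z\el)^{-1}$ (the positive roots sent to negative ones) is contained in $\Phi^M_+$. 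I will then invoke the standard fact that a Weyl group element whose inversion set lies in the positive roots of a standard Levi belongs to that Levi's Weyl group; I expect this to be the main point requiring care, and I would verify it via a reduced expression or the parabolic factorization $w=w^Mw_M$. It follows that $z\el\in W_0^M$. In particular $z\el$ preserves $\Phi^M$, so
\[
z\el\Phi_+\cap\Phi^M=z\el\Phi^M_+ .
\]

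\emph{Step 2: identification of the $M$-positive set.} Since $z\el$ preserves $\Phi^M$, its inverse does too, so $\el\Phi_+\cap z^{-1}\Phi^M=z^{-1}(z\el\Phi_+\cap\Phi^M)$, with $z^{-1}\Phi^M=\Phi^{M_b}$. Therefore
\[
z\el\Phi_+\cap\Phi^M=z\bigl(\el\Phi_+\cap\Phi^{M_b}\bigr)=z\{\alpha\in\Phi^{M_b}\mid\ld_\alpha\ge0\}=\{\beta\in\Phi^M\mid (z\ld)_\beta\ge 0\},
\]
where the last equality uses $\ld_\alpha=(z\ld)_{z\alpha}$ for $\alpha\in\Phi^{M_b}$. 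Because $M$ is standard, $\Phi^M_\pm=\Phi^M\cap\Phi_\pm$, so for $\beta\in\Phi^M$ the quantity $(z\ld)_\beta$ is the same whether computed in $M$ or in $G$. The set above is thus the one defining $\epsilon^M_{z\ld}$, and uniqueness of $\epsilon^M_{z\ld}$ in $W_0^M$ gives $\epsilon^M_{z\ld}=z\el$.

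\emph{Step 3: the bijection.} Since $z\in W_0$ acts linearly and $b^M=zb\sigma(z)^{-1}$, we have
\[
b^M\sigma(z\ld)-z\ld=zb\sigma(z)^{-1}\sigma(z)\sigma(\ld)-z\ld=z\bigl(b\sigma(\ld)-\ld\bigr)=z\ld^\natural .
\]
Hence Nie's map sends $\ld$ to $(z\el)^{-1}z\ld^\natural=\el^{-1}\ld^\natural=\ld^\flat$. Every orbit in $(\Omega\cap\J_b)\backslash\cAm^{\sm,\tp}$ meets $\cAm^{\sm,\tp}(v)$, and $\flat$ is constant on $\Omega\cap\J_b$-orbits by Lemma \ref{ld}. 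So $\flat$ agrees with Nie's map on every orbit, and the bijectivity of Nie's map onto $\{\nu\in W_0\mu\mid\nu^\diamond=\tilde\nu_b^\diamond\}$ transfers to $\flat$.
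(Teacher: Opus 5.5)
Your proposal is correct and follows essentially the paper's proof: you obtain $z\epsilon_\lambda\in W_0^M$ from $\lambda_\alpha\ge 0$ on $\Phi^{N_v}$, identify $z\epsilon_\lambda\Phi^M_+$ with $\{\beta\in\Phi^M\mid (z\lambda)_\beta\ge 0\}$ via $\lambda_\alpha=(z\lambda)_{z\alpha}$, and then compute that Nie's map is $\lambda\mapsto\lambda^\flat$. The only difference is cosmetic: the paper gets $z\epsilon_\lambda\in W_0^M$ by showing $\epsilon_\lambda^{-1}v$ is dominant, so that $z\epsilon_\lambda$ fixes $\overline{v}$, while you use the equivalent inversion-set criterion.
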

\begin{proof}
The last assertion follows from $\epsilon^{M}_{z\ld}=z\epsilon_\ld$ and the Chen-Zhu conjecture, since
$$(\epsilon^{M}_{z\ld})^{-1}(b^M\sigma(z\ld)-z\ld)=\el^{-1}z^{-1}(zb\sigma(\ld)-z\ld)=\el^{-1}(b\sigma(\ld)-\ld)=\ld^\flat.$$

We claim that $\el^{-1}v=\overline{v}$.
Otherwise, there exists $\alpha\in \Phi_+$ such that $\la\alpha,\el^{-1}v\ra\le -1$.
Then $\la -\el \alpha,v\ra\geq 1$.
Since $\ld\in \cAm^{\tp}(v)$, this implies that $\ld_{-\el\alpha}\geq 0$, which is a contradiction.
This proves the claim.

By the claim, we conclude that $z\el\overline{v}=\overline{v}$, i.e., $z\el\in W_0^M$ and
$$\alpha\in \Phi^M\Leftrightarrow \la\alpha,\el^{-1}v\ra=0\Leftrightarrow \la\el\alpha,v\ra=0\Leftrightarrow \el\alpha\in \Phi^{M_b}.$$
Since $(z\ld)_{z\el\alpha}=\ld_{\el\alpha}\geq 0$, we have $\epsilon^{M}_{z\ld}=z\epsilon_\ld$, which completes the proof.
\end{proof}

\section{The $\J$-stratification}
\label{J-stratification}
We introduce a slight variant of the $\J$-stratification defined in \cite{CV18}.
We retain the name $\J$-stratification, as the two stratifications coincide in important cases.
\subsection{Definition}
\label{J-str}
For any $g,h\in G(L)$, let $\inv(g,h)$ denote the relative position, i.e., the unique element in $\tW$ such that $g^{-1}h\in I\inv(g,h)I$.
In \cite[Theorem 2.10]{Gortz19}, G\"{o}rtz proved the following finiteness property.
\begin{theo}
\label{finiteness}
Let $Y\subset G(L)/I$ be a quasi-compact subscheme of the affine flag variety.
There exist only finitely many families of the form $(\inv(j,g))_{j\in \J_b}$ with $g\in Y$.
\end{theo}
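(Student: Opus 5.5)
Since Theorem \ref{finiteness} is quoted from \cite[Theorem 2.10]{Gortz19}, I would reprove it along the lines of G\"{o}rtz's argument. The aim is to reduce the infinitely many conditions indexed by $j\in \J_b$ to finitely many, using the fact that $\J_b$ acts on the Bruhat--Tits building with finitely many orbits of simplices and that $Y$ is bounded.

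\textbf{Step 1: reduce to a single base alcove.} Since $Y$ is quasi-compact, it lies in a finite union of Schubert cells $IwI/I$, and so every $g\in Y$ is at bounded distance from the base alcove $\mathfrak a$. Only the function $j\mapsto \inv(j,g)$ matters, and it is determined by the relative position of the alcove $g\mathfrak a$ to each alcove $j\mathfrak a$. Equivalently, the family $(\inv(j,g))_{j\in\J_b}$ is determined by $g\mathfrak a$ together with the orbit $\J_b\mathfrak a$, viewed as a set of alcoves with labels $j$.

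\textbf{Step 2: local determination.} I would show that $\inv(j,g)$ can be computed from $\inv(j,g_0)$, where $g_0$ runs over finitely many fixed reference points, together with a bounded amount of local data. Concretely, fix $g$ and pick an apartment containing both $j\mathfrak a$ and $g\mathfrak a$. Then $\inv(j,g)$ is the Weyl distance $\delta(j\mathfrak a, g\mathfrak a)$. Since $\ell(\inv(1,g))$ is bounded on $Y$, we can compute $\delta(j\mathfrak a,g\mathfrak a)$ by walking along a gallery $\mathfrak a=\mathfrak a_0,\dots,\mathfrak a_r=g\mathfrak a$ of bounded length $r$. At each step, the folding behaviour of $\delta(j\mathfrak a,\cdot)$ depends only on the type $s_i$ of the wall crossed and on whether $\mathfrak a_{i+1}$ is the projection of $j\mathfrak a$ onto the panel. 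In other words, it depends on $\delta(j\mathfrak a,\mathfrak a_i)$ and on which chamber of the panel is nearest to $j\mathfrak a$.

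\textbf{Step 3: the main obstacle.} The hard part is bounding the number of possible ``nearest chamber'' patterns as $j$ ranges over the infinite group $\J_b$. Projections of $j\mathfrak a$ onto panels near $\mathfrak a$ factor through the projection of $j\mathfrak a$ onto the finite ball $B$ of radius $r$ around $\mathfrak a$. For $\J_b$ with $b$ basic, hence anisotropic modulo center after passing to an inner form, I would use the following structure: $\J_b$ acts on the building of $J_b$ over $F$, and the combinatorial projection to $B$ is locally constant in $j$ with respect to a compact open subgroup fixing $B$. Such a subgroup has finite index in the stabilizer of $\mathfrak a$.

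In addition, $j\mathfrak a$ and $j'\mathfrak a$ project onto $B$ through the same chamber whenever they lie in the same ``sector'' relative to $B$, and there are only finitely many sectors because the ball is finite. Consequently each family $(\inv(j,g))_j$ is determined by $g\mathfrak a\in B$ (finitely many choices) together with the function recording $\delta(j\mathfrak a,\mathfrak a_i)$ for alcoves in $B$. The latter is a fixed function of $j$ that does not depend on $g$, so the family is a function of the finitely many choices of $g\mathfrak a$ up to the type data. This gives finiteness.
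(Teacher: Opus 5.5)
The paper gives no proof of its own here; it simply quotes \cite[Theorem 2.10]{Gortz19}. Your sketch has a genuine gap, and it sits exactly where the content of the theorem lies.

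\textbf{The false finiteness of $B$.} The building of $G$ over $L$ is not locally finite. The residue field of $L$ is $\aFq$, so every panel contains infinitely many alcoves and the ball $B$ of radius $r$ around $\mathfrak a$ is infinite. Correspondingly, $Y$ typically has infinitely many points (already $IsI/I\cong\A^1$ for $s\in\tS$), and hence yields infinitely many distinct alcoves $g\mathfrak a$.
\begin{itemize}
\item Your conclusion relies on ``$g\mathfrak a\in B$ (finitely many choices)''. If that were true, the theorem would be trivial, since $\inv(j,g)$ depends only on $gI$.
\item Without it, your last step is circular. Saying that the family is determined by $g\mathfrak a$ together with the function $(j,y)\mapsto\delta(j\mathfrak a,y)$ on $\J_b\times B$ is a tautology. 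The families $(\delta(j\mathfrak a,y))_{j\in\J_b}$, $y\in B$, are precisely what must be shown to be finite in number.
\item Step 2 has the same limitation, although its local folding rule is correct. It shows only that $\inv(j,g)$ is determined by $\inv(j,1)$ and by the projections of $j\mathfrak a$ onto the panels of a gallery from $\mathfrak a$ to $g\mathfrak a$. As $g$ varies, these panels range over an infinite set, and $j$ ranges over the infinite set $\J_b/(\J_b\cap I)$. Nothing yet bounds the number of resulting functions of $j$.
\end{itemize}

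\textbf{Step 3 does not close the gap.}
\begin{itemize}
\item Invariance under a compact open subgroup of $\J_b$ only shows that the projection pattern factors through a double coset space of $\J_b$. That space is infinite whenever $J_b$ is isotropic modulo the centre.
\item Basic does not imply anisotropic modulo the centre. Anisotropy modulo the centre is equivalent to $b$ being superbasic, which by \S\ref{HN} only happens in type $A$.
\item ``Finitely many sectors relative to $B$'' again presupposes that $B$ is finite. In any case, gated projections exist onto residues, not onto balls.
\end{itemize}

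\textbf{What is missing.} You need a source of finiteness that is uniform in both $j$ and $g$, and it has to come from rationality. Since $b\sigma(j)b^{-1}=j$, the alcove $j\mathfrak a$ is $b\sigma$-stable. For instance, its projection onto a $b\sigma$-stable spherical residue of $\mathfrak a$ is then a $b\sigma$-stable chamber, i.e.\ one of finitely many $\F_q$-rational points of a finite flag variety. More generally, the part of the convex hull of $\mathfrak a$ and $j\mathfrak a$ near $\mathfrak a$ is a $b\sigma$-stable set of boundedly many alcoves. It is therefore fixed by a fixed power of $b\sigma$, and there are only finitely many such alcoves near $\mathfrak a$. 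One must then prove that $\inv(j,1)$, together with this finitely-valued local data, determines $\inv(j,g)$ for all $g\in Y$. Neither step appears in your proposal, so as written it does not establish the theorem.
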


Let $\inv_K(g,h)$ denote the image of $\inv(g,h)$ under the natural projection $\tW\rightarrow \tW/W_0$.
By sending $\vp^\ld u\in \tW$ to $\ld\in \Y$, we identify $\tW/W_0$ with $\Y$.
We assign to an element $gK\in G(L)/K$ the function
$$f=f_g\colon \J_b\rightarrow \Y,\quad j\mapsto \inv_K(j,g).$$
Note that $f$ is constant on cosets $j(I\cap \J_b)$.
By Theorem \ref{finiteness}, the set $$S_f\coloneqq \{hK\in G(L)/K\mid f_h=f\}=\bigcap_{j\in \J_b}jI\vp^{f(j)}K/K$$ defines a locally closed reduced $\aFq$-subscheme of $\cG r$.
Hence this invariant induces a stratification of $\cG r$.
By definition, each $I\vp^\ld K/K$ is a union of $S_f$.
By intersecting $S_f$ with $X_\mu(b)$, we obtain the $\J_b$-stratification of $X_\mu(b)$.
We usually fix $b$ and omit it from the notation.

To define the $\J$-stratification, Chen-Viehmann in \cite{CV18} used the image of $\inv(j,g)$ in $W_0\backslash\tW/W_0\cong \Y_+$ instead of $\tW/W_0\cong \Y$.
Although our stratification is in general finer than the original stratification, they coincide in many important cases.
Indeed, if $G=\mathrm{Res}_{F'/F}\GL_n$ for some finite unramified extension $F'$ of $F$, then our invariant coincides with theirs by the fact that for any $\ld,\ld'\in \Y$, $$\ld=\ld'\iff K\inv(\tau,\vp^\ld)K=K\inv(\tau,\vp^{\ld'})K\text{ for all $\tau\in \Omega\cap \J_b\cong \Z$.}$$
In particular, the semi-module stratification of de Jong-Oort coincides with our stratification, and the $a=1$ locus in the basic case is equal to the $\J_b$-orbit of a single $\J$-stratum in our sense (cf.\ \cite[Proposition 3.4 \& Proposition 5.11]{CV18}).
If $(G,\mu,K)$ is of Coxeter type, then $X_\mu(b)$ admits the Bruhat-Tits stratification (cf.\ \cite{GH15}).
The main theorem of \cite{Gortz19} shows that if $G$ is an absolutely simple group of adjoint type, then the Bruhat-Tits stratification coincides with the $\J$-stratification in the sense of Chen-Viehmann.
Let $\pi$ denote the natural projection $G(L)/I\rightarrow G(L)/K$.
By \cite[\S3.3]{Gortz19}, each Bruhat-Tits stratum has the form $\pi(Y)$ with $Y\subset G(L)/I$ such that $\inv(j,-)$ is constant on $Y$.
This implies that $\pi(Y)$ is also a $\J$-stratum in our sense.

Recall that for $g\in G(L)$, we have isomorphisms 
\begin{align*}
X_\mu(b)\cong X_\mu(g^{-1}b\sigma(g)),\quad xK\mapsto g^{-1}xK\text{\quad and\quad}
\J_b\cong \J_{g^{-1}b\sigma(g)},\quad j\mapsto g^{-1}jg.
\end{align*}
As explained in \cite[Remark 2.1]{CV18}, the $\J$-stratification heavily depends on the choice of $b$ in its $\sigma$-conjugacy class in the sense that a $\J_b$-stratum does not necessarily map to a $\J_{g^{-1}b\sigma(g)}$-stratum.
For example, $gK$ always maps to $K$, which itself forms a single $\J_{g^{-1}b\sigma(g)}$-stratum. 
On the other hand, for $w\in \tW$, the $\J_{\dot w}$-stratification is independent of the choice of lift $\dot w$ in $N_{G(L)}T(L)$ because any two lifts of $w$ in $N_{G(L)}T(L)$ are $T(\cO)$-$\sigma$-conjugate (cf.\  \cite[Lemma 2.5]{Gortz19}).
It is also pointed out in \cite[Remark 2.1]{CV18} that if $[b]$ is a basic class in $B(G, \mu)$, then a reasonable choice of $b$ is a length $0$ element.
If $b$ is such a choice, then by \cite[Theorem 3.7]{He14}, any other reasonable choice can be written as $\tau^{-1}b\sigma(\tau)$ for some $\tau\in \Omega$.
Clearly, left multiplication by $\tau^{-1}$ induces an isomorphism of $\cG r$ that maps a $\J_b$-stratum to a $\J_{\tau^{-1}b\sigma(\tau)}$-stratum.
This is not evident in the original definition by Chen-Viehmann.

\subsection{Demazure Product}
\label{Demazure}
Let $\le$ denote the Bruhat order on $\tW$.
By \cite[Lemma 1]{He09b}, for any $x,y\in \tW$, the sets $\{x'y'\mid x'\le x, y'\le y\}$, $\{xy'\mid y'\le y\}$ and $\{x'y\mid x'\le x\}$ each contain a unique maximal element, and these maximal elements coincide.
We denote this common element by $x\ast y$ and we call it the {\it Demazure product} of $x$ and $y$.
Moreover, $(\tW,\ast)$ is a monoid, and the Demazure product is determined by
the following two rules:
\begin{itemize}
\item $x\ast y=xy$ if $x,y\in \tW$ such that $\ell(xy)=\ell(x)+\ell(y)$;
\item $s\ast y=y$ if $s\in \tS,\ y\in \tW$ such that $sy<y$.
\end{itemize}
Using the Demazure product, we may write $\overline{IxIyI}=\overline{I(x\ast y)I}$ (cf.\ \S\ref{notation}).
For $x\in \tW$ and $\ld\in \Y$, let us define $x\ast \ld\in \Y$ by $x\ast \vp^\ld\in \vp^{x\ast \ld}W_0$.
Clearly, $1\ast \ld=\ld$.
\begin{lemm}
\label{Demazure action}
Let $x,y\in \tW$ and $\ld\in \Y$.
Then $x\ast (y\ast \ld)=(x\ast y)\ast \ld$.
\end{lemm}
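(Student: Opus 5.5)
The plan is to reduce the claim to the associativity of the Demazure product on $\tW$, via the definition $x\ast\vp^\ld\in\vp^{x\ast\ld}W_0$. The only thing to check is that $x\ast\ld$ depends only on the coset $\vp^\ld W_0$, and in a form that allows replacing $\vp^\ld$ by any element of its coset. So the key intermediate claim is: for $x\in\tW$ and $w\in\vp^\ld W_0$, we have $x\ast w\in\vp^{x\ast\ld}W_0$.

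To prove the claim, first write $w=m_\ld u$ with $u\in W_0$, where $m_\ld$ is the minimal length element of $\vp^\ld W_0$. Then $\ell(m_\ld u)=\ell(m_\ld)+\ell(u)$, so $w=m_\ld\ast u$. Associativity gives $x\ast w=(x\ast m_\ld)\ast u$. It therefore suffices to show two things:
\begin{enumerate}[(a)]
\item for any $y\in\tW$ and $u\in W_0$, we have $y\ast u\in yW_0$;
\item $x\ast m_\ld$ and $x\ast\vp^\ld$ lie in the same coset of $W_0$.
\end{enumerate}
For (a), induct on $\ell(u)$ using a reduced word in $\bS\subseteq W_a$. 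For a simple reflection $s\in W_0$, the Demazure product $y\ast s$ is either $ys$ or $y$ (the right-handed version of the second rule, valid by symmetry of the definition via maximal elements). Either way the result stays in $yW_0$. Statement (b) follows from (a), since $\vp^\ld=m_\ld\ast u_0$ for the appropriate $u_0\in W_0$. Then $x\ast\vp^\ld=(x\ast m_\ld)\ast u_0\in (x\ast m_\ld)W_0$.

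With the claim in hand, the lemma follows directly. Choose $w\in\vp^{\ld}W_0$. Then $y\ast w\in\vp^{y\ast\ld}W_0$ by the claim, and applying the claim again,
\[
x\ast(y\ast w)\in\vp^{x\ast(y\ast\ld)}W_0 .
\]
On the other hand, $(x\ast y)\ast w\in\vp^{(x\ast y)\ast\ld}W_0$. Associativity of $\ast$ on $\tW$ (the monoid structure from \cite[Lemma 1]{He09b}) identifies the two left-hand sides. Comparing cosets then gives $x\ast(y\ast\ld)=(x\ast y)\ast\ld$.

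The only mildly delicate point is the right-multiplication rule $y\ast s\in\{ys,y\}$ for $s\in\tS$. The paper states only the left rule, but the right one follows from the characterization of $x\ast y$ as the maximum of $\{x'y\mid x'\le x\}$ and $\{xy'\mid y'\le y\}$ with $y'\in\{1,s\}$. Extending $\le$ and $\ast$ to $\tW=W_a\rtimes\Omega$ causes no trouble here, since $\Omega$ factors pass through.
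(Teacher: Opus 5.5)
Your proof is correct and takes essentially the same route as the paper. Both write an element of $\vp^\ld W_0$ as $m_\ld\ast u$ with $u\in W_0$, use associativity of $\ast$ on $\tW$, and note that right Demazure multiplication by an element of $W_0$ preserves left $W_0$-cosets, so that $x\ast w\in (x\ast m_\ld)W_0=\vp^{x\ast\ld}W_0$ for every $w\in\vp^\ld W_0$; the only cosmetic difference is that you obtain $z\ast u\in zW_0$ by induction on a reduced word via $z\ast s\in\{zs,z\}$, whereas the paper reads it off directly from $z\ast u\in\{zu'\mid u'\le u\}$.
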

\begin{proof}
Set $\mu=y\ast \ld$.
Let $m_{\mu}$ denote the unique minimal element in $\vp^\mu W_0$.
Then there exists $u\in W_0$ such that $\vp^\mu=m_\mu u$ and $\ell(m_\mu u)=\ell(m_\mu)+\ell(u)$.
Thus
$$x\ast \vp^\mu=x\ast (m_\mu u)=x\ast (m_\mu \ast u)=(x\ast m_\mu)\ast u.$$
By the definition of $\ast$, we have $(x\ast m_\mu)\ast u\in \{(x\ast m_\mu)u'\mid u'\le u\}$.
Hence $x\ast m_\mu\in \vp^{x\ast \mu}W_0=\vp^{x\ast (y\ast \ld)}W_0$.
By a similar argument, this implies that $$x\ast (y\ast \vp^\ld)=x\ast (\vp^\mu v)=x\ast (m_\mu\ast v')=(x\ast m_\mu)\ast v'\in (x\ast m_\mu)W_0=\vp^{x\ast (y\ast \ld)}W_0$$
for some $v,v'\in W_0$.
Then the assertion follows from $x\ast (y\ast \vp^\ld)=(x\ast y)\ast \vp^\ld$.
\end{proof}

\subsection{$\J$-strata with Parahoric Stabilizers}
\label{J-small}
Let $\cS_{\mu,b}$ denote the set of non-empty $\J$-strata of $X_\mu(b)$.
By the definition of $\J$-strata, $\J_b$ acts on $\cS_{\mu,b}$, and each $X_\mu^\ld(b)$ is a union of $\J$-strata.
We say that a $\J$-stratum is {\it parahoric} (resp.\ {\it standard parahoric}) if it admits a parahoric (resp.\ standard parahoric) stabilizer in $\J_b$.
Let $\cS_{\mu,b}^{\pa}$ denote the subset of $\cS_{\mu,b}$ consisting of parahoric $\J$-strata.
Then $\J_b$ acts on $\cS_{\mu,b}^{\pa}$, and each $\J_b$-orbit contains a (not necessarily unique) standard parahoric representative.

In the rest of this subsection, we assume that the (absolute) root system of $G$ is irreducible, $\mu$ is minuscule and $[b]\in B(G,\mu)$ is basic.
We also assume that $b\in \Omega$.

Recall that $\J_b$ is generated by $I\cap \J_b, \Omega\cap \J_b$ and $\Wa\cap \J_b$ for $\alpha\in \Pi$.
Moreover, $W_a\cap \J_b$ is a Coxeter group whose simple affine reflections are the longest elements $\wa$ of $\Wa\cap \J_b$ for $\alpha\in \Pi$ such that $\Wa$ is finite (cf.\ \cite[\S5.5]{Nie22}).

By Lemma \ref{Oa}, one of the following cases occurs:
\begin{enumerate}[(1)]
\item $\cO_\alpha$ is strongly orthogonal and $\wa=\prod_{\beta\in \cO_\alpha}s_{\tb}$;
\item $\cO_{\alpha+\alpha^1}=\{\alpha+\alpha^1\}$, $\la\alpha^1,\alpha^\vee\ra=\la\alpha,{\alpha^1}^\vee\ra=-1$ and $\wa=s_{\tilde \alpha} s_{\tilde \alpha^1}s_{\tilde \alpha}$.
\end{enumerate}

Let $S_f$ be a $\J$-stratum of $\cG r$ corresponding to a function $f\colon \J_b\rightarrow \Y$ such that $S\coloneqq S_f\cap X_\mu(b)\in \cS_{\mu,b}$.
Then $S$ is standard parahoric if and only if $(I\cap \J_b) S=S$.
This means that if $j\in (I\cap \J_b)w(I\cap \J_b)$ for some $w\in \tW\cap \J_b$, then $f(j)=f(w)$.
In other words, $f$ is constant on each double coset in $(I\cap \J_b)\backslash\J_b/(I\cap \J_b)$.
\begin{prop}
\label{unique S}
Let $S$ be as above, and let $\ld\in\cAm$ such that $S\subseteq X_\mu^\ld(b)$.
Assume that $S$ is standard parahoric.
Then $f$ is uniquely determined by $f(j)=f(w)=w^{-1}\ast \ld$, where $\inv(1,j)=w$.
In particular, $S$ is the unique standard parahoric $\J$-stratum in $X_\mu^\ld(b)$.
\end{prop}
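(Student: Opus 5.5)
Since $S$ is standard parahoric, $f$ is constant on each double coset $(I\cap \J_b)w(I\cap \J_b)$ with $w\in \tW\cap \J_b$. It therefore suffices to compute $f(w)$ for $w\in \tW\cap\J_b$ and to show $f(w)=w^{-1}\ast\ld$. This determines $f$ entirely from $\ld$, because $\inv(1,j)=w$ for $j$ in that double coset. The uniqueness of $S$ follows at once: two standard parahoric strata in $X_\mu^\ld(b)$ would have the same $f$, hence coincide.

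For $g\in S$ we have $g\in I\vp^\ld K$, so $f(w)=\inv_K(w,g)$ is the $\Y$-component of the relative position of $w^{-1}g\in w^{-1}I\vp^\ld K$. The set $w^{-1}I\vp^\ld K/K$ is a union of cells $I\vp^{\ld'}K/K$ with $\ld'=x\ast\ld$ type values, namely $\ld'\in\{x\ld\mid x\le w^{-1}\}$. By the Demazure product facts of \S\ref{Demazure}, $w^{-1}\ast\ld$ is the unique maximal one, and its cell is open dense in the closure of $w^{-1}I\vp^\ld K/K$.

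The heart of the argument is to show that invariance under $I\cap \J_b$ forces the generic value. Take a reduced decomposition of $w$ in the Coxeter group $W_a\cap\J_b$ (times an element of $\Omega\cap\J_b$, which acts trivially on the issue since it normalizes $I$). Its simple reflections are the $\wa$, of the two types (1), (2) of Lemma \ref{Oa}. I would induct on the length of $w$, using Lemma \ref{Demazure action} to write $w^{-1}\ast\ld=w_1^{-1}\ast(\wa\ast\ld)$, so the problem reduces to a single $\wa$ applied to the stratum-type data.

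For a single $\wa$, the orbit $(I\cap\J_b)\wa(I\cap \J_b)$ contains elements $j=u\wa u'$ with $u$ ranging over the root subgroups $U_{\tilde\beta}$, $\beta\in\Oa$, twisted to be $\J_b$-invariant (a copy of a finite-field-type unipotent group via Lang's theorem). As $u$ varies, $j^{-1}g$ sweeps through the various cells $I\vp^{x\ld}K$ with $x\le \wa$. Constancy of $f$ on the double coset then says that $\inv_K(j,g)$ is independent of these choices. On the other hand, for generic $u$ we land in the maximal cell $\wa\ast\ld$, by the $\mathbb{P}^1$-computation $IsIwI$ formula of \S\ref{notation}. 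In case (1) this is done factor by factor using strong orthogonality. In case (2) we use Lemma \ref{ab}, which lists the possible $(\ld'_\alpha,\ld'_\beta)$.

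I expect this step to be the main obstacle. One must make precise that the $\J_b$-rational points of $I\cap\J_b$ still realize the generic cell. This is a Lang/density argument: the relevant unipotent is an affine space over $\aFq$ twisted by $\sigma$, and its $\F_q$-points suffice because the condition "not in the top cell" is a proper closed condition given by finitely many polynomial equations of small degree. Case (2) needs extra care, since $s_{\ta}s_{\tb}s_{\ta}$ has length $3$ and the intermediate cells must be excluded via Lemma \ref{ab}.
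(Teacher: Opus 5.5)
You have the right outer frame, and it matches the paper: reduce to $w\in W_a\cap\J_b$ (using that $\Omega\cap\J_b$ normalizes $I$), induct along a reduced word in the $\wa$, treat cases (1) and (2) separately, and deduce uniqueness of $S$ from uniqueness of $f$. There is, however, a genuine gap at the decisive step, and the induction is set up on the wrong side.

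\textbf{The induction order.} Your identity $w^{-1}\ast\ld=w_1^{-1}\ast(\wa\ast\ld)$ corresponds to $w=\wa w_1$. You first move $S$ by $\wa$ and then want the induction hypothesis for $w_1$. That hypothesis would have to be applied to $\wa S$. But its invariant $f_{\wa g}(j)=f_g(\wa^{-1}j)$ is no longer constant on $(I\cap\J_b)$-double cosets unless $\wa$ already stabilizes $S$. Instead, write $w=w'\wa$ with the word reduced in $W_a\cap\J_b$. The induction hypothesis then gives $w'^{-1}S\subseteq I\vp^{\ld'}K/K$ with $\ld'=w'^{-1}\ast\ld$. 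You then study $\wa$ acting on that cell through the elements $w'i\wa$ with $i\in I\cap\J_b$, which lie in $(I\cap\J_b)w(I\cap\J_b)$.

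\textbf{The missing step.} The claim that some element of the double coset puts $j^{-1}g$ into the top cell $\wa\ast\ld'$ is the whole content of the proposition. Your justification does not establish it.
\begin{itemize}
\item A finite set of rational points is Zariski closed, so there is no density to invoke.
\item A degree count of Schwartz--Zippel type only works when the degree is small compared with $q$. In case (1) the bad locus is a union of up to $|\Oa|$ hyperplanes, and nothing prevents $|\Oa|\geq q$.
\item One would have to compute explicitly in the finite reductive quotient of the parahoric generated by $I$ and $\wa$. In case (1), the rational elements are parametrized by $\F_{q^{|\Oa|}}$ and at most $|\Oa|$ of them are bad. In case (2), one works in a unitary group, after Lemma~\ref{ab} and Lemma~\ref{consecutive} have cut the possible cells down to those of $\ld'$ and $\wa\ld'$.
\end{itemize}
None of this is in your proposal.

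\textbf{How the paper avoids it.} The paper uses no genericity at all.
\begin{itemize}
\item It picks a single $i\in I\cap\J_b$ with $\wa i\wa\in(I\cap\J_b)\wa(I\cap\J_b)$. Then $wi\wa\in(I\cap\J_b)w(I\cap\J_b)$, so $f(wi\wa)=f(w)$.
\item On the other hand, $(wi\wa)^{-1}g=\wa^{-1}i^{-1}w^{-1}g$ is obtained by applying $\wa$ once more to a point of $I\vp^{f(w)}K/K$.
\item Suppose $f(w)$ had flipped some $\beta\in\Oa$ with $\ld'_\beta\geq 1$ to $f(w)_\beta=-\ld'_\beta\le -1$. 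Then $s_{\tb}$ is length-increasing in that coordinate and, by orthogonality of $\Oa$, flips it back. This gives $f(wi\wa)_\beta\geq 1$, contradicting $f(wi\wa)=f(w)$.
\item In case (2), Lemma~\ref{ab} and consecutiveness leave only $f(w)\in\{\ld',\wa\ld'\}$. The same flip-back argument excludes $\wa\ld'$ when $\ld'_\alpha,\ld'_{\alpha^1}\geq 0$ are not both $0$. The remaining sign patterns are forced directly.
\end{itemize}
This needs only one extra element of the double coset and no control over rational points, for every $q$.
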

\begin{proof}
Clearly, we may assume that $w\in W_a\cap \J_b$.
Let $w=w_{\alpha_1}\cdots w_{\alpha_l}$ be a reduced expression, where $\alpha_1,\ldots, \alpha_l\in \Pi$.
Then $\ell(w)=\ell(w_{\alpha_1})+\cdots +\ell(w_{\alpha_l})$ (see \cite[\S1]{Steinberg68} or \cite[\S2.2]{KR00}).
We argue by induction on $l$.
If $l=0$, then the assertion is trivially true.
Assume that the assertion is true for $l-1\geq 0$.
Set $\ld'=(w_{\alpha_{l-1}}\cdots w_{\alpha_1})\ast \ld$.
Then
$$w_{\alpha_{l-1}}\cdots w_{\alpha_1}S_f\subseteq I \vp^{\ld'}K/K,\quad w^{-1}S_f=w_{\alpha_l}w_{\alpha_{l-1}}\cdots w_{\alpha_1}S_f\subseteq w_{\alpha_l}I \vp^{\ld'}K/K.$$
By Lemma \ref{Demazure action}, we have $$w_{\alpha_l}\ast \ld'=w_{\alpha_l}\ast(w_{\alpha_{l-1}}\cdots w_{\alpha_1})\ast \ld=w_{\alpha_l}\ast w_{\alpha_{l-1}}\ast\cdots \ast w_{\alpha_1}\ast \ld=(w_{\alpha_l}\cdots w_{\alpha_1})\ast \ld.$$
Thus we need to show that $f(w)=w_{\alpha_l}\ast \ld'$, i.e., $w^{-1}S_f\subseteq I \vp^{w_{\alpha_l}\ast \ld'}K/K$.
Set $\alpha=\alpha_l$.
Let $i\in I\cap \J_b$ such that $\wa i\wa\in (I\cap \J_b)\wa(I\cap \J_b)$ and hence $wi\wa\in (I\cap \J_b)w(I\cap \J_b)$.
Such an element $i$ exists because $\wa (I\cap \J_b)\wa\neq I\cap \J_b$ and $$(I\cap \J_b)\wa(I\cap \J_b)\wa(I\cap \J_b)=(I\cap \J_b)\sqcup(I\cap \J_b)\wa(I\cap \J_b).$$

First assume that $\wa$ is in case (1) above.
In this case, the reflections $s_{\tb}$ for $\beta\in \cO_\alpha$ commute with each other because $\cO_\alpha$ is orthogonal.
Recall that $\ell(s_{\tb} \vp^{\ld'})=\ell(\vp^{\ld'})+1$ if and only if $\la\beta,\ld'\ra\le 0$.
Thus there exists a subset $\cO'_{\alpha}\subseteq \cO_{\alpha}$ such that
$$\{\beta\in \cO_{\alpha}\mid \ld'_\beta\le -1\}\subseteq \cO'_{\alpha},\quad\text{and}\quad f(w)=\wa'\ld' \quad\text{with}\quad \wa'=\prod_{\beta\in \cO'_{\alpha}}s_{\tb}.$$
We claim that $\{\beta\in \cO_{\alpha}\mid \ld'_\beta\geq 1\}\cap \cO'_{\alpha}=\emptyset$.
Indeed, if there exists $\beta\in \cO'_\alpha$ such that $\ld'_\beta\geq 1$, then $f(w)_\beta=-\ld'_\beta\le -1$.
This implies that $f(wi\wa)_\beta=-f(w)_\beta\geq 1$.
On the other hand, since $S$ is standard parahoric, we have $f(wi\wa)=f(w)$ and $f(wi\wa)_\beta=f(w)_\beta\le -1$, which is a contradiction.
This proves the claim.
Note that if $\ld'_\beta=0$, then $s_{\tb}\ld'=\ld'$.
Thus $f(w)=\wa'\ld'=\wa\ast\ld'$ by the characterization of the Demazure product in \S\ref{Demazure}.

Next assume that $\wa$ is in case (2) above.
By Lemma \ref{consecutive} and $\ld'\in \cAm$, we have $\ld'_\alpha-\ld'_{\alpha^1}\in \{0,\pm 1\}$.
Since $f(w)\in \cAm$, we also have $f(w)_\alpha-f(w)_{\alpha^1}\in \{0,\pm 1\}$.
Thus, by Lemma \ref{ab}, we have $f(w)=\ld'$ or $\wa\ld'$.
If $\ld'_\alpha=\ld'_{\alpha^1}=0$, then $f(w)=\ld'=\wa\ast\ld'$.
If $\ld'_\alpha,\ld'_{\alpha^1}\le -1$, then $\ell(\wa \vp^{\ld'})=\ell(\wa)+\ell(\vp^{\ld'})$.
Hence $f(w)=\wa\ld'=\wa\ast \ld'$.
Thus the remaining cases are either $\{\ld'_\alpha,\ld'_{\alpha^1}\}=\{0,-1\}$ or $\ld'_\alpha,\ld'_{\alpha^1}\geq 0$.
If $\{\ld'_\alpha,\ld'_{\alpha^1}\}=\{0,-1\}$, then $f(w)=\ld'$ is impossible and hence $f(w)=\wa \ld'$.
It is straightforward to see $\wa \ld'=\wa\ast \ld'$ in this case.
If $\ld'_\alpha,\ld'_{\alpha^1}\geq 0$, $(\ld'_\alpha,\ld'_{\alpha^1})\neq (0,0)$ and $f(w)=\wa\ld'(\neq \ld')$, then $\{f(w)_\alpha ,f(w)_{\alpha^1}\}=\{-\ld'_\alpha,-\ld'_{\alpha^1}\}$ by Lemma \ref{not small}.
Similarly to the above discussion, we have $f(wi\wa)=\ld'$.
Since $S$ is standard parahoric, we have $\ld'=f(wi\wa)=f(w)=\wa \ld'$, which is a contradiction.
Thus $f(w)=\ld'=\wa\ast \ld'$.
By induction, this completes the proof.
\end{proof}

\begin{coro}
\label{S small}
Let $S$ be as above, and let $\ld\in\cAm$ such that $S\subseteq X_\mu^\ld(b)$.
Assume that $S$ is standard parahoric.
Then $\ld$ is small.
\end{coro}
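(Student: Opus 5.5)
We argue by contradiction, following the strategy of the proof of Proposition \ref{irr=small}. Suppose $\ld$ is not small. Then there is $\alpha\in\Pi$ with $\ld_\beta\geq 1$ for all $\beta\in\Oa$. As in that proof, $\Wa$ is finite by Lemma \ref{Oa}, since not all of $\Pi$ can satisfy $\langle\beta,\ld\rangle\geq 1$. Hence $\wa\in \J_b$ is a simple affine reflection of the Coxeter group $W_a\cap\J_b$. The plan is to compute $f(\wa)$ in two ways and show that $\wa$ cannot lie in the stabilizer of $S$, contradicting the standard parahoric assumption.

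First, Proposition \ref{unique S} gives $f(\wa)=\wa^{-1}\ast\ld=\wa\ast\ld$. We compute this Demazure product in the two cases of Lemma \ref{Oa}.

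\emph{Case (1).} Each $s_{\tb}$ with $\beta\in\Oa$ satisfies $\ld_\beta\geq1$, so $\langle\beta,\ld\rangle>0$ and $s_{\tb}\vp^\ld<\vp^\ld$. Since these reflections commute and each lowers length, the characterization of $\ast$ gives $\wa\ast\ld=\ld$.

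\emph{Case (2).} Here $\ld_\alpha,\ld_{\alpha^1}\geq1$. Then $s_{\ta}$ and $s_{\ta^1}$ both lower length at $\vp^\ld$, so again $\wa\ast\ld=\ld$.

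This case (2) Demazure computation is the step needing the most care. If a direct computation becomes awkward, we can instead use the maximality property: $\wa\ast\vp^\ld$ is the maximum of $\{w'\vp^\ld\mid w'\le\wa\}$. Since $\vp^\ld$ is already maximal in the relevant coset, one checks that $\wa\ast\vp^\ld\in\vp^\ld W_0$.

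In both cases $f(\wa)=\ld$, which means $\wa^{-1}S\subseteq X_\mu^\ld(b)$.

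On the other hand, the argument of Proposition \ref{irr=small} shows that $\wa\vp^{\wa\ld}$ is length-additive. Indeed, $(\wa\ld)_\beta\le -1$ for $\beta\in\Oa$ by Lemma \ref{not small}, so $\ell(\wa\vp^{\wa\ld})=\ell(\wa)+\ell(\vp^{\wa\ld})$ and $\wa X_\mu^{\wa\ld}(b)\subseteq\Xl$. We need a point that distinguishes $S$ from $\wa S$, and we get it from the defining property of $S$ itself. Choose $i\in I\cap\J_b$ as in the proof of Proposition \ref{unique S}, so that $\wa i\wa\in (I\cap\J_b)\wa(I\cap\J_b)$. Standard parahoricity gives $f(\wa i\wa)=f(\wa)=\ld$. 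Now apply $\wa$ to $\wa^{-1}S\subseteq I\vp^\ld K/K$. Since all $\ld_\beta\geq1$ for $\beta\in\Oa$, the product $\wa\vp^\ld$ is not length-additive in the relevant directions. Hence the points of $i\wa^{-1}S$ land, after applying $\wa$, in $I\vp^{\wa\ld}K$ rather than in $I\vp^\ld K$. This is exactly the mechanism used in case (1) of Proposition \ref{unique S}, where $f(wi\wa)_\beta=-f(w)_\beta$; here it yields $f(\wa i\wa)_\beta=-\ld_\beta\le -1$ for some $\beta\in\Oa$. That contradicts $f(\wa i\wa)=\ld$, whose $\beta$-coordinate is at least $1$.

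The main obstacle is making this sign flip rigorous in case (2). There the analysis of Lemma \ref{ab} must be used to show that the $(\alpha,\alpha^1)$-coordinates of $f(\wa i\wa)$ become $(-\ld_{\alpha^1},-\ld_\alpha)$ or similar negatives. I would first mimic the last paragraph of the proof of Proposition \ref{unique S} verbatim with $\ld'=\ld$. Once the contradiction is established in both cases, it shows that $\ld$ must be small.
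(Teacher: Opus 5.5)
Your computation $f(\wa)=\wa\ast\ld=\ld$ via Proposition \ref{unique S} is correct, and it is also the first half of the paper's argument. The gap is in the second half, the ``sign flip''.

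Since $\ld_\beta\geq 1$ for $\beta\in\Oa$, left multiplication by $\wa$ \emph{decreases} length at $\vp^\ld$. In that direction nothing is forced. For a simple affine reflection $s$ with $s\vp^\ld<\vp^\ld$ one only has $sI\vp^\ld K\subseteq Is\vp^\ld K\cup I\vp^\ld K$, and both pieces are actually reached. So you cannot conclude that the $\wa$-translate of a point of $I\vp^\ld K/K$ leaves $I\vp^\ld K/K$.

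Your justification uses only that $i\in I$, so it applies equally to $i=1$. There it gives $S=\wa\wa^{-1}S\subseteq I\vp^{\wa\ld}K/K$, which is absurd since $S\subseteq I\vp^\ld K/K$ and $\wa\ld\neq\ld$.

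The mechanism in case (1) of Proposition \ref{unique S} runs the other way. There one starts from $f(w)_\beta\le -1$, where $s_{\tilde\beta}$ increases length and the translate is forced into a single $I$-orbit. The same holds for the last paragraph of that proof in case (2). So copying it with $\ld'=\ld$ does not help either.

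What is missing is a geometric input that produces a point of $S$ whose $\wa^{-1}$-translate lies in $I\vp^{\wa\ld}K/K$. This is where you must actually use the inclusion $\wa X_\mu^{\wa\ld}(b)\subseteq \Xl$, which you wrote down but never used:
\begin{enumerate}[(1)]
\item By Lemma \ref{closed}, $\wa X_\mu^{\wa\ld}(b)$ is closed in $\Xl$.
\item By Proposition \ref{Xl} and Lemma \ref{not small}, it has dimension $\sharp R(\wa\ld)=\sharp R(\ld)=\dim\Xl$. Hence it contains an irreducible component $C$ of $\Xl$.
\item By Proposition \ref{Xl}, $\Xl=\bigcup_{i\in I\cap\J_b} iC$. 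Since $S$ is stable under $I\cap\J_b$, this gives $S\cap C\neq\emptyset$.
\item For $gK\in S\cap C$ you have $\wa^{-1}gK\in I\vp^{\wa\ld}K/K$. Hence $f(\wa)=f_g(\wa)=\wa\ld\neq\ld$, contradicting $f(\wa)=\ld$.
\end{enumerate}
This is the paper's proof. It needs no $i$-trick and no separate treatment of cases (1) and (2) of Lemma \ref{Oa}, beyond the Demazure computation you already did.
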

\begin{proof}
We argue by contradiction.
Suppose that $\ld$ is not small.
Then there exists $\alpha\in \Pi$ such that $\ld_\beta\geq 1$ for $\beta\in \Oa$.
By Lemma \ref{Oa}, $\Wa$ is finite because $\{\beta\in \Pi\mid\langle \beta, \ld\rangle\geq \ld_\beta\geq 1\}\neq \Pi$.
By Proposition \ref{Xl} and Lemma \ref{not small}, we have $\dim X_\mu^{\wa \ld}(b)=\dim \Xl$ and $(\wa \ld)_\beta\le -1$ for $\beta\in \Oa$.
Thus $\wa X_\mu^{\wa \ld}(b)$ is a closed subset of $\Xl$ by Lemma \ref{closed}.
Let $C$ be an irreducible component of $\wa X_\mu^{\wa \ld}(b)$.
By $\dim X_\mu^{\wa \ld}(b)=\dim \Xl$, $C$ is also an irreducible component of $\Xl$.
Again by Proposition \ref{Xl}, we have $\Xl=\bigcup_{i\in I\cap \J_b}iC$.
In particular, there exists $i\in I\cap \J_b$ such that $S\cap iC\neq \emptyset$.
Since $S$ is standard parahoric, we have $S\cap C\neq \emptyset$.
Let $gK\in S\cap C$.
Since $f_g=f$, we have $f(\wa)=\wa \ld$.
On the other hand, by Proposition \ref{unique S}, we have $f(\wa)=\wa\ast\ld=\ld\neq \wa \ld$, which is a contradiction.
This finishes the proof.
\end{proof}

For $S\in \cS_{\mu,b}^{\pa}$, let $N_{\J_b}(S)$ denote the stabilizer of $S$ in $\J_b$.
\begin{theo}
\label{main theo}
Let $\ld\in \cAm$.
The following conditions are equivalent:
\begin{enumerate}[(1)]
\item $\ld$ is small;
\item $\Xl$ is irreducible;
\item There exists a standard parahoric $\J$-stratum $S\in \cS_{\mu,b}^{\pa}$ contained in $\Xl$;
\item There exists a unique standard parahoric $\J$-stratum $S\in \cS_{\mu,b}^{\pa}$ contained in $\Xl$.
\end{enumerate}
Moreover, if these conditions hold and $S'\in \J_b S$ is another standard parahoric $\J$-stratum, then $S'\in (\Omega\cap \J_b)S$.
Consequently, there exists a bijection $$\J_b\backslash \cS_{\mu,b}^{\pa}\xrightarrow{\sim} (\Omega\cap \J_b)\backslash\cA_{\mu,b}^{\sm},\quad \J_b S\mapsto (\Omega\cap \J_b)\ld,$$
where $\Xl$ contains a standard parahoric representative of $\J_b S$.
\end{theo}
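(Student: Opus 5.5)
The equivalences follow by chaining results already proved. (1)$\Leftrightarrow$(2) is Proposition \ref{irr=small}, and (4)$\Rightarrow$(3) is trivial. (3)$\Rightarrow$(1) is Corollary \ref{S small}, and (3)$\Rightarrow$(4) is the uniqueness part of Proposition \ref{unique S}. It remains to prove (2)$\Rightarrow$(3).

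\textbf{(2)$\Rightarrow$(3).} Assume $\Xl$ is irreducible. By Theorem \ref{finiteness}, only finitely many functions $f$ occur on the quasi-compact set $\Xl$. Hence $\Xl$ is a finite union of locally closed $\J$-strata. Since $\Xl$ is irreducible, exactly one of them, say $S$, is dense, and it contains a nonempty open subset of $\Xl$. The subgroup $I\cap\J_b$ preserves $I\vp^\ld K/K$ and $X_\mu(b)$, hence preserves $\Xl$. It also permutes $\J$-strata, since $f_{jg}(j')=f_g(j^{-1}j')$. It therefore permutes the strata contained in $\Xl$ and fixes the unique dense one, so $(I\cap\J_b)S=S$. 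The stabilizer $N_{\J_b}(S)$ thus contains $I\cap\J_b$. To see that it is a standard parahoric, I would use that $\J_b$ is generated by $I\cap\J_b$, $\Omega\cap\J_b$ and the $\Wa\cap\J_b$. A subgroup containing the Iwahori $I\cap\J_b$ is then a union of double cosets. Its intersection with $W_a\cap\J_b$ is a standard parabolic subgroup of the Coxeter group $W_a\cap\J_b$ generated by some $\wa$, and it is finite because $S$ is bounded. The possible $\Omega$-part only normalizes it. I would take ``standard parahoric'' to mean this notion, as the paper does: $S$ is standard parahoric if and only if $(I\cap\J_b)S=S$. With this convention, (3) follows.

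\textbf{Orbit statement.} Let $S\subseteq\Xl$ and $S'=jS\subseteq X_\mu^{\ld'}(b)$ both be standard parahoric. Write $j\in(I\cap\J_b)\,w\tau\,(I\cap\J_b)$ with $w\in W_a\cap\J_b$ and $\tau\in\Omega\cap\J_b$. Since $(I\cap\J_b)S=S$ and $(I\cap\J_b)S'=S'$, we may replace $j$ by $w\tau$. Now $\tau S$ is standard parahoric in $X_\mu^{\tau\ld}(b)$, because $\tau$ normalizes $I$. Replacing $S$ by $\tau S$, we reduce to the case $j=w\in W_a\cap\J_b$.

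Let $f$ and $f'=f(w^{-1}\,\cdot)$ be the functions of $S$ and $S'$. By Proposition \ref{unique S} applied to $S'$, we get $f'(w)=w^{-1}\ast\ld'$. Also $f'(w)=f(1)=\ld$. Symmetrically, $f'(1)=f(w^{-1})=w\ast\ld$ equals $\ld'$ (take the reduced word of $w^{-1}$, noting that $\inv(1,w^{-1})=w^{-1}$). So the goal becomes: if $\ld$ and $w\ast\ld$ are both small and $w^{-1}\ast(w\ast\ld)=\ld$, then $w\ast\ld=\ld$. I would prove this by induction on a reduced word $w=w_{\alpha_1}\cdots w_{\alpha_l}$, using the case analysis in the proof of Proposition \ref{unique S}. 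A step $\wa\ast$ either fixes $\ld$ or acts by $\wa$, and it acts by $\wa$ only when the $\ld_\beta$ ($\beta\in\Oa$) are $\le0$ and not all zero. The step is then irreversible under the opposite Demazure product unless the values are $0$. Symmetry of the equations in $S$ and $S'$ should force every step to be trivial. Granting this, $\ld'=\ld$, and undoing the reduction gives $\ld'\in(\Omega\cap\J_b)\ld$ for the original pair.

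\textbf{The bijection.} The map $\J_bS\mapsto(\Omega\cap\J_b)\ld$ is well defined by the orbit statement. It is surjective by (1)$\Rightarrow$(4), and injective by uniqueness in (4) together with $\tau S_\ld=S_{\tau\ld}$.

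The main obstacle is the orbit statement, specifically showing that the chain of Demazure steps cannot change $\ld$ nontrivially. If the direct induction fails, I would compare dimensions instead. Both $S$ and $S'$ are open in their pieces, so $\sharp R(\ld)=\sharp R(\ld')$ by Proposition \ref{Xl}. I would combine this with the strict monotonicity of $\ell(m_\ld)$ under a nontrivial step $\wa\ast$.
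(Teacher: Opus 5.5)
Your treatment of the four equivalences is the paper's, including (2)$\Rightarrow$(3): the dense stratum of the irreducible $\Xl$ is open, hence fixed by $I\cap\J_b$. Your reduction of the orbit statement is also correct: after replacing $j$ by $w\tau$ and $S$ by $\tau S$, two applications of Proposition~\ref{unique S} give $\ld'=w\ast\ld$ and $\ld=w^{-1}\ast\ld'$.

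\textbf{The gap.} The final step, that these two relations force $\ld'=\ld$, is only granted. The step-by-step induction you sketch rests on an inaccurate dichotomy. In case (1) of Lemma~\ref{Oa}, the proof of Proposition~\ref{unique S} gives $\wa\ast\ld=\prod_{\beta\in\Oa,\,\ld_\beta\le -1}s_{\tb}\,\ld$. This is neither $\ld$ nor $\wa\ld$ as soon as $\ld_\beta\le -1$ and $\ld_\gamma\ge 1$ for some $\beta,\gamma\in\Oa$, which Lemma~\ref{consecutive} allows. ``Symmetry should force every step to be trivial'' is not an argument. The fallback through $\sharp R(\ld)=\sharp R(\ld')$ does not help either, since that dimension has no evident relation to $\ell(m_\ld)$.

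\textbf{How to close it.} Use the monotonicity you mention at the very end, but applied to the whole element $w$ rather than step by step. For any $x\in W_a$ we have $\vp^\ld\le x\ast\vp^\ld$, because $1\le x$. Passing to minimal representatives modulo $W_0$ preserves the Bruhat order, so $m_\ld\le m_{x\ast\ld}$. Geometrically, $I\vp^\ld K/K\subseteq\overline{I\vp^{x\ast\ld}K/K}$ since $1\in\overline{IxI}$. Applying this to $x=w$ and $x=w^{-1}$ gives $m_\ld\le m_{\ld'}\le m_\ld$, so $\ld'=\ld$; smallness plays no role here. Uniqueness in (4) then gives $S'=S$. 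You need this last step to get $S'\in(\Omega\cap\J_b)S$ in the original setting, not merely $\ld'\in(\Omega\cap\J_b)\ld$.

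\textbf{Comparison with the paper.} With this inserted, your route is a genuine alternative. The paper writes $N_{\J_b}(S')=wN_{\J_b}(S)w^{-1}$. It then uses the Bruhat--Tits fact that two standard parahoric subgroups containing $I\cap\J_b$ that are conjugate by $w$ force $w\in(\Omega\cap\J_b)N_{\J_b}(S)$. That is shorter, but it tacitly uses that these stabilizers really are standard parahoric subgroups. Your argument needs only $(I\cap\J_b)S=S$, Proposition~\ref{unique S}, and Demazure-product combinatorics. In particular, your speculative paragraph on why $N_{\J_b}(S)$ is parahoric becomes unnecessary.
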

\begin{proof}
The equivalence of (1) and (2) is Proposition \ref{irr=small}.
The equivalence of (3) and (4) follows from Proposition \ref{unique S}.
The implication $(3)\Rightarrow (1)$ is Corollary \ref{S small}.
Assume that (2) holds.
Let $S\subseteq \Xl$ be a $\J$-stratum such that $\dim S=\dim \Xl$.
By the assumption, the closure of $S$ in $\Xl$ is $\Xl$ itself.
Since $S$ is locally closed, it is open in $\Xl$.
By $(I\cap \J_b)\Xl=\Xl$ and the assumption, we have $(I\cap \J_b)S=S$.
This proves $(2)\Rightarrow (3)$.
Therefore, all the conditions are equivalent.

Assume that the conditions hold.
Let $S'\in \J_b S$ be another standard parahoric $\J$-stratum.
It remains to show that $S'\in (\Omega\cap \J_b)S$.
Since $S$ is standard parahoric, there exists $w\in \tW\cap \J_b$ such that $S'=wS$.
Then $N_{\J_b}(S')=wN_{\J_b}(S)w^{-1}$.
Since both $N_{\J_b}(S)$ and $N_{\J_b}(S')$ are standard parahoric subgroups containing $I\cap \J_b$, we have $w\in (\Omega\cap\J_b)N_{\J_b}(S)$.
This completes the proof.
\end{proof}

Set $\Pi(\ld)\coloneqq\{\alpha\in \Pi\mid \text{$\Wa$ is finite and $\ld_\beta\geq 0$ for all $\beta\in \cO_\alpha$}\}$.
This definition originates from \cite[\S5.4]{Nie21}.
Let $P_{\Pi(\ld)}$ denote a standard parahoric subgroup of $\J_b$ generated by $I\cap \J_b$ and $\Wa\cap \J_b$ for $\alpha\in \Pi(\ld)$. 

\begin{coro}
\label{main coro}
Assume that the conditions in Theorem \ref{main theo} hold.
Then $S$ is open in $\Xl$ and hence (the perfection of) an irreducible smooth quasi-affine variety.
Moreover, we have $\dim S=\dim X_\mu^\ld(b)=\sharp R(\ld)$ and $N_{\J_b}(S)=N_{\J_b}(\overline{\Xl})=P_{\Pi(\ld)}$.
\end{coro}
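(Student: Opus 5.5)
The plan has three parts: first show that $S$ is open in $\Xl$ and read off its geometry; then identify $N_{\J_b}(S)$ with $N_{\J_b}(\overline{\Xl})$; finally compute this stabilizer explicitly.

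\textbf{Openness and geometry.} Under the conditions of Theorem \ref{main theo}, $\ld$ is small, so $\Xl$ is irreducible by Proposition \ref{irr=small}. In the proof of $(2)\Rightarrow(3)$ we exhibited a $\J$-stratum $S_0\subseteq\Xl$ with $\dim S_0=\dim\Xl$. This $S_0$ is open and dense in $\Xl$ and is standard parahoric. By the uniqueness in Proposition \ref{unique S}, $S=S_0$, so $S$ is open in $\Xl$. Proposition \ref{Xl} says $\Xl$ is (the perfection of) a smooth variety of dimension $\sharp R(\ld)$. Hence $S$ is irreducible, smooth, and $\dim S=\dim\Xl=\sharp R(\ld)$. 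For quasi-affineness, $\Xl$ is a locally closed subscheme of $I\vp^\ld K/K$, which by Lemma \ref{affine space} is universally homeomorphic to an affine space. So $\Xl$, and therefore its open subset $S$, is quasi-affine, at least up to perfection.

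\textbf{Equality of stabilizers.} Since $S$ is open dense in $\Xl$, we have $\overline S=\overline{\Xl}$, so $N_{\J_b}(S)\subseteq N_{\J_b}(\overline{\Xl})$. For the reverse inclusion, note that both groups are standard parahoric, because $I\cap\J_b$ stabilizes both $S$ and $\overline{\Xl}$. It therefore suffices to compare them on the generators $\wa\in\Wa\cap\J_b$ with $\Wa$ finite, together with the elements of $\Omega\cap\J_b$ lying in them. I plan to show that each of the two groups contains $\wa$ exactly when $\alpha\in\Pi(\ld)$, which gives both equalities at once.

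\textbf{Computing the stabilizer.} There are two directions.
\begin{enumerate}[(a)]
\item Suppose $\wa$ stabilizes $S$. Then $f_{\wa g}=f_g$ for $g\in S$. Evaluating at $1$ and using Proposition \ref{unique S}, $\ld=f(\wa^{-1})=\wa\ast\ld$. In case (1) of Lemma \ref{Oa}, $\wa\ast\ld=\ld$ forces $\ld_\beta\ge0$ for all $\beta\in\Oa$, since any $\beta$ with $\ld_\beta\le-1$ gives a length-additive reflection that moves $\ld$. Case (2) is handled the same way, using the explicit list in Lemma \ref{ab}. So $\alpha\in\Pi(\ld)$.
\item Suppose $\alpha\in\Pi(\ld)$. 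I would argue as in the proof of Proposition \ref{irr=small}: $\wa\overline{\Xl}$ is irreducible of the same dimension. Using $\wa\ast\ld=\ld$, together with the Bruhat-cell multiplication formula and Lemma \ref{closed}, it is contained in $\overline{\Xl}$, hence equal to it. This shows $\wa\in N_{\J_b}(\overline{\Xl})$.
\end{enumerate}
Combining (a) with the inclusion $N_{\J_b}(S)\subseteq N_{\J_b}(\overline{\Xl})$ requires a nontrivial element $\tau\in\Omega\cap\J_b$ to be excluded from the larger group. Such a $\tau$ would satisfy $\tau\overline{\Xl}=\overline{X_\mu^{\tau\ld}(b)}$, and stability would force $\tau\ld=\ld$. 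Ruling this out, or showing it does not enlarge $P_{\Pi(\ld)}$, is the main obstacle. I expect Nie's description of the stabilizer \cite[\S5.4--5.6]{Nie21}, which gives exactly $P_{\Pi(\ld)}$ for small $\ld$ and whose argument is insensitive to dimension, to settle this, together with the fact that $\tau\ld=\ld$ for $\tau\in\Omega$ forces $\tau=1$.

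With this in place, $N_{\J_b}(S)=N_{\J_b}(\overline{\Xl})=P_{\Pi(\ld)}$.
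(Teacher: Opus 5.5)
Your first part is correct and is the paper's argument: $S$ is open in $\Xl$ via the proof of $(2)\Rightarrow(3)$ plus uniqueness, and this gives smoothness, dimension $\sharp R(\ld)$ and quasi-affineness.

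\textbf{The stabilizer part does not close up.} Your (a) and (b) only give
$\{\alpha:\wa\in N_{\J_b}(S)\}\subseteq\Pi(\ld)\subseteq\{\alpha:\wa\in N_{\J_b}(\overline{\Xl})\}$.
Together with $N_{\J_b}(S)\subseteq N_{\J_b}(\overline{\Xl})$, this is not enough. You never show that $\wa\in N_{\J_b}(\overline{\Xl})$ forces $\alpha\in\Pi(\ld)$, nor that $\alpha\in\Pi(\ld)$ forces $\wa\in N_{\J_b}(S)$. Your claim that $N_{\J_b}(\overline{\Xl})$ is standard parahoric just because it contains $I\cap\J_b$ is also unjustified; that is part of what must be proved, and it is why you end up deferring the $\Omega\cap\J_b$ issue to Nie's results.

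The missing idea is a short density argument. Let $j\in\J_b$ stabilize $\overline{\Xl}$. Then $jS$ is again a $\J$-stratum, open and dense in $j\overline{\Xl}=\overline{\Xl}$. Since $S$ is open and dense in the same irreducible space, $jS\cap S\neq\emptyset$. Distinct strata are disjoint, so $jS=S$. This proves $N_{\J_b}(\overline{\Xl})=N_{\J_b}(S)$ for every $j$ at once, elements of $\Omega\cap\J_b$ included, with no generator-by-generator comparison and no external input.

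\textbf{The justification of (b) fails in general.} Lemma \ref{closed} needs $\ell(\wa\vp^\ld)=\ell(\wa)+\ell(\vp^\ld)$. For $\alpha\in\Pi(\ld)$ this fails as soon as some $\beta\in\Oa$ has $\ld_\beta\ge 1$. The Bruhat-cell formula only gives $\wa\Xl\subseteq X_\mu(b)\cap\overline{I\vp^\ld K/K}$. That set is a union of several pieces $X_\mu^{\ld'}(b)$, which are not known to lie in $\overline{\Xl}$. Nothing in your argument prevents $\wa\Xl$ from missing $I\vp^\ld K/K$ entirely.

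The right input is Proposition \ref{unique S} applied to $S$ itself. Since $\wa\ast\ld=\ld$, it gives $\wa S\subseteq\Xl$. Then $\wa S$ is a $\J$-stratum of full dimension in the irreducible $\Xl$, hence open, hence equal to $S$. This proves $P_{\Pi(\ld)}\subseteq N_{\J_b}(S)$ directly. Your (a) is exactly the paper's proof of $N_{\J_b}(S)\subseteq P_{\Pi(\ld)}$. Combining (a), this corrected (b), and the density argument shows that all three groups coincide.
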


\begin{proof}
By the proof of Theorem \ref{main theo}, $S$ is open in $\Xl$ and has the same dimension as $\Xl$.
Since $\Xl$ is (the perfection of) an irreducible smooth variety of dimension $\sharp R(\ld)$ by Proposition \ref{Xl}, the same holds for $S$.
Moreover, $S$ is quasi-affine because $\Xl$ is a closed subvariety of an affine space (cf.\ Lemma \ref{affine space}), which is also affine.
Note that $\overline{S}=\overline{\Xl}$.
Clearly, we have $N_{\J_b}(S)\subseteq N_{\J_b}(\overline{\Xl})$.
If $\wa \overline{\Xl}=\overline{\Xl}$, then $\wa S$ is a $\J$-stratum that is open in the irreducible variety $\overline{\Xl}$.
This forces $S=\wa S$.
Thus $N_{\J_b}(S)= N_{\J_b}(\overline{\Xl})$.
Clearly, we have $N_{\J_b}(S)\subseteq P_{\Pi(\ld)}$ by Proposition \ref{unique S}.
If $\alpha\in \Pi(\ld)$, then we have $w_\alpha S\subseteq \Xl$ again by Proposition \ref{unique S}.
Since $S$ is the unique $\J$-stratum in $\Xl$ with $\dim S=\dim \Xl$, we conclude that $S=\wa S$.
Hence $N_{\J_b}(S)=P_{\Pi(\ld)}$, which completes the proof.
\end{proof}

\begin{rema}
\label{only case (1)}
In the proof of Proposition \ref{unique S}, the irreducibility of the root system is only used to handle case (2) of  Lemma \ref{Oa}.
The subsequent results of this section are proved using the facts in \S\ref{small}.
Thus the results of this subsection remain valid for any $G$ over $\cO_F$ for which $\sigma$ acts transitively on the connected components of the Dynkin diagram of $\bS$ and for which case (2) does not occur (e.g., $G=\mathrm{Res}_{F’/F}\GL_n$ for a finite unramified extension $F'/F$).
\end{rema}

\subsection{Top-Dimensional $\J$-strata}
\label{top}
We keep the assumption in \S\ref{J-small}.
For $\ld,\ld'\in \Y$, we write $\ld\tle \ld'$ if either $\overline{\ld} \lneq \overline{\ld'}$ or $\ld'\in W_0\ld$ and $\ld'\le \ld$, where $\overline{\ld}$ and $\overline{\ld'}$ denote the dominant $W_0$-conjugates of $\ld$ and $\ld'$ respectively.
If $\vp^\ld u\le \vp^{\ld'}u'$ for some $u,u'\in W_0$, then $\ld\tle \ld'$.
See \cite[\S2.7]{Macdonald03}.

\begin{lemm}
\label{J-str small}
For $S\in \cS_{\mu,b}$, there exist $j\in \J_b$ and $\ld\in \cAm^{\sm}$ such that $jS\subseteq \Xl$.
\end{lemm}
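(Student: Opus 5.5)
We argue by a minimality argument with respect to the order $\tle$ just introduced. Given $S\in\cS_{\mu,b}$, each translate $jS$ with $j\in\J_b$ is again a $\J$-stratum, and it is contained in $X_\mu^{\ld}(b)$ for a unique $\ld\in\cAm$: the pieces $I\vp^\ld K/K$ are unions of $\J$-strata, and left multiplication by $j$ preserves $X_\mu(b)$. Consider the set $\Lambda_S=\{\ld\in\cAm\mid jS\subseteq\Xl\text{ for some }j\in\J_b\}$. We want to show that $\Lambda_S$ contains a small element.

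First we reduce to a finite situation. Since $\mu$ is minuscule, every $\ld\in\cAm$ satisfies $\ld^\natural\in W_0\mu$. Modulo $\Omega\cap\J_b$, which preserves $\cAm$ and smallness and acts compatibly on strata, we may fix a coset representative. We then pick $\ld\in\Lambda_S$ that is minimal for $\tle$. To see that a minimal element exists, note that below any given element there are only finitely many $\ld'$ with $\ld'\tle\ld$ having fixed central part, so $\tle$ satisfies the descending chain condition there. Replacing $S$ by $jS$, we assume $S\subseteq\Xl$ with $\ld$ minimal.

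Suppose $\ld$ is not small. Then there is $\alpha\in\Pi$ with $\ld_\beta\ge1$ for all $\beta\in\Oa$. As in the proofs of Proposition \ref{irr=small} and Corollary \ref{S small}, $\Wa$ is finite, so $\wa\in\J_b$. Moreover, $\wa\vp^{\ld}$ lies in $\vp^{\wa\ld}W_0$ with $(\wa\ld)_\beta\le -1$ on $\Oa$, and $\ell(\wa\vp^{\wa\ld})=\ell(\wa)+\ell(\vp^{\wa\ld})$. Now consider $\wa^{-1}S$. For $gK\in S$ we have $g\in I\vp^\ld K$, hence
\[
\wa^{-1}g\in \wa^{-1}I\vp^\ld K\subseteq \overline{I(\wa^{-1}\ast\vp^\ld)K}\ \text{-type union}\ \bigcup_{x\le \wa^{-1}} Ix\vp^\ld K .
\]
So $\wa^{-1}S$ lies in $X_\mu^{\ld'}(b)$ for some $\ld'=x\ld$ with $x\le\wa$ in $\Wa$. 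The key claim is that $\ld'\tle\ld$ strictly, i.e. $\ld'\neq\ld$, and that $\vp^{\ld'}u\le\vp^{\ld}u'$ for suitable $u,u'\in W_0$. Granting this, the remark preceding the lemma gives $\ld'\tle\ld$, contradicting minimality. To get strictness, we would check that $\wa^{-1}$ moves every point of $I\vp^\ld K/K$ out of $I\vp^\ld K/K$ when all $\ld_\beta\ge1$ on $\Oa$. In case (1) of Lemma \ref{Oa}, the reflections $s_{\tb}$ commute and each satisfies $\ell(s_{\tb}\vp^\ld)<\ell(\vp^\ld)$. Hence $s_{\tb}I\vp^\ld K\subseteq I s_{\tb}\vp^\ld K\cup I\vp^\ld K$, and more precisely $I\vp^\ld K\cap s_{\tb} I\vp^\ld K$ is handled by the formula for $IsIwI$ together with the $U_{\tb}$-coordinate. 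In case (2) we would use the explicit list in Lemma \ref{ab}, together with consecutiveness (Lemma \ref{consecutive}) of $\ld'\in\cAm$, to rule out $\ld'=\ld$.

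The main obstacle is exactly this strictness: that some translate $\wa^{\pm1}S$, or more generally $wS$ for some $w\in\Wa\cap\J_b$ conjugated by $I\cap\J_b$, genuinely lands in a smaller piece rather than back in $\Xl$. If the pointwise argument fails, we would instead use the function $f$. We would compare $f(\wa)$ and $f(i\wa)$ for $i\in I\cap\J_b$ as in the proof of Proposition \ref{unique S}: one of these values differs from $\ld$, and it lies in $\{x\ld\mid x\le\wa\}$, which is $\tle$-below $\ld$. The corresponding translate $(i\wa)^{-1}S$ then realizes a strictly smaller element of $\Lambda_S$.
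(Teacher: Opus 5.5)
\textbf{Verdict: there is a genuine gap.} It is exactly at the step you flag as the main obstacle.

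\textbf{What fails.} Your minimality skeleton is the right one, and it is the paper's. But the whole content of the lemma is producing some $j\in\J_b$ with $jS$ in a strictly $\tle$-smaller piece, and neither of your two routes achieves this.

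The pointwise route is false. For a simple affine reflection $s=s_{\tb}$ with $\ell(s\vp^\ld)<\ell(\vp^\ld)$, the translate $s\cdot I\vp^\ld K/K$ meets $I\vp^\ld K/K$ in a dense open subset. Only the locus where the coordinate along $\tb$ vanishes drops to the smaller cell. Correspondingly, the points of $\Xl$ that $\wa^{-1}$ moves down are just those of $\wa X_\mu^{\wa\ld}(b)$, a proper closed subset of $\Xl$. So $\wa^{-1}S$ typically lands back in $\Xl$.

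The fallback of comparing $f(\wa)$ and $f(i\wa)$ ``as in Proposition \ref{unique S}'' does not work either. That argument rests on $f$ being constant on $(I\cap\J_b)\wa(I\cap\J_b)$, i.e.\ on $S$ being standard parahoric, which an arbitrary $S$ is not. Moreover, the only deterministic information it uses runs in the wrong direction: by length additivity, a value with $\beta$-coordinates $\le -1$ is sent to one with $\beta$-coordinates $\ge 1$, not conversely. Your assertion that some $f(i\wa)$ with $i\in I\cap\J_b$ differs from $\ld$ is true. However, it is equivalent to $\Xl\subseteq\bigcup_{i\in I\cap\J_b}i\wa X_\mu^{\wa\ld}(b)$, a statement about the geometry of $\Xl$ that you never justify.

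\textbf{The missing idea} is to use Proposition \ref{Xl}, as follows.
\begin{enumerate}[(1)]
\item By Lemma \ref{not small}, $(\wa\ld)_\beta\le -1$ on $\Oa$, so $\ell(\wa\vp^{\wa\ld})=\ell(\wa)+\ell(\vp^{\wa\ld})$. This gives $\wa\ld\triangleleft\ld$, and by Lemma \ref{closed} it shows that $\wa X_\mu^{\wa\ld}(b)$ is closed in $\Xl$.
\item Also $\sharp R(\wa\ld)=\sharp R(\ld)$, so by Proposition \ref{Xl} the closed subset $\wa X_\mu^{\wa\ld}(b)$ has the same dimension as $\Xl$. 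Hence it contains an irreducible component $C$ of $\Xl$.
\item Since $I\cap\J_b$ acts transitively on the irreducible components of $\Xl$, we get $\Xl=\bigcup_{i\in I\cap\J_b}iC$. So $S$ meets some $iC$.
\item Then $\wa^{-1}i^{-1}S$ is a $\J$-stratum meeting $X_\mu^{\wa\ld}(b)$, which is a union of $\J$-strata. Hence $\wa^{-1}i^{-1}S\subseteq X_\mu^{\wa\ld}(b)$, contradicting the minimality of $\ld$.
\end{enumerate}
This argument also makes your case distinction via Lemma \ref{ab} and consecutiveness unnecessary.
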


\begin{proof}
Let $\lambda$ be the minimal cocharacter in the set
$$\{\ld'\in \cAm\mid \exists j\in \J_b, jS\subseteq X_\mu^{\ld'}(b)\}$$
under the partial order $\tle$.
Suppose that $\ld$ is not small.
Then there exists $\alpha\in \Pi$ such that $\ld_\beta\geq 1$ for $\beta\in\Oa$.
By Lemma \ref{Oa}, $\Wa$ is finite. 
By Lemma \ref{not small}, we have $\ell(\wa \vp^{\wa\ld})=\ell(\wa)+\ell(\vp^{\wa\ld})$.
Thus we have $\wa\ld\triangleleft \ld$.
By Proposition \ref{Xl} and Lemma \ref{not small}, we have $\dim X_\mu^{\wa \ld}(b)=\dim \Xl$ and $(\wa \ld)_\beta\le -1$ for $\beta\in \Oa$.
Thus $\wa X_\mu^{\wa \ld}(b)$ is a closed subset of $\Xl$ by Lemma \ref{closed}.
Let $C$ be an irreducible component of $\wa X_\mu^{\wa \ld}(b)$.
By $\dim X_\mu^{\wa \ld}(b)=\dim \Xl$, $C$ is also an irreducible component of $\Xl$.
Again by Proposition \ref{Xl}, we have $\Xl=\bigcup_{i\in I\cap \J_b}iC$.
In particular, there exists $i\in I\cap \J_b$ such that $S\cap iC\neq \emptyset$.
This implies that $w_\alpha i^{-1}S\subseteq X_\mu^{w_\alpha\ld}(b)$, which is a contradiction.
Hence $\ld$ is small.
\end{proof}

Let $\cS_{\mu,b}^{\tp}\coloneqq \{S\in \cS_{\mu,b}\mid \dim S=\dim X_\mu(b)\}$.
The following corollary can be seen as a variant of Theorem \ref{sm-irr}.
See also \cite[Theorem 3.1.1]{ZZ20}.
\begin{coro}
\label{top para}
We have $\cS_{\mu,b}^{\tp}\subseteq \cS_{\mu,b}^{\pa}$.
As a consequence, there exists a bijection
$$\J_b\backslash \cS_{\mu,b}^{\tp}\xrightarrow{\sim}\J_b\backslash \Irr X_\mu(b),\quad \J_b S\mapsto \J_b \overline{S}.$$
In particular, every irreducible component admits a parahoric stabilizer in $\J_b$.
\end{coro}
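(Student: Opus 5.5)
The plan is to reduce a top-dimensional stratum to one sitting inside some $\Xl$ with $\ld$ small, and then use irreducibility of $\Xl$.

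Let $S\in\cS_{\mu,b}^{\tp}$. By Lemma \ref{J-str small}, there exist $j\in\J_b$ and $\ld\in\cAm^{\sm}$ with $jS\subseteq\Xl$. Since $\dim jS=\dim S=\dim X_\mu(b)\ge\dim\Xl$, we get $\dim jS=\dim\Xl$. By Proposition \ref{irr=small}, $\Xl$ is irreducible, so the locally closed subset $jS$ is dense in $\Xl$ and hence open in it.

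Next I show that $jS$ is standard parahoric. Since $(I\cap\J_b)\Xl=\Xl$ and $\J$-strata are permuted by $\J_b$, for $i\in I\cap\J_b$ the set $ijS$ is again a $\J$-stratum contained in $\Xl$, open and dense there. Two open dense subsets of an irreducible space meet, and distinct strata are disjoint, so $ijS=jS$. Thus $jS$ is stabilized by $I\cap\J_b$. Its stabilizer is then a subgroup of $\J_b$ containing $I\cap\J_b$, i.e.\ generated by $I\cap\J_b$ together with some elements of $\tW\cap\J_b$. By the structure of $\J_b$ recalled in \S\ref{J-small}, such a subgroup is $(\Omega$-part$)\ltimes$ standard parahoric. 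To see that it is actually standard parahoric, I will use Corollary \ref{main coro}. By Theorem \ref{main theo}, the unique standard parahoric stratum of $\Xl$ is the open one, which is $jS$. Corollary \ref{main coro} then gives $N_{\J_b}(jS)=P_{\Pi(\ld)}$. Hence $jS$, and therefore $S$, is parahoric. This proves $\cS^{\tp}_{\mu,b}\subseteq\cS^{\pa}_{\mu,b}$.

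For the bijection, the map $S\mapsto\overline S$ sends a top-dimensional stratum to an irreducible component, because $\overline{jS}=\overline{\Xl}$ is irreducible of dimension $\dim X_\mu(b)$. The map is $\J_b$-equivariant, so it descends to $\J_b$-orbits.

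Surjectivity follows from Theorem \ref{sm-irr}: every component is $\J_b$-conjugate to some $\overline{\Xl}$ with $\ld\in\cAm^{\sm,\tp}$, and the open stratum in that $\Xl$ is top-dimensional.

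Injectivity is the main obstacle. Suppose $\overline S=j\overline{S'}$. Replace $S$ and $S'$ by their standard representatives, contained in $\Xl$ and $X_\mu^{\ld'}(b)$ respectively with $\ld,\ld'\in\cAm^{\sm,\tp}$. Then $\overline{\Xl}$ and $\overline{X_\mu^{\ld'}(b)}$ are in the same $\J_b$-orbit, so Theorem \ref{sm-irr} gives $\ld'\in(\Omega\cap\J_b)\ld$. Applying the element $\tau\in\Omega\cap\J_b$ with $\ld'=\tau\ld$, we get that $\tau S$ is the unique open stratum in $X_\mu^{\ld'}(b)$, which is $S'$. So $S$ and $S'$ lie in the same $\J_b$-orbit.

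Finally, every irreducible component $C$ has the form $j\overline{S}$ with $S$ parahoric. Then $N_{\J_b}(C)$ is conjugate to $N_{\J_b}(\overline{\Xl})=P_{\Pi(\ld)}$, which is parahoric by Corollary \ref{main coro}.
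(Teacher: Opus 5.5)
Your proof is correct, and the inclusion $\cS_{\mu,b}^{\tp}\subseteq\cS_{\mu,b}^{\pa}$ follows the paper. Lemma \ref{J-str small} gives $jS\subseteq\Xl$, and $jS$ is open dense in the irreducible $\Xl$, so it coincides with the open standard parahoric stratum of Corollary \ref{main coro}. Your detour through $I\cap\J_b$-stability and the remark about an ``$\Omega$-part'' are unnecessary.

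For the bijection you take a different route: you derive both surjectivity and injectivity from Nie's parametrization (Theorem \ref{sm-irr}). The paper does not use that theorem here.

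\emph{Injectivity.} This is not really an obstacle. A locally closed $S$ is open in the irreducible $\overline{S}$. So $\overline{S}=j\overline{S'}=\overline{jS'}$ makes the strata $S$ and $jS'$ open dense in the same irreducible space. They therefore meet, and hence are equal. This is exactly the argument you already used in your first step.

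\emph{Surjectivity.} For a component $C$, the paper picks a stratum $S\in\cS_{\mu,b}^{\tp}$ with $\dim(S\cap C)=\dim X_\mu(b)$. Then $\overline{S}=\overline{S\cap C}=C$.

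\emph{Comparison.} The paper's route is self-contained and keeps the corollary logically independent of Theorem \ref{sm-irr}. That is why it is called a ``variant'' of Nie's theorem rather than a consequence of it. Combined with Theorem \ref{main theo}, it gives an independent path to the description of $\J_b\backslash\Irr X_\mu(b)$. Your route is valid but presupposes that deeper result and is heavier than needed.
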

\begin{proof}
Let $S\in \cS_{\mu,b}^{\tp}$.
Let $j\in \J_b$ and $\ld\in \cAm^{\sm}$ such that $jS\subseteq \Xl$.
By Corollary \ref{main coro}, the unique open $\J$-stratum in $\Xl$ is standard parahoric.
Since $\dim S=\dim X_\mu(b)=\dim \Xl$, $jS$ must be the open stratum.
Thus $S\in \cS_{\mu,b}^{\pa}$ as desired.
This combined with  Corollary \ref{main coro} implies that each $S\in  \cS_{\mu,b}^{\tp}$ is irreducible.
Hence the above map is well-defined.
Since $S$ is the unique $\J$-stratum that is open in $\overline{S}$, the injectivity is clear.
Let $C\in \Irr X_\mu(b)$.
Then  there exists $S\in \cS_{\mu,b}^{\tp}$ such that $\dim (S\cap C)=\dim X_\mu(b)$.
Thus $\overline{S}=\overline{S\cap C}=C$.
This proves the surjectivity.
The proof is finished.
\end{proof}

\subsection{The Equality $\cS_{\mu,b}^{\pa}=\cS_{\mu,b}$}
\label{HN}
We keep the assumption in \S\ref{J-small}.
In this subsection, we record some basic facts that will be used in \S\ref{weakly HN section}.
We say $b\in G(L)$ is superbasic if none of its $\sigma$-conjugates is contained in a proper Levi subgroup of $G$.
In particular, $b$ is basic in $G(L)$.
If $b$ is superbasic, then $G$ must be of type $A$ by \cite[Lemma 3.1.1]{CKV15}.
By Lemma \ref{Oa}, $b$ is superbasic if and only if $\Oa=\Pi$ for some/any $\alpha\in \Pi$.
In this case, we have $\cS_{\mu,b}^{\pa}=\cS_{\mu,b}$ and $\cAm^{\sm}=\cAm$.
See \S\ref{superbasic} and \cite[\S3]{CV18}.
In this subsection, we always assume that $b$ is not superbasic and that $b$ is the unique length $0$ element in $\Omega\cap \vp^\mu W_0$.
\begin{lemm}
\label{0 parahoric}
If $\vp^\ld K$ is contained in a standard parahoric $\J$-stratum $S$, then $\vp^\ld K=\tau K$ for some $\tau\in \Omega\cap \J_b$ and $S=\{\vp^\ld K\}=\{\tau K\}$.
\end{lemm}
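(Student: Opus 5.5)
We would exploit the explicit form of standard parahoric strata given by Proposition \ref{unique S}. Since $\vp^\ld K\in S\subseteq X_\mu(b)$ and $\vp^\ld K\in I\vp^\ld K/K$, we have $\ld\in\cAm$ and $S\subseteq\Xl$. Proposition \ref{unique S} then gives $f(w)=w^{-1}\ast\ld$ for $w\in\tW\cap\J_b$. On the other hand, $f=f_{\vp^\ld}$. For $w\in\tW\cap\J_b$ we have $w^{-1}\vp^\ld\in\tW$, so $\inv_K(w,\vp^\ld)=w^{-1}\ld$. Hence $w^{-1}\ld=w^{-1}\ast\ld$ for all $w\in W_a\cap\J_b$. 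We apply this to the simple reflections $\wa$ ($\alpha\in\Pi$), which all exist because $b$ is not superbasic: by Lemma \ref{Oa}, $\Wa$ is finite for every $\alpha\in\Pi$. This gives $\wa\ld=\wa\ast\ld$ for every $\alpha\in\Pi$.

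Next we translate this equality into sign conditions on the $\ld_\beta$. In case (1) of Lemma \ref{Oa}, the factors $s_{\tb}$ ($\beta\in\Oa$) commute. The Demazure product multiplies by $s_{\tb}$ when $\ld_\beta\le 0$ (for $\ld_\beta=0$ the reflection acts trivially) and absorbs it when $\ld_\beta\ge1$. So if some $\ld_\beta\ge1$, then $\wa\ast\ld$ differs from $\wa\ld$ in the $\beta$-coordinate, namely $\ld_\beta$ versus $-\ld_\beta$. Hence $\ld_\beta\le0$ for all $\beta\in\Oa$. In case (2), we use the case analysis already carried out in the proof of Proposition \ref{unique S}, with the list of values in Lemma \ref{ab} and Lemma \ref{consecutive}. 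Equality $\wa\ast\ld=\wa\ld$ holds exactly when $\ld_\alpha,\ld_{\alpha^1}\le0$. When both are $\ge0$ and not both zero, $\wa\ast\ld=\ld\ne\wa\ld$. Either way we conclude that $\ld_\beta\le0$ for all $\beta\in\Pi$.

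Now $\ld_\beta\le0$ for minus simple roots $\beta$ means $\ld$ is dominant. For the highest root(s) $\theta$ it means $\la\theta,\ld\ra\le1$. Thus $\ld$ lies in the closure of the base alcove, so $m_\ld$ has length $0$, i.e.\ $\vp^\ld K=\tau K$ with $\tau=m_\ld\in\Omega$. Then $\tau K\in X_\mu(b)$ gives $\tau^{-1}b\sigma(\tau)\in\Omega\cap K\vp^\mu K$. This forces $\tau^{-1}b\sigma(\tau)\in\Omega\cap\vp^\mu W_0$, so $\tau^{-1}b\sigma(\tau)=b$ by the assumed uniqueness of $b$. Hence $\tau\in\Omega\cap\J_b$.

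Finally, for $gK\in S$ we have $f_g=f_\tau$. Evaluating at $j=\tau$ gives $\inv_K(\tau,g)=\inv_K(\tau,\tau)=0$, so $\tau^{-1}gK\in IK/K=\{K\}$, and $gK=\tau K$. Thus $S=\{\tau K\}$. The main obstacle is the case (2) verification that $\wa\ld=\wa\ast\ld$ forces $\ld_\alpha,\ld_{\alpha^1}\le0$. We would handle it by reusing the explicit six-element list from Lemma \ref{ab} together with $\ld_\alpha-\ld_{\alpha^1}\in\{0,\pm1\}$.
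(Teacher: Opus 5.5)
Your proposal is correct and follows the paper's proof. You apply Proposition \ref{unique S} to $f=f_{\vp^\ld}$ to get $\wa\ast\ld=\wa\ld$ for every $\alpha\in\Pi$ (every $\Wa$ is finite because $b$ is not superbasic), deduce $\ld_\alpha\le 0$ on $\Pi$, so that $\ld$ is minuscule and $\vp^\ld K=\tau K$ with $\tau\in\Omega$. You also spell out what the paper leaves implicit: the Demazure computation in both cases of Lemma \ref{Oa}, why $\tau\in\Omega\cap\J_b$ via the uniqueness of $b$ in $\Omega\cap\vp^\mu W_0$, and why evaluating $f$ at $\tau$ forces $S=\{\tau K\}$. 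Your omission of the paper's preliminary appeal to Corollary \ref{S small} is harmless, since smallness is never used.
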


\begin{proof}
By Corollary \ref{S small}, $\ld$ is small.
By Proposition \ref{unique S}, we have $w\ast \ld=w\ld$ for any $w\in \tW\cap \J_b$.
In particular, we have $\ld_\alpha\le 0$ for any $\alpha\in \Pi$.
Otherwise, since $b$ is not superbasic, we have $\wa\ast \ld\neq\wa\ld$ for some $\alpha\in \Pi$, which is a contradiction.
This implies that $\ld$ is minuscule.
Then the unique element $\tau$ in $\Omega\cap \vp^\ld W_0$ belongs to $\Omega\cap \J_b$ and $\{\tau K\}$ forms a single $\J$-stratum as desired.
\end{proof}

\begin{coro}
\label{par necessity}
If $\cS_{\mu,b}^{\pa}=\cS_{\mu,b}$, then $\cAm=\{\inv_K(1,w)\mid w\in \tW\cap \J_b\}.$
\end{coro}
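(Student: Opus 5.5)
The idea is to prove both inclusions. The set $\{\inv_K(1,w)\mid w\in \tW\cap \J_b\}$ is the set of $\ld$ with $w\in \vp^\ld W_0$ for some $w\in\tW\cap\J_b$, equivalently with $wK=\vp^\ld K$, since $\inv(1,w)=w$ for $w\in \tW$.

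For the inclusion $\supseteq$, let $w\in \tW\cap \J_b$. Since $w\in\J_b$, we have $w^{-1}b\sigma(w)=b\in K\vp^\mu K$, because $b\in\Omega\cap\vp^\mu W_0$. Hence $wK\in X_\mu(b)$. As $wK=\vp^{\ld}K$ with $\ld=\inv_K(1,w)$, the point $wK$ lies in $\Xl$, so $\ld\in\cAm$.

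For the inclusion $\subseteq$, let $\ld\in \cAm$.
\begin{enumerate}[(a)]
\item First check that $\vp^\ld K\in X_\mu(b)$. The element $\vp^{-\ld}b\sigma(\vp^\ld)$ equals $\vp^{\ld^\natural}p(b)$ up to $T(\cO)$, and $\ld^\natural\in W_0\mu$ by Proposition \ref{Xl}. So it lies in $K\vp^\mu K$, and therefore $\vp^\ld K\in \Xl$.
\item Let $S\in\cS_{\mu,b}$ be the $\J$-stratum containing $\vp^\ld K$. By hypothesis $S$ is parahoric, so its stabilizer is $jPj^{-1}$ for some standard parahoric $P$ and some $j\in \J_b$. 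Then $j^{-1}S$ is a standard parahoric $\J$-stratum containing $j^{-1}\vp^\ld K$.
\item Using the decomposition $\J_b=\bigsqcup (I\cap\J_b)w(I\cap\J_b)$ over $w\in \tW\cap\J_b$, write $j^{-1}=iwi'$ with $i,i'\in I\cap \J_b$. Since $i'$ stabilizes the standard parahoric stratum $j^{-1}S$, we may replace $j^{-1}$ by $j^{-1}i'^{-1}=iw$. The stratum $(iw)S$ is again standard parahoric, since its stabilizer is a conjugate of a parahoric containing $I\cap\J_b$; this requires the conjugated stratum to still contain $I\cap \J_b$ in its stabilizer, which is how the paper uses the word. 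Replacing further by $i^{-1}$, which keeps the stratum standard parahoric because it is stabilized by $I\cap \J_b$, we get that $wS$ is a standard parahoric stratum containing $w\vp^\ld K=\vp^{w\ld}K$.
\item Apply Lemma \ref{0 parahoric} to $wS$. It gives $\vp^{w\ld}K=\tau K$ for some $\tau\in\Omega\cap\J_b$.
\item Then $\vp^\ld K=w^{-1}\tau K$ with $w^{-1}\tau\in\tW\cap\J_b$, so $\ld=\inv_K(1,w^{-1}\tau)$, which proves $\subseteq$.
\end{enumerate}

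The main obstacle is step (c): reducing an arbitrary parahoric stratum to a standard parahoric one by a translate of the form $w\in\tW\cap\J_b$. The conjugating element $j$ must be replaceable by an element of $\tW\cap\J_b$ modulo $I\cap\J_b$ on both sides. This uses the Bruhat decomposition of $\J_b$ relative to $I\cap\J_b$, together with the fact that each $\J_b$-orbit of parahoric strata contains a standard parahoric representative, as asserted in \S\ref{J-small}. I would argue it as above, using that $\J_b=(I\cap\J_b)(\tW\cap\J_b)(I\cap\J_b)$ and that $I\cap\J_b$ preserves standard parahoric strata.
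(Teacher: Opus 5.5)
Your inclusion $\supseteq$ and steps (a), (b), (d), (e) are fine, but step (c) has a genuine gap. Write $j^{-1}=iwi'$ with $i,i'\in I\cap\J_b$. The fact that $j^{-1}S$ is standard parahoric only lets you discard the \emph{left} factor: $i^{-1}j^{-1}S=j^{-1}S$, so $wi'S$ is standard parahoric. The right factor $i'$ acts on $S$ itself. Removing it would need $i'\in N_{\J_b}(S)=jPj^{-1}$, which you do not know; that $i'$ stabilizes $j^{-1}S$ is not what is needed.

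Your justification that $wS$ is still standard ``since its stabilizer is a conjugate of a parahoric containing $I\cap\J_b$'' is circular, because a $\tW\cap\J_b$-conjugate of a standard parahoric is usually not standard. Nor can you apply Lemma \ref{0 parahoric} to $wi'S$ instead. The point $wi'\vp^\ld K$ it contains is in general not of the form $\vp^{\ld'}K$.

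In fact (c) cannot hold for an arbitrary parahoric stratum. If $S_0$ is standard parahoric and $g\in\J_b$ moves the facet of $N_{\J_b}(S_0)$ off the standard apartment, then no $w\in\tW\cap\J_b$ makes $wgS_0$ standard parahoric. So some property specific to your $S$ has to enter, and the Bruhat decomposition of $\J_b$ alone cannot supply it.

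The missing input is the one the paper uses. $S$ contains the point $\vp^\ld K$, which is fixed by $T(\cO)\cap\J_b$. Since $\J_b$ permutes the $\J$-strata, this gives $T(\cO)\cap\J_b\subseteq N_{\J_b}(S)$. The paper then uses that a parahoric subgroup containing $T(\cO)\cap\J_b$ can be conjugated to a standard parahoric by an element $w\in W_a\cap\J_b$, not merely by some $j\in\J_b$. Hence $wS$ is standard parahoric and contains $w\vp^\ld K=\vp^{w\ld}K$. From there your steps (d) and (e) go through exactly as in the paper.
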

\begin{proof}
Assume that $\cS_{\mu,b}^{\pa}=\cS_{\mu,b}$.
Let $\ld\in \cAm$.
Let $S$ be a $\J$-stratum containing $\vp^{\lambda}K\in X_\mu(b)$, which is parahoric by assumption.
Since $T(\cO)\cap \J_b\subset N_{\J_b}(S)$, there exists $w\in W_a\cap\J_b$ such that $wS$ is standard parahoric.
Then by Lemma \ref{0 parahoric}, $w\vp^{\ld}K=\vp^{w\ld}K=\tau K$ for some $\tau\in \Omega\cap \J_b$ and it forms a $0$-dimensional $\J$-stratum.
Thus $\ld\in \{\inv_K(1,w\tau)\mid w\in W_a\cap \J_b,\tau\in \Omega\cap \J_b\}$.
Since the other inclusion is obvious, this finishes the proof.
\end{proof}

Recall that $W_a\cap \J_b$ is a Coxeter group whose simple affine reflections are the longest elements $\wa$ of $\Wa\cap \J_b$ for $\alpha\in \Pi$ such that $\Wa$ is finite.
\begin{lemm}
\label{every parahoric}
For $\ld\in \cAm^{\sm}$, let $w\in W_a\cap \J_b$ be the unique minimal length representative of its coset in $(W_a\cap \J_b)/(W_{\Pi(\ld)}\cap \J_b)$.
Assume that $w\ast\ld=w\ld$ holds for all $\ld\in \cAm^{\sm}$ and all such $w$.
Then every $\J$-stratum $S$ in $X_\mu(b)$ is parahoric.
\end{lemm}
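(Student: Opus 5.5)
We first reduce to the standard parahoric open strata. By Lemma \ref{J-str small}, for any $S\in\cS_{\mu,b}$ there are $j\in\J_b$ and $\ld\in\cAm^{\sm}$ with $jS\subseteq\Xl$. Parahoricity is invariant under $\J_b$-translation, so we may assume $S\subseteq \Xl$ with $\ld$ small. By Theorem \ref{main theo} and Corollary \ref{main coro}, $\Xl$ contains a unique standard parahoric stratum $S_\ld$. It is open in $\Xl$, with stabilizer $P_{\Pi(\ld)}$ and $f_{S_\ld}(w)=w^{-1}\ast\ld$ by Proposition \ref{unique S}. The goal is to show that the strata of $\Xl$ are exactly the translates of the standard parahoric strata of various $X_\mu^{\ld'}(b)$ by elements of $I\cap\J_b$ and of $\tW\cap\J_b$. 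Since $S\cap\Xl\neq\emptyset$, it suffices to show that every point $gK\in\Xl$ lies in $jS_{\ld'}$ for some $j\in\J_b$ and some small $\ld'$.

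The plan is an induction on $\ld$ with respect to $\tle$, following the proof of Lemma \ref{J-str small}. Take $gK\in\Xl\setminus S_\ld$ and compute $f_g$. Since $f_g\neq f_{S_\ld}$, there is some $w\in\tW\cap\J_b$, which after left multiplication by $I\cap\J_b$ lies in $W_a\cap\J_b$, with $f_g(w)\neq w^{-1}\ast\ld$. We choose such a $w$ of minimal length, write $w=w'\wa$ reduced, and apply the case analysis of Proposition \ref{unique S} to the last factor. This shows that $\wa^{-1}$ translates (a $(I\cap\J_b)$-translate of) $gK$ into $X_\mu^{\ld''}(b)$ with $\ld''=\wa\ld\neq \wa\ast\ld$, and that $\ld''\triangleleft\ld$. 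Here we use $w\ast\ld=w\ld$ for minimal coset representatives modulo $W_{\Pi(\ld)}\cap\J_b$. Indeed, any $w$ where $f_g$ differs can be replaced by its minimal representative, because $P_{\Pi(\ld)}$ stabilizes $\Xl$, as in Corollary \ref{main coro}. The hypothesis then gives that $w^{-1}S_\ld$ lies in $X_\mu^{w^{-1}\ld}(b)$, where $w^{-1}\ast\ld$ equals the honest translate. Hence the point $gK$, which behaves differently, moves under $w^{-1}$ into a strictly smaller cell. If $\ld''$ is not small, we apply Lemma \ref{J-str small} again to decrease further. Since $\tle$ restricted to cocharacters in $\cAm$ with bounded dominant part is well-founded, and Lemma \ref{small finite} gives finiteness modulo $\Omega\cap\J_b$, the induction terminates. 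It ends at a point lying in some $S_{\ld'}$, which shows that $S=j S_{\ld'}$ is parahoric.

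The main obstacle is the middle step. We must verify that a point of $\Xl$ outside the open stratum really is moved by some minimal coset representative $w$ into a strictly $\tle$-smaller cell, rather than into a cell of the same size. We also need to know that the stratum of $gK$ is determined by the first such $w$. For the first point, we would try to use the hypothesis $w\ast\ld=w\ld$ together with Lemma \ref{closed}, which shows that $w^{-1}$ maps $S_\ld$ closedly into $I\vp^{w^{-1}\ld}K/K$ and that the complement lands in lower Bruhat cells. The second point is a comparison of $f$-functions. Specifically, $f_{jg}(x)=f_g(j^{-1}x)$, so the equality $f_g=f_{jS_{\ld'}}$ follows once $j^{-1}gK\in S_{\ld'}$.
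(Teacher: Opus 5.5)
Your outer frame (reduce to a small $\ld$ via Lemma \ref{J-str small}, then descend along the well-founded order $\tle$) is the same as the paper's. However, the descent step, which you yourself flag as the main obstacle, is not established, and the justification you give for it is false.

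Corollary \ref{main coro} says that $P_{\Pi(\ld)}$ is the stabilizer of $S_\ld$ and of $\overline{\Xl}$, not of $\Xl$. In general it does not stabilize $\Xl$. If $\alpha\in\Pi(\ld)$ and $\wa\ld\neq\ld$, then $\wa\ast(\wa\ld)=\ld$, so $\wa X_\mu^{\wa\ld}(b)\subseteq\Xl$, and $\wa$ moves these points into the strictly smaller cell $\wa\ld$. So you cannot replace a $w$ at which $f_g$ deviates by its minimal coset representative.

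The mechanism is also backwards. The hypothesis $w\ast\ld=w\ld$ forces $\ell(wm_\ld)=\ell(w)+\ell(m_\ld)$. So a minimal coset representative $w$ maps all of $\Xl$ into the single cell $I\vp^{w\ld}K/K$, and it never separates $gK$ from $S_\ld$. Suppose instead you take a minimal-length $w=w'\wa$ at which $f_g$ deviates. Then you only learn that $w^{-1}gK$ lies in a cell strictly below $w^{-1}\ast\ld$, which can be far above $\ld$. Nothing makes it $\triangleleft\ld$, so your induction does not descend. In addition, the case analysis of Proposition \ref{unique S} uses $f(wi\wa)=f(w)$, i.e.\ standard parahoricity, which $gK$ does not have.

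The missing idea is to let $P_{\Pi(\ld)}$, not the minimal coset representatives, do the moving.

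\textbf{Step 1: identify the points whose $P_{\Pi(\ld)}$-orbit stays in $\Xl$.} Let $T\subseteq\Xl$ be the set of points $gK$ with $P_{\Pi(\ld)}gK\subseteq\Xl$. It is $P_{\Pi(\ld)}$-stable, is a union of strata, and contains $S_\ld$. Every $j\in\J_b$ satisfies $j^{-1}\in(I\cap\J_b)\tau w P_{\Pi(\ld)}$, with $\tau\in\Omega\cap\J_b$ and $w$ minimal in $wW_{\Pi(\ld)}$. Hence for $gK\in T$ the hypothesis gives $j^{-1}gK\in I\vp^{\tau w\ld}K/K$. Thus $f_g$ is independent of $gK\in T$, and $T=S_\ld$. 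This is the only place the hypothesis is needed.

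\textbf{Step 2: points outside $S_\ld$ drop to a smaller cell.} We have $u\ast\ld=\ld$ for $u\in W_{\Pi(\ld)}\cap\J_b$, since $\ld_\beta\ge 0$ on the relevant orbits. Hence $P_{\Pi(\ld)}\Xl\subseteq\overline{I\vp^\ld K/K}$.

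\textbf{Step 3: minimal counterexample.} Take a non-parahoric $S$, and choose $\ld$ $\tle$-minimal among the cells containing a $\J_b$-translate of $S$; it is small by the proof of Lemma \ref{J-str small}. We may assume $S\subseteq\Xl\setminus T$. Then some $p\in P_{\Pi(\ld)}$ sends $S$ into $X_\mu^{\ld'}(b)$ with $\ld'\triangleleft\ld$, contradicting minimality.
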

\begin{proof}
For $\ld\in \cAm^{\sm}$, let $T$ be the union of $\J$-strata $S\subset \Xl$ such that $P_{\Pi(\ld)}S\subset \Xl$.
Note that $T\neq \emptyset$ by Corollary \ref{main coro}.
If $w\ast\ld=w\ld$ for every $w$ as above, then $T$ must be equal to the unique standard parahoric $\J$-stratum in $\Xl$.

Suppose that there exists $S\notin \cS_{\mu,b}^{\pa}$.
Let  $\ld$ be the minimal cocharacter in the set
$$\{\ld'\in \cAm\mid \exists j\in \J_b, jS\subseteq X_\mu^{\ld'}(b)\}$$
under the partial order $\tle$.
Then $\ld$ is small by the proof of Lemma \ref{J-str small}.
By replacing $S$ in its $\J_b$-orbit, we may assume that $S\subset \Xl\setminus T$.
On the other hand, there exist $j\in P_{\Pi(\ld)}$ and $\ld'\triangleleft \ld$ such that $j S\subset X_\mu^{\ld'}(b)$.
This is a contradiction.
Therefore, every $\J$-stratum $S$ in $X_\mu(b)$ is parahoric as desired.
\end{proof}

\subsection{Passage to Adjoint Groups}
\label{ad}
We return to the general situation.
Let $G_{\ad}$ denote the adjoint group of $G$.
Let $K_{\ad}=G_{\ad}(\cO)$ and let $I_{\ad}$ denote the Iwahori subgroup containing the image of $I$.
Let $\Omega_{\ad}\subseteq \tW_{G_{\ad}}$ denote the subgroup of length $0$ elements.
For an element associated with $G$, we denote its image in $G_{\ad}$ by the subscript $\ad$, for example $g_{\ad},\mu_{\ad},b_{\ad},w_{\ad}$.

Let $\tau\in \Omega\cong \pi_1(G)$ and let $\cG r^\tau$ denote the corresponding connected component of $\cG r$.
Set $X_\mu(b)^\tau\coloneqq X_\mu(b)\cap \cG r^\tau$.
If $X_\mu(b)^\tau\neq \emptyset$, then by \cite[Corollary 2.4.2]{CKV15} and \cite[Proposition 3.1]{HV18}, there is a universal homeomorphism
$$X_\mu(b)^\tau\rightarrow X_{\mu_{\ad}}(b_{\ad})^{\tau_{\ad}}.$$

Although we did not take this approach here, the results in \S\ref{J-small} can be proved by passage to adjoint groups.
For this, the following lemma is a key.

\begin{lemm}
\label{ad f}
Let $g,h\in G(L)$.
Then $gK,hK\in \cG r^\tau$ for some $\tau\in \Omega$ and $f_{g_{\ad}}=f_{h_{\ad}}$ if and only if $f_g=f_h$.
In particular, for any $w\in W_a\cap \J_b$, $f_g$ is constant on $(I\cap \J_b)w(I\cap \J_b)$ if and only if $f_{g_{\ad}}$ is constant on $(I_{\ad}\cap \J_{b_{\ad}}) w_{\ad}(I_{\ad}\cap \J_{b_{\ad}})$.
\end{lemm}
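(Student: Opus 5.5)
The plan is to compare the invariant $f_g\colon \J_b\to \Y$ with its adjoint counterpart via the natural map $\tW\to \tW_{G_{\ad}}$. This map is compatible with Iwahori–Bruhat decompositions, since $I$ maps into $I_{\ad}$ and $G(L)\to G_{\ad}(L)$ sends $IwI$ into $I_{\ad}w_{\ad}I_{\ad}$. It is also compatible with the projections to $\tW/W_0\cong \Y$ and $\tW_{G_{\ad}}/W_0\cong X_*(T_{\ad})$. Hence $f_{g_{\ad}}(j_{\ad})=f_g(j)_{\ad}$ for $j\in \J_b$; here $j_{\ad}\in \J_{b_{\ad}}$.

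First I would show that $f_g$ determines $f_{g_{\ad}}$ on all of $\J_{b_{\ad}}$, not just on the image of $\J_b$. I use that $\J_{b_{\ad}}$ is generated by $I_{\ad}\cap\J_{b_{\ad}}$, $\Omega_{\ad}\cap \J_{b_{\ad}}$ and the $W_{\mathcal O_{\tilde\alpha},\ad}\cap \J_{b_{\ad}}$. The affine Weyl group part and $\sigma$-stable Iwahori elements lift to $\J_b$ by Lang's theorem, as in \S\ref{small}. So every $j'\in \J_{b_{\ad}}$ can be written as $j'=j_{\ad}\tau'$ with $j\in\J_b$ and $\tau'\in \Omega_{\ad}\cap \J_{b_{\ad}}$. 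Since $\tau'$ normalizes $I_{\ad}$, we get $\inv(j',g_{\ad})=\tau'^{-1}\inv(j_{\ad},g_{\ad})$. Thus $f_{g_{\ad}}(j')=\tau'^{-1}f_{g_{\ad}}(j_{\ad})$, which gives the implication $f_g=f_h\Rightarrow f_{g_{\ad}}=f_{h_{\ad}}$. The hypothesis that $gK,hK$ lie in the same $\cG r^\tau$ follows trivially from $f_g(1)=f_h(1)$.

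For the converse, suppose $gK,hK\in\cG r^\tau$ and $f_{g_{\ad}}=f_{h_{\ad}}$. Fix $j\in\J_b$. Then $f_g(j)$ and $f_h(j)$ have the same image in $X_*(T_{\ad})$. They also have the same image in $\pi_1(G)\cong\Omega$, namely the component of $j^{-1}\cG r^\tau$, which is determined by $\kappa(j)$ and $\tau$. The kernel of $\Y\to X_*(T_{\ad})$ consists of central cocharacters, and a central cocharacter with trivial image in $\pi_1(G)$ lies in $\Z\Phi^\vee\cap X_*(Z)$. This is torsion-free, yet its image in $X_*(T_{\ad})$ is zero, so it must vanish. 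Hence $f_g(j)=f_h(j)$.

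The "in particular" statement then follows by applying the equivalence to the translates $g$ versus $i g$ for $i\in I\cap \J_b$. Alternatively, one checks directly that the map $(I\cap\J_b)w(I\cap\J_b)\to (I_{\ad}\cap\J_{b_{\ad}})w_{\ad}(I_{\ad}\cap\J_{b_{\ad}})$ is surjective, again by Lang's theorem for the connected pro-algebraic group $I_{\ad}\cap$ (image), combined with the injectivity shown above.

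The main obstacle is this surjectivity and lifting step. The map $\J_b\to\J_{b_{\ad}}$ is not surjective, and I need to control the cokernel by elements of $\Omega_{\ad}\cap\J_{b_{\ad}}$. I would first try to establish this via the generation statement for $\J_{b_{\ad}}$ together with Lang's theorem for $T(\cO)$ and $I$.
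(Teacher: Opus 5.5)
The gap is in the forward direction. Your converse direction (same component and $f_{g_{\ad}}=f_{h_{\ad}}$ implies $f_g=f_h$) matches the paper. The precise reason the central difference vanishes is that $\Z\Phi^\vee\to X_*(T_{\ad})$ is injective: a central cocharacter pairs trivially with every root, so $\Z\Phi^\vee\cap X_*(Z)=0$. Torsion-freeness alone does not give this.

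In the forward direction you claim that $b\sigma$-stable Iwahori elements of $G_{\ad}$ lift to $\J_b$ by Lang's theorem. From this you conclude that every $j'\in\J_{b_{\ad}}$ equals $j_{\ad}\tau'$ with $j\in\J_b$ and $\tau'$ the chosen lift of an element of $\Omega_{\ad}\cap\J_{b_{\ad}}$. This is false in general:

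\begin{itemize}
\item In \S\ref{small}, Lang's theorem is applied to fibres of $N_{G(L)}T(L)\to\tW$, which are torsors under the connected group $T(\cO)$.
\item The fibres of $G\to G_{\ad}$ are torsors under the centre $Z$, which is in general disconnected, so no lifting of $b\sigma$-fixed points follows.
\item For $G=\mathrm{SL}_2$ and $p=2$, even $T(\cO)\to T_{\ad}(\cO)$ is not surjective.
\end{itemize}

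A concrete counterexample: take $G=\mathrm{SL}_2$, $b=1$, $p$ odd. Then $\J_b=\mathrm{SL}_2(F)$ and $\J_{b_{\ad}}=\mathrm{PGL}_2(F)$, and the determinant gives a well-defined class in $F^\times/(F^\times)^2$. The image of $\J_b$ has trivial class, and any lift of the nontrivial element of $\Omega_{\ad}$ has a class of odd valuation. So the image of $\mathrm{diag}(u,1)$, with $u\in\cO_F^\times$ a non-square, lies in $I_{\ad}\cap\J_{b_{\ad}}$ but is not of the form $j_{\ad}\tau'$. The cokernel of $\J_b\to\J_{b_{\ad}}$ therefore has a torus part that $\Omega_{\ad}\cap\J_{b_{\ad}}$ does not control. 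The obstacle you flag cannot be overcome as you formulate it.

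The missing idea is that these torus elements need not lift, because $f_{g_{\ad}}$ does not see them. The paper proceeds as follows:
\begin{itemize}
\item Write $j'\in(I_{\ad}\cap\J_{b_{\ad}})\,w_{\ad}\,(I_{\ad}\cap\J_{b_{\ad}})\,\tau'$ with $w\in W_a\cap\J_b$.
\item Decompose $i=u_{\ad}t$, with $u$ in the product of the affine root subgroups of $I$ and in $\J_b$ (root subgroups map isomorphically to $G_{\ad}$), and $t\in T_{\ad}(\cO)\cap\J_{b_{\ad}}$.
\item Since $w$ normalizes $T_{\ad}(\cO)$, we have $u_{\ad}tw_{\ad}=u_{\ad}w_{\ad}(w_{\ad}^{-1}tw_{\ad})$ with the last factor in $I_{\ad}$.
\item Right $I_{\ad}$-invariance of $j\mapsto\inv_{K_{\ad}}(j,g_{\ad})$ then gives $f_{g_{\ad}}(iw_{\ad})=f_g(uw)_{\ad}$.
\end{itemize}
Equivalently, you could let $\tau'$ range over all elements of $\J_{b_{\ad}}$ that normalize $I_{\ad}$ and map to $\Omega_{\ad}$, including $T_{\ad}(\cO)\cap\J_{b_{\ad}}$. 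Proving that decomposition needs exactly the step above.

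This pointwise identity, together with your central-cocharacter argument, is also what proves the ``in particular''. Your reduction comparing $g$ with $ig$ does not give it. The condition $f_{ig}=f_g$ for all $i\in I\cap\J_b$ expresses constancy on all double cosets at once, not on a single $(I\cap\J_b)w(I\cap\J_b)$. It also only involves translates by the image of $I\cap\J_b$. The double-coset surjectivity you propose as an alternative fails for the same torus reason; it holds only modulo right multiplication by $I_{\ad}$.
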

\begin{proof}
Assume that $gK,hK\in \cG r^\tau$ for some $\tau\in \Omega$ and $f_{g_{\ad}}=f_{h_{\ad}}$.
It follows from $f_{g_{\ad}}=f_{h_{\ad}}$ that for any $j\in \J_b$, the elements $\inv_K(j,g)$ and $\inv_K(j,h)$ differ by a central cocharacter.
They must coincide because $gK,hK\in \cG r^\tau$.

Assume that $f_g=f_h$.
Then $gK$ and $hK$ lie in the same connected component because $f_g(1)=f_h(1)$.
Let $w\in \J_b$ be a lift of an element in $W_a\cap \J_b=W_a\cap \J_{b_{\ad}}$.
Since $\Omega_{\ad}\cap \J_{b_{\ad}}$ normalizes $I_{\ad}$, to prove $f_{g_{\ad}}=f_{h_{\ad}}$, it suffices to show that $f_{g_{\ad}}(iw_{\ad})=f_{h_{\ad}}(iw_{\ad})$ for any $i\in I_{\ad}\cap \J_{b_{\ad}}$.
For $t\in T_{\ad}(\cO)$, we have $w^{-1}tw\in T_{\ad}(\mathcal O)$.
Moreover, any $i\in I_{\ad}\cap \J_{b_{\ad}}$ can be written as $i=u_{\ad}t$ for some $t\in T_{\ad}(\cO)\cap \J_{b_{\ad}}$ and $u\in\prod_{\alpha\in \Phi_+}U_{\alpha}(\vp\cO)\prod_{\beta\in \Phi_-}U_{\beta}(\cO)\cap \J_b$.
Thus
$$f_{g_{\ad}}(iw_{\ad})=\inv_{K_{\ad}}(u_{\ad}tw_{\ad}, g_{\ad})=\inv_{K_{\ad}}(u_{\ad}w_{\ad}, g_{\ad})=f_g(uw)_{\ad}.$$
The same holds true for $h$.
Therefore the assumption implies $f_{g_{\ad}}(iw_{\ad})=f_{h_{\ad}}(iw_{\ad})$.

The last assertion follows from the above argument, which finishes the proof.
\end{proof}

\begin{rema}
In the original definition by Chen-Viehmann, the $\J$-stratification does not behave well under passage to adjoint groups in the sense that $f_g$ does not necessarily determine $f_{g_{\ad}}$ (cf.\ \cite[Lemma 2.4 \& Remark 2.5]{CV18}).
The problem is that $\J_b\rightarrow \J_{b_{\ad}}$ is not necessarily surjective and $\Omega$ does not normalize $K$ in general.
\end{rema}

Until the end of this subsection, we assume that $\mu$ is minuscule, $b$ is a length $0$ element such that  $X_\mu(b)\neq \emptyset$.
Since the connected components $\cG r^\tau$ that meet $X_\mu(b)$ differ only by $\Omega^\sigma=\Omega\cap \J_b$,
we have $X_\mu(b)=(\Omega\cap \J_b)X_\mu(b)^\tau$, where $X_\mu(b)^\tau\neq \emptyset$.
Let $\J_b^0$ denote the subgroup of $\J_b$ generated by $I\cap \J_b$ and $W_a\cap \J_b$.
Let $\cS_{\mu,b}^\tau$ (resp.\ $\cS_{\mu,b}^{\pa,\tau}$) denote the subset of $\cS_{\mu,b}$ (resp.\ $\cS_{\mu,b}^{\pa}$) consisting of the $\J$-strata contained in $X_\mu(b)^\tau$.
Then we have $$\J_b\backslash \cS_{\mu,b}\cong \J_b^0\backslash \cS_{\mu,b}^\tau,\quad \J_b\backslash \cS_{\mu,b}^{\pa}\cong \J_b^0\backslash \cS_{\mu,b}^{\pa,\tau}.$$
The universal homeomorphism combined with Lemma \ref{ad f} implies the following:
\begin{prop}
The projection $G\rightarrow G_{\ad}$ induces bijective maps $$\J_b\backslash \cS_{\mu,b}\xrightarrow{\sim} \J_{b_{\ad}}\backslash \cS_{\mu_{\ad},b_{\ad}},\quad\J_b\backslash \cS_{\mu,b}^{\pa}\xrightarrow{\sim} \J_{b_{\ad}}\backslash \cS_{\mu_{\ad},b_{\ad}}^{\pa}.$$
\end{prop}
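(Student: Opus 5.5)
The plan is to first reduce both bijections to statements about a single connected component and the groups $\J_b^0$, and then compare these with their adjoint counterparts. By the displayed isomorphisms $\J_b\backslash \cS_{\mu,b}\cong \J_b^0\backslash \cS_{\mu,b}^\tau$ and $\J_b\backslash \cS_{\mu,b}^{\pa}\cong \J_b^0\backslash \cS_{\mu,b}^{\pa,\tau}$, together with the analogous ones for $(G_{\ad},\mu_{\ad},b_{\ad})$, it suffices to construct bijections
$$\J_b^0\backslash \cS_{\mu,b}^\tau\xrightarrow{\sim}\J_{b_{\ad}}^0\backslash \cS_{\mu_{\ad},b_{\ad}}^{\tau_{\ad}},\qquad \J_b^0\backslash \cS_{\mu,b}^{\pa,\tau}\xrightarrow{\sim}\J_{b_{\ad}}^0\backslash \cS_{\mu_{\ad},b_{\ad}}^{\pa,\tau_{\ad}}.$$
Note that $b_{\ad}$ is again a length $0$ element with $X_{\mu_{\ad}}(b_{\ad})\neq\emptyset$, so the same reduction applies on the adjoint side.

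Next I would show that the universal homeomorphism $X_\mu(b)^\tau\to X_{\mu_{\ad}}(b_{\ad})^{\tau_{\ad}}$, $gK\mapsto g_{\ad}K_{\ad}$, maps $\J$-strata bijectively onto $\J$-strata. The key input is the first part of Lemma \ref{ad f}. For $gK,hK$ in $\cG r^\tau$, it says $f_g=f_h$ if and only if $f_{g_{\ad}}=f_{h_{\ad}}$. Hence the fibres of $gK\mapsto f_g$ and of $gK\mapsto f_{g_{\ad}}$ define the same partition of $X_\mu(b)^\tau$. Since the map is bijective on points, this partition is carried onto the partition of $X_{\mu_{\ad}}(b_{\ad})^{\tau_{\ad}}$ into $\J$-strata, giving $\cS_{\mu,b}^\tau\xrightarrow{\sim}\cS_{\mu_{\ad},b_{\ad}}^{\tau_{\ad}}$.

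Then I would check equivariance. The map $\J_b^0\to\J_{b_{\ad}}^0$ sends $I\cap\J_b$ into $I_{\ad}\cap\J_{b_{\ad}}$ and is the identity on $W_a\cap \J_b=W_a\cap\J_{b_{\ad}}$, so the bijection of strata is equivariant. I expect surjectivity of $\J_b^0\to\J_{b_{\ad}}^0$ on orbits to be the main obstacle, and I would handle it as in the proof of Lemma \ref{ad f}. Any $i\in I_{\ad}\cap\J_{b_{\ad}}$ factors as $u_{\ad}t$ with $u$ from the unipotent part in $\J_b$ and $t\in T_{\ad}(\cO)\cap\J_{b_{\ad}}$. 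The torus element $t$ does not move strata, because $\inv_{K_{\ad}}$ is unchanged by $T_{\ad}(\cO)$, as computed there. So the $\J_{b_{\ad}}^0$-orbit of a stratum coincides with the orbit under the image of $\J_b^0$. This gives bijectivity on orbit sets, and thus the first map.

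For the parahoric version, I would show that the bijection of strata preserves parahoricity; it then restricts to the second map. Up to $\J_b^0$-translation, a parahoric stratum may be taken standard parahoric. A stratum $S$ is standard parahoric if and only if its function $f$ is constant on each double coset $(I\cap\J_b)w(I\cap\J_b)$ with $w\in W_a\cap\J_b$; elements of $\Omega\cap\J_b$ do not occur here because we work inside one component with $\J_b^0$. The second part of Lemma \ref{ad f} says precisely that this condition holds for $f_g$ if and only if it holds for $f_{g_{\ad}}$, so standard parahoric strata correspond to standard parahoric strata. Combining this with the $\J_b^0$-equivariance, the correspondence of orbits restricts to the parahoric orbits, giving the second bijection.
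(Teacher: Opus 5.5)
Your route is the paper's: reduce to one connected component and $\J_b^0$, transport strata along the universal homeomorphism using the first part of Lemma~\ref{ad f}, and transport the standard parahoric condition using its second part. The gap is in the step comparing $\J_{b_{\ad}}^0$-orbits with orbits of the image of $\J_b^0$, which is what injectivity on orbit sets requires.

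The computation in the proof of Lemma~\ref{ad f} shows $f_{g_{\ad}}(u_{\ad}tw_{\ad})=f_{g_{\ad}}(u_{\ad}w_{\ad})$. That is, a torus factor \emph{inside the argument} of $f$ can be pushed to the right and absorbed, which is why $f_{g_{\ad}}$ is determined by $f_g$. It says nothing about translating the point. Translating gives $f_{tg_{\ad}}(u_{\ad}w_{\ad})=f_{g_{\ad}}(t^{-1}u_{\ad}w_{\ad})=f_{g_{\ad}}((t^{-1}ut)_{\ad}w_{\ad})$. So $tS'$ is the stratum of $f_{g_{\ad}}\circ\mathrm{Ad}(t^{-1})$, which is in general a different stratum. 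Already for $t\in T(\cO)\cap\J_b$, a standard parahoric $S_0$ and $j\in\J_b$, one has $t\,jS_0=(tjt^{-1})S_0$, and this equals $jS_0$ only if $j^{-1}tjt^{-1}$ stabilizes $S_0$.

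So ``$t$ does not move strata'' is false. What you actually need is that $tS'$ lies in the orbit of $S'$ under the image of $\J_b^0$. If $T(\cO)\cap\J_b$ surjects onto $T_{\ad}(\cO)\cap\J_{b_{\ad}}$ (e.g.\ connected center) this is automatic. Otherwise it is a genuine additional claim. Your parahoric step inherits the problem: showing that $S$ is parahoric when $S_{\ad}$ is requires pulling a conjugating element of $\J_{b_{\ad}}^0$ back to $\J_b^0$.

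For the second bijection there is an easy repair.
\begin{enumerate}[(1)]
\item For $t\in T_{\ad}(\cO)\cap\J_{b_{\ad}}$, $\mathrm{Ad}(t)$ preserves $\J_b$, $I$ and $N_{G(L)}T(L)$. Hence it normalizes the image of $\J_b^0$, and every $j'\in\J_{b_{\ad}}^0$ can be written as $\bar j t$ with $\bar j$ in that image and $t$ such a torus element.
\item Such $t$ lies in $I_{\ad}\cap\J_{b_{\ad}}$, so it fixes every standard parahoric stratum by definition.
\item Write parahoric strata as $\J^0$-translates of standard parahoric ones; the conjugating element can be taken in $\J^0$, since $\J^0$ is transitive on alcoves.
\end{enumerate}
Combining these with the second part of Lemma~\ref{ad f}, you get that $S$ is parahoric if and only if $S_{\ad}$ is. You also get that parahoric strata with $\J_{b_{\ad}}^0$-conjugate images are $\J_b^0$-conjugate.

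For the first bijection you still need, for non-parahoric strata, an argument that $tS'$ stays in the orbit of the image of $\J_b^0$. One way would be to lift $t$ to $\tilde t\in T(\cO)$ with $\tilde t^{-1}b\sigma(\tilde t)b^{-1}$ central; such $\tilde t$ preserves $X_\mu(b)$ and normalizes $\J_b$. You would then have to prove $\tilde tS\in\J_bS$. Neither Lemma~\ref{ad f} nor your sketch supplies this, and the paper's one-line deduction does not spell it out either.
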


We have analogous statements for $\cAm$:
\begin{lemm}
\label{cAm ad}
The projection $G\rightarrow G_{\ad}$ induces bijective maps
$$(\Omega\cap \J_b)\backslash \cA_{\mu,b}\cong (\Omega\cap \J_{b_{\ad}})\backslash \cA_{\mu_{\ad},b_{\ad}},\quad (\Omega\cap \J_b)\backslash \cA_{\mu,b}^{\sm}\cong (\Omega\cap \J_{b_{\ad}})\backslash \cA_{\mu_{\ad},b_{\ad}}^{\sm}.$$
\end{lemm}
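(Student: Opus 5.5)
The plan is to compare the two sides through the projection $\Y\to X_*(T_{\ad})$, $\ld\mapsto\ld_{\ad}$, which is compatible with all the combinatorial data entering $\cAm$ and smallness. First I check that the data defining $\cAm$ and smallness only depend on pairings with roots. For $\alpha\in\Phi$ we have $\la\alpha,\ld\ra=\la\alpha,\ld_{\ad}\ra$, so $\ld_\alpha=(\ld_{\ad})_\alpha$. The action of $p(b\sigma)$ on $\Phi$ coincides with that of $p(b_{\ad}\sigma)$, and $\ld^\natural_{\ad}=(\ld_{\ad})^\natural$ computed with $b_{\ad}$. Since $\mu$ is minuscule, and $\ld^\natural\in W_0\mu$ is detected by the values $\la\alpha,\ld^\natural\ra$ together with the image in $\pi_1$, I would use Proposition~\ref{Xl} to see that $\ld\in\cAm$ implies $\ld_{\ad}\in\cA_{\mu_{\ad},b_{\ad}}$. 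The smallness condition (existence of $\beta\in\cO_\alpha$ with $\ld_\beta\le0$) is literally the same condition on both sides. Hence the projection maps $\cAm$ to $\cA_{\mu_{\ad},b_{\ad}}$ and $\cAm^{\sm}$ to $\cA^{\sm}_{\mu_{\ad},b_{\ad}}$. It is equivariant for $\Omega\cap\J_b\to\Omega_{\ad}\cap\J_{b_{\ad}}$, so it descends to the quotients. Alternatively, one could deduce the smallness part directly from the $\cAm$ statement via Proposition~\ref{irr=small} and the universal homeomorphism, but the combinatorial route is cleaner.

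For \emph{surjectivity}, take $\ld'\in\cA_{\mu_{\ad},b_{\ad}}$. Its class in $\pi_1(G_{\ad})=\Omega_{\ad}$ differs from some element in the image; I would use that $\Omega_{\ad}\cap\J_{b_{\ad}}$ permutes the components of $X_{\mu_{\ad}}(b_{\ad})$ transitively to move $\ld'$ into the component $\cG r^{\tau_{\ad}}$ with $X_\mu(b)^\tau\neq\emptyset$. The point $\vp^{\ld'}K_{\ad}$ lies in $X^{\ld'}_{\mu_{\ad}}(b_{\ad})$, and by the universal homeomorphism $X_\mu(b)^\tau\to X_{\mu_{\ad}}(b_{\ad})^{\tau_{\ad}}$ it has a preimage $gK$. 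That preimage lies in some $I\vp^\ld K/K$ with $\ld_{\ad}=\ld'$, since $I$ maps into $I_{\ad}$ and relative positions are compatible. Alternatively, I would lift directly: choose $\ld$ with $\ld_{\ad}=\ld'$ in the coset of $\Y$ fixed by the Kottwitz condition. Then $\ld^\natural$ has the correct root pairings and the correct $\pi_1$-image, so $\ld^\natural\in W_0\mu$.

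For \emph{injectivity}, suppose $\ld_{\ad}=\tau'(\ld'_{\ad})$ with $\tau'\in\Omega_{\ad}\cap\J_{b_{\ad}}$. I must produce $\tau\in\Omega\cap\J_b$ with $\tau\ld'=\ld$. I expect this step to be the main obstacle, since $\Omega\cap\J_b\to\Omega_{\ad}\cap\J_{b_{\ad}}$ need not be surjective. The approach I would try first is to reduce to the connected component picture: $X_\mu(b)=(\Omega\cap\J_b)X_\mu(b)^\tau$, and both sides identify with orbits of the respective $\J^0$-type data on a single component. Within one component $\cG r^\tau$, two cocharacters with the same adjoint image differ by a central cocharacter lying in the kernel of $\pi_1(G)\to\pi_1(G_{\ad})$ restricted to a fixed class, hence they are equal. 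So after translating $\ld,\ld'$ into the fixed component by $\Omega\cap\J_b$, I need the relevant $\tau'$ to come from $\Omega\cap\J_b$. For this I would use the universal homeomorphism of Lemma~\ref{ad f}'s setting, which identifies components meeting $X_\mu(b)$ compatibly. An element $\tau'$ preserving the image of $X_\mu(b)^\tau$'s component forces $\tau'(\ld'_{\ad})$ and $\ld'_{\ad}$ to sit in the same adjoint component, and hence to be equal up to the lift. In general, I would lift $\tau'$ to $\tilde\tau\in\Omega$ and check $\tilde\tau\in\Omega^\sigma$ modulo a central correction, using that $\tilde\tau\ld'\in\cAm$ forces $\tilde\tau\in\Omega^\sigma$, exactly as in the proof of Lemma~\ref{small finite}.
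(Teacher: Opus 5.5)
Your proposal is correct and follows the paper's route. Smallness is preserved in both directions because $\ld_\alpha=(\ld_{\ad})_\alpha$ and the $p(b\sigma)$-orbits on $\Pi$ agree, and bijectivity comes from $X_\mu(b)=(\Omega\cap\J_b)X_\mu(b)^\tau$ together with the universal homeomorphism $X_\mu(b)^\tau\to X_{\mu_{\ad}}(b_{\ad})^{\tau_{\ad}}$, which is exactly what the paper invokes. For injectivity the clean reason is that $\Omega_{\ad}\cong\pi_1(G_{\ad})$ acts freely on connected components, so once $\ld,\ld'$ are moved into $\cG r^\tau$ the element $\tau'$ must be trivial, giving $\ld_{\ad}=\ld'_{\ad}$ and hence $\ld=\ld'$; you need no lifting of $\tau'$, and your fallback of lifting it to $\Omega$ would fail in general since $\pi_1(G)\to\pi_1(G_{\ad})$ need not be surjective (e.g.\ $G=\mathrm{SL}_2$).
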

\begin{proof}
The existence of the maps is clear.
By the definition of smallness, it is also clear that the first map sends $\cAm^{\sm}$ to $\cA_{\mu_{\ad},b_{\ad}}^{\sm}$.
Thus it suffices to show the first map is bijective.
This follows immediately from the above universal homeomorphism.
\end{proof}

\section{The Case of $\GL_n$}
\label{GLn}
In this section, we set $G=\GL_n$ and use the following description.
Let $T$ be the torus of diagonal matrices, and we choose the subgroup of upper triangular matrices $B$ as Borel subgroup.
Let $\chi_{ij}$ be the character $T\rightarrow \Gm$ defined by $\mathrm{diag}(t_1,t_2,\ldots, t_n)\mapsto t_i{t_j}^{-1}$.
Then we have $\Phi=\{\chi_{ij}\mid i\neq j\}$, $\Phi_+=\{\chi_{ij}\mid i< j\}$, $\Phi_-=\{\chi_{ij}\mid i> j\}$.
The set of simple roots is $\{\chi_{i,i+1}\mid 1\le i <n\}$.
Hence $\Pi=\{\chi_{i+1,i}\mid 1\le i\le n-1\}\sqcup \{\chi_{1,n}\}$.
Through the isomorphism $X_*(T)\cong \Z^n$, ${X_*(T)}_+$ can be identified with the set $\{(m_1,\cdots, m_n)\in \Z^n\mid m_1\geq \cdots \geq m_n\}$.
The finite Weyl group $W_0$ is isomorphic to the symmetric group $\mathfrak S_n$.
Let us write $s_1=(1\ 2), s_2=(2\ 3), \ldots, s_{n-1}=(n-1\ n)$.
Set $s_0=\vp^{\chi_{1,n}^{\vee}}(1\ n)$.
Then $\bS=\{s_1,s_2,\ldots, s_{n-1}\}$ and $\tS=\bS\cup\{s_0\}$.
The Iwahori subgroup $I\subset K$ is the inverse image of $B^{\op}$ under the projection $G(\cO)\rightarrow G(\aFq)$ sending $\vp$ to $0$, where $B^{\op}$ is the subgroup of lower triangular matrices.

We set $\mu=(1^{(m)}, 0^{(n-m)}), c=s_1s_2\cdots s_{n-1},\tau=\vp^{(1,0^{(n-1)})}c$ and $b=\tau^m$ for $0< m<n$.
Then $\nu_b=(\frac{m}{n},\ldots,\frac{m}{n})$ and $p(b\sigma)=p(b)=c^m$.
Set $m'=\frac{m}{g}$ and $n'=\frac{n}{g}$.
Note also that $\tau$ is a generator of $\Omega\cong \Z$.
We also set $\alpha_i=\chi_{i+1,i}$ and $\alpha_0=\chi_{1,n}$.

For $\ld\in \cAm$, we define $w_\ld\coloneqq \el^{-1}c^m\el$ and $\a_\ld\coloneqq\el^{-1}((0,1,\ldots,n-1)+n\ld)$ (note that this definition is slightly different from the one in \S\ref{flat}).
By the definition of $\el$, $\a_\ld$ is strictly dominant, i.e., $\la \chi_{i,j},\a_\ld\ra>0$ for $1\le i<j\le n$.
We have
\begin{align}
\a_\ld=w_\ld\a_\ld-n\ld^\flat+(m,\ldots,m).\tag{$\ast$}
\end{align}

The goal of this section is to prove the bijectivity of $\flat$.
In the superbasic case (i.e., $\gcd(m,n)=1$), this was proved by Hamacher-Viehmann \cite{HV18}.
The key ingredient there was the bijectivity of the sweep map.
Since the sweep map is bijective without assuming $\gcd(m,n)=1$, we use the same idea to prove the bijectivity of $\flat$ in general.

\subsection{The Sweep Map}
The {\it sweep map} defined in \cite{ALW15} plays a crucial role in the case of $\GL_n$.
By a word $\w$ we mean a sequence of integers $(\w(1),\ldots,\w(n))$. 
For $1\le k\le n$, we define the level of $\w$ at $k$ by $l(\w)(k)\coloneqq \sum_{i=1}^k \w(i)$.
Let $\W_{\mu}$ denote the set of words such that $\w$ is a rearrangement of $(m,\ldots,m)-n\mu=(m-n,\ldots,m-n,m,\ldots,m)$.
Let $\W_{\mu}^+\subset\W_{\mu}$ denote the subset of words in $\W_{\mu}$ whose level at $1\le k\le n$ is always nonnegative.

\begin{defi}
The sweep map $\sw\colon \W_\mu\rightarrow \W_\mu$ is the map which sorts $\w$ according to its level by permuting $\w$ using the following algorithm:
For each $a\in \Z$, down from $-1$ to $-\infty$, and then down from $\infty$ to $0$, read $\w$ from right to left, and append to $\sw(\w)$ all letters $\w(k)$ such that $l(\w)(k)=a$.
\end{defi}

We set $l(\w)=(\l(\w)(1),\ldots, l(\w)(n))\in \Y$ for $\w\in \W_\mu^+$.
\begin{exam}
Let $n=7,m=3$.
Let $\w=(3,3,-4,3,3,-4,-4)$.
Then
\begin{align*}
\sw(\w)=(3,3,3,-4,3,-4,-4), 
\quad l(\w)=(3,6,2,5,8,4,0).
\end{align*}

Let $n=9,m=3$.
Let $\w=(3, 3, 3, 3, -6, -6, 3, 3, -6)$.
Then
\begin{align*}
\sw(\w)=(3, 3, 3, -6, 3, 3, 3, -6, -6), 
\quad l(\w)=(3, 6, 9, 12, 6, 0, 3, 6, 0).
\end{align*}
\end{exam}

\begin{theo}
\label{sweep}
The sweep map $\sw$ is bijective and preserves $\W_{\mu}^+$.
\end{theo}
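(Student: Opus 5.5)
Since the words in $\W_\mu$ have letters $m$ and $m-n$ (each $\ge$ or $<0$ with total sum $\sum\w(i)=nm-nm=0$), this is the sweep map for the rational Dyck path setting of \cite{ALW15}, and my plan is to deduce the statement from their general results rather than reprove it. A word $\w\in\W_\mu$ corresponds to a lattice path with $n-m$ steps of one kind and $m$ of the other, and the level $l(\w)(k)$ is (up to scaling) the signed distance from the diagonal. The first step is to check that the algorithm in the definition above is literally the sweep map of Armstrong--Loehr--Warrington for the step set $\{m,\,m-n\}$, including the convention of processing levels $-1,-2,\ldots$ before $\infty,\ldots,0$ and reading right to left. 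After this identification, bijectivity on all of $\W_\mu$ is the main theorem of \cite{ALW15} (proved there for arbitrary integer step weights, not only coprime ones), which I would simply cite; if the coprime hypothesis is an issue in their statement, I would instead invoke the general-weight version due to Thomas--Williams (``Sweeping up zeta''), which handles all integer letters.

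The second claim, that $\sw$ preserves $\W_\mu^+$, I would prove directly. Note the levels in the definition are prefix sums, and the level ``before'' position $k$ is $l(\w)(k-1)$ with $l(\w)(0)=0$. For $\w\in\W_\mu^+$ all levels $l(\w)(k)\ge 0$, so the sweep only runs through $a=\infty$ down to $0$; hence $\sw(\w)$ lists letters in weakly decreasing order of the level at which they occur. Then I would show that any prefix of $\sw(\w)$ has nonnegative sum: the letters read so far are exactly those at positions $k$ with $l(\w)(k)>a$ plus some with $l(\w)(k)=a$. Counting up-steps $m$ ending above level $a$ versus down-steps $m-n$ ending above $a$: each down-step landing at level $>a$ starts at a level $>a+n-m\geq$ and is preceded, in the path, by crossings; a standard crossing argument shows that among steps ending at levels $\ge a'$ there are ``enough'' up-steps, giving the inequality. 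Conversely $\sw^{-1}$ preserves $\W_\mu^+$ too, since $\W_\mu^+$ is finite and $\sw$ injective maps it into itself, hence bijectively.

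The main obstacle is this crossing-count inequality in the non-coprime case, where several positions share the same level and the right-to-left tie-breaking matters; I expect it is cleanest to cite \cite[Proposition]{ALW15} that sweep sends Dyck words to Dyck words for general weights, and only sketch the counting argument above as justification.
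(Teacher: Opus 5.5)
Your route is essentially the paper's. The paper's proof consists only of observing that $\sw$ is a special case of the general sweep map of \cite{ALW15} and citing \cite[Theorem 6.3 \& Theorem 6.7]{TW18} for bijectivity. One correction to your first step: \cite{ALW15} does not prove bijectivity of general sweep maps, coprime or not. It is conjectured there and only special cases are treated. So Thomas--Williams is not a fallback; it is the reference that actually carries the proof and should be cited as such.

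Your crossing sentence for preservation of $\W_\mu^+$ is not a proof as written. It is, however, easy to complete, and the obstacle you anticipate is not real.

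For $\w\in\W_\mu$ and a half-integer $h$, the path $0=l(\w)(0),\ldots,l(\w)(n)=0$ crosses $h$ upward and downward equally often. A letter $m$ ending at level $e$ crosses exactly the half-integers in $(e-m,e)$. A letter $m-n$ ending at $e$ crosses exactly those in $(e,e+n-m)$. Summing over the half-integers $h>a$, for an integer $a$, gives
\[
m\cdot\sharp\{k\mid \w(k)=m,\ l(\w)(k)>a\}\ \ge\ (n-m)\cdot\sharp\{k\mid \w(k)=m-n,\ l(\w)(k)\ge a\}.
\]

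Now let $\w\in\W_\mu^+$. Every prefix of $\sw(\w)$ consists, for some $a\ge 0$, of all letters $\w(k)$ with $l(\w)(k)>a$ plus some of the letters with $l(\w)(k)=a$. Its sum is therefore at least the left side minus the right side above, hence nonnegative. This holds no matter which level-$a$ letters have already been read, so neither coprimality nor the right-to-left tie-breaking plays any role.

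Your finiteness argument then correctly upgrades this to $\sw(\W_\mu^+)=\W_\mu^+$. That equality is what the paper uses when it applies $\sw^{-1}$ to elements of $\W_\mu^+$.
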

\begin{proof}
The map $\sw$ is a special case of the general sweep map defined in \cite{ALW15}.
The general bijectivity is proved in \cite[Theorem 6.3 \& Theorem 6.7]{TW18}.
\end{proof}

The following lemmas will be used in \S\ref{basic}.
\begin{lemm}
\label{distinct}
Let $\w\in \W_\mu^+$.
Then $l(\w)(i)\neq l(\w)(j)$ for any $1\le i<j\le n$ if and only if $m$ is coprime to $n$.
\end{lemm}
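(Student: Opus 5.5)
The plan is to show that the levels of a word in $\W_\mu^+$ are pairwise distinct exactly when $m$ is coprime to $n$.

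First I reduce everything to residues mod $n$. Every letter of $\w\in\W_\mu$ is $m$ or $m-n$, so each letter is congruent to $m$ modulo $n$. Hence $l(\w)(k)\equiv km \pmod n$ for every $1\le k\le n$.

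For the direction "coprime implies distinct", assume $\gcd(m,n)=1$. Then the residues $km \bmod n$ for $k=1,\dots,n$ are pairwise distinct, so the levels $l(\w)(k)$ are pairwise distinct as integers. This direction does not even use the positivity condition defining $\W_\mu^+$.

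For the converse I argue by contraposition. Suppose $g=\gcd(m,n)>1$, and write $m=gm'$, $n=gn'$ with $n'<n$. I will find $i<j$ with $l(\w)(i)=l(\w)(j)$. The natural candidates are positions $i$ and $j=i+n'$, since then $l(\w)(j)-l(\w)(i)\equiv n'm\equiv 0 \pmod n$. That congruence alone does not force equality, so I plan to use the following facts.

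1. The total level is $l(\w)(n)=nm-mn=0$.
2. Every level is nonnegative, because $\w\in\W_\mu^+$.
3. The block differences $l(\w)(k+n')-l(\w)(k)$ are multiples of $n$.

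The cleanest route is a discrete intermediate-value argument on the function $\phi(k)=l(\w)(k+n')-l(\w)(k)$ for $0\le k\le n-n'$, with the convention $l(\w)(0)=0$. This function takes values in $n\Z$.

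- At the endpoints, $\phi(0)=l(\w)(n')\ge 0$ and $\phi(n-n')=-l(\w)(n-n')\le 0$.
- A single step changes $\phi$ by $\w(k+n'+1)-\w(k+1)\in\{0,\pm n\}$.

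So $\phi$ moves through multiples of $n$ in steps of at most one multiple of $n$. It starts at a value $\ge 0$ and ends at a value $\le 0$, so it must equal $0$ at some $k$.

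If that $k$ satisfies $k\ge 1$, then $l(\w)(k)=l(\w)(k+n')$ with $1\le k<k+n'\le n$, and we are done. The only problem is the case $k=0$ with no other zero, which gives $l(\w)(n')=0$. In that case $l(\w)(n')=0=l(\w)(n)$, and since $n'<n$ this is again a repeated pair. So the only care needed is in handling this $k=0$ endpoint, and it is resolved by the fact that $l(\w)(n)=0$.
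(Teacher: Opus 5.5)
Your proof is correct and follows the paper's argument. For the converse, your $\phi(k)$ is the paper's window sum $\w(k+1)+\cdots+\w(k+n')$, with the same split into $k=0$ (giving $l(\w)(n')=l(\w)(n)=0$) and $k\ge 1$; you also usefully make explicit what the paper leaves implicit, namely that these sums lie in $n\Z$ and change by at most $n$ per step. For the coprime direction, your residue argument $l(\w)(k)\equiv km \pmod n$ is a cleaner packaging of the paper's observation that, since $\gcd(m,n-m)=1$, two coincident levels would force $(x+y)-(x'+y')\ge n$.
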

\begin{proof}
We first assume that $m$ is coprime to $n$.
Then $m$ is also coprime to $n-m$.
Thus if $mx-(n-m)y=mx'-(n-m)y'$ for $x,y,x',y'\in \Z_{\ge 0}$ with $x> x'$, then $$(x+y)-(x'+y')=(x-x
')+(y-y')\geq (n-m)+m=n.$$
This implies that $l(\w)(i)\neq l(\w)(j)$ for any $1\le i<j\le n$.

We next assume that $g=\gcd(m,n)=\gcd(m,n-m)>1$.
To complete the proof, it suffices to show that $l(\w)(i)= l(\w)(j)$ for some $1\le i<j\le n$.
Note that $n'=\frac{n}{g}<n$.
Then $l(\w)(n-n')\geq 0$ and $l(\w)(n)=0$ implies $\w(n-n'+1)+\cdots+\w(n)\le 0$.
On the other hand, we have $l(\w)(n')\geq 0$.
Thus there exists $1\le i\le n-n'+1$ such that $\w(i)+\cdots+\w(i+n'-1)=0$.
This implies that if $i=1$ (resp.\ $2\le i\le n-n'+1$), then $l(\w)(n')=l(\w)(n)=0$ (resp.\ $l(\w)(i-1)=l(\w)(i+n'-1)$) as desired.
\end{proof}

Note that for $i<j$, $l(\w)(i)\equiv l(\w)(j)\pmod{n}$ if and only if $i\equiv j \pmod{n'}$.
\begin{lemm}
\label{lw}
Let $\w\in \W_\mu^+$.
Assume that $\w(i)+\cdots+\w(i+n'-1)=0$ for some $i$.
If there exist $i'\in \{i,\ldots,i+n'-1\}$ and $k>0$ such that $l(\w)(i'+kn')> l(\w)(i')$, then there exists $j\ge i+n'$ such that $l(\w)(j)\in \{l(\w)(i),\ldots, l(\w)(i+n'-1)\}$.
Moreover, we may take $j$ such that $i+n'\le j<i'+kn'$ unless $i=i'$ and $k=1$.
\end{lemm}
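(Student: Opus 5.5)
The plan is to reduce everything to a discrete intermediate value argument for a suitably normalized level function. Write $l=l(\w)$ and set $l(0)=0$. The hypothesis $\w(i)+\cdots+\w(i+n'-1)=0$ says exactly that $l(i-1)=l(i+n'-1)$. Put $A=\{l(i),\ldots,l(i+n'-1)\}$.

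First I record the congruence bookkeeping. Every letter is $\equiv m\pmod n$, so $l(j)\equiv jm\pmod n$. By the remark before the lemma, $l(j)\equiv l(j')\pmod n$ if and only if $j\equiv j'\pmod{n'}$. The block $\{i,\ldots,i+n'-1\}$ contains exactly one representative $\rho(j)$ of each residue class of positions mod $n'$. For $j\ge i+n'-1$, define
$$G(j)\coloneqq l(j)-l(\rho(j))\in n\Z .$$
A level $l(j)$ with $j\ge i+n'$ lies in $A$ precisely when $G(j)=0$, because two levels in different residue classes mod $n'$ are never equal. So the lemma asks for a zero of $G$ at some $j\ge i+n'$.

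The key step is to show that $G$ moves in steps of $0$ or $\pm n$. There are two cases for the increment from $j$ to $j+1$.
\begin{itemize}
\item If $\rho(j+1)=\rho(j)+1$, then $G(j+1)-G(j)=\w(j+1)-\w(\rho(j+1))$. Both letters lie in $\{m,m-n\}$, so the difference is in $\{0,\pm n\}$.
\item If instead $\rho(j)=i+n'-1$ and $\rho(j+1)=i$, then $l(i)-l(i+n'-1)=l(i)-l(i-1)=\w(i)$, using the closed-loop hypothesis. Hence $G(j+1)-G(j)=\w(j+1)-\w(i)\in\{0,\pm n\}$ again.
\end{itemize}
Thus $G$ is an integer-multiple-of-$n$ valued walk with steps in $\{0,\pm n\}$, and the discrete intermediate value theorem applies. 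Its boundary values are as follows.
\begin{itemize}
\item $G(i+n'-1)=0$ trivially.
\item $G(i'+kn')\ge n$, since $G(i'+kn')$ is a positive multiple of $n$ by hypothesis.
\item $G(n)\le 0$, since $l(n)=0$ and all levels are nonnegative because $\w\in\W_\mu^+$.
\end{itemize}
Running the walk from $i'+kn'$ (value $\ge n$) to $n$ (value $\le 0$) produces $j\in[i'+kn',n]$ with $G(j)=0$. This proves the first assertion.

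For the ``moreover'' part, I would instead run the walk on $[i+n'-1,i'+kn']$. Let $j_0$ be the last index $<i'+kn'$ with $G(j_0)\le 0$; such an index exists since $G(i+n'-1)=0$. Because the steps are $0$ or $\pm n$, we get $G(j_0)=0$. If $j_0\ge i+n'$ we are done.

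The remaining case is $G>0$ on all of $[i+n',i'+kn']$, and this is the step I expect to be the main obstacle. Here I would show the configuration forces $i=i'$ and $k=1$. I would first try the following. In this case $G(i+n')=n$, i.e.\ $l(i+n')=l(i)+n$. I would then compare with the shifted block $\{i+1,\ldots,i+n'\}$, whose sum is $\w(i+n')-\w(i)$. Using the positivity of $G$ on the whole range together with $l(n)=0$, I would try to show that the relevant shifted block also sums to zero, so that the argument can be iterated with the block moved one step to the right. This induction on the distance $i'+kn'-(i+n')$ should collapse unless $i'=i$ and $k=1$, which is exactly the excluded case where the interval $[i+n',i'+kn')$ reduces to the single point $i+n'$ at which $G=n\neq0$.
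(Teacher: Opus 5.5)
Your first assertion is correct and is the paper's argument: the paper's $D(j)$ is your $G(j)/n$, the wrap-around step is handled via the block sum exactly as you do, and the intermediate-value argument between $i'+kn'$ and $n$ is the same. Your reduction for the second assertion is also correct. Taking $j_0$ to be the last index in $[i+n'-1,i'+kn')$ with $G(j_0)\le 0$ forces $G(j_0)=0$, so you are done unless $G>0$ on all of $[i+n',i'+kn']$.

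The gap is the final step, and it cannot be closed. The remaining configuration does \emph{not} force $(i',k)=(i,1)$, so the ``moreover'' clause as stated is false. Take $n=9$, $m=3$ (so $g=n'=3$) and $\w=(3,3,-6,3,3,3,3,-6,-6)\in\W_\mu^+$, with $l(\w)=(3,6,0,3,6,9,12,6,0)$. For $i=3$ the block sum is $-6+3+3=0$, and $\{l(\w)(3),l(\w)(4),l(\w)(5)\}=\{0,3,6\}$. With $i'=4$ and $k=1$ we have $l(\w)(7)=12>3=l(\w)(4)$ and $i'\neq i$. Yet the only admissible index is $j=6$, and $l(\w)(6)=9\notin\{0,3,6\}$; in your notation $G(6)=G(7)=9>0$. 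Your proposed iteration fails at its first step. In the remaining case $G(i+n')=n$, and $G(i+n')=\w(i+n')-\w(i)$ is exactly the sum of the shifted block $\{i+1,\ldots,i+n'\}$, so that block never sums to zero.

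What your argument does prove, with no further work, is a corrected statement. One can take $i+n'\le j<i'+kn'$ whenever $l(\w)(i+n')\le l(\w)(i)$, i.e.\ unless $\w(i)=m-n$ and $\w(i+n')=m$. This hypothesis automatically excludes $(i',k)=(i,1)$, and it gives $G(i+n')\le 0$, hence $j_0\ge i+n'$.

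The paper's proof has the same defect, hidden in its first sentence. It replaces $i'+kn'$ by the minimal index with $l(\w)(i'+kn')>l(\w)(i')$, and then obtains $j=i'+kn'-1$ unless that minimal pair is $(i,1)$. But the minimal pair can be $(i,1)$ even when the given pair is not: in the example above the minimal index is $6=i+n'$. So the paper's argument also establishes only the corrected version, whose exceptional case is $l(\w)(i+n')>l(\w)(i)$ rather than $i=i'$, $k=1$.
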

\begin{proof}
For $j\ge i+n'$, let $\bar j\in\{i,\ldots,i+n'-1\}$ be congruent to $j$ modulo $n'$.
Set $D(j)=\frac{1}{n}(l(\w)(j)-l(\w)(\bar j))$.
Since $l(\w)$ changes by $m$ or $m-n$ at each step and $\w(i)+\cdots+\w(i+n'-1)=0$, we have $D(j+1)-D(j)\in\{0,\pm 1\}$.
Note that $D(n)\le 0$.
Thus if there exist $i'\in \{i,\ldots,i+n'-1\}$ and $k>0$ such that $D(i'+kn')>0$, then there exists $j\ge i'+kn'\ge i+n'$ such that $D(j)=0$.
This proves the first statement.

For the second statement, we may assume that $i'+kn'$ is minimal among all such integers.
By this minimality, we have $l(\w)(i'+kn'-1)\le l(\w)(i'-1)$ if $i'\neq i$ and $l(\w)(i+kn'-1)\le l(\w)(i+n'-1)=l(\w)(i-1)$ if $i'=i$.
Since $l(\w)$ changes by only $m$ or $m-n$ at each step, the equality must hold in either case.
Thus the statement is true unless $i=i'$ and $k=1$.
\end{proof}

\subsection{The Superbasic Case}
\label{superbasic}
In this subsection, we assume that $\gcd(m,n)=1$ and recall a main result in \cite{HV18}.

\begin{defi}
\label{semi-module}
A {\it semi-module} for $m, n$ is a subset $A\subset \Z$ that is bounded below and satisfies $m+A\subset A$ and $n+A\subset A$.
Set $\hat{A}=A\setminus (n+A)$.
The semi-module $A$ is called normalized if $\sum_{a\in \hat{A}}a=\frac{n(n-1)}{2}$.
\end{defi}

For a semi-module $A$, there exists a unique $\nu'\in \N^n$ satisfying the following condition:
Let $a_1=\min \hat A$ and let inductively $a_{i+1}=a_i+m-\nu'(i)n$ for $i=1,\ldots, n$.
Then $a_1=a_{n+1}$ and $\{a_1,a_2,\ldots,a_{n}\}=\hat A$.
We call $\nu'$ the {\it type} of $A$.

\begin{lemm}
\label{type}
There is a bijection between the set of normalized semi-modules for $m,n$ and the set of possible types $\nu'\in \N^n$ with $\nu'\le\nu_b$.
\end{lemm}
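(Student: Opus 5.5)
We plan to construct the bijection explicitly in both directions, using that $\gcd(m,n)=1$ throughout. First we make the condition $\nu'\le\nu_b$ concrete. Since $\nu_b=(\frac mn,\ldots,\frac mn)$ is central, for $\nu'\in\N^n$ the condition $\nu'\le\nu_b$ says exactly that
$$\textstyle\sum_{i\le k}\nu'(i)\le \frac{km}{n}\quad (1\le k<n)\qquad\text{and}\qquad \sum_{i\le n}\nu'(i)=m .$$

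\emph{From semi-modules to types.} Let $A$ be a semi-module. Because $n+A\subset A$ and $A$ is bounded below, $\hat A$ contains exactly one element of each residue class modulo $n$, and $A=\hat A+n\N$. Given $a_i\in\hat A$, we have $a_i+m\in A$. Hence $a_{i+1}$, the element of $\hat A$ congruent to $a_i+m$, satisfies $a_{i+1}\le a_i+m$, so $\nu'(i)\in\N$. Since $m$ is a unit modulo $n$, the residues $a_1+km$ for $0\le k<n$ are pairwise distinct. Therefore $a_1,\ldots,a_n$ enumerate $\hat A$ and $a_{n+1}=a_1$, which gives the existence and uniqueness of the type claimed before the lemma. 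Now
$$\textstyle a_{k+1}-a_1=km-n\sum_{i\le k}\nu'(i).$$
Taking $k=n$ gives $\sum_i\nu'(i)=m$. For $k<n$, minimality of $a_1$ in $\hat A$ gives $a_{k+1}-a_1\ge 0$, which is the partial-sum inequality. So the type satisfies $\nu'\le\nu_b$.

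\emph{From types to semi-modules.} Conversely, let $\nu'\in\N^n$ with $\nu'\le\nu_b$. Set $a_{k+1}=a_1+km-n\sum_{i\le k}\nu'(i)$, where $a_1$ is still to be chosen. Summing over $k=0,\ldots,n-1$ gives
$$\textstyle\sum_k a_{k+1}=na_1+m\frac{n(n-1)}2-nC,\qquad C\in\Z .$$
Normalization therefore forces
$$\textstyle a_1=(1-m)\frac{n-1}{2}+C .$$
This step is the main thing to check: $a_1$ must be an integer. We will argue this by parity. If $n$ is even then $m$ is odd by coprimality, so $(1-m)(n-1)$ is even; if $n$ is odd then $n-1$ is even. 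With this $a_1$, put $A=\bigcup_i(a_i+n\N)$. Then $A$ is bounded below and $n+A\subset A$. Also $a_i+m=a_{i+1}+\nu'(i)n\in A$ because $\nu'(i)\ge0$, where $a_{n+1}=a_1$ holds since $\sum_i\nu'(i)=m$; hence $m+A\subset A$. The $a_i$ lie in distinct residue classes, so $\hat A=\{a_1,\ldots,a_n\}$. The partial-sum inequalities give $a_{k+1}\ge a_1$, so $\min\hat A=a_1$. Thus $A$ is a normalized semi-module whose type is $\nu'$.

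\emph{The two maps are mutually inverse.} A semi-module is determined by $\hat A$. For a normalized semi-module, $\hat A$ is determined by its type: the type fixes all the differences $a_{k+1}-a_1$, and normalization then fixes $a_1$. Hence the two constructions are inverse to each other, which gives the asserted bijection.
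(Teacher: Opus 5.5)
Your proof is correct. It takes a genuinely different route from the paper only in the sense that the paper gives no argument: it cites \cite[Lemma 3.3]{Viehmann06} and warns that Viehmann uses the lower triangular Borel.

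\textbf{What your direct proof gains.} You work entirely in this paper's conventions. You unpack $\nu'\le\nu_b$ for the upper triangular Borel as $\sum_{i\le k}\nu'(i)\le km/n$ for $k<n$, with total sum $m$. This is visibly equivalent to $a_{k+1}\ge a_1=\min\hat A$. So the convention translation that the citation leaves to the reader is checked automatically. It also agrees with the later equivalence $\w=(m,\ldots,m)-n\nu'\in\W_\mu^+\iff\nu'\le\nu_b$.

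Your argument also makes explicit the two places where $\gcd(m,n)=1$ is used:
\begin{enumerate}[(1)]
\item the elements $a_1+km$, $0\le k<n$, exhaust all residues modulo $n$;
\item the forced value $a_1=(1-m)\frac{n-1}{2}+C$ is an integer, which your parity argument settles.
\end{enumerate}
Finally, you show the bijection is the type map itself, with an explicit inverse. The citation is of course shorter.

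\textbf{One small point to tidy.} Your opening sentence claims $\hat A$ has exactly one element in every residue class. That needs $A$ to meet every class, which uses $A\neq\emptyset$ together with $m+A\subset A$. Nonemptiness is automatic for normalized $A$: since $n\ge 2$, the empty set has $\sum_{a\in\hat A}a=0\neq\frac{n(n-1)}{2}$. Your inductive construction of the $a_i$ only needs ``at most one element per class, and one in each class that $A$ meets'', so nothing is actually missing.
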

\begin{proof}
This is \cite[Lemma 3.3]{Viehmann06}.
Note that although we choose the subgroup of upper triangular matrices $B$ as a Borel subgroup in this paper, the fixed Borel subgroup in \cite{Viehmann06} is the subgroup of lower triangular matrices.
\end{proof}

\begin{defi}
Let $A$ be a semi-module of type $\nu'$, and let $a_1,a_2,\ldots,a_{n}\in\hat A$ be defined as above.
Let $\overline a_1>\overline a_2>\cdots>\overline a_n$ be the elements in $\hat A$ arranged in decreasing order.
We define $\nu$ by setting $\nu(i)=\nu'(i')$ where $i$ is the unique number such that $\overline a_i=a_{i'}$.
We call $\nu$ the {\it cotype} of $A$.
\end{defi}

A semi-module $A$ for $\mu$ is a normalized semi-module for $m,n$ whose type is a $W_0$-conjugate of $\mu$.
Let $\mathscr A_\mu$ denote the set of semi-modules for $\mu$.
Recall that $\overline \nu$ denotes the unique dominant $W_0$-conjugate of $\nu$.
Then by Lemma \ref{type}, $A\mapsto \mathrm{type}(A)$ induces a bijection $\mathscr A_\mu\cong \{\nu\in W_0\mu \mid  \nu\le \nu_b\}$.
It is shown in \cite[p.\ 12831]{Hamacher15} that $\mathrm{cotype}(A)\in \{\nu\in W_0\mu \mid  \nu\le \nu_b\}$.
Thus we obtain a map 
$$\zeta\colon \{\nu\in W_0\mu \mid  \nu\le \nu_b\}\rightarrow \{\nu\in W_0\mu \mid  \nu\le \nu_b\},\quad \mathrm{type}(A)\mapsto \mathrm{cotype}(A).$$
As explained in \cite[p.\ 1625]{HV18}, $\zeta$ is the composition of 
\begin{align*}
\nu'\mapsto \w\coloneqq (m,\ldots,m)-n\nu'\mapsto \sw(\w)\mapsto \frac{(m,\ldots,m)-\sw(\w)}{n}.
\end{align*}
Note that $\w=(m,\ldots,m)-n\nu'\in \W_\mu^+$ if and only if $\nu'\in W_0\mu$ with $\nu'\le \nu_b$.
Thus its bijectivity follows from Theorem \ref{sweep}.
Combining this with the bijection $\mathscr A_\mu\cong \{\nu\in W_0\mu \mid  \nu\le \nu_b\}$ induced by type, we obtain the following theorem, which was originally conjectured in \cite[Remark 6.16]{dJO00} and proved in \cite[Theorem 4.16]{HV18}.
\begin{theo}
\label{sbbijective}
The $\mathrm{cotype}=\zeta\circ\mathrm{type}$ induces a bijection $\mathscr A_\mu\cong \{\nu\in W_0\mu \mid  \nu\le \nu_b\}$.
\end{theo}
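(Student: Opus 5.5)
The statement is a composite of bijections, so I will build it in three steps.

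First, Lemma \ref{type} already identifies normalized semi-modules for $m,n$ with the set of types $\nu'\in\N^n$ satisfying $\nu'\le\nu_b$. Restricting to semi-modules whose type is a $W_0$-conjugate of $\mu$ gives
\[
\mathrm{type}\colon\mathscr A_\mu\xrightarrow{\sim}\{\nu\in W_0\mu\mid \nu\le\nu_b\}.
\]
Since the sum of entries of $\nu'$ is fixed at $m$, and $\nu_b=(\tfrac{m}{n},\ldots,\tfrac{m}{n})$, the condition $\nu'\le\nu_b$ is the partial-sum condition $\sum_{i\le k}\nu'(i)\le km/n$. In the Borel convention of this paper, where $B$ is upper triangular, these are the inequalities defining $\le$ on $\Y_\Q$. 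This is the only place where the convention mismatch with \cite{Viehmann06} (lower triangular Borel) needs a check, and it amounts to the stated restriction of Lemma \ref{type}.

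Second, I will show that $\zeta$ is a bijection of the target set onto itself. Write $\zeta$ as the composite $\nu'\mapsto\w=(m,\ldots,m)-n\nu'\mapsto\sw(\w)\mapsto ((m,\ldots,m)-\sw(\w))/n$. The first and last maps are mutually inverse affine bijections between $W_0\mu$ and $\W_\mu$. Under them, the levels satisfy $l(\w)(k)=km-n\sum_{i\le k}\nu'(i)$, so $\nu'\le\nu_b$ holds exactly when all levels are nonnegative, that is, when $\w\in\W_\mu^+$. Theorem \ref{sweep} says that $\sw$ is a bijection of $\W_\mu$ preserving $\W_\mu^+$. A map on the finite set $\W_\mu$ that is bijective and sends $\W_\mu^+$ into itself restricts to a bijection of $\W_\mu^+$. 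Conjugating back, $\zeta$ is a bijection of $\{\nu\in W_0\mu\mid\nu\le\nu_b\}$.

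Third, the identity $\mathrm{cotype}=\zeta\circ\mathrm{type}$ is the content of \cite[p.\ 1625]{HV18} as recalled above. The decreasing rearrangement of $\hat A$ orders the $a_i$ by size, and this matches the sweep map reading letters by level, because $a_{i+1}-a_i=m-\nu'(i)n=\w(i)$. I will take this identity as given and conclude that $\mathrm{cotype}$ is a composite of two bijections, hence a bijection $\mathscr A_\mu\cong\{\nu\in W_0\mu\mid\nu\le\nu_b\}$.

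The main obstacle is the bookkeeping in the third step: normalization, the choice of starting point $a_1=\min\hat A$, and the reading order (negative levels first, scanned right to left) must line up exactly with the sorting of $\hat A$. Since the paper cites \cite{HV18} for this, I will rely on that citation and only verify the level/partial-sum translation used in the second step.
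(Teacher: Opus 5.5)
Your proposal is correct and follows the paper's own argument. Type is a bijection onto $\{\nu\in W_0\mu\mid \nu\le\nu_b\}$ by Lemma \ref{type}; $\zeta$ is identified with the sweep composite via \cite[p.\ 1625]{HV18}, and the level/partial-sum translation makes it a bijection of that set by Theorem \ref{sweep}; hence $\mathrm{cotype}=\zeta\circ\mathrm{type}$ is bijective. The only cosmetic difference is that you define $\zeta$ directly as the sweep composite, so you do not need the separate fact from \cite[p.\ 12831]{Hamacher15} that $\mathrm{cotype}(A)$ lands in the target set.
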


We will now explain a relationship between $\mathscr A_\mu$, $\Omega\backslash \cAm^{\mathrm{sm}}$ and $\{\nu\in W_0\mu \mid  \nu\le \nu_b\}$.
It is easy to check that every $\ld\in \cAm$ is small.
For $\ld\in \Y$, set $A^\ld=\{(i-1)+\ld(i)n+k n \mid 1\le i\le n, k\in \N\}.$
For any $\Omega\ld\in \Omega\backslash \Y$, we always assume that $\ld$ is a normalized representative, i.e., $\ld(1)+\cdots+\ld(n)=0$.
Then $\Omega\ld\in \Omega\backslash\cAm(=(\Omega\cap \J_b)\backslash \cAm^{\mathrm{sm}})$ if and only if $A^\ld$ is a semi-module for $\mu$.
Moreover, $\Omega\ld\mapsto A^\ld$ induces a bijection $\Omega\backslash\cAm\cong \mathscr A_\mu$.
See \cite[Lemma 3.4 \& \S 3.2]{Shimada4}.
Recall that $(\tau\ld)^\flat=\ld^\flat$.
\begin{coro}
\label{sbAB}
We have $\mathrm{cotype}(A^\ld)=\lambda^\flat$ and hence the map $$\flat\colon  \Omega\backslash\cAm\rightarrow \{\nu\in W_0\mu \mid  \nu\le \nu_b\},\quad \Omega\ld\mapsto \ld^\flat$$
is the composition of 
\begin{align*}
\Omega\ld\mapsto A^\ld\mapsto \mathrm{type}(A^\ld)\xmapsto{\zeta} \mathrm{cotype}(A^\ld).
\end{align*}
In particular, $\flat$ is bijective.
Moreover, $\Xl$ is an affine space of dimension $\langle \rho,\mu+\ld^\flat\rangle$ for $\Omega\ld\in \Omega\backslash\cAm$.
\end{coro}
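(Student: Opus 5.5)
The main step is the identity $\mathrm{cotype}(A^\ld)=\ld^\flat$ for a normalized representative $\ld$ of $\Omega\ld\in\Omega\backslash\cAm$. Granting it, bijectivity of $\flat$ follows from the bijection $\Omega\ld\mapsto A^\ld$ onto $\mathscr A_\mu$ and Theorem \ref{sbbijective}. The affine-space statement comes from Viehmann's result that each semi-module stratum is an affine space. The dimension comes from Proposition \ref{dim}.

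To compute the cotype, I will write $\hat A^\ld=\{(i-1)+\ld(i)n\mid 1\le i\le n\}$. This holds because the residues $i-1$ modulo $n$ are distinct, so for each $i$ the minimal element of $A^\ld$ in the class of $i-1$ is $(i-1)+\ld(i)n$. These are exactly the entries of the vector $(0,1,\ldots,n-1)+n\ld$, whose decreasing rearrangement is governed by $\el$. Since $\a_\ld=\el^{-1}((0,1,\ldots,n-1)+n\ld)$ is strictly dominant, the coordinates of $\a_\ld$ are the elements of $\hat A^\ld$ listed in decreasing order. That is, $\a_\ld(i)=\overline a_i$ in the notation of the cotype definition.

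Next I will determine the successor structure. For $a\in\hat A^\ld$, the next element $a+m-\nu'n$ in the type sequence is the unique element of $\hat A^\ld$ congruent to $a+m$ modulo $n$. Applying $b=\tau^m$, which shifts indices by $c^m$ and adds a unit vector pattern, the element $(b\sigma\ld)$ satisfies $(b\ld)(j)=\ld(j-m)+[\,j\le m\,]$, with indices read cyclically. Rewriting this in terms of $\hat A^\ld$ gives a coordinatewise identity: the element of $\hat A^\ld$ in residue class $(i-1+m)$ equals $(i-1)+\ld(i)n+m-n\,\ld^\natural(\cdot)$, where the coordinate of $\ld^\natural$ is taken at the appropriate index. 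Transporting by $\el^{-1}$, this is exactly identity $(\ast)$:
$$\a_\ld=w_\ld\a_\ld-n\ld^\flat+(m,\ldots,m).$$
Reading $(\ast)$ coordinatewise, the entry $\ld^\flat(i)$ is the $\nu'$-value attached to the step leaving $\overline a_i$, up to the orientation convention for which step is attached to which element. The sign and orientation must be checked against the definition $a_{i+1}=a_i+m-\nu'(i)n$. By the definition of cotype, $\nu(i)=\nu'(i')$ where $\overline a_i=a_{i'}$, so this yields $\mathrm{cotype}(A^\ld)=\ld^\flat$.

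The main obstacle is this bookkeeping: matching the index shift by $c^m$ and the Borel/Iwahori conventions (upper versus lower triangular, which Lemma \ref{type} warns about) so that $(\ast)$ really says "successor" rather than "predecessor". If it comes out reversed, I will instead compare with type rather than cotype, and adjust using $\zeta$ and the description of $\zeta$ through the sweep map. Finally, I will check that the composite is $\Omega\ld\mapsto A^\ld\mapsto\mathrm{type}(A^\ld)\mapsto\zeta(\mathrm{type}(A^\ld))$. This is immediate from the definition of $\zeta$.
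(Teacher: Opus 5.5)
\textbf{The orientation step is backwards.} You flagged this step as the main obstacle, and as written your conclusion rests on a misreading of it. Coordinatewise, $(\ast)$ says $\a_\ld(i)=\a_\ld(w_\ld^{-1}(i))+m-n\ld^\flat(i)$. Here $\a_\ld(w_\ld^{-1}(i))\equiv \a_\ld(i)-m \pmod n$ is the \emph{predecessor} of $\overline a_i=\a_\ld(i)$ in the type cycle. So $\ld^\flat(i)$ is the $\nu'$-value of the step \emph{arriving} at $\overline a_i$, not the step leaving it. Your successor rule itself is right: the coordinate of $\ld^\natural$ in it sits at the arrival index $i+m$.

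If you match this against the cotype definition read literally ($\nu(i)=\nu'(i')$ with $\overline a_i=a_{i'}$, i.e.\ the leaving step), the identity fails. For $\ld=0$ one has $\hat A^0=\{0,\ldots,n-1\}$ and $\overline a_i=n-i$, which leaves with $\nu'=1$ iff $i\le m$. That gives $\mu$, which is not even $\le\nu_b$, whereas $\ld^\flat=w_0\mu=(0^{(n-m)},1^{(m)})$.

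What makes the statement true is the sweep-map description of $\zeta$ that the paper adopts. The letter $\w(k)=m-n\nu'(k)$ carries the level $l(\w)(k)=a_{k+1}-a_1$ of its arrival point. These levels are pairwise distinct, being the numbers $a-a_1$ for $a\in\hat A^\ld$, and $\sw$ lists letters by decreasing level. Hence $\zeta(\mathrm{type}(A^\ld))(i)$ is the $\nu'$-value of the step arriving at $\overline a_i$, which equals $\ld^\flat(i)$ by $(\ast)$. So commit to your sweep-map fallback. Drop the alternative of comparing with the type itself: the type is indexed along the cycle, not by size, so that comparison does not make sense.

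\textbf{Comparison with the paper.} With that fixed, your argument is correct and more self-contained than the paper's, which proves nothing here directly. The paper quotes $\mathrm{cotype}(A^\ld)=\ld^\flat$ from \cite[Remark 3.4]{Nie22}, and the affine-space and dimension statements from \cite[Proposition 1.6]{HV18} (see also \cite[Theorem 3.3]{Nie22}). It then deduces bijectivity from Theorem \ref{sbbijective} and $\Omega\backslash\cAm\cong\mathscr A_\mu$, exactly as you do. You instead verify the identity from $(\ast)$. Your two observations are both correct: the entries of $\a_\ld$ are the elements of $\hat A^\ld$ in decreasing order, and $(\ast)$ is the $\el^{-1}$-transport of $v(j)=v(j-m)+m-n\ld^\natural(j)$ for $v=(0,1,\ldots,n-1)+n\ld$. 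You also take the dimension from Proposition \ref{dim}, which applies because $\GL_n$ has irreducible root system. Only the affine-space claim remains a citation, via the identification of $\Xl$ with a semi-module stratum.
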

\begin{proof}
The first statement is explained in \cite[Remark 3.4]{Nie22}.
The second statement is \cite[Proposition 1.6]{HV18}.
See also \cite[Theorem 3.3]{Nie22}.
\end{proof}

The decomposition of $X_\mu(b)$ into the disjoint union of $\Xl(\neq\emptyset)$ coincides with the semi-module stratification considered in \cite[\S5]{dJO00} and \cite[Proposition 5.1]{Viehmann08}.

Let $\Y_{++}\subset \Y_{+}$ denote the subset of strictly dominant cocharacters.
In \S\ref{basic}, we need the following rephrase of Theorem \ref{sbbijective}.
\begin{coro}
\label{sbwa}
Let $\nu\le \nu_b$ and let $\ld\in \cAm$ such that $\ld^\flat=\nu$.
Then $w=w_\ld$ is the unique $n$-cycle $w$ such that
$$\a-w\a=(m,\ldots,m)-n\nu$$
for some $\a\in \Y_{++}$.
\end{coro}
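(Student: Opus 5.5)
Let $\w=(m,\ldots,m)-n\nu$; since $\nu\in W_0\mu$ with $\nu\le\nu_b$, the remark before Theorem \ref{sbbijective} gives $\w\in\W_\mu^+$. The proof has two parts. First we check that $w_\ld$ and $\a_\ld$ satisfy the identity. Then we show that any $n$-cycle $w$ admitting a strictly dominant solution $\a$ is forced to equal $w_\ld$.

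For existence, identity $(\ast)$ reads $\a_\ld-w_\ld\a_\ld=(m,\ldots,m)-n\ld^\flat=\w$ once $\ld^\flat=\nu$. Here $\a_\ld\in\Y_{++}$, as noted after the definition of $\a_\ld$. Also $w_\ld=\el^{-1}c^m\el$ is conjugate to $c^m$, which is an $n$-cycle because $\gcd(m,n)=1$.

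For uniqueness, let $w$ be an $n$-cycle and $\a\in\Y_{++}$ with $\a-w\a=\w$. In coordinates this says $\a(w^{-1}(i))=\a(i)-\w(i)$. Since $w$ is an $n$-cycle, we start at some index $i_1$ and set $i_{k+1}=w^{-1}(i_k)$. This gives $\a(i_{k+1})=\a(i_1)-l'(k)$, where $l'$ is a partial sum of $\w$ taken along the cycle.

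Strict dominance means $\a(1)<\a(2)<\cdots<\a(n)$ (with the convention $\la\chi_{ij},\a\ra>0$ for $i<j$, the entries are strictly decreasing; I will fix whichever direction holds). So the permutation $w$ is determined by the relative order of the values $\a(i_1)-l'(k)$. In other words, $w$ is recovered from the word obtained by reading $\w$ along the cycle, after sorting its entries by level.

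This is exactly the sweep map. The word read along the cycle is some $\mathbf u\in\W_\mu$, and $\w$ is the sorted word by level, so $\w=\sw(\mathbf u)$. The distinctness of levels in Lemma \ref{distinct}, valid because $\gcd(m,n)=1$, makes the sorting unambiguous. We also need $\mathbf u\in\W_\mu^+$; this holds if the cycle starts at the position of maximal $\a$-value, since then all levels are nonnegative. By Theorem \ref{sweep}, $\mathbf u$ is then uniquely determined by $\w$, and therefore so is $w$: it is the cyclic order of indices, labelled by the rank of each level. Hence any two solutions $w$ coincide, and the existence part identifies the unique one as $w_\ld$.

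The main obstacle is the bookkeeping in this reconstruction. The tie-breaking convention in the definition of $\sw$, in particular reading right to left and the order of the levels, has to match the strict ordering of the coordinates of $\a$. The starting point of the cycle also has to be normalized so that $\mathbf u\in\W_\mu^+$. I would settle these conventions by checking them against the $(n,m)=(7,3)$ example.
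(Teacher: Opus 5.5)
Your plan of recovering $w$ from $\nu$ through injectivity of $\sw$ uses the right mechanism. However, the key identification $\w=\sw(\mathbf u)$ is false for the reading you committed to: backward along $w^{-1}$, starting at the maximal entry. The sweep map files each letter $\mathbf u(k)$ under the level $l(\mathbf u)(k)$ reached \emph{after} it. In your reading $\mathbf u(k)=\w(i_k)$ but $l(\mathbf u)(k)=\a(i_1)-\a(i_{k+1})$, so $\w(i_k)$ is filed under the index $i_{k+1}=w^{-1}(i_k)$. Strict dominance means $\a(1)>\a(2)>\cdots>\a(n)$, so sorting by decreasing level gives $\sw(\mathbf u)(p)=\w(w(n+1-p))$. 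This depends on $w$ and in general differs from $\w(p)$, so injectivity of $\sw$ no longer determines $\mathbf u$ from $\nu$.

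Concretely, take $\ld_2$ in Example~\ref{3,7}: $\nu=(0,0,0,1,1,0,1)$, $w=w_{\ld_2}=(1\ 4\ 2\ 5\ 7\ 6\ 3)$, $\a_{\ld_2}=(7,6,4,3,2,1,-2)$ and $\w=(3,3,3,-4,-4,3,-4)$. Your sequence is $(i_k)=(1,3,6,7,5,2,4)$. This gives $\mathbf u=(3,3,3,-4,-4,3,-4)$ with levels $(3,6,9,5,1,4,0)$, and $\sw(\mathbf u)=(3,3,-4,3,3,-4,-4)=(3,\ldots,3)-7\ld_3^\flat\neq\w$. Your proposed check against the $(7,3)$ example need not catch this: for $\ld_1$ and $\ld_5=0$ the backward reading happens to give the right answer.

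The fix is to read forward from the minimal entry, i.e.\ from the index $n$, setting $\mathbf u(k)=\w(w^k(n))$. From $\a(w(j))-\a(j)=\w(w(j))$ you get $l(\mathbf u)(k)=\a(w^k(n))-\a(n)\ge 0$ with $l(\mathbf u)(n)=0$, so $\mathbf u\in\W_\mu^+$. Now the letter $\w(j)$ is filed under the level $\a(j)-\a(n)$ of the \emph{same} index $j$. The levels are distinct and the $p$-th largest is $\a(p)-\a(n)$, so $\sw(\mathbf u)=\w$. Theorem~\ref{sweep} then determines $\mathbf u$ from $\nu$. Since $w^k(n)$ is the rank of $l(\mathbf u)(k)$ among the levels in decreasing order, $w$ is determined. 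Existence via $(\ast)$ is as you wrote.

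With this correction your argument is a shortcut through the paper's. The forward word is $(m,\ldots,m)-n\,\mathrm{type}(A)$ for the semi-module $A=\{\a(j)+kn\}$, and $\sw(\mathbf u)=\w$ is the paper's assertion $\nu=\mathrm{cotype}(A)$. The paper then uses Lemma~\ref{type} and Theorem~\ref{sbbijective} to pin down the normalized $\a$, and concludes $w=w_\ld$ from $\a-w\a=\a-w_\ld\a$. You instead read $w$ directly off the level sequence and avoid semi-modules altogether.
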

\begin{proof}
By $(\ast)$, it remains to show the uniqueness of $w$.
Assume that an $n$-cycle $w$ and $\a\in \Y_{++}$ satisfy $\a-w\a=(m,\ldots,m)-n\nu$.
Then the $n$ entries of $\a$ are pairwise incongruent modulo $n$ because $m$ is coprime to $n$.
So we may assume that $\a(1)+\cdots+\a(n)=\frac{n(n-1)}{2}$.
Let $a_1=a_{n+1}=\a(i)$ be the smallest entry of $\a$.
We set $a_j=\a(w^{j-1}(i))$.
Then $\{a_1,\ldots, a_n\}=\{\a(1),\ldots, \a(n)\}$ because $w$ is an $n$-cycle.
Further, there exists $\nu'\in W_0\mu$ such that $a_{j+1}=a_j+m-\nu'(j)n$ for $j=1,\ldots, n$.
Since $a_1$ is the smallest entry,  we have $(m,\ldots, m)-n\nu'\in \W_\mu^+$, or equivalently, $\nu'\le \nu_b$.
By Lemma \ref{type}, $A\coloneqq \{a_j+kn\mid 1\le j\le n,k\in\N\}$ is a semi-module for $\mu$ with type $\nu'$.
Moreover, $\a-w\a=(m,\ldots,m)-n\nu$ also implies $\nu=\mathrm{cotype}(A)$.
Thus $\nu',A$ and hence $\a$ are uniquely determined by $\nu$.
It follows from $\a-w\a=\a-w_\ld\a$ that $w=w_\ld$ because $\a$ is strictly dominant.
The proof is finished.
\end{proof}

\begin{exam}
\label{3,7}
Assume that $n=7, m=3$.
Set $$\ld_1=\chi_{3,5}^\vee, \ld_2=\chi_{1,6}^\vee,\ld_3=\chi_{2,7}^\vee,\ld_4=\chi_{1,7}^\vee,\ld_5=0.$$
Then $\Omega\backslash\cAm=\Omega\{\ld_1,\ld_2,\ld_3,\ld_4,\ld_5\}$ (cf.\ \cite[\S5.2]{Shimada5}) and
\begin{align*}
\ld_1^\flat&=(0,0,1,0,1,0,1),\quad \ld_2^\flat=(0,0,0,1,1,0,1),\quad \ld_3^\flat=(0,0,1,0,0,1,1),\\
\ld_4^\flat&=(0,0,0,1,0,1,1),\quad \ld_5^\flat=(0,0,0,0,1,1,1).
\end{align*}
We also have
\begin{align*}
\epsilon_{\ld_1}&=(1\ 3\ 6)(2\ 7\ 5),\quad \epsilon_{\ld_2}=(2\ 7\ 6)(3\ 5),\quad \epsilon_{\ld_3}=(1\ 2\ 6)(3\ 5),\\
\epsilon_{\ld_4}&=(2\ 6)(3\ 5),\quad \epsilon_{\ld_5}=(1\ 7)(2\ 6)(3\ 5),
\end{align*}
and 
\begin{align*}
w_{\ld_1}&=(1\ 3\ 5\ 7\ 6\ 4\ 2),\quad w_{\ld_2}=(1\ 4\ 2\ 5\ 7\ 6\ 3),\quad w_{\ld_3}=(1\ 3\ 6\ 4\ 7\ 5\ 2),\\
w_{\ld_4}&=(1\ 4\ 7\ 5\ 2\ 6\ 3),\quad w_{\ld_5}=(1\ 5\ 2\ 6\ 3\ 7\ 4).
\end{align*}
\end{exam}

\subsection{The Basic Case}
\label{basic}
We return to the general situation.
The goal of this subsection is to prove Theorem \ref{main theo GLn} by reduction to Theorem \ref{sweep} and Theorem \ref{sbbijective}.
Recall that $m'=\frac{m}{g}$ and $n'=\frac{n}{g}$.
The following lemma will be used in the proof of Lemma \ref{unique w}.
\begin{lemm}
\label{I}
Let $\w\in \W_{\mu}^+$.
Let $I\subseteq \{1,\ldots, n\}$ be a subset satisfying all of the following conditions:
\begin{enumerate}[(1)]
\item $1\in I, |I|=n'$ and $l(\w)(\max I)=0$;
\item if $i\in I$, then $i\le j$ for all $j$ with $l(\w)(j)=l(\w)(i)$;
\item if $i\in I, i\neq \max I$ and $i+1\notin I$, then $l(\w)(i)=l(\w)(j_i-1)$, where $j_i=\min(I\setminus\{1,\ldots,i\})$.
\end{enumerate}
If $I'$ also satisfies the same conditions, then $I=I'$.
\end{lemm}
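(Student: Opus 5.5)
The plan is to show that any $I$ satisfying (1)--(3) is determined element by element, starting from $1$. Write $I=\{i_1<i_2<\cdots<i_{n'}\}$ and $I'=\{i'_1<\cdots<i'_{n'}\}$. Then $i_1=i'_1=1$ by (1). We argue by contradiction: let $k$ be the smallest index with $i_k\neq i'_k$, and set $i=i_{k-1}=i'_{k-1}$. Such a $k\le n'$ exists if $I\neq I'$, because both sets have exactly $n'$ elements.

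The first step records the shape of the level path along $I$. By (3), when $i_{r+1}\neq i_r+1$ we have $l(\w)(i_{r+1}-1)=l(\w)(i_r)$. Hence in all cases
$$l(\w)(i_{r+1})=l(\w)(i_r)+\w(i_{r+1}),$$
since the skipped letters sum to zero. Inductively, $l(\w)(i_r)=\sum_{s\le r}\w(i_s)$. Every letter is $\equiv m\pmod n$, so $l(\w)(i_r)\equiv rm \pmod n$. The same congruence holds for $I'$. (Incidentally, (1) then forces exactly $m'$ letters equal to $m-n$ among $\w(i_1),\ldots,\w(i_{n'})$, although this is not needed below.)

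Now compare the two $k$-th elements. From the recursion, $l(\w)(i_k)=l(\w)(i)+\w(i_k)$ and $l(\w)(i'_k)=l(\w)(i)+\w(i'_k)$, and both are $\equiv km\pmod n$. Therefore $\w(i_k)\equiv\w(i'_k)\pmod n$. The letters only take the values $m$ and $m-n$, so $\w(i_k)=\w(i'_k)$, and hence $l(\w)(i_k)=l(\w)(i'_k)$. Without loss of generality $i_k<i'_k$. Then condition (2) applied to $i'_k\in I'$ says $i'_k$ is the first position attaining its level, which contradicts $l(\w)(i_k)=l(\w)(i'_k)$ with $i_k<i'_k$. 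Hence $I=I'$.

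The main point requiring care is the level recursion for $I$, namely that (3) makes each gap between consecutive elements of $I$ contribute zero. We need this in the form $l(\w)(j_i-1)=l(\w)(i)$, exactly as given by (3), so that the $r$-th element of $I$ has level determined modulo $n$ by $r$ alone. Once the recursion is in place, the residue argument together with condition (2) finishes the proof directly. Notably, condition (1), beyond $1\in I$ and $|I|=n'$, is not even needed for uniqueness.
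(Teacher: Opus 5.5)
There is a genuine gap at the step from $\w(i_k)\equiv\w(i'_k)\pmod n$ to $\w(i_k)=\w(i'_k)$. The only possible letters are $m$ and $m-n$, and these are congruent modulo $n$; you note this yourself when you say every letter is $\equiv m\pmod n$. So the congruence carries no information. In fact $l(\w)(j)\equiv jm\pmod n$ for every position $j$, so your congruence $l(\w)(i_r)\equiv rm$ only says $i_r\equiv r\pmod{n'}$.

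The remaining case is the one where the two candidate $k$-th elements carry different letters, so their levels differ by exactly $n$. This case cannot be excluded locally. Your closing claim that only $1\in I$ and $|I|=n'$ are needed is false. Take $n=4$, $m=2$ (so $n'=2$) and $\w=(2,2,-2,-2)\in\W_\mu^+$, with $l(\w)=(2,4,2,0)$. Then $I=\{1,4\}$ satisfies (1)--(3). Meanwhile $I'=\{1,2\}$ satisfies (2), (3), $1\in I'$ and $|I'|=2$, but $l(\w)(2)=4\neq 0$. Your argument breaks exactly at $k=2$: $\w(4)=-2\neq 2=\w(2)$, although $0\equiv 4\pmod 4$.

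Two parts of your proposal are correct: the recursion $l(\w)(i_{r+1})=l(\w)(i_r)+\w(i_{r+1})$, and the fact that equal next letters contradict (2). What is missing is to use the level-zero condition as an anchor and argue from the right end, which is what the paper does.

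\begin{enumerate}[(a)]
\item By (2) and $l(\w)(\max I)=l(\w)(\max I')=0$, both maxima equal the first position where $l(\w)$ vanishes. So $\max I=\max I'$, and within $\{1,\ldots,\max I\}$ the level $0$ occurs only at $\max I$.
\item Decompose $I$ and $I'$ into maximal runs of consecutive integers, $I_1,\ldots,I_k$ and $I'_1,\ldots,I'_{k'}$.
\item Suppose the last run of $I'$ were longer than $I_k$. Then $\min I_k-1\in I'$. By (3) this position has the same level as the strictly earlier position $\max I_{k-1}$, which contradicts (2) for $I'$.
\item By symmetry $I_k=I'_{k'}$. Then (3) and (2) force $\max I_{k-1}=\max I'_{k'-1}$, since both are the first occurrence of the level $l(\w)(\min I_k-1)$.
\item Iterate down to the first run. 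There $1\in I\cap I'$ and $|I|=|I'|=n'$ give $k=k'$ and $I_1=I'_1$.
\end{enumerate}
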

\begin{proof}
If $n=n'$, then the statement is obvious.
So we assume that $g>1$.
By $l(\w)(\max I)=l(\w)(\max I')=0$ and (2), we must have $\max I=\max I'$.
Note that $l(\w)(\max I)=0$ implies that $\max I$ is a multiple of $n'$ (cf.\ Lemma \ref{distinct}).
By replacing $(m,n)$ with $(\frac{m'(\max I)}{n'},\max I)$ if necessary, we may assume that $\max I=n$ and that $l(\w)(i)=0$ if and only if $i=n$.

Let $I=I_1\sqcup\cdots\sqcup I_k$ be the decomposition of $I$ into consecutive components (i.e., maximal consecutive subsets) such that $\max I_1\le\cdots \le \max I_k=n$.
Let $I'=I'_1\sqcup\cdots\sqcup I'_{k'}$ be a similar decomposition.
Our assumption implies $k,k'>1$.
By replacing $I$ with $I'$ if necessary, we may also assume that $k'\geq k$.
Let $i=\max I_{k-1}$ and let $j_i=\min(I\setminus\{1,\ldots,i\})=\min I_k$.
Then (3) for $I$ implies $l(\w)(i)=l(\w)(j_i-1)$.
If $\sharp I_k<\sharp I'_{k'}$, we have $j_i-1\in I'_{k'}$, which contradicts (2) for $I'$.
Thus $\sharp I_k\geq \sharp I'_{k'}$.
Similarly, we deduce that $\sharp I_k\le \sharp I'_{k'}$ and hence $I_k=I'_{k'}$.
Combined with (2) and (3), this implies $\max I_{k-1}=\max I'_{k'-1}$.
Repeating the same argument, we deduce that $I_k=I'_{k'},\ldots, I_2=I'_{k'-k+2}$ and $\max I_{1}=\max I'_{k'-k+1}$.
Then (1) implies $k=k'$ and $I_1=I'_1$.
This finishes the proof.
\end{proof}

The following lemma is a key to the injectivity of $\flat$.
\begin{lemm}
\label{unique w}
Let $\nu\le \nu_b$.
Assume that there exists $w\in W_0$ which is conjugate to $c^m$ such that
$$
\a-w\a=(m,\ldots,m)-n\nu
$$
for some $\a\in \Y_{++}$.
Then $w$ is uniquely determined by $\nu$.
\end{lemm}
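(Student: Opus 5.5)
The plan is to reduce to the bijectivity of the sweep map (Theorem \ref{sweep}), as in the proof of Corollary \ref{sbwa}. The new difficulty is that $w$ is now a product of $g$ disjoint $n'$-cycles rather than a single $n$-cycle, so the entries of $\a$ need not be pairwise incongruent modulo $n$.

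\textbf{Step 1: structure along each cycle.} Since $(w\a)(i)=\a(w^{-1}(i))$, the hypothesis says $\a(i)=\a(w^{-1}(i))+m-n\nu(i)$ for all $i$. Let $C$ be a cycle of $w$. Summing this over $C$ gives $\sum_{i\in C}(m-n\nu(i))=0$, so $\nu$ takes the value $1$ exactly $m'$ times on $C$. Traverse $C$ starting from the index carrying the smallest entry of $\a|_C$. The successive differences form a word $\w_C$ of length $n'$ with letters in $\{m,m-n\}$. Its levels are the entries of $\a|_C$ minus that minimum, so they are nonnegative and return to $0$ after $n'$ steps. In particular $\w_C$ is a Dyck-type word for $(m',n')$ scaled by $g$. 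Since these levels are distinct, Lemma \ref{distinct} applied to $(m',n')$ forces the entries of $\a$ on one cycle to be pairwise incongruent modulo $n$.

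\textbf{Step 2: glue the cycles into one word.} Order the cycles $C_1,\ldots,C_g$ by their minimal $\a$-entry, say decreasingly. Form the concatenation $\w=\w_{C_1}\cdots\w_{C_g}\in\W_\mu$. Each block has total sum $0$ and nonnegative partial sums, so $\w\in\W_\mu^+$, and $l(\w)$ is $\a$ shifted by a constant on each block. I then claim that $\sw(\w)=(m,\ldots,m)-n\nu$. Because $\a$ is strictly dominant, $\nu$ read in increasing index order lists the letters $m-n\nu(i)$ in decreasing order of $\a(i)$. One has to check that the sweep order (by level, negative levels first, ties read right to left) reproduces exactly this decreasing order of $\a$. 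This is the step I expect to be the main obstacle, because after the blockwise shifts distinct $\a$-values can collide in level. I would first try choosing the normalization of $\a$ on each block so that the shifts are compatible with the right-to-left tie rule. Lemma \ref{lw} controls when a later block revisits levels of an earlier block, and I would use it to show that ties are always broken in favour of the entry with larger $\a$-value. If the gluing order above fails, I would reorder the blocks according to the tie-breaking rule.

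\textbf{Step 3: recover $w$.} Once $\sw(\w)=(m,\ldots,m)-n\nu$, Theorem \ref{sweep} determines $\w$ uniquely from $\nu$. It remains to recover the cycle decomposition, and hence $w$, from $\w$ alone. The set of positions in $\w$ coming from the block that contains position $1$ satisfies conditions (1)--(3) of Lemma \ref{I}: (1) holds since the block closes at level $0$, (2) by minimality of the starting entry, and (3) by the gluing/tie analysis of Step 2. Lemma \ref{I} then says this set is uniquely determined. Removing it, passing to $(m-m',n-n')$, and inducting on $g$ recovers every block together with its internal order. Each block lists a cycle of $w$ in order, and the actual indices are fixed by matching entries of $\a$ to positions via the decreasing order. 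Hence $w$ is determined by $\nu$.
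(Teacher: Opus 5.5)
There is a genuine gap in Step 2, and reordering the blocks or renormalizing $\a$ on each block cannot repair it. The concatenated word $\w=\w_{C_1}\cdots\w_{C_g}$ depends only on the list of cycle words $\w_C$, and each of these is read relative to its own cycle's minimum. So $\w$ forgets how the $\a$-values of different cycles interleave, and that interleaving is exactly what determines $\nu$.

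Take $n=4$, $m=2$. For $\nu=(0,1,0,1)$ you have $w=(1\ 2)(3\ 4)$ and $\a=(6,4,2,0)$. For $\nu^*=(0,0,1,1)$ you have $w^*=(1\ 3)(2\ 4)$ and $\a^*=(3,2,1,0)$. In both cases every cycle word is $(2,-2)$, so every ordering gives $\w=(2,-2,2,-2)$, and $\sw(\w)=(2,2,-2,-2)=(m,\ldots,m)-n\nu^*$. Hence the claim $\sw(\w)=(m,\ldots,m)-n\nu$ fails for $\nu$. Step 3 cannot work either, since two different $w$'s produce the same word.

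The true preimage $\w_\nu=\sw^{-1}((m,\ldots,m)-n\nu)$ is $(2,2,-2,-2)$, with levels $(2,4,2,0)$. Its levels must be the actual (normalized, tie-shifted) $\a$-values, not $\a$ shifted blockwise. So the word must leave a cycle and enter another one at a point of equal level: the cycles are \emph{nested}, not concatenated. This is why Lemma \ref{I} has condition (3) about gaps $i+1\notin I$. The positions of a single orbit are not contiguous in $\w_\nu$, whereas in a concatenation that condition would be vacuous.

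The missing idea is the paper's $\delta$-sequence.
\begin{enumerate}[(a)]
\item Normalize $\a$: set $\a(n)=0$ and shift every other orbit down as far as strict dominance allows, so that some $\a(i')-1=\a(i'+1)$ in each such orbit.
\item Put $\a'=\a-\delta$, where $\delta$ is the residue modulo $g$. Then $\a'$ is dominant with ties broken by $\delta$, and every orbit other than $\langle w\rangle\cdot n$ shares an $\a'$-value with an orbit of smaller $\delta$.
\item The sequence $i_1=w(n),i_2,\ldots,i_n$ runs along the orbit of $n$. It jumps into another orbit on meeting an equal $\a'$-value of larger $\delta$ (rule (i)), and returns via rule (ii).
\item Claims 1--4 show that $(m,\ldots,m)-n(\nu(i_1),\ldots,\nu(i_n))$ has levels $(\a'(i_1),\ldots,\a'(i_n))$. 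Hence its sweep is $(m,\ldots,m)-n\nu$, and it equals $\w_\nu$.
\end{enumerate}
In the example above, normalization gives $\a'=(4,2,2,0)$ and the sequence is $(3,1,2,4)$, reproducing $\w_\nu$. Only after this can Lemma \ref{I} be applied, to the non-contiguous set $\{k\mid i_k\in\langle w\rangle\cdot n\}$, followed by induction on $g$.
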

In the situation of Lemma \ref{unique w}, the entries $\a(i),\a(w(i)),\ldots, \a(w^{n'-1}(i))$ are all congruent modulo $g$, but pairwise incongruent modulo $n'$.
Define $$\delta=\delta_\a\colon \{1,\ldots,n\}\rightarrow \{0,\ldots, g-1\}$$
by setting $\delta(i)=d$ whenever $\a(i) \equiv d \pmod{g}$ with $d \in \{0,\ldots,g-1\}$.
For $1\le i\le n$, let $\la w\ra\cdot i$ denote the $w$-orbit of $i$.
Then  $\delta$ is constant on each $\la w\ra\cdot i$.
We say that $\a$ is {\it normalized} if $\a(n)=0$ and for each $i\in \{1,\ldots,n\}\setminus \langle w\rangle \cdot n$, there exists $i'\in \la w\ra\cdot i$ such that $\a(i')-1=\a(i'+1)$.
Since adding orbitwise constants to the entries of $\a$ does not change $\a-w\a$, there exists a normalized $\a\in \Y_{++}$ satisfying the assumption of the lemma.
If $\a$ is normalized, then any other element of $\Y_{++}\cap \mathbb Z_{\ge 0}^n$ satisfying the assumption of the lemma is of the form $\a+\mathbf c,$
where $\mathbf c\in \mathbb Z_{\ge 0}^n$ is constant on each $\langle w\rangle\cdot i$.
Hence such a choice is unique.
Moreover, if $\a$ is normalized, it is easy to see that $\mathrm{Im}\delta$ is consecutive and $\delta^{-1}(0)=\la w\ra\cdot n$.

By replacing $\a$ if necessary, we may assume that $\a$ is normalized.
We define $\a' \in \Y$ by $\a'(i) = \a(i)-\delta(i)$.
By successively subtracting $d\in \mathrm{Im}\delta$ in increasing order, one can verify that $\a'$ remains dominant and that for each $i\in \{1,\ldots,n\}\setminus \la w\ra\cdot n$, there exist $i'\in \la w\ra\cdot i$ and $1\le j\le n$ such that $\a'(i')=\a'(j)$ and $\delta(i)>\delta(j)$.
Note also that we have $\a'-w\a'=(m,\ldots,m)-n\nu$.

Let $i_1=w(n)$.
For $1\le k<n$, we set $J_k\coloneqq\{1,\ldots,n\}\setminus\{i_1,\ldots, i_k\}$ and define $i_{k+1}$ from $i_k$ as follows:
\begin{enumerate}[(i)]
\item If there exists $i\in J_k$ such that $\a'(i)=\a'(i_k)$, $\delta(i)>\delta(i_k)$ and 
\begin{align*}
\text{$\{\a'(j)\mid j\in \la w\ra\cdot i\}\cap\{\a'(j)\mid \delta(j)<\delta(i),j\in J_k\}=\emptyset$,}
\end{align*}
then we set $i_{k+1}=w(i)$.
\item If (i) fails and $\la w\ra\cdot i_k\subseteq\{i_1,\ldots,i_k\}$, then we set $i_{k+1}=w(i_{k'})$, where $1\le k'< k$ is the maximal integer such that $\delta(i_{k'})<\delta(i_k)$ and $\la w\ra\cdot i_{k'}\nsubseteq\{i_1,\ldots,i_k\}$.
\item Otherwise, we set $i_{k+1}=w(i_k)$.
\end{enumerate}
In case (i), such $i$ is unique because if another $i'\in J_k$ satisfies the same condition, then $\a'(i')=\a'(i)$ and $\delta(i')=\delta(i)$, which implies that $\a(i')=\a(i)$ and $i'=i$.
If $\la w\ra\cdot i_k\subseteq\{i_1,\ldots,i_k\}$ and there is no $k'$ as in (ii), then $i_k$ satisfies (i).
Indeed, let $i\in J_k$ such that $\delta(i)=\min\{\delta(j)\mid j\in J_k\}$.
Then the assumption implies $\delta(i)>0$.
As stated above, and using the minimality of $\delta(i)$, there exists $l\le k$ such that $\delta(i_l)<\delta(i)$ and $\a’(i_l)\in \{\a’(j)\mid j\in \la w\ra\cdot i\}$.
Take $l$ maximal among such integers.
Then $i_l$ satisfies (i), which forces $l=k$.
Thus $i_{k+1}$ is well-defined.

In the situation of Lemma \ref{unique w}, we call the sequence $i_1,\ldots,i_n$ defined above the {\it $\delta$-sequence} associated to $(\a,w)$.
If $\a$ is not normalized, we mean by this the sequence obtained after normalizing $\a$ with respect to $w$.
Note that $i_k$ satisfies (i) (resp.\ (ii), resp.\ (iii)) if and only if $\delta(i_k)<\delta(i_{k+1})$ (resp.\ $\delta(i_k)>\delta(i_{k+1})$, resp.\  $\delta(i_k)=\delta(i_{k+1})$).

Set $\w_\nu\coloneqq\sw^{-1}((m,\ldots, m)-n\nu)\in \W_\mu^+$.
The strategy of the proof is to show that $\sw((m,\ldots,m)-n(\nu(i_1),\ldots,\nu(i_n)))=(m,\ldots,m)-n\nu$, where $i_1,\ldots, i_n$ is the $\delta$-sequence associated to $(\a,w)$ (cf.\ Example \ref{20,8}).
Then the problem is reduced to the injectivity of $\sw$.
Before giving the proof, we verify the following claims about the $\delta$-sequence.
The first claim below says that $\delta$ can decrease only after all elements of some $\langle w\rangle$-orbit with that $\delta$-value have already appeared in the sequence so far.

\begin{clai}
For $k\le l$, if $i_k$ and $i_l$ satisfy either $\delta(i_k)>\delta(i_{l+1})$ or $\delta(i_k)=\delta(i_{l+1})$ with $i_{l+1}\notin \la w\ra\cdot i_k$, then $\la w\ra\cdot i_k\subseteq \{i_1,\ldots, i_l\}$.
\end{clai}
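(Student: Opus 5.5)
The plan is to prove Claim 1 by induction on $l$, tracking how the $\delta$-value evolves along the $\delta$-sequence through the three rules (i)--(iii). Since $i_{k'+1}$ is produced from $i_{k'}$ by exactly one rule, the step from $l$ to $l+1$ is analyzed according to which rule produced $i_{l+1}$. We use the stated equivalences: rule (i) corresponds to $\delta(i_l)<\delta(i_{l+1})$, rule (ii) to $\delta(i_l)>\delta(i_{l+1})$, and rule (iii) to equality.

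\emph{Base case $k=l$.} Here $k=l$, and we have either $\delta(i_l)>\delta(i_{l+1})$ or $\delta(i_l)=\delta(i_{l+1})$ with $i_{l+1}\notin\la w\ra\cdot i_l$.
\begin{itemize}
\item In the first situation, $i_{l+1}$ arose by rule (ii). That rule is applied only when $\la w\ra\cdot i_l\subseteq\{i_1,\ldots,i_l\}$, which is exactly the claim.
\item In the second situation, rule (iii) would give $i_{l+1}=w(i_l)\in\la w\ra\cdot i_l$, so (iii) is excluded. Rules (i) and (ii) change $\delta$, so they are excluded as well. Hence this case cannot occur.
\end{itemize}

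\emph{Inductive step $k<l$.} Assume the hypothesis for the pair $(k,l)$. We split according to how $i_{l+1}$ was obtained.
\begin{itemize}
\item If $\delta(i_l)\ge\delta(i_{l+1})$ via rule (ii), the choice in (ii) is $i_{l+1}=w(i_{k'})$ with $k'$ maximal such that $\delta(i_{k'})<\delta(i_l)$ and the orbit of $i_{k'}$ is not yet exhausted. If $i_k$ had $\delta(i_k)>\delta(i_{l+1})=\delta(i_{k'})$ and an unexhausted orbit, we would compare $k$ with $k'$. If $k>k'$, maximality and the ordering of $\delta$ give a contradiction. If $k<k'$, apply the induction hypothesis to the pair $(k,k'-1)$ or $(k,l-1)$, chosen so that its $(l+1)$-st term has smaller $\delta$; this forces the orbit of $i_k$ to be covered already.
\item If $\delta$ stays constant or increases at step $l$, use the induction hypothesis for $(k,l-1)$ whenever $\delta(i_l)\le\delta(i_{l+1})$ still satisfies the hypothesis relative to $i_k$. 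Otherwise $\delta(i_l)>\delta(i_k)$ or $i_l\in\la w\ra\cdot i_k$, and we trace back to the last index $k''\le l$ at which $\delta$ dropped below $\delta(i_k)$.
\end{itemize}

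The cleanest way to organize the inductive step is probably to prove the following slightly stronger invariant. At each stage $l$, consider the unexhausted orbits among $\la w\ra\cdot i_1,\ldots,\la w\ra\cdot i_l$. The invariant asserts that these have strictly increasing $\delta$ in order of last visit, that the current orbit is the last one, and that each has exactly one ``frontier'' element. In other words, the sequence behaves like a stack indexed by $\delta$. Rule (i) pushes a new orbit with larger $\delta$. Rule (iii) continues the top orbit. Rule (ii) pops the exhausted top and resumes the maximal $k'$ below, which by the stack property is exactly the new top. Uniqueness in (i) and the well-definedness argument before the claim ensure that a newly pushed orbit is fresh, that is, disjoint from $\{i_1,\ldots,i_l\}$.

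Given the stack invariant, Claim 1 follows at once. If $\delta(i_{l+1})<\delta(i_k)$, or if $\delta(i_{l+1})=\delta(i_k)$ with a different orbit, then the orbit of $i_k$ is not on the stack at time $l+1$ and hence has been popped, i.e.\ exhausted by time $l$. The case of equal $\delta$ also needs that no two unexhausted orbits share a $\delta$-value, which is again part of the invariant.

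The main obstacle is showing that rule (i) never pushes onto an orbit already partly visited. This requires combining the emptiness condition in (i) with the stated property of $\a'$, namely that equal $\a'$-values across different $\delta$ are linked. I would first try to derive this from the uniqueness remark and the fact that $\a(i')=\a(i)$ forces $i'=i$.
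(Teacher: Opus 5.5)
Your first sketch is essentially the paper's proof. The paper fixes $k$ and takes $l$ minimal for a counterexample. It disposes of $\delta(i_l)\le\delta(i_k)$ directly: minimality forces $i_l\in\la w\ra\cdot i_k$, and then (ii) or (iii) at step $l$ gives a contradiction. In the remaining case $\delta(i_l)>\delta(i_k)\ge\delta(i_{l+1})$ it takes $i_{l'}=w^{-1}(i_{l+1})$ from rule (ii), shows $k<l'<l$, and contradicts minimality at $l'-1$; this is your step ``apply the hypothesis to $(k,k'-1)$''.

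To make your version airtight, three points need fixing:
\begin{enumerate}[(a)]
\item Maximality of $k'$ only excludes $k>k'$ when $\delta(i_k)<\delta(i_l)$. So treat $\delta(i_k)\ge\delta(i_l)$ first, exactly as the paper does: use the hypothesis at $(k,l-1)$, or the exhaustion precondition of (ii) when $i_l\in\la w\ra\cdot i_k$.
\item The equality case of the hypothesis goes through verbatim.
\item The ``otherwise'' in your second bullet never occurs. Whenever $\delta$ does not drop at step $l$, the hypothesis passes to $(k,l-1)$; in the constant case this is because $i_l$ and $i_{l+1}=w(i_l)$ share an orbit.
\end{enumerate}

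The stack invariant is a sound alternative and proves more, since Claims 2 and 3 drop out of it as well. However, the step you single out as the main obstacle is not one, and the tools you propose for it are the wrong ones. Freshness of the orbit entered by rule (i) at step $l$ follows from the invariant at time $l$ itself. Every unexhausted visited orbit has $\delta\le\delta(i_l)<\delta(i)$, while $\la w\ra\cdot i$ is unexhausted because $i\in J_l$. So $\la w\ra\cdot i$ cannot have been visited.

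Neither the uniqueness of $i$ nor the emptiness condition involving $\a'$ says anything about already visited elements of $\la w\ra\cdot i$; the latter only constrains unvisited indices of smaller $\delta$. So your sentence that these ``ensure'' freshness is unjustified, and that route would not close the argument. This is consistent with the paper, where freshness is deduced from Claim 1 at the start of Claim 2 rather than used to prove it.

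Finally, state the invariant so that it includes the current orbit even just after it has been exhausted, with $\delta$ strictly above that of every unexhausted orbit. Otherwise the maximal $k'$ of rule (ii) is not identified with the top of the stack.
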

We argue by contradiction.
Suppose that the statement fails for some $l$, and take $l$ minimal among such integers.
If $\delta(i_l)\le\delta(i_k)$, then the minimality of $l$ implies $i_l\in \la w\ra\cdot i_k$ and $\delta(i_l)=\delta(i_k)$ (otherwise, the statement fails for $l-1$).
If, moreover, $\delta(i_k)>\delta(i_{l+1})$ (resp.\ $\delta(i_k)=\delta(i_{l+1})$ with $i_{l+1}\notin \la w\ra\cdot i_k$), then we have $\la w\ra\cdot i_k=\la w\ra\cdot i_l\subseteq\{i_1,\ldots,i_l\}$ by (ii) for $i_l$ (resp.\ $i_{l+1}=w(i_l)\in \la w\ra\cdot i_k$ by (iii) for $i_l$), which is a contradiction.
Thus we must have $\delta(i_{l})>\delta(i_k)\geq\delta(i_{l+1})$.
Let $i_{l'}=w^{-1}(i_{l+1})$.
By $\delta(i_{l})>\delta(i_k)$ and $\la w\ra\cdot i_k\nsubseteq \{i_1,\ldots, i_{l}\}$, the condition (ii) for $i_l$ and $i_{l+1}\notin \la w\ra\cdot i_k$ imply $k< l'< l$.
Since $\delta(i_k)\geq \delta(i_{l+1})=\delta(i_{l'})$, $i_{l'}=w^{-1}(i_{l+1})\notin \la w\ra \cdot i_k$ and $\la w\ra\cdot i_k\nsubseteq \{i_1,\ldots, i_{l'-1}\}$, this contradicts the minimality of $l$.
The claim is proved.

\begin{clai}
For $k<l$, $i_k$ and $i_l$ lie in the same $w$-orbit if and only if there exists $0<e<n'$ such that $w^e(i_k)=i_l$.
In particular, we have $\{i_1,\ldots, i_n\}=\{1,\ldots, n\}$.
\end{clai}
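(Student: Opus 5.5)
The plan is to read off the structure of the $\delta$-sequence directly from rules (i)--(iii) and Claim 1. The key observation is that the only way to pass from $i_j$ to $i_{j+1}$ without applying $w$ to the previous term is rule (i) or rule (ii). Rule (i) jumps to a new orbit with strictly larger $\delta$; rule (ii) returns to an earlier orbit with smaller $\delta$, and it fires only once the current orbit has been exhausted.

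\emph{Step 1: the ``if'' direction and $w$-steps along a fixed orbit.} Fix an orbit $\la w\ra\cdot i_k$. Since $w$ is conjugate to $c^m$, each orbit has size exactly $n'$. Consider the consecutive visits of the sequence to this orbit. I will show that whenever $i_{j}$ lies in the orbit and the next element of the sequence lying in the orbit is $i_{j'}$ with $j'>j$, then $i_{j'}=w(i_j)$. Each step between them is one of the following.
\begin{enumerate}[(a)]
\item A (iii)-step staying in the orbit, which is literally $w$.
\item A (i)-step leaving to a higher $\delta$-orbit.
\item A (ii)-step returning from a higher $\delta$ to $w(i_{k'})$. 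Here $k'$ is the last index with smaller $\delta$ whose orbit is not exhausted.
\end{enumerate}
In case (b) followed by an eventual (ii) return, the element returned to is $w(i_{k'})$ with $i_{k'}$ the last visited element of the unexhausted lower orbit. By the maximality of $k'$ in rule (ii), together with Claim 1 (which forces all intermediate higher-$\delta$ or different same-$\delta$ orbits to be exhausted before we come back), this lower orbit is ours and $i_{k'}=i_j$. Hence the return is $w(i_j)$. A subtlety is a return via rule (i) rather than (ii), into an orbit with larger $\delta$ that we left earlier. I would exclude this with Claim 1: leaving an orbit to a smaller $\delta$ value only happens after it is exhausted. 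Consequently the visits to a single orbit are $i_k, w(i_k), w^2(i_k),\dots$ in order.

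\emph{Step 2: no repetition before exhaustion.} Since orbits have size $n'$, the elements $i_k,w(i_k),\dots,w^{n'-1}(i_k)$ are distinct. It remains to see that once $n'$ elements of an orbit have appeared, the sequence never enters it again. Entering an orbit happens only in three ways.
\begin{itemize}
\item Via (iii) from within the orbit. This is impossible once the orbit is exhausted: the previous element's successor $w(i_l)$ would be the first element $i_k$, and then rule (ii) applies instead of (iii), since $\la w\ra\cdot i_l\subseteq\{i_1,\dots,i_l\}$.
\item Via (ii). This requires the orbit to be non-exhausted, by the definition of rule (ii).
\item Via (i). Here $i\in J_k$ forces $w(i)$ to be the first element of a fresh orbit. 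Otherwise $i$ itself is unvisited while a later element of its orbit is visited, contradicting Step 1.
\end{itemize}
Thus within each orbit the visited indices form $i_k, w(i_k),\dots, w^{e}(i_k)$ with $e<n'$. This gives both directions of the equivalence: if $i_k,i_l$ share an orbit, then $i_l=w^e(i_k)$ with $0<e<n'$, and conversely such an $e$ obviously places them in one orbit.

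\emph{Step 3: the ``in particular''.} By Step 2 the $i_j$ are pairwise distinct, so the $n$ terms exhaust $\{1,\dots,n\}$. The main obstacle is the bookkeeping in Step 1, namely proving that a (ii)-return lands precisely on $w$ of the last visited element of the correct orbit. I would handle it by induction on $l$, mimicking the minimal-counterexample argument used for Claim 1, and invoking Claim 1 at each (i)/(ii) transition.
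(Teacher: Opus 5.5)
Your proposal is correct and follows essentially the paper's argument: the paper likewise uses Claim 1 to see that a rule-(i) step always lands on the first element of a fresh $w$-orbit. An easy induction then finishes, since rule (iii) applies $w$ to the current element of a non-exhausted orbit and rule (ii) applies $w$ to the last visited element of a non-exhausted orbit; hence each orbit is visited as $x,w(x),\ldots,w^{e}(x)$ with $e<n'$, giving distinctness and $\{i_1,\ldots,i_n\}=\{1,\ldots,n\}$. In the (ii)-return step, maximality of $k'$ alone already forces $i_{k'}$ to be the last visited element of its orbit, so your extra appeal to Claim 1 there is unnecessary; also, Step 1 proves the ``only if'' direction, not the ``if'' direction.
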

In case (i), it follows from Claim 1 that $i_{k+1}$ is the first element in its $w$-orbit because $\delta(i_{k+1})>\delta(i_k)$.
With this observation, one can easily deduce by induction that if $i_k$ and $i_l$ with $k<l$ lie in the same $w$-orbit, then there exists $0<e<n'$ such that $w^e(i_k)=i_l$.
This implies that $\{i_1,\ldots, i_n\}=\{1,\ldots, n\}$.
The claim is proved.

\begin{clai}
If $i_k$ satisfies (i), then $\{i_{k+1},i_{k+2},\ldots, w^{-1}(i_{k+1})\}$ is a union of $w$-orbits.
Moreover, if $\la w\ra\cdot i_l\subseteq \{i_{k+1},i_{k+2},\ldots, w^{-1}(i_{k+1})\}$, then $\delta(i_l)\geq \delta(i_{k+1})$.
The equality $\delta(i_l)= \delta(i_{k+1})$ holds if and only if $i_l\in\la w\ra\cdot i_{k+1}$.
\end{clai}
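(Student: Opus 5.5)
The plan is to derive all three assertions from Claims 1 and 2, by strong induction on the length of the segment $\{i_{k+1},\ldots,w^{-1}(i_{k+1})\}$. First I fix the relevant positions. Since $i_k$ satisfies (i), we have $\delta(i_{k+1})>\delta(i_k)$, and the observation in the proof of Claim 2 shows that $i_{k+1}$ is the first element of its $w$-orbit to appear. Let $l>k$ be the index with $i_l=w^{-1}(i_{k+1})$. By Claim 2, the orbit $\la w\ra\cdot i_{k+1}$ appears in the order $i_{k+1},w(i_{k+1}),\ldots$ and ends exactly at position $l$. So the segment in question is $\{i_{k+1},\ldots,i_l\}$, and $\la w\ra\cdot i_{k+1}\nsubseteq\{i_1,\ldots,i_{p}\}$ for every $p<l$.

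\emph{Step 1 (the $\delta$-inequality).} Suppose some $p$ with $k+1<p\le l$ has either $\delta(i_p)<\delta(i_{k+1})$, or $\delta(i_p)=\delta(i_{k+1})$ with $i_p\notin\la w\ra\cdot i_{k+1}$. Apply Claim 1 to the pair $(k+1,p-1)$. It gives $\la w\ra\cdot i_{k+1}\subseteq\{i_1,\ldots,i_{p-1}\}$, which contradicts the previous paragraph because $p-1<l$. Hence every element of the segment satisfies $\delta\ge\delta(i_{k+1})$, with equality only on $\la w\ra\cdot i_{k+1}$. Conversely, all of $\la w\ra\cdot i_{k+1}$ lies in the segment. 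Together these give the second and third assertions, once the first assertion is known.

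\emph{Step 2 (no orbit straddles the left end).} Let $i_p$ lie in the segment, with $\delta(i_p)>\delta(i_{k+1})$. I claim that no element of $\la w\ra\cdot i_p$ occurs at a position $q\le k$. Suppose one did. Then $\delta(i_q)=\delta(i_p)>\delta(i_{k+1})>\delta(i_k)$. Let $r$ be the first index with $q\le r<k$ and $\delta(i_{r+1})<\delta(i_q)$; it exists because $\delta(i_k)<\delta(i_q)$. Claim 1 applied to $(q,r)$ gives $\la w\ra\cdot i_q\subseteq\{i_1,\ldots,i_r\}$, and this contradicts $i_p$ occurring at position $p>k$.

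\emph{Step 3 (no orbit straddles the right end).} This is the step I expect to be the main obstacle. Take $i_p$ in the segment with $\delta(i_p)>\delta(i_{k+1})$, and let $p_0\ge k+1$ be the position of the first element of its orbit. By Step 2, $p_0$ is in the segment. Moreover $\delta(i_{p_0})>\delta(i_{p_0-1})$, because the $\delta$-value can only move up through (i) when a new orbit of larger $\delta$ begins, so $i_{p_0-1}$ satisfies (i). The induction hypothesis then applies to $p_0-1$ and its strictly shorter segment $\{i_{p_0},\ldots,w^{-1}(i_{p_0})\}$. To make the induction legitimate, I would show that this segment is strictly contained in $\{i_{k+1},\ldots,i_l\}$. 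If it were not, it would contain $w^{-1}(i_{k+1})=i_l$. That is impossible by the equality statement of the inductive hypothesis, since $\delta(i_l)=\delta(i_{k+1})<\delta(i_{p_0})$. The inner segment therefore closes before $l$, so the whole orbit of $i_p$ lies in $\{i_{k+1},\ldots,i_l\}$.

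\emph{Conclusion.} Combining Steps 2 and 3 with $\la w\ra\cdot i_{k+1}\subseteq$ segment, the segment $\{i_{k+1},\ldots,i_l\}$ is a union of $w$-orbits, and Step 1 then gives the remaining two assertions. The base case of the induction is the situation where the segment is exactly $\la w\ra\cdot i_{k+1}$, which is immediate from Step 1. The delicate bookkeeping is in Step 3: one has to check that the positions supplied by Claims 1 and 2 really give strict nesting of the inner segment, and I would verify this carefully against cases (i)–(iii) of the definition of the $\delta$-sequence.
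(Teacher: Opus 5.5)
\textbf{There is a circularity in Step 3.} Steps 1 and 2 are sound. As written, though, Step 3 does not work.

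You run strong induction on the length of the segment. To apply the inductive hypothesis to the inner segment $\{i_{p_0},\ldots,w^{-1}(i_{p_0})\}$, you must first know it is shorter than $\{i_{k+1},\ldots,i_l\}$. Your argument for this (that it cannot reach $i_l$) itself invokes the inductive hypothesis on that same inner segment. A priori the inner segment could run far past position $l$ and be longer than the outer one, so the hypothesis is not yet available at that point.

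Your justification that $i_{p_0-1}$ satisfies (i) is also muddled. The point actually needed is that (ii) and (iii) only ever produce $w$ applied to an element of an already started orbit. Hence a new orbit can only begin right after a step of type (i).

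\textbf{The repair is immediate and makes the induction unnecessary.} Your Step 1 was proved for an arbitrary step of type (i), with no induction. Apply it to the segment starting at $p_0$: every element there has $\delta\ge\delta(i_{p_0})>\delta(i_l)$. Hence that segment ends before position $l$, and by Claim 2 the orbit of $i_p$ occupies only positions between $p_0$ and the position of $w^{-1}(i_{p_0})$.

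Even more directly, take $i_p$ in the segment with $\delta(i_p)>\delta(i_{k+1})$. Then $p<l$ and $\delta(i_p)>\delta(i_l)$, so Claim 1 applied to the pair $(p,l-1)$ gives $\langle w\rangle\cdot i_p\subseteq\{i_1,\ldots,i_{l-1}\}$. Likewise, your Step 2 is just Claim 1 applied to the pair $(q,k)$, since $\delta(i_q)>\delta(i_{k+1})$.

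So the whole claim is three applications of Claim 1: your Step 1 and these two. This is what the paper means by ``follows easily from Claim 1''. It needs no induction and no identification of first elements of orbits.
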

Note that the inclusion $\{i_{k+1},i_{k+2},\ldots, w^{-1}(i_{k+1})\}\subseteq \{i_1,\ldots,i_n\}$ follows from Claim 2.
Then the claim follows easily from Claim 1.

\begin{clai}
We have $\a'(i_k)=\a'(w^{-1}(i_{k+1}))$.
\end{clai}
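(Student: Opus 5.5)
We will prove the claim by distinguishing which of the three rules (i), (ii), (iii) was used to define $i_{k+1}$ from $i_k$.

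\textbf{Cases (i) and (iii).} These two cases are immediate from the definitions.
In case (iii) we have $i_{k+1}=w(i_k)$, so $w^{-1}(i_{k+1})=i_k$ and there is nothing to prove.
In case (i) we have $i_{k+1}=w(i)$ for the unique $i\in J_k$ with $\a'(i)=\a'(i_k)$. Hence $w^{-1}(i_{k+1})=i$, and again $\a'(w^{-1}(i_{k+1}))=\a'(i_k)$.

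\textbf{Case (ii).} Here $i_{k+1}=w(i_{k'})$, where $k'<k$ is maximal with $\delta(i_{k'})<\delta(i_k)$ and $\la w\ra\cdot i_{k'}\nsubseteq\{i_1,\ldots,i_k\}$. So we must show $\a'(i_k)=\a'(i_{k'})$. The plan is to locate the step at which the orbit $\la w\ra\cdot i_k$ was entered, and to show that this step is exactly $k'$.

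First, we identify how an orbit can be entered. By the proof of Claim 2, the first element of any $w$-orbit to appear in the $\delta$-sequence is either $i_1=w(n)$ or an element $i_{k''+1}$ produced by rule (i). The orbit $\la w\ra\cdot i_1=\la w\ra\cdot n$ has $\delta=0$, so it cannot be the orbit of $i_k$ in case (ii), since $\delta(i_{k'})<\delta(i_k)$ forces $\delta(i_k)>0$. Hence there is an index $k''<k$ such that $i_{k''}$ satisfies (i) and $i_{k''+1}=w(i)$ lies in $\la w\ra\cdot i_k$, where $\a'(i)=\a'(i_{k''})$ and $\delta(i_{k''})<\delta(i_k)$.

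Next, we show $i_k=i$. Rule (ii) applies at step $k$, so $\la w\ra\cdot i_k\subseteq\{i_1,\ldots,i_k\}$. By Claim 2 the orbit is traversed in the order $w^e(i_{k''+1})$ for $0\le e<n'$, so its last element is $w^{n'-1}(i_{k''+1})=w^{-1}(i_{k''+1})=i$. Since the orbit is completed exactly at step $k$, we get $i_k=i$, and therefore $\a'(i_k)=\a'(i_{k''})$.

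It remains to prove $k''=k'$, which I expect to be the main obstacle.
\begin{enumerate}[(a)]
\item \emph{$k''$ is admissible in rule (ii).} We have $\delta(i_{k''})<\delta(i_k)$. We also need $\la w\ra\cdot i_{k''}\nsubseteq\{i_1,\ldots,i_k\}$. Suppose instead that this orbit were complete. Apply Claim 1 at the earlier moment when its last element appeared, together with the description of the block in Claim 3, which consists only of orbits of $\delta$-value at least $\delta(i_{k''+1})>\delta(i_{k''})$. This should contradict the fact that $i_{k''}$ was still followed by an increase of $\delta$ via (i). I would settle this point by a careful application of Claim 1.
\item \emph{No larger index is admissible.} By Claim 3, the indices $k''+1,\ldots,k$ form the block $\{i_{k''+1},\ldots,w^{-1}(i_{k''+1})\}$. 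This block is a union of $w$-orbits, and every such orbit either is $\la w\ra\cdot i_k$ or has $\delta$-value strictly greater than $\delta(i_{k''+1})=\delta(i_k)$. Moreover, all these orbits are already complete at step $k$. So no index $l$ with $k''<l<k$ satisfies both $\delta(i_l)<\delta(i_k)$ and incompleteness of its orbit.
\end{enumerate}
By maximality of $k'$, (a) and (b) give $k'=k''$. Hence $\a'(w^{-1}(i_{k+1}))=\a'(i_{k'})=\a'(i_{k''})=\a'(i_k)$, which proves the claim.
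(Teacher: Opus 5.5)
\textbf{Gap: step (a) is false, so $k''=k'$ can fail.} Your handling of cases (i) and (iii) is correct. So is the first half of case (ii): you locate the rule-(i) step $k''$ at which the orbit $\la w\ra\cdot i_k$ was entered, and you get $i_k=w^{-1}(i_{k''+1})$, hence $\a'(i_k)=\a'(i_{k''})$. Part (b) is also correct and gives $k'\le k''$. (A small imprecision: in case (i), $i$ is unique among elements satisfying \emph{all} the conditions of (i), not among all $i\in J_k$ with $\a'(i)=\a'(i_k)$. This does not affect your argument.)

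Step (a) fails because rule (i) takes precedence over rule (ii). It can therefore fire at a step where the orbit of the current element has just been completed, and Claim 1 yields no contradiction. Example~\ref{20,8} ($n=20$, $m=8$) shows this:
\begin{itemize}
\item At step $13$ rule (ii) applies, with $i_{13}=10$ and $k'=3$ (indeed $i_3=12$ and $i_{14}=w(12)=17$).
\item The orbit $\{1,3,5,7,10\}$ of $i_{13}$ is entered at step $9$ via rule (i) at step $8$, so your $k''$ equals $8$.
\item But $i_8=11$ is the last element of the orbit $\{2,4,6,8,11\}$, which is already complete at step $8$. Hence $k''$ is not admissible in rule (ii), and $k''=8\neq 3=k'$.
\end{itemize}
Your final chain $\a'(w^{-1}(i_{k+1}))=\a'(i_{k'})=\a'(i_{k''})$ therefore rests on a false identification. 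The claimed equality still holds here, but for a different reason: $\a'(i_3)=\a'(i_8)=\a'(i_{13})=24$.

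\textbf{Missing idea: iterate the step you already have.} The orbit of $i_{k''}$ was itself entered by a rule-(i) step at which $\a'$ takes the same value. You must keep going back until you reach a rule-(i) step whose orbit is still incomplete, and then show that this step is $k'$.

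The paper runs this chain forward:
\begin{itemize}
\item Start from $l_0=k'$, which satisfies (i).
\item Set $i_{l_j}=w^{-1}(i_{l_{j-1}+1})$, stopping at the first $l_h$ at which (ii) applies. All $i_{l_j}$ with $0<j<h$ satisfy (i), and $\a'$ is constant along the chain.
\item Claim 3 gives $\delta(i_l)>\delta(i_{k'})$ for $k'<l\le l_h$, and shows that $\{i_{k'+1},\ldots,i_{l_h}\}$ is a union of $w$-orbits disjoint from $\la w\ra\cdot i_{k'}$. This forces $i_{l_h+1}=w(i_{k'})$, i.e.\ $l_h=k$.
\end{itemize}
Your backward version can be repaired along the same lines. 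However, you would additionally have to show that the backward chain cannot stall at a complete orbit before reaching $k'$, for instance at the orbit of $n$. This needs Claim 1 together with the existence of $k'$.
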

This is obvious in case (i) or (iii).
So it suffices to show that $\a'(i_k)=\a'(i_{k'})$ in case (ii).
Assume that $i_k$ satisfies (ii).
Then $i_{k'}$ satisfies (i).
Since $\{i_1,\ldots,i_n\}$ is a union of $w$-orbits by Claim 2, we can define a sequence $l_0<l_1<\cdots <l_h$ by setting $l_0=k'$ and $i_{l_1} = w^{-1}(i_{l_0+1}),\, \ldots,\, i_{l_h}=w^{-1}(i_{l_{h-1}+1})$ until we first reach an integer $h$ such that $i_{l_h}$ satisfies (ii).
All of $i_{l_1},\ldots, i_{l_{h-1}}$ satisfy (i) and $\a'(i_{l_0})=\a'(i_{l_1})=\cdots =\a'(i_{l_h})$.
By Claim 3, we have $\delta(i_l)>\delta(i_{k'})$ for $k'+1\le l\le l_h$.
Thus $\la w\ra\cdot i_{k'}\cap\{i_{k'+1},i_{k'+2},\ldots, i_{l_h}\}=\emptyset$ and $l_h\le k$.
Again by Claim 3, the set $\{i_{k'+1},i_{k'+2},\ldots, i_{l_h}\}=\{i_{l_0+1},\ldots, i_{l_1}\}\sqcup\cdots \sqcup \{i_{l_{h-1}+1},\ldots,i_{l_h}\}$ is a union of $w$-orbits.
So we have $i_{l_h+1}=w(i_{k'})$, i.e., $l_h=k$.
Since $\a'(i_{k'})=\a'(i_{l_0})=\a'(i_{l_h})=\a'(i_k)$, the claim is proved.

\begin{proof}
We will prove by induction on $g=\gcd(m,n)$ that $w$ is uniquely determined by $\nu$.
If $g=1$, the statement follows from Corollary \ref{sbwa}.
Assume that $g>1$ and the statement is true for $((g-1)m',(g-1)n')=(m-m',n-n')$.
Set $\w=(m,\ldots,m)-n(\nu(i_1),\ldots,\nu(i_n))$.
Then $l(\w)=(\a'(i_1),\ldots,\a'(i_n))$.
Indeed, we have $\a'(i_1)=\a'(w(n))=m$ and $\a'(w(i))-\a'(i)=m-n\nu(w(i))$ for any $1\le i\le n$.
Thus Claim 4 implies that $\a'(i_{k+1})-\a'(i_k)=\a'(i_{k+1})-\a'(w^{-1}(i_{k+1}))=m-n\nu(i_{k+1})$.
Note that for $k<l$, the equality $\a'(i_k)=\a'(i_l)$ implies $\delta(i_k)<\delta(i_l)$.
Indeed, we have $\{\a'(j)\mid j\in \la w\ra\cdot i_k\}\cap\{\a'(j)\mid \delta(j)<\delta(i_k),j\in J_k\}=\emptyset$.
Since $1\le i<j\le n$ if and only if either $\a'(i)>\a'(j)$ or $\a'(i)=\a'(j)$ with $\delta(i)>\delta(j)$, we have $\sw(\w)=(m,\ldots,m)-n\nu$ and hence $\w=\w_\nu$ by Theorem \ref{sweep}.
Moreover, the inverse of the map $$\{1,\ldots, n\}\rightarrow \{1,\ldots,n\},\quad k\mapsto i_k$$
is precisely the order in which the letters of $\w_\nu$ are read under the sweep map.
Therefore, the sequence $i_1,\ldots,i_n$ is uniquely determined by $\nu$.

Define $I\coloneqq\{k\mid i_k\in \la w\ra\cdot n\}\subsetneq \{1,\ldots, n\}$.
Then by Claim 4, $I$ satisfies the conditions of Lemma \ref{I}, and hence it is uniquely determined by $\w_\nu$.
Thus $\la w\ra \cdot n$ is uniquely determined by $\nu$.
Let $\hat w$ be the $n'$-cycle of $w$ containing $n$.
By scaling the components of $\a$ and $\nu$ corresponding to $\la w\ra\cdot n$ by $\frac{1}{g}$, the induction hypothesis implies that $\hat w$ is uniquely determined by $\nu$.
Let $j_1<j_2<\cdots<j_{n-n'}$ be the elements in $\{1,\ldots,n\}\setminus \la w\ra\cdot n$ arranged in increasing order.
Let $\iota\in W_0=\mathfrak S_n$ be any element such that $\iota(k)=j_k$ for $1\le k\le n-n'$.
Set $\a_{\cut}=\frac{g-1}{g}(\a'(\iota(1)),\ldots,\a'(\iota(n-n')))+(\delta(\iota(1)),\ldots, \delta(\iota(n-n')))$, $\nu_{\cut}=(\nu(\iota(1)),\ldots, \nu(\iota(n-n')))$ and $w_{\cut}=\iota^{-1}(w \hat w^{-1})\iota$.
Then $\a_{\cut}$ is strictly dominant and $\a_{\cut}-w_{\cut}\a_{\cut}=(m-m',\ldots,m-m')-(n-n')\nu_{\cut}$.
By the induction hypothesis, we conclude that $w_{\cut}$ and hence $w$ is uniquely determined by $\nu$.
\end{proof}

Let $\ld\in \Y$.
Then all entries of $\a_\ld$ are incongruent modulo $n$.
Moreover, $\epsilon_{\ld}(i)=d+1$ with $d \in \{0,\ldots,n-1\}$ if and only if $\a_\ld(i) \equiv d \pmod{n}$.
If $\ld\in \cAm^{\sm}$ such that $\a_\lambda(n)=0$, we set $\delta_\ld=\delta_{\a_\ld}$.
Let $C_k\coloneqq\{\ld(i)\mid i\equiv k\ \pmod{g}\}$.
\begin{lemm}
\label{diff}
Assume that $g>1$.
Let $d\in\{0,\ldots,g-1\}$.
Let $\ld\in \cAm^{\sm}$ such that $\a_\lambda(n)=0$.
Then there exists $k\le d$ such that $\ld_\gamma=0$ for some $\gamma\in \cO_{\chi_{d+1,k}}$.
As a consequence, for any $i\in \{1,\ldots,n\}\setminus \la w_\ld\ra\cdot n$, there exists $i'\in \la w_\ld\ra\cdot i$ such that $\a_\ld(i')-\a_\ld(i'+1)\le \delta_\ld(i)$.
\end{lemm}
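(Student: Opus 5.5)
The plan is to first use the normalization $\a_\ld(n)=0$ to pin down $\ld$, and then run a descending search on $k$ that combines smallness with Lemma \ref{consecutive}. Indices of roots are read modulo $n$, so $\chi_{d+1,0}=\chi_{d+1,n}$; this is needed for $d=0$. Since $p(b\sigma)=c^m$ shifts both indices of $\chi_{i,j}$ by $m$, the orbit $\cO_{\chi_{d+1,k}}$ is $\{\chi_{d+1+tm,\,k+tm}\mid t\in\Z\}$. It has $n'$ elements, and its first index runs over all residues $\equiv d+1\pmod g$. By Lemma \ref{consecutive}, the values $\{\ld_\gamma\mid \gamma\in \cO_{\chi_{d+1,k}}\}$ form a consecutive set. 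So to get some $\ld_\gamma=0$ it is enough to find $k\le d$ for which this orbit contains both a value $\ge 0$ and a value $\le 0$.

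\emph{Step 1 (normalization).} By definition, $\a_\ld$ is the strictly dominant rearrangement of the entries $(i-1)+n\ld(i)$. The normalization $\a_\ld(n)=0$ therefore says that the smallest of these entries equals $0$. The entries are pairwise incongruent modulo $n$, and $(i-1)+n\ld(i)\geq 0$ forces $\ld(i)\ge 0$, with equality exactly when the entry is $i-1$. Hence $\ld(1)=0$ and $\ld(i)\ge 0$ for all $i$. For $d\ge1$, the orbit of $\chi_{d+1,1}$ contains the negative root $\chi_{d+1,1}$, and $\ld_{\chi_{d+1,1}}=\ld(d+1)-\ld(1)\ge 0$. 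For $d=0$, the orbit of $\chi_{1,n}\in\Pi$ already has a value $\le 0$ by smallness, and I would get a value $\ge 0$ from $\ld_{\chi_{1,n}}+\ld_{\chi_{n,1}}=-1$ together with $\ld_{\chi_{n,1}}\ge 0$ and consecutiveness along shifted copies. I will double-check this $d=0$ case separately.

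\emph{Step 2 (descending search).} For $d\ge 1$, let $k\in\{1,\dots,d\}$ be maximal such that $\cO_{\chi_{d+1,k}}$ contains a value $\ge 0$; Step 1 shows such $k$ exists. I then want a value $\le 0$ in this same orbit, and there are two cases. If $k=d$, this follows directly from smallness applied to $\chi_{d+1,d}\in\Pi$. If $k<d$, then by maximality every value on $\cO_{\chi_{d+1,k+1}}$ is $\le -1$, while smallness gives some shift $\chi_{k+1+tm,k+tm}$ with value $\le 0$. Additivity then finishes: $\chi_{d+1,k}=\chi_{d+1,k+1}+\chi_{k+1,k}$, and applying the same shift by $tm$ gives $\ld_{\alpha+\beta}=\ld_\alpha+\ld_\beta+\epsilon$ with $\epsilon\in\{0,1\}$, where $\epsilon=1$ only when the two summands are positive roots and the sum is negative. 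This bounds the corresponding shifted copy of $\chi_{d+1,k}$ by $-1+0+1=0$. Consecutiveness then produces $\gamma$ with $\ld_\gamma=0$.

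\emph{Step 3 (consequence).} A root $\gamma=\chi_{i,j}$ with $\ld_\gamma=0$ says that the entries $(i-1)+n\ld(i)$ and $(j-1)+n\ld(j)$ differ by less than $n$, in a definite direction. Through $\epsilon_\ld$, this gives two positions of $\a_\ld$ whose values differ by less than $n$. Here $d$ records the residue class modulo $g$, which is $\delta_\ld$ of the corresponding $w_\ld$-orbit, and $k\le d$ records a smaller residue class. Moving along the $w_\ld$-orbit by $c^m$-conjugation, using $\ep_{\tau\ld}=p(\tau)\el$ and the orbit structure above, turns this into a pair of adjacent positions $i',i'+1$ with $\a_\ld(i')-\a_\ld(i'+1)\le d-k\le\delta_\ld(i)$.

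The step I expect to be hardest is this last translation: matching the root $\gamma$ found in Step 2 with adjacent positions $i',i'+1$ in the same $w_\ld$-orbit as $i$, and checking that the congruence bookkeeping gives exactly the bound $\delta_\ld(i)$ rather than something weaker. I would first test it against Example \ref{3,7}-type computations with $g>1$, such as $n=4$, $m=2$.
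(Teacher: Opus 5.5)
For $d\ge 1$, your Steps 1 and 2 are the paper's argument, and they are correct. The ingredients are the same: $\ld(1)=0=\min_i\ld(i)$, then $k\le d$ maximal with a nonnegative value on $\cO_{\chi_{d+1,k}}$, then smallness for $\chi_{k+1,k}\in\Pi$, additivity, and Lemma~\ref{consecutive}. Your description of $\epsilon$ is garbled, since a sum of two positive roots is positive. In fact $\epsilon=1$ exactly when both summands are positive, or exactly one is and the sum is negative. Only $\epsilon\le 1$ is used, so this is harmless.

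The $d=0$ case cannot be done as you plan, because the claim you set out to prove there is false. Take $n=4$, $m=2$ (so $b=\vp^{(1,1,0,0)}c^2$) and $\ld=(0,1,0,0)$. Then $\ld^\natural=(1,0,0,1)$, $\a_\ld=(5,3,2,0)$ and $\ld$ is small. Yet $\cO_{\chi_{1,4}}=\{\chi_{1,4},\chi_{3,2}\}$ has values $\{-1,-1\}$. Your proposed source of a nonnegative value also runs backwards: it gives $\ld_{\chi_{1,n}}=-1-\ld_{\chi_{n,1}}\le -1$.

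The paper's proof likewise only covers $d\ge 1$, since its assertion that $k=1$ is admissible needs it. That is all the consequence uses:
\begin{itemize}
\item From $\a_\ld(i)\equiv\el(i)-1\pmod n$ you get $\delta_\ld(i)\equiv\el(i)-1\pmod g$.
\item Since $w_\ld=\el^{-1}c^m\el$ and the $c^m$-orbits on $\{1,\ldots,n\}$ are the residue classes mod $g$, the $w_\ld$-orbits are exactly the fibres of $\delta_\ld$.
\item As $\el(n)=1$, the condition $i\notin\la w_\ld\ra\cdot n$ forces $d=\delta_\ld(i)\ge 1$.
\end{itemize}
So drop $d=0$.

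The genuine gap is Step 3. It is only gestured at, and in a direction that fails. Neither $c^m$-conjugation nor $\ep_{\tau\ld}=p(\tau)\el$ is needed; passing to $\tau\ld$ would even destroy $\a_\ld(n)=0$. Moreover, the bound $d-k$ you aim for is false. For $n=4$, $m=2$, $\ld=0$ one has $\a_\ld=(3,2,1,0)$ and necessarily $d=k=1$. The orbit with $\delta_\ld=1$ is $\{1,3\}$, and both gaps $\a_\ld(1)-\a_\ld(2)$ and $\a_\ld(3)-\a_\ld(4)$ equal $1>d-k$.

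The missing computation is direct:
\begin{enumerate}
\item Since $\ld_\gamma=0\ge 0$, we have $\gamma\in\el\Phi_+$. Hence $\el^{-1}\gamma=\chi_{i',j}$ with $i'<j$.
\item The first index of $\gamma$ is $\el(i')\equiv d+1\pmod g$. So $\delta_\ld(i')=d$, and $i'\in\la w_\ld\ra\cdot i$ by the fibre description above.
\item Next, $\a_\ld(i')-\a_\ld(j)=\la\gamma,(0,1,\ldots,n-1)+n\ld\ra$. For $\gamma=\chi_{a,b}$ this equals $a-b$ if $\gamma\in\Phi_-$ and $a-b+n$ if $\gamma\in\Phi_+$; this is where $\ld_\gamma=0$ enters.
\item So the value lies in $(0,n)$ and is $\equiv d+1-k\pmod n$, hence equals $d+1-k$.
\item Strict dominance gives $\a_\ld(i'+1)\ge\a_\ld(j)$. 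Therefore $\a_\ld(i')-\a_\ld(i'+1)\le d+1-k\le d$, using $k\ge 1$.
\end{enumerate}
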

\begin{proof}
Note that $\el(n)=1$ and $\ld(1)=0=\min\{\ld(1),\ldots,\ld(n)\}$.
Let $k\le d$ be maximal such that there exists
$\alpha\in \cO_{\chi_{d+1,k}}$ with $\ld_\alpha\ge 0$.
Such $k$ exists because $k=1$ satisfies this condition.
The smallness of $\ld$ implies that there exists $\beta'\in \cO_{\chi_{k+1,k}}$ such that $\ld_{\beta'}\le 0$.
The maximality of $k$ implies that there exists $\beta\in \cO_{\chi_{d+1, k}}$ such that $\ld_{\beta}\le 0$ and $\beta-\beta'$ is a negative root or $0$.
Thus, by Lemma \ref{consecutive}, there exists $\gamma\in \cO_{\chi_{d+1, k}}$ such that $\ld_{\gamma}=0$.

Let $i\in \{1,\ldots,n\}\setminus \la w_\ld\ra\cdot n$ and $d=\delta_\ld(i)$.
By the first assertion, take $k\le d$ and
$\gamma\in\cO_{\chi_{d+1,k}}$ with $\ld_\gamma=0$.
Write $\gamma=\el(\chi_{i',j})$ with $i'\in\la w_\ld\ra\cdot i$ and $i'<j$.
Then
$$
\a_\ld(i')-\a_\ld(i'+1)
\le \a_\ld(i')-\a_\ld(j)=\la\el^{-1}\gamma,\a_\ld\ra
=d+1-k
\le d=\delta_\ld(i).
$$
This finishes the proof.
\end{proof}

Let $\ld\in \cAm^{\sm}$ such that $\a_\lambda(n)=0$.
We define $\a'_\ld \in \Y$ by $\a'_\ld(i) = \a_\ld(i)-\delta_\ld(i)$.
Let $\a$ be the normalization of $\a_\ld$ with respect to $w_\ld$ and let $\delta=\delta_\a$.
Then it follows from Lemma \ref{diff} that $\a'=\a'_\ld$.
Note that $0<\el(i)-\el(j)<g$ if and only if $\a'_\ld(i)\equiv\a'_\ld(j) \pmod{n}$.
If this is the case, then $\ld_{\el\chi_{i,j}}=0$ (resp.\ $\ld_{\el\chi_{i,j}}>0$, resp.\ $\ld_{\el\chi_{i,j}}<0$) is equivalent to $\a'_\ld(i)=\a'_\ld(j)$ (resp.\ $\a'_\ld(i)>\a'_\ld(j)$, resp.\ $\a'_\ld(i)<\a'_\ld(j)$).

Note also that $\a_{\tau\ld}=\a_\ld+(1,\ldots,1)$ for $\ld\in \Y$.
Therefore, to prove the injectivity of $\flat$, it suffices to show that if $\ld\in \cAm^{\sm}$ satisfies $\a_\ld(n)=0$ and $\ld^\flat=\nu$, then $\ld$ is uniquely determined by $\nu$.
\begin{prop}
\label{injective}
Let $\nu\le \nu_b$.
Let $\ld\in \cAm^{\sm}$ such that $\a_\lambda(n)=0$ and $\ld^\flat=\nu$, i.e.,
$$
\a_\ld-w_\ld\a_\ld=(m,\ldots,m)-n\nu.
$$
Then the set $\{i\mid \el(i)\equiv 0\pmod{g}\}$ is uniquely determined by $\nu$.
As a consequence, the map $\flat\colon \Omega\backslash\cAm^{\sm}\rightarrow \{\nu\in W_0\mu \mid  \nu\le \nu_b\},\  \Omega\ld\mapsto \ld^\flat$ is injective.
\end{prop}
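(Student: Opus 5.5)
The plan is to reduce everything to Lemma \ref{unique w} together with the normalization machinery built just before the statement. By $(\ast)$, the element $w_\ld=\el^{-1}c^m\el$ is conjugate to $c^m$ and satisfies $\a_\ld-w_\ld\a_\ld=(m,\ldots,m)-n\nu$ with $\a_\ld\in\Y_{++}$. Hence Lemma \ref{unique w} applies, and $w_\ld$ is determined by $\nu$. Let $\a$ be the normalization of $\a_\ld$ with respect to $w_\ld$. We saw that $\a$ is unique once $w_\ld$ and $\nu$ are fixed: any two strictly dominant solutions differ by an orbitwise constant, and normalization pins this constant down. So $\a$, $\delta=\delta_\a$ and $\a'=\a-\delta$ all depend only on $\nu$. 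By Lemma \ref{diff}, and the remark following it, $\a'=\a'_\ld$, where $\a'_\ld=\a_\ld-\delta_\ld$.

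Next I would identify $\{i\mid \el(i)\equiv 0\pmod g\}$ inside this data. Recall that $\el(i)=d+1$ exactly when $\a_\ld(i)\equiv d\pmod n$. Also $\a_\ld(n)=0$, and $\delta_\ld(i)$ is the residue of $\a_\ld(i)$ mod $g$. Since $g\mid n$, the condition $\el(i)\equiv 0\pmod g$ is equivalent to $\a_\ld(i)\equiv g-1\pmod g$, that is, $\delta_\ld(i)=g-1$. So the key step is to show that $\delta_\ld=\delta$ as functions. Both are constant on $w_\ld$-orbits, because $\a_\ld-\a$ is orbitwise constant. Both vanish on $\la w_\ld\ra\cdot n$. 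The equality $\a'_\ld=\a'$ gives $\a_\ld-\a=\delta_\ld-\delta$, and this difference is orbitwise constant.

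I would then argue orbit by orbit that the constant is $0$. For $i$ outside $\la w\ra\cdot n$, normalization gives some $i'$ in the orbit with $\a(i')-\a(i'+1)=1$, while Lemma \ref{diff} gives some $i''$ with $\a_\ld(i'')-\a_\ld(i''+1)\le\delta_\ld(i)$. Combining these with the facts that both $\mathrm{Im}\,\delta$ and $\mathrm{Im}\,\delta_\ld$ are consecutive starting at $0$, I would run an induction on the $\delta$-values, increasing from $0$, to force $\delta_\ld=\delta$. I expect this to be the main obstacle. If the direct comparison is too delicate, the fallback is the following observation: $\el$ is a bijection, so $\delta_\ld$ takes each value in $\{0,\ldots,g-1\}$ on exactly $n'$ indices, and the same holds for $\delta$ as well. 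Then $\delta_\ld-\delta$ is orbitwise constant, takes values in $(-g,g)$, and both functions have the same fibre sizes. Combining this with dominance of $\a'$ should pin down the orbit constants.

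For the consequence, I would reconstruct $\ld$ from $\nu$. The data so far determine $\a'_\ld$ and $\delta_\ld$, hence $\a_\ld=\a'_\ld+\delta_\ld$ itself. Then $\el$ is recovered from the residues of $\a_\ld$ mod $n$, and $\ld=\frac1n(\el\a_\ld-(0,1,\ldots,n-1))$. Finally, I would use $\a_{\tau\ld}=\a_\ld+(1,\ldots,1)$ to move any representative of $\Omega\ld$ to one with $\a_\ld(n)=0$; this gives injectivity of $\flat$ on $\Omega\backslash\cAm^{\sm}$.
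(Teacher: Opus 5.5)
Your setup is correct and matches the paper up to the target set: Lemma \ref{unique w} gives $w_\ld$, the normalization $\a$ depends only on $\nu$, $\a'=\a'_\ld$, and $\{i\mid \el(i)\equiv 0\pmod g\}=\{i\mid \delta_\ld(i)=g-1\}$. The gap is your key step $\delta_\ld=\delta$, which is false. Example \ref{20,8} ($n=20$, $m=8$, $g=4$) lists both functions. On the $w_\ld$-orbit $\{9,13,14,16,19\}$ one has $\delta_\ld=3=g-1$, but normalization lowers $\a_\ld$ by $2$ on that orbit, so $\delta=1$ there. Thus $\delta$ need not attain $g-1$ at all. Its fibres in that example have sizes $5,10,5$, so your fallback, that $\delta$ has all fibres of size $n'$ like $\delta_\ld$, also fails. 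Only $\a-\delta=\a_\ld-\delta_\ld$ holds; $\delta$ and $\delta_\ld$ can differ by nonzero orbit constants, so the set cannot be read off from $\delta$.

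Moreover, the constraints you plan to combine cannot fix these orbit constants; smallness must be used again in an essential way. In the same example, put the constants $3,1,2$ (instead of $2,1,3$) on the orbits $\{1,3,5,7,10\}$, $\{2,4,6,8,11\}$, $\{9,13,14,16,19\}$. This gives a strictly dominant $\a'+\delta''$ with pairwise incongruent entries mod $n$. It equals $\a_{\ld''}$ for $\ld''=(0,2,1,2,1,1,0,1,0,2,1,2,0,1,0,1,0,1,0,1)\in\cAm$. This $\ld''$ has $(\ld'')^\flat=\nu$, $\a_{\ld''}(n)=0$, the same $\a'$, and it still satisfies the inequality of Lemma \ref{diff}. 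Yet $\ld''_{\beta}=1$ for all $\beta\in\cO_{\chi_{4,3}}$, so $\ld''$ is not small, and its set $\{i\mid \delta_{\ld''}(i)=g-1\}$ is $\{1,3,5,7,10\}$. The paper locates the correct orbit differently. By Claim 3 and $\a'=\a'_\ld$, the set $\{i\mid\delta_\ld(i)=g-1\}$ is a $w_\ld$-orbit occupying a consecutive block $\{i_k,\ldots,i_{k+n'-1}\}$ of the $\delta$-sequence of $(\a,w_\ld)$, which depends only on $\nu$. The paper then proves this is the last block forming a full orbit. The proof is by induction on $g$ through $\ld_{\cut}$, which is again small with $\ld_{\cut}^\flat=\nu_{\cut}$. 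The inductive step uses smallness on $\cO_{\chi_{g,g-1}}$ together with Lemma \ref{lw} applied to $\w_\nu$, against rule (i) of the $\delta$-sequence. Injectivity then also goes by induction on $g$: $\ld_{\cut}$ is determined by $\nu_{\cut}$, and $\ld(g),\ld(2g),\ldots,\ld(n)$ by $\a'$ together with the identified orbit. So the other values of $\delta_\ld$ are recovered only recursively, never from $\delta$.
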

\begin{proof}
Let $\a$ be the normalization of $\a_\ld$ with respect to $w_\ld$ and let $\delta=\delta_\a$.
Let $i_1,\ldots,i_n$ be the $\delta$-sequence associated to $(\a,w_\ld)$.
The equality $\a'=\a'_\ld$ combined with Claim 3 implies that there exists $k$ such that $\la w_\ld\ra\cdot i_k=\{i_k,\ldots,i_{k+n'-1}\}=\{i\mid \el(i)\equiv 0\pmod{g}\}=\{i\mid \delta_\ld(i)= g-1\}$.
It follows from the proof of Lemma \ref{unique w} that $l(\w_\nu)=(\a'(i_1),\ldots,\a'(i_n))$.

Let $\ld_{\cut}\in \Z^{n-n'}$ be the element obtained from $\ld$ by removing $\ld(g),\ld(2g),\ldots,\ld(n)$.
Note that the smallness of $\ld$ combined with $\ld_{\cut}(1)=\ld(1)=0=\min\{\ld(1),\ldots,\ld(n)\}$ implies that $\ld_{\cut}$ is small.
Let $j_1<j_2<\cdots<j_{n-n'}$ be the elements in $\{1,\ldots,n\}\setminus \la w_\ld\ra\cdot i_k$ arranged in increasing order.
Set $\a'_{\cut}=(\a'(j_1),\ldots, \a'(j_{n-n'}))$ and $\nu_{\cut}=(\nu(j_1),\ldots, \nu(j_{n-n'}))$.
Then it is easy to see that $\ld_{\cut}^\flat=\nu_{\cut}$, $\frac{g-1}{g}\a'_{\cut}=\a'_{\ld_{\cut}}$ and $\a_{\ld_{\cut}}=\frac{g-1}{g}\a'_{\cut}+(\delta_\ld(j_1),\ldots, \delta_\ld(j_{n-n'}))$.
Let $\iota\in W_0=\mathfrak S_n$ be any element such that $\iota(k)=j_k$ for $1\le k\le n-n'$.
Then the $\delta$-sequence associated to $(\a_{\ld_{\cut}},w_{\ld_{\cut}})$ is the sequence $\iota^{-1}(i_1),\ldots,\iota^{-1}(i_{k-1}),\iota^{-1}(i_{k+n'}),\ldots, \iota^{-1}(i_n)$.

We claim that if $l$ satisfies $\{i_l,\ldots,i_{l+n'-1}\}=\la w_\ld\ra\cdot i$ for some $i$, then $l\le k$.
We argue by induction on $g=\gcd(m,n)$.
If $g=1$, the statement is obvious.
Assume that $g>1$ and the statement is true for $((g-1)m',(g-1)n')=(m-m',n-n')$.
By the induction hypothesis combined with the above observation, it suffices to show that there exists $i_l$ such that $l< k$ and $\delta_\ld(i_l)=g-2$.
If there exists $\alpha\in \cO_{\chi_{g,g-1}}$ such that $\ld_\alpha=0$, then this follows from (i) in the definition of $i_1,\ldots, i_n$.
Since $\ld$ is small, we may assume that $\ld_\alpha\le -1$ for all $\alpha\in \cO_{\chi_{g,g-1}}$.
If there exists $l\geq k+n'$ such that $\delta_\ld(i_l)=g-2$, then by Lemma \ref{lw} applied to $\w_\nu$, there exists $l'\ge k+n'$ such that $\a'(i_{l'})\in \{\a'(i_k),\ldots, \a'(i_{k+n'-1})\}$.
Then we have $\delta(i_k)>\delta(i_{l'})$, which contradicts (i) in the definition of the sequence $i_1,\ldots,i_n$ at the step where $i_k$ is chosen.
Thus $\delta_\ld(i_l)=g-2$ implies $l<k$ as desired.

The claim implies that $\{i_k,\ldots,i_{k+n'-1}\}=\{i\mid \el(i)\equiv 0\pmod{g}\}$ is uniquely determined by $w_\ld$.
By Lemma \ref{unique w}, $\nu$ uniquely determines $w_\ld$ and hence the set $\{i\mid \el(i)\equiv 0\pmod{g}\}$.
This proves the first statement.

To prove injectivity of $\flat$, we again argue by induction on $g=\gcd(m,n)$.
If $g=1$, this follows from Corollary \ref{sbAB}.
Assume that $g>1$ and the statement is true for $((g-1)m',(g-1)n')=(m-m',n-n')$.
By the induction hypothesis, $\ld_{\cut}$ is uniquely determined by $\nu_{\cut}$.
Since the set $\{i\mid \el(i)\equiv 0\pmod{g}\}$ is uniquely determined by $\nu$, the definition of $\nu_{\cut}$ only depends on $\nu$.
Thus $\ld_{\cut}$ is uniquely determined by $\nu$.
It follows from $\a'=\a'_\ld$ that $\ld(g),\ld(2g),\ldots,\ld(n)$ are also uniquely determined by $\nu$.
Hence $\nu$ uniquely determines $\ld$, and thus the map $\flat$ is injective.
\end{proof}

For $\w\in \W_\mu^+$, let $\overline{l(\w)}$ denote the dominant conjugate of $l(\w)$.
The proof of Lemma \ref{unique w} yields the following corollary.
\begin{coro}
\label{inj coro}
Let $\ld\in \cAm^{\sm}$ such that $\a_\lambda(n)=0$ and $\ld^\flat=\nu$.
Then $\overline{l(\w_\nu)}=\a'_\ld$.
\end{coro}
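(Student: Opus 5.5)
We have to show $\overline{l(\w_\nu)}=\a'_\ld$, where $\ld\in\cAm^{\sm}$ satisfies $\a_\ld(n)=0$ and $\ld^\flat=\nu$. The plan is to reuse the $\delta$-sequence from the proof of Lemma \ref{unique w}, specialised to $w=w_\ld$ and $\a=\a_\ld$.

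First, let $\a$ be the normalization of $\a_\ld$ with respect to $w_\ld$, and put $\delta=\delta_\a$. By Lemma \ref{diff} and the discussion after it, $\a'=\a'_\ld$, where $\a'(i)=\a(i)-\delta(i)$. Let $i_1,\ldots,i_n$ be the $\delta$-sequence associated to $(\a,w_\ld)$. This makes sense because $(\ast)$ gives $\a_\ld-w_\ld\a_\ld=(m,\ldots,m)-n\nu$, and $w_\ld=\el^{-1}c^m\el$ is conjugate to $c^m$, so the hypotheses of Lemma \ref{unique w} hold.

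Second, I would import the identity established in the proof of Lemma \ref{unique w}:
$$l(\w_\nu)=(\a'(i_1),\ldots,\a'(i_n)).$$
That proof sets $\w=(m,\ldots,m)-n(\nu(i_1),\ldots,\nu(i_n))$. Using Claim 4 together with $\a'(i_1)=m$, it shows $l(\w)=(\a'(i_1),\ldots,\a'(i_n))$. It then shows $\sw(\w)=(m,\ldots,m)-n\nu$, so by the injectivity in Theorem \ref{sweep}, $\w=\w_\nu$. Only $g=\gcd(m,n)>1$ needs this argument. When $g=1$, Corollary \ref{sbwa} applies, all $\delta=0$, and the same level identity holds directly.

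Finally, Claim 2 says $k\mapsto i_k$ is a permutation of $\{1,\ldots,n\}$. Hence $l(\w_\nu)$ is a rearrangement of the entries of $\a'=\a'_\ld$. After subtracting $\delta$, the vector $\a'$ remains dominant (this was noted before the construction of the $\delta$-sequence). The dominant conjugate is therefore exactly $\a'_\ld$, which gives $\overline{l(\w_\nu)}=\a'_\ld$.

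The main obstacle is the bookkeeping in the second step. One must check that the argument in the proof of Lemma \ref{unique w} only used that $\a$ is a normalized strictly dominant solution, and not the induction hypothesis. The identity $l(\w)=(\a'(i_k))_k$ and the sweep computation do not rely on induction, so this check should go through.
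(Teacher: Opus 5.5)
Your proposal is correct and is essentially the argument the paper intends: the paper says only that the corollary follows from the proof of Lemma~\ref{unique w}, and you read it off the identity $l(\w_\nu)=(\a'(i_1),\ldots,\a'(i_n))$ proved there (which indeed does not use the induction hypothesis), combined with $\a'=\a'_\ld$ from Lemma~\ref{diff}, Claim~2, and the dominance of $\a'$. One small point: Lemma~\ref{diff} assumes $g>1$, but for $g=1$ the equality $\a'=\a=\a_\ld=\a'_\ld$ is immediate because $\delta\equiv 0$ and $w_\ld$ is an $n$-cycle, as you essentially note.
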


\begin{theo}
\label{main theo GLn}
The following map is bijective: $$\flat\colon \Omega\backslash\cAm^{\sm}\rightarrow \{\nu\in W_0\mu \mid  \nu\le \nu_b\},\quad \Omega\ld\mapsto \ld^\flat.$$
\end{theo}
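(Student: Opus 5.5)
Well-definedness is Proposition \ref{map}: for $\ld\in\cAm$ we have $\ld^\flat\in W_0\mu$ and $\ld^{\flat,\diamond}\le\nu_b$. Since $\sigma$ acts trivially for $\GL_n$ and $\nu_b$ is central, this target is exactly $\{\nu\in W_0\mu\mid \nu\le\nu_b\}$. Injectivity is Proposition \ref{injective}. It therefore remains to prove surjectivity. I plan to prove it by constructing, for each $\nu\le\nu_b$, an explicit small $\ld$ with $\ld^\flat=\nu$, by induction on $g=\gcd(m,n)$. This reverses the cutting procedure used in Proposition \ref{injective}. The base case $g=1$ is Corollary \ref{sbAB}.

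\textbf{Construction.} Fix $\nu$ and put $\w_\nu=\sw^{-1}((m,\ldots,m)-n\nu)\in\W_\mu^+$, which is possible by Theorem \ref{sweep}. Guided by Corollary \ref{inj coro}, the dominant rearrangement $\overline{l(\w_\nu)}$ is the candidate for $\a'_\ld$. The reading order of the sweep map gives a bijection $k\mapsto i_k$, and I define $w$ by the rule $w^{-1}(i_{k+1})=$ the earlier position with the same level, as in Claim 4. Next I use Lemma \ref{I} to single out the block $I$ corresponding to $\la w\ra\cdot n$. This block is an $n'$-element set $\{k\}$ with level sum zero. Lemma \ref{distinct} and Lemma \ref{lw} guarantee that such a block of consecutive zero-sum segments exists, and that it can be chosen satisfying conditions (1)--(3) of Lemma \ref{I}; I choose the last such block, in line with the claim in the proof of Proposition \ref{injective}. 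Removing these $n'$ positions produces a word for $(m-m',n-n')$ together with a cotype $\nu_{\cut}\le\nu_{b,\cut}$. By induction there is a small $\ld_{\cut}$ with $\ld_{\cut}^\flat=\nu_{\cut}$. I then reinsert the entries $\ld(g),\ld(2g),\ldots,\ld(n)$, read off from the values $\a'(i)$ on the removed orbit via $\a_\ld=\a'+\delta$ with $\delta=g-1$ there, which recovers $\a_\ld$ and hence $\ld$ and $\el$.

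\textbf{Verification.} I then check three things. First, $\a_\ld-w\a_\ld=(m,\ldots,m)-n\nu$, with $w=w_\ld$ conjugate to $c^m$; by $(\ast)$ this gives $\ld^\flat=\nu$ and $\ld\in\cAm$. Second, $\a_\ld$ is strictly dominant, with entries pairwise incongruent mod $n$, so that $\el$ is well defined. Third, $\ld$ is small. For smallness, the orbits $\cO_{\chi_{k+1,k}}$ with $k\not\equiv 0$ are inherited from $\ld_{\cut}$. The remaining orbits, those meeting the reinserted indices, are controlled by the choice of the last zero-sum block: Lemma \ref{lw} shows that some $\ld_\gamma\le0$ on these orbits, using the sign dictionary between $\a'$-comparisons and $\ld_{\el\chi_{i,j}}$ recorded before Proposition \ref{injective}.

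\textbf{Fallback and main obstacle.} If the explicit construction becomes too delicate, I would instead argue by cardinality. Both sets are finite (Lemma \ref{small finite}), so it suffices to show $\sharp(\Omega\backslash\cAm^{\sm})\ge\sharp\W_\mu^+$. I would try to get this from an injection $\W_\mu^+\to\Omega\backslash\cAm^{\sm}$ defined by the same cut-and-paste recursion; only that map needs to be well defined, with no inverse check required. I expect the main obstacle to be verifying smallness of the reassembled $\ld$ along the orbits through the reinserted indices, that is, ensuring the reinsertion never forces $\ld_\beta\ge1$ on an entire orbit $\cO_\alpha$. This is where the maximality choice of the block (the "last" zero-sum segment) must be used carefully.
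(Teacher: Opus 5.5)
Your architecture matches the paper's. Well-definedness comes from Proposition~\ref{map}, injectivity from Proposition~\ref{injective}, and surjectivity is proved by induction on $g$ starting from Corollary~\ref{sbAB}: cut off the \emph{last} zero-sum window of $\w_\nu$, reinsert it with $\delta=g-1$, and get smallness from that maximality plus Lemma~\ref{lw}. However, the construction step as written fails at two points.

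First, $w$ cannot be read off from the sweep order by the rule ``$w^{-1}(i_{k+1})=$ the earlier position with the same level''. For $g>1$ levels repeat (Lemma~\ref{distinct}), and Claim~4 only gives $\a'(w^{-1}(i_{k+1}))=\a'(i_k)$. Which index of that level is meant is decided by $\delta$, that is, by the very data you are constructing. In case~(i) it is even an element of $J_k$, an index that has \emph{not} yet occurred, so ``earlier'' is wrong.

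Second, Lemma~\ref{I} is the wrong tool. It is a uniqueness statement, used in Lemma~\ref{unique w} for the position set of $\la w\ra\cdot n$. That orbit contains $i_1=w(n)$ and has $\delta=0$ under the normalization $\a_\ld(n)=0$. The orbit to remove here is instead the one with $\delta=g-1$: the indices with $\el(i)\equiv 0\pmod g$, which carry $\ld(g),\ld(2g),\ldots,\ld(n)$. For $g>1$ the last zero-sum window never contains position $1$, so it violates condition~(1) of Lemma~\ref{I}.

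The missing idea is how the paper builds $\a$ directly, with no a priori $w$. Maximality of $\hat k$ makes the window levels the rightmost occurrences in $l(\w_\nu)$, hence the leftmost occurrences $\hat j_1<\cdots<\hat j_{n'}$ in $\overline{l(\w_\nu)}$. Since the window sums to zero, it closes up into an $n'$-cycle $\hat w$ on these positions, with $\overline{l(\w_\nu)}-\hat w\,\overline{l(\w_\nu)}=(m,\ldots,m)-n\hat\nu$ there and $\sum_t\nu(\hat j_t)=m'$. This is a superbasic $(m',n')$-configuration (Corollary~\ref{sbwa}). On the complement one uses $\w_{\nu_{\cut}}=\tfrac{g-1}{g}(\w_\nu\text{ with the window deleted})$ and Corollary~\ref{inj coro} applied to $\ld_{\cut}$. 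Then define $\a=\overline{l(\w_\nu)}+(g-1)$ on the $\hat j_t$ and $\a=\overline{l(\w_\nu)}+\delta_{\ld_{\cut}}\circ\iota^{-1}$ elsewhere. This $\a$ is strictly dominant with entries pairwise incongruent mod $n$, because ties are broken by giving the largest $\delta$ to the leftmost occurrence; this is one place where maximality of $\hat k$ enters. Finally $w_\ld=(\iota w_{\ld_{\cut}}\iota^{-1})(\hat\iota\hat w\hat\iota^{-1})$ yields $\ld^\flat=\nu$. Without this, your first two verification items have no mechanism.

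For smallness, $\cO_{\chi_{g,g-1}}$ is \emph{not} inherited from $\ld_{\cut}$ as you claim, since its roots involve the removed entries $\ld(jg)$. It is exactly the orbit the paper treats. The argument: otherwise the $\delta_{\ld_{\cut}}=g-2$ block from the proof of Proposition~\ref{injective} precedes the window, and Lemma~\ref{lw} produces a repeated level contradicting rule~(i) of the $\delta$-sequence.

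Your fallback saves nothing. A map $\W_\mu^+\to\Omega\backslash\cAm^{\sm}$ gives the needed inequality only if it is injective. You would prove that via $\flat\circ F=\mathrm{id}$, which is surjectivity itself. Moreover, the type~$A$ count in \S\ref{numerical} is deduced from this theorem, not obtained independently.
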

\begin{proof}
By Proposition \ref{injective}, it suffices to show the surjectivity.
Let $\nu\in \{\nu\in W_0\mu \mid  \nu\le \nu_b\}$.
We construct $\ld\in \cAm^{\sm}$ mapping to $\nu$ by induction on $g=\gcd(m,n)$.
If $g=1$, such $\ld$ exists by Corollary \ref{sbAB}.
Assume that $g>1$ and the statement is true for $((g-1)m',(g-1)n')=(m-m',n-n')$.
By Lemma \ref{distinct}, there exists $k$ such that $\w_\nu(k)+\cdots+\w_\nu(k+n'-1)=0$.
Let $\hat k$ denote the maximal one among such integers.
Then each of $l(\w_\nu)(\hat k),\ldots, l(\w_\nu)(\hat k+n'-1)$ is the rightmost occurrence of its value in $l(\w_\nu)$.
Let $\hat j_1<\hat j_2<\cdots<\hat j_{n'}$ be the positions such that $\{\overline{l(\w_\nu)}(\hat j_1),\ldots, \overline{l(\w_\nu)}(\hat j_{n'})\}=\{l(\w_\nu)(\hat k),\ldots, l(\w_\nu)(\hat k+n'-1)\}$ and  $\overline{l(\w_\nu)}(\hat j_k-1)>\overline{l(\w_\nu)}(\hat j_k)$ for each $k$ unless $k=1$ and $\hat j_1=1$.
In other words, each of $l(\w_\nu)(\hat j_1),\ldots, l(\w_\nu)(\hat j_{n'})$ is the leftmost occurrence of its value in $\overline{l(\w_\nu)}$.
Set $\hat \nu=(\nu(\hat j_1),\ldots, \nu(\hat j_{n'}))$.
Then there exists a (unique) $n'$-cycle $\hat w\in \mathfrak S_{n'}$ such that
\begin{align*}
(\overline{l(\w_\nu)}(\hat j_1),\ldots, \overline{l(\w_\nu)}(\hat j_{n'}))-\hat w(\overline{l(\w_\nu)}(\hat j_1),\ldots, \overline{l(\w_\nu)}(\hat j_{n'}))=(m,\ldots,m)-n\hat\nu.
\end{align*}
In particular, $\hat \nu\le (\frac{m'}{n'},\ldots,\frac{m'}{n'})$.
It follows from $\w_\nu(\hat k)+\cdots+\w_\nu(\hat k+n'-1)=0$ that $\nu(\hat j_1)+\cdots+ \nu(\hat j_{n'})=m'$.
Define $\tilde \ep\in \mathfrak S_{n'}$ by setting $\tilde \ep(i)=d+1$ whenever $$\frac{\overline{l(\w_\nu)}(\hat j_{i})}{g}\equiv d \pmod{n'}\quad\text{with\quad $d\in \{0,\ldots,n'-1\}$.}$$
Then we have $\tilde \ep^{-1}(1\ 2\ \cdots\ n')^{m'}\tilde \ep=\hat w$ by Corollary \ref{sbwa}.
Let $\hat \iota\in W_0=\mathfrak S_n$ be any element such that $\hat\iota(k)=\hat j_k$ for $1\le k\le n'$.

Let $j_1<j_2<\cdots<j_{n-n'}$ be the elements in $\{1,\ldots,n\}\setminus \{\hat j_1,\ldots, \hat j_{n'}\}$ arranged in increasing order.
Let $\iota\in W_0=\mathfrak S_n$ be any element such that $\iota(k)=j_k$ for $1\le k\le n-n'$.
Set $\nu_{\cut}=(\nu(j_1),\ldots, \nu(j_{n-n'}))$.
By a similar argument as above, we have $\nu_{\cut}\le (\frac{m'}{n'},\ldots,\frac{m'}{n'})$.
Then by the induction hypothesis, there exists a small $\ld_{\cut}$ such that $\ld_{\cut}^\flat=\nu_{\cut}$, i.e., $\a_{\ld_{\cut}}-w_{\ld_{\cut}}\a_{\ld_{\cut}}=(m-m',\ldots,m-m')-(n-n')\nu_{\cut}$.
Clearly,
$$\w_{\nu_{\cut}}=\tfrac{g-1}{g}(\w_\nu(1),\ldots,\w_\nu(\hat k-1),\w_\nu(\hat k+n'),\ldots, \w_\nu(n)).$$
By Corollary \ref{inj coro}, we have $\overline{l(\w_{\nu_{\cut}})}=\a'_{\ld_{\cut}}$.

We define $\a\in \Y$ by setting
\begin{align*}
\a(i)=\begin{cases}
\overline{l(\w_\nu)}(i)+g-1 & \text{if } i\in \{\hat j_1,\ldots, \hat j_{n'}\},\\
\overline{l(\w_\nu)}(i)+\delta_{\ld_{\cut}}(\iota^{-1}(i)) & \text{otherwise.}
\end{cases}
\end{align*}
Then it follows from $\overline{l(\w_{\nu_{\cut}})}=\a'_{\ld_{\cut}}$ that $\a$ is a strictly dominant cocharacter whose entries are pairwise incongruent modulo $n$.
Define $\ep$ by setting $\ep(i)=d+1$ whenever $\a(i)\equiv d\pmod{n}$ with $d\in \{0,\ldots,n-1\}$.
Set $\ld=\frac{1}{n}(\epsilon\a-(0,1,\ldots,n-1))\in \Y$.
Then $\a=\a_\ld$ and $\ep=\el$.
By the above arguments, we also have $w_\ld=(\iota w_{\ld_{\cut}}\iota^{-1})(\hat \iota\hat w\hat \iota^{-1})$, $\a_\ld-w_\ld\a_\ld=(m,\ldots,m)-n\nu$ and hence $\ld^\flat=\nu$.

It remains to show that $\ld$ is small.
It is straightforward to see that $$\ld_{\cut}=(\ld(1),\ldots,\ld(g-1),\ld(g+1),\ldots,\ld(2g-1),\ldots,\ld(n-g+1),\ldots, \ld(n-1)).$$
By the induction hypothesis, it suffices to show that if $\ld_\alpha>0$ for some $\alpha\in \cO_{\chi_{g,g-1}}$, then there exists $\beta\in \cO_{\chi_{g,g-1}}$ such that $\ld_\beta=0$.
Note that we have $\a'=\a_\ld'=\overline{l(\w_\nu)}$ by the definition of $\a$.
Let $i_1,\ldots,i_{n-n'}$ be the $\delta$-sequence associated to $(\a_{\ld_{\cut}},w_{\ld_{\cut}})$.
Then the $\delta$-sequence associated to $(\a_\ld,w_\ld)$ is obtained from the sequence $\iota(i_1),\ldots,\iota(i_{n-n'})$ by inserting, between $\iota(i_{\hat k-1})$ and $\iota(i_{\hat k})$, a subsequence of length $n'$ whose entries are a permutation of $\hat j_1,\ldots,\hat j_{n'}$.
By the proof of Proposition \ref{injective}, there exists $k$ such that $\la w_{\ld_{\cut}}\ra\cdot i_k=\{i_k,\ldots,i_{k+n'-1}\}=\{i\mid \delta_{\ld_{\cut}}(i)= g-2\}$.
If $\ld_\alpha>0$ for all $\alpha\in \cO_{\chi_{g,g-1}}$, then $k+n'<\hat k$.
Moreover, by Lemma \ref{lw}, there exists $k+n'\le k'<\hat k$ such that $l(\w_\nu)(k')\in \{l(\w_\nu)(k),\ldots, l(\w_\nu)(k+n'-1)\}$.
Let $\a_{\cut}$ be the normalization of $\a_{\ld_{\cut}}$.
Then we have $\delta_{\a_{\cut}}(i_k)>\delta_{\a_{\cut}}(i_{k'})$, which contradicts (i) in the definition of the sequence $i_1,\ldots,i_{n-n'}$ at the step where $i_k$ is chosen.
Thus $\ld$ is small as desired.
This finishes the proof.
\end{proof}

\begin{exam}
\label{20,8}
Assume that $n=20, m=8$.
Set 
\begin{align*}
\nu&=(0, 0, 0, 0, 1, 1, 0, 0, 0, 1, 1, 0, 0, 1, 0, 0, 1, 0, 1, 1),\\
\ld&=(0,2,2,1,1,1,1,0,0,2,2,1,0,1,1,0,0,1,1,0).
\end{align*}
Then $\ld$ is small and $\ld^\flat=\nu$.
We have
\begin{align*}
\w_\nu&=(8, 8, 8, 8, 8, 8, -12, -12, 8, 8, 8, -12, -12, -12, 8, 8, -12, -12, 8, -12),\\
l(\w_\nu)&=(8, 16, 24, 32, 40, 48, 36, 24, 32, 40, 48, 36, 24, 12, 20, 28, 16, 4, 12, 0),\\
\overline{l(\w_\nu)}&=(48,48,40,40,36,36,32,32,28,24,24,24,20,16,16,12,12,8,4,0),
\end{align*}
and
\begin{align*}
\a_\ld&=(50, 49, 42, 41, 38, 37, 34, 33, 31, 26, 25, 24, 23, 19, 16, 15, 12, 8, 7, 0),\\
w_\ld&=(1\ 5\ 10\ 7\ 3)(2\ 6\ 11\ 8\ 4)(9\ 14\ 19\ 16\ 13)(12\ 17\ 20\ 18\ 15),\\
\el&=(1\ 11\ 6\ 18\ 9\ 12\ 5\ 19\ 8\ 14\ 20)(2\ 10\ 7\ 15\ 17\ 13\ 4).
\end{align*}
Let $\a$ be the normalization of $\a_\ld$ with respect to $w_\ld$.
Set $\delta=\delta_\a$.
Let $i_1,\ldots,i_n$ be the $\delta$-sequence associated to $(\a,w_\ld)$.
Then
\begin{align*}
\a_\ld-\overline{l(\w_\nu)}&=(2, 1, 2, 1, 2, 1, 2, 1, 3, 2, 1, 0, 3, 3, 0, 3, 0, 0, 3, 0)(=(\delta_\ld(i))_i),\\
\a-\overline{l(\w_\nu)}&=(2, 1, 2, 1, 2, 1, 2, 1, 1, 2, 1, 0, 1, 1, 0, 1, 0, 0, 1, 0)(=(\delta(i))_i),\\
(i_k)_k&=(18,15,12,8,4,2,6,11,7,3,1,5,10,17,13,9,14,19,16,20),\\
(\delta(i_k))_k&=(0,0,0,1,1,1,1,1,2,2,2,2,2,0,1,1,1,1,1,0).
\end{align*}
\end{exam}

\section{The Numerical Identity}
\label{numerical}
We assume that the (absolute) root system of $G$ is irreducible, $\mu$ is minuscule and non-central.
Let $b\in \Omega$ with $[b]\in B(G,\mu)$.
By Proposition \ref{map}, we have a map $$\flat\colon(\Omega\cap \J_b)\backslash\cAm^{\sm}\rightarrow\{\nu\in W_0\mu\mid \nu^\diamond\le \nu_b\},\quad (\Omega\cap \J_b)\ld\mapsto \ld^\flat.$$
It is easy to see that the map $\flat$ and its source and target do not depend on the choice of $b$, and are stable under passage to adjoint groups (cf.\ Lemma \ref{cAm ad}).
The results of this section can be summarized as follows.
\begin{theo}
\label{flat bijective}
We have $$\sharp\bigl((\Omega\cap \J_b)\backslash\cAm^{\sm}\bigr)=\sharp\{\nu\in W_0\mu\mid \nu^\diamond\le \nu_b\}.$$
Moreover, the map $\flat$ is bijective in the case where $G$ is of type $A_n$ or $C_n$.
\end{theo}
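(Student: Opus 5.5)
The plan is to reduce to adjoint, absolutely simple groups and then verify the numerical identity type by type by explicit counting. This is the approach announced in the introduction (Table \ref{count-small}). By Lemma \ref{cAm ad}, both $(\Omega\cap\J_b)\backslash\cAm^{\sm}$ and the target $\{\nu\in W_0\mu\mid \nu^\diamond\le\nu_b\}$ are unchanged under passage to $G_{\ad}$. The map $\flat$ is also independent of the choice of length-zero $b$, so we may take $G$ adjoint and $b$ the standard length-zero element attached to $\mu$. The data are then $(\Phi,\sigma,\mu,p(b))$, where $\mu$ runs over the minuscule coweights of the irreducible types $A_n,B_n,C_n,D_n,E_6,E_7$ and $\sigma$ over the diagram automorphisms: the split case and the quasi-split forms ${}^2A_n$, ${}^2D_n$, ${}^3D_4$, ${}^2E_6$.

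\textbf{Type $A$ and type $C$.} For split type $A_{n-1}$, i.e.\ $G=\GL_n$ up to passage to the adjoint group, bijectivity of $\flat$ is Theorem \ref{main theo GLn}. Hence the counts agree, and both equal the rational Catalan-type number of $\W_\mu^+$ via the sweep map. For type $C_n$ with $\mu$ the Siegel coweight, I will embed $\GSp_{2n}$ into $\GL_{2n}$. Under this embedding, small $\ld$ for $\GSp_{2n}$ correspond to $\sigma$-self-dual small cocharacters for $\GL_{2n}$, and $\flat$ is equivariant for the duality involution. Bijectivity for type $C$ then follows from bijectivity for $\GL_{2n}$ restricted to fixed points. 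The point to check is that the duality preserves smallness and commutes with $\ld\mapsto\ld^\flat$; this should follow from $\ep_{\ld^*}=w_0\ep_\ld w_0$. For the non-split unitary case ${}^2A$, I expect a similar fixed-point argument, or a direct count.

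\textbf{Remaining cases: direct count.} For each remaining case I will count both sides separately. The right side is computed by enumerating $\nu\in W_0\mu$ with $\nu^\diamond\le\nu_b$, using $\nu_b$ central and the explicit description of $\le$. For the left side, Lemma \ref{small finite} and its proof bound the range of $\ld_\beta$ on each orbit $\cO_\alpha$, namely $\max\le\sharp\cO_\alpha$. Lemma \ref{consecutive} forces each orbit to take consecutive values. Together these reduce the count to finitely many configurations of the values $\ld_\alpha$, $\alpha\in\Pi$, modulo the action of $\Omega\cap\J_b$, which permutes $\Pi$. For the exceptional cases $E_6,E_7$ and ${}^3D_4$ this is a finite computation, which I would organize by the $p(b\sigma)$-orbits on $\Pi$ and verify, if necessary, by machine. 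For $B_n$ and $D_n$ I will parametrize $\ld$ by its values on $\Pi$ and get closed formulas, often exploiting the Coxeter-type or low-depth structure. In Coxeter type, the Bruhat--Tits description of G\"ortz--He gives bijectivity directly.

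\textbf{Main obstacle.} The hard step is the uniform count for the infinite families $B_n$, $D_n$, ${}^2D_n$ and ${}^2A_n$ with general minuscule $\mu$. There, the condition $\ld^\natural\in W_0\mu$ interacts nontrivially with smallness. I would first try to reduce these cases to $\GL$ via the standard representation, as in the type $C$ argument. Failing that, I would encode small $\ld$ as lattice-path or semi-module data and count by a recursion matching the count of $\nu$.
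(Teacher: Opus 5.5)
Your architecture is the paper's. You reduce to adjoint, absolutely simple $G$, take type $A$ from Theorem \ref{main theo GLn}, and obtain type $C$ by restricting $\flat$ from $\GL_{2n}$; your duality argument via $\epsilon_{\ld^*}=w_0\epsilon_\ld w_0$ is a clean way to see the restricted map stays surjective, a point the paper passes over quickly. You then use G\"ortz--He in Coxeter type and check the exceptional types by computer.

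The gap is that the numerical identity for the families covered by none of these is exactly what you leave as the ``main obstacle''. Those families are ${}^2A_{n-1}$ with $2\le m\le n-2$ (apart from ${}^2A_3$, $m=2$), and $D_n,{}^2D_n$ with $n\ge 5$ and $\mu=\omega_{n-1}^\vee,\omega_n^\vee$. $B_n$ is not among them: its only minuscule coweight $\omega_1^\vee$ is of Coxeter type, as is $\omega_1^\vee$ for $D_n$ and ${}^2D_n$.

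Your first fallback, reducing to $\GL$ through the standard representation as in type $C$, does not go through. For ${}^2A_{n-1}$ the natural embedding is into $\mathrm{Res}_{E/F}\GL_n$, whose root system is not irreducible and on which $\sigma$ swaps the factors, so Theorem \ref{main theo GLn} does not apply. For $D_n$ with a spin coweight, $h$ and $\rho^\vee$ differ from those of $\GL_{2n}$, so the identification of the two $\epsilon_\ld$ via Lemma \ref{strictly dominant} used in type $C$ is unavailable. Moreover, $\Pi$ for $D_n$ contains $-(e_{n-1}+e_n)$ and $e_1+e_2$, which are not restrictions of elements of $\Pi$ for $A_{2n-1}$, so smallness does not transfer either. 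This is consistent with the paper proving only the count, not bijectivity, in these cases.

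What the paper supplies here is an explicit count of the fibres of $\ld\mapsto\ld^\natural$ over each admissible $\nu$. Parametrizing by $(\ld_\alpha)_{\alpha\in\Pi}$, as you suggest, is the right start. However, the bound $\max\le\sharp\cO_\alpha$ and Lemma \ref{consecutive} only give finiteness; two further ingredients make the count closed-form.

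First, Lemma \ref{ld}\,(1) gives $\ld_{\alpha^{-1}}-\ld_\alpha=\la\alpha,\nu\ra$. Hence each $p(b\sigma)$-orbit in $\Pi$ has a representative $\beta_k$ with $\la\beta_k,\nu\ra\ge 0$ carrying the minimal value. Smallness then becomes simply $l_k:=-\ld_{\beta_k}\ge 0$, with a parity constraint on $p(b\sigma)$-fixed roots in ${}^2A$.

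Second, since $\ld_\alpha=-\ta(\ld)$, the values on $\Pi$ satisfy the single relation $\sum_{\alpha\in\Pi}c_\alpha\ld_\alpha=-1$, where $c_\alpha$ are the marks of the affine Dynkin diagram. For fixed $\nu$, these values determine $\ld$.

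So the small $\ld$ over $\nu$ are the nonnegative solutions of one weighted linear equation, for example $l_0+l_1+2\sum_{k=2}^{N-1}l_k+l_N=L+1$ for $D_{2N}$. Their number is a binomial expression in a descent-type statistic $L=L(\nu)$. One then counts the $\nu$ (modulo $p(\Omega\cap\J_b)$) with given $L$ as products of binomials. Summing with Vandermonde's identity or with $(1+t)^m(1-t^2)^{-m}=(1-t)^{-m}$ matches a ballot-type count of the right-hand side. An unspecified lattice-path or semi-module recursion does not replace this.

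Finally, in Coxeter type the G\"ortz--He input is not quite direct. You need Theorem \ref{main theo} and Proposition \ref{dim} to turn ``one $\J_b$-orbit of strata per dimension'' into injectivity of $\flat$. You also need Theorem \ref{Chen-Zhu bijection} for the top-dimensional exception in type ${}^2D_n$.
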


Let $\omega_i^\vee$ denote the fundamental coweight, 
where our labeling of simple roots follows Bourbaki \cite[Chapitre VI \S4]{Bourbaki68}.
The explicit formulas are summarized in Table \ref{count-small}.
All tables in this paper concern absolutely simple adjoint groups.

\begin{table}[htbp]
\centering
\renewcommand{\arraystretch}{1.2}
\setlength{\tabcolsep}{5pt}
\[
\begin{array}{|c|c|c|}
\hline
\text{Type of }G & \text{Minuscule coweight }\mu & \sharp\bigl((\Omega\cap \J_b)\backslash\cAm^{\sm}\bigr)=\sharp\{\nu\in W_0\mu\mid \nu^\diamond\le \nu_b\} \\
\hline
A_n
& \omega_m^\vee,\ m=1,\ldots,n
& \mathrm{Cat}_{m,n+1-m}(\text{rational Catalan number}) \\
\hline
{}^2A_n
& \omega_m^\vee,\ m=1,\ldots,n
& \binom{\lfloor \frac{n+1}{2} \rfloor}{\lfloor \frac{m}{2} \rfloor}\binom{\lceil \frac{n+1}{2} \rceil}{\lceil \frac{m}{2} \rceil} \\
\hline
B_n
& \omega_1^\vee
& n \\
\hline
C_n
& \omega_n^\vee
& \binom{n}{\lfloor \frac{n}{2}\rfloor} \\
\hline
D_n
& \omega_1^\vee
& n-1 \\
\hline
D_n
& \omega_{n-1}^\vee,\omega_n^\vee
& \binom{n-1}{\lfloor \frac{n-2}{2}\rfloor} \\
\hline
{}^2D_n(n>4)
& \omega_1^\vee
& n+1 \\
\hline
{}^2D_n(n>4)
& \omega_{n-1}^\vee,\omega_n^\vee
& \binom{n-1}{\lfloor \frac{n-1}{2}\rfloor} \\
\hline
{}^2D_4,{}^3D_4
& \omega_1^\vee,\omega_3^\vee,\omega_4^\vee
& 3\text{ or }5 \\
\hline
E_6
& \omega_1^\vee,\omega_6^\vee
& 8 \\
\hline
{}^2E_6
& \omega_1^\vee,\omega_6^\vee
& 15 \\
\hline
E_7
& \omega_7^\vee
& 21 \\
\hline
\end{array}
\]
\caption{the number of small cocharacters}
\label{count-small}
\end{table}

In the sequel, we compute both sides case by case for each Dynkin type.
The computation of $\sharp\{\nu\in W_0\mu\mid \nu^\diamond\le \nu_b\}$ is easy.
We compute $\sharp\bigl((\Omega\cap \J_b)\backslash\cAm^{\sm}\bigr)$ by counting possible $(\ld_\alpha)_{\alpha\in \Pi}$, which determines $\ld$ completely.
It is plausible that $\flat$ is bijective in general.
For small rank $n$, this can be checked by a computer program.

\subsection{Type $A_{n-1}$}
An $(m,n-m)$ {\it Dyck path} is a lattice path in $m\times (n-m)$ rectangle that lies weakly below the diagonal.
The following pictures are examples of $(5,7)$ Dyck paths:
\begin{center}
\begin{tikzpicture}[scale=0.5]

\begin{scope}[shift={(0,0)}]
\draw[step=1cm, gray, very thin] (0,0) grid (7,5);
\draw[dashed, darkgray] (0,0) -- (7,5);
\draw[black, ultra thick]
(0,0)--(1,0)--(2,0)--(2,1)--(3,1)--(3,2)--(4,2)--(5,2)--(5,3)--(6,3)--(6,4)--(7,4)--(7,5);
\end{scope}

\begin{scope}[shift={(10,0)}]
\draw[step=1cm, gray, very thin] (0,0) grid (7,5);
\draw[dashed, darkgray] (0,0) -- (7,5);
\draw[black, ultra thick]
(0,0) -- (1,0) -- (2,0) -- (2,1) -- (3,1) -- (4,1) -- (4,2) -- (5,2) -- (6,2) -- (6,3) -- (7,3) -- (7,4) -- (7,5);
\end{scope}

\begin{scope}[shift={(20,0)}]
\draw[step=1cm, gray, very thin] (0,0) grid (7,5);
\draw[dashed, darkgray] (0,0) -- (7,5);
\draw[black, ultra thick]
(0,0)--(1,0)--(2,0)--(3,0)--(4,0)--(4,1)--(5,1)--(6,1)--(6,2)--(7,2)--(7,3)--(7,4)--(7,5);
\end{scope}

\end{tikzpicture}
\end{center}

Without loss of generality, we may assume that $G=\GL_n$.
In this case, the set $\{\nu\in W_0\mu\mid \nu\le \nu_b\}$ can be identified with the set of $(m,n-m)$ Dyck paths by reading $1$ up and $0$ right.
For example, if $n=12$ and $m=5$, the corresponding $\nu$ to the above pictures are the following respectively:
\begin{align*}
\nu&=(0,0,1,0,1,0,0,1,0,1,0,1)(=\tilde \nu_b),\\
\nu&=(0,0,1,0,0,1,0,0,1,0,1,1),\\
\nu&=(0,0,0,0,1,0,0,1,0,1,1,1).
\end{align*}
Let $\mathrm{Cat}_{m,n-m}$ be the number of $(m,n-m)$ Dyck paths, which is called the {\it rational Catalan number}.
Then $\sharp \{\nu\in W_0\mu\mid \nu\le \nu_b\}=\mathrm{Cat}_{m,n-m}$.
We have
$$\mathrm{Cat}_{m,n-m}=\begin{cases}
\frac{1}{n}\binom{n}{m} & \text{if $\gcd(m,n)=1$,}\\
\frac{1}{m+1}\binom{2m}{m} & \text{if $n=2m$.}
\end{cases}$$
Note that the latter case is the classical Catalan number.
The formula for $\mathrm{Cat}_{m,n-m}$ in other cases is much more complicated (see \cite{Bizley54}).
By Theorem \ref{main theo GLn}, $\flat$ is bijective and hence
$$\sharp(\Omega\backslash\cAm^{\sm})=\sharp\{\nu\in W_0\mu\mid \nu\le \nu_b\}=\mathrm{Cat}_{m,n-m}.$$

\subsection{Type $C_n$}
\label{Cn}
Let $n\geq 2$.
Let us denote by $\GSp_{2n}\subset \GL_{2n}$ the group of symplectic similitudes of dimension $2n$ as in \cite[\S 2.3]{GC10}.
In the case $G=\GSp_{2n}$, we will use the following description.
Let $T$ (resp.\ $B$) be the intersection of the torus (resp.\ Borel subgroup) of $\GL_{2n}$ with $\GSp_{2n}$.
See \cite[\S 8]{GC12} for the description of the corresponding roots.
The cocharacter group $X_*(T)$ can be identified with the set $\{(m_1,\cdots, m_{2n})\in \Z^{2n}\mid m_1+m_{2n}=m_2+m_{2n-1}=\cdots=m_n+m_{n+1}\}$.
Set $s_1=(1\ 2)(2n-1\ 2n), s_2=(2\ 3)(2n-2\ 2n-1),\ldots, s_{n-1}=(n-1\ n)(n+1\ n+2), s_n=(n\ n+1)$.
Then $\bS=\{s_1,s_2,\ldots,s_n\}$ and the finite Weyl group is the subgroup of the symmetric group of degree $2n$ generated by $\bS$.
Set $s_0=\vp^{\chi_{1,2n}^{\vee}}(1\ 2n)$.
Then $\tS=\bS\cup\{s_0\}$.

Under the above identification, the positive coroots of $\GSp_{2n}$ are as follows. 
For $1\le i<j\le n$,
$$(0,\dots,\overset{i}{1},\dots,\overset{j}{-1},\dots,
\overset{2n-j+1}{1},\dots,\overset{2n-i+1}{-1},\dots,0),$$
$$(0,\dots,\overset{i}{1},\dots,\overset{j}{1},\dots,
\overset{2n-j+1}{-1},\dots,\overset{2n-i+1}{-1},\dots,0).$$
For $1\le i\le n$,
$$(0,\dots,\overset{i}{1},\dots,\overset{2n-i+1}{-1},\dots,0).$$
Thus we obtain$$\rho^\vee =(n - \tfrac{1}{2}, n - \tfrac{3}{2}, \dots, \tfrac{1}{2}, -\tfrac{1}{2}, \dots,-n + \tfrac{3}{2}, -n + \tfrac{1}{2})$$
Notice that this is identical to $\rho^\vee$ for $\GL_{2n}$.
Since the Coxeter numbers for $\GSp_{2n}$ and $\GL_{2n}$ are both $2n$, it follows from Lemma \ref{strictly dominant} that if $\ld\in \Y$, then $\epsilon_\ld$ for $\GSp_{2n}$ coincides with that for $\GL_{2n}$.

For $\mu=\omega_n^\vee=(1^{(n)},0^{(n)})$, the corresponding $b$ is $\vp^\mu(1\ n+1)\cdots (n\ 2n)$, which remains the same when viewing $\mu$ as a dominant cocharacter of $\GL_{2n}$.
Thus both $\Omega\backslash \cAm$ and $\Omega\backslash \cAm^{\sm}$ can be seen as subsets of the corresponding set for $\GL_{2n}$.
Therefore, the map $\flat$ for $\GSp_{2n}$ can be obtained as a restriction of the corresponding map for $\GL_{2n}$.
By Theorem \ref{main theo GLn}, the restricted $\flat$ is also bijective.

Let $\nu\in W_0\mu$.
The condition $\nu \le \nu_b$ requires $\sum_{j=1}^k \nu(j) \le \frac{k}{2}$ for all $1 \le k \le 2n$. 
By symmetry, this is equivalent to requiring $\sum_{j=1}^k \nu(j) \le \frac{k}{2}$ for $1 \le k \le n$. 
The number of such sequences with $r\le \lfloor \frac{n}{2} \rfloor$ entries equal to $1$ is given by $\binom{n}{r} - \binom{n}{r-1}$.
Indeed, if a sequence fails the condition, then there is a first position where the number of $1$'s exceeds that of $0$'s.
Since $r\le \lfloor \frac{n}{2} \rfloor$, exchanging $0$ and $1$ in the initial segment up to that position gives a bijection with sequences having $r-1$ entries equal to $1$ and $n-r+1$ entries equal to $0$.
Summing these terms over all $0 \le r \le \lfloor \frac{n}{2} \rfloor$ yields 
$$\sharp(\Omega\backslash\cAm^{\sm})=\sharp\{\nu\in W_0\mu\mid \nu\le \nu_b\}=\binom{n}{\lfloor \frac{n}{2} \rfloor}.$$

\subsection{Type ${}^2A_{n-1}$}
\label{2An}
Let $n\geq 2$.
For simplicity, we consider the Iwahori-Weyl group of $\GL_n$ and $\mu=\omega_m^\vee=(1^{(m)},0^{(n-m)})$ instead.
We may assume that $m>0$.
Let $c=s_1s_2\cdots s_{n-1}$ and $b=\vp^\mu c^m\in \Omega$.
Let $w_0$ denote the longest element of $W_0$.
Note that $(c^m w_0)^2=1$ and $\Omega\cap \J_b=\{1\}$.
For any $\lambda\in \Y$, we have $\sigma(\ld)=-w_0\ld$.
Thus, $\ld^\natural=\nu\in W_0\mu$ if and only if $c^m w_0\ld+\ld=\mu-\nu$.
For $i\in \{1,\ldots,n\}$, set $i^*=c^m w_0(i)$.
Then  $i^* \equiv m+1-i \pmod n$.
For $\alpha\in \Pi$, set $\alpha^*=p(b\sigma)\alpha=-c^m w_0\alpha\in \Pi$.
Let $N=\lfloor \frac{n}{2}\rfloor$ and $M=\lfloor \frac{m}{2}\rfloor$.
Let $N'$ be the number of pairs of indices $(i, i^*)$ such that $i \neq i^*$.
The sets of fixed indices and fixed roots in $\Pi$ are explicitly described as follows:
$$
\begin{array}{l|ccc}
(n,m) & \{i \mid i=i^*\} & \{\alpha \mid \alpha=\alpha^*\} & N' \\ \hline
(2N, 2M) & \emptyset & \{\chi_{M+1, M}, \chi_{M+N+1, M+N}\} & N \\
(2N, 2M+1) & \{M+1, M+N+1\} & \emptyset & N-1 \\
(2N+1, 2M) & \{M+N+1\} & \{\chi_{M+1, M}\} & N \\
(2N+1, 2M+1) & \{M+1\} & \{\chi_{M+N+2, M+N+1}\} & N
\end{array}
$$
Let $\nu\in W_0\mu$.
The condition $\nu^\diamond \le \nu_b=0$ requires $\sum_{i=1}^k \nu(i) \le \sum_{i=1}^k \nu(n+1-i)$ for all $1 \le k \le N$.
The number of such sequences with exactly $r\le\lfloor \frac{j}{2} \rfloor$ entries equal to $1$ among $\{\nu(1), \dots, \nu(N)\}$ is given by $\binom{N}{r}\binom{N}{j-r} - \binom{N}{r-1}\binom{N}{j-r+1}$, where $j = \sum_{i=1}^N \bigl(\nu(i) + \nu(n+1-i)\bigr)$.
Indeed, if a sequence fails the condition, then there is a first index $k$ where the number of $1$’s among $\{\nu(1), \dots, \nu(k)\}$ exceeds the number of $1$'s among $\{\nu(n+1-k), \dots, \nu(n)\}$.
Since $r\le\lfloor \frac{j}{2} \rfloor$, exchanging the values of $\nu(i)$ and $\nu(n+1-i)$ in the initial segment up to that index $k$ gives a bijection with sequences having $r-1$ entries equal to $1$ among $\{\nu(1), \dots, \nu(N)\}$ and $j-r+1$ entries equal to $1$ among $\{\nu(n+1-N), \dots, \nu(n)\}$.
Summing these terms over all $0 \le r \le \lfloor \frac{j}{2} \rfloor$ yields $\binom{N}{\lfloor \frac{j}{2} \rfloor} \binom{N}{\lceil \frac{j}{2} \rceil}$.
If $n$ is even, $j = m$.
If $n$ is odd, $j = m$ or $m-1$ depending on $\nu(N+1) \in \{0, 1\}$.
Summing these two cases via $\binom{N}{\lceil \frac{m}{2} \rceil} + \binom{N}{\lceil \frac{m}{2} \rceil - 1} = \binom{N+1}{\lceil \frac{m}{2} \rceil}$ yields:$$\binom{N}{\lfloor \frac{m}{2} \rfloor} \binom{N}{\lceil \frac{m}{2} \rceil} + \binom{N}{\lfloor \frac{m-1}{2} \rfloor} \binom{N}{\lceil \frac{m-1}{2} \rceil} = \binom{N}{\lfloor \frac{m}{2} \rfloor} \binom{N+1}{\lceil \frac{m}{2} \rceil}.$$
Since $N = \lfloor \frac{n}{2} \rfloor$, both cases yield the formula $\sharp\{\nu\in W_0\mu\mid \nu^\diamond\le \nu_b\} =\binom{\lfloor \frac{n}{2} \rfloor}{\lfloor \frac{m}{2} \rfloor} \binom{\lceil \frac{n}{2} \rceil}{\lceil \frac{m}{2} \rceil}.$

Note that $c^m w_0\mu=\mu$.
Thus, there exists $\ld\in \cAm$ with $\ld^\natural=\nu$ if and only if $c^m w_0\nu=\nu$ and $\mu(i)=\nu(i)$ for all $i=i^*$.
Assume that this holds.
We describe the number of small $\ld$ satisfying $\ld^\natural=\nu$ in terms of $\ld_\alpha$.
Set $i_1 = M(=n$ if $m=1$).
For $2 \le k \le N'+1$, we define $i_k \in \{1,\ldots,n\}$ by $i_k \equiv i_{k-1} - 1 \pmod n$.
Then the set $I = \{i_1, \ldots, i_{N'}\}$ provides a complete set of representatives for the non-fixed orbits of indices.
For each $1 \le k \le N'+1$, let $\alpha_k = \chi_{i_k+1, i_k}(=\chi_{1,n}$ if $i_k=n$).
Then $\Pi=\{\alpha_1,\alpha_1^*\}\sqcup\cdots \sqcup\{\alpha_{N'+1},\alpha_{N'+1}^*\}$, and the fixed roots are among $\alpha_1$ and $\alpha_{N+1}$.

Note that $\ld_{\alpha^*}-\ld_\alpha=\la \alpha,\nu\ra$ for $\alpha\in \Pi$ by Lemma \ref{ld} (1).
Note also that by $c^m w_0\nu=\nu$, we have $\la\alpha^*,\nu\ra=-\la\alpha,\nu\ra$.
For each $1\le k\le N'+1$, we fix $\beta_k\in \{\alpha_k,\alpha_k^*\}$ such that $\la\beta_k,\nu\ra\geq 0$.
Then it follows from  $\sum_{\substack{\alpha \in \Pi}} \lambda_\alpha = -1$ that
$\sum_{\substack{\beta_k \neq \beta_k^*}} (2\ld_{\beta_k}+\la\beta_k,\nu\ra)+\sum_{\beta_k=\beta_k^*}\ld_{\beta_k}= -1$, i.e.,
\begin{align}\label{beta-sum}
2\sum_{\beta_k \neq \beta_k^*} (-\ld_{\beta_k})+\sum_{\beta_k=\beta_k^*}(-\ld_{\beta_k})
&= \Bigl(\sum_{\beta_k \neq \beta_k^*}\la\beta_k,\nu\ra\Bigr)+1.
\end{align}
If $\ld$ is small and $\la\beta_k,\nu\ra\geq 0$, then we have $(-\ld_{\beta_k})\geq 0$.
For fixed $\nu$, $\ld$ with $\ld^\natural=\nu$ is uniquely determined by $\ld_{\alpha_k}$ for $1\le k\le N'+1$ because $\ld(1)+\cdots+\ld(n)=0$.
Thus, it suffices to count the number of $\ld_{\beta_k}$ for $1\le k\le N'+1$ satisfying this equation.

For $\nu\in W_0\mu$, we define $\nu^\circ\in \Z^{N'}$ by $\nu^\circ(k)=\nu(i_k)=\nu(i_k^*)$ for $1\le k\le N'$.
Note that this gives a bijection between $\{\nu\mid \exists \ld\in \cAm,\ld^\natural=\nu\}$ and the set of all permutations of $(1^{(M)},0^{(N'-M)})$.
If $\alpha_1=\alpha_1^*$ (resp.\ $\alpha_{N+1}=\alpha_{N+1}^*$), let $\delta_1\coloneqq\nu^\circ(1)-1$ (resp.\ $\delta_{N+1} \coloneqq -\nu^\circ(N)$).
For $k \in \{1, N+1\}$, the relation $\ld(i_k^*)+\ld(i_k)=\mu(i_k)-\nu(i_k)$ implies that $\lambda_{\alpha_k}$ is even if $\delta_k=0$, and odd if $\delta_k=-1$. 
Combined with $\lambda_{\alpha_k}\le 0$, it follows that $\delta_k-\lambda_{\alpha_k}\ge 0$ can take only even values.
Set $l_k=-\ld_{\beta_k}$ if $\beta_k \neq \beta_k^*$, and $l_k=\delta_k-\ld_{\beta_k}$ if  $\alpha_k=\beta_k = \beta_k^*=\alpha_k^*$.
Then the above equation becomes
\begin{align}\tag{$1^\prime$}\label{lk-sum}
2\sum_{\beta_k \neq \beta_k^*} l_k+\sum_{\beta_k=\beta_k^*}l_k
&= \Bigl(\sum_{\beta_k \neq \beta_k^*}\la\beta_k,\nu\ra\Bigr)
 +\Bigl(\sum_{\beta_k= \beta_k^*}\delta_k\Bigr)+1,
\qquad l_k\geq 0.
\end{align}
Thus, the number of $\ld_{\beta_k}\le 0$ for $1\le k\le N'+1$ satisfying (\ref{beta-sum}) is equal to the number of $l_k\ge 0$ for $1\le k\le N'+1$ satisfying (\ref{lk-sum}).
Note that both sides in (\ref{lk-sum}) are even, and that $l_k$ with $\beta_k=\beta_k^*$ can vary only over even integers.
Hence, letting $L$ be half of the right-hand side, the number of $l_k\ge 0$ satisfying \eqref{lk-sum} is $\binom{N'+L}{N'}$.

We next count the number of $\nu^\circ$ with given $L$.
For $\beta_k \neq \beta_k^*$, we have
$$
\langle\beta_k, \nu\rangle = 
\begin{cases}
1 - \nu^\circ(1) & (k = 1), \\
|\nu^\circ(k-1) - \nu^\circ(k)| & (2 \le k \le N'), \\
\nu^\circ(N') & (k = N'+1).
\end{cases}
$$
Then $L$ is equal to the number of $i$ such that $1=\nu^\circ_{\mathrm{ext}}(i)>\nu^\circ_{\mathrm{ext}}(i+1)=0$, where
$$\nu^\circ_{\mathrm{ext}}
=\begin{cases}
\nu^\circ & \text{if } (n,m)=(2N,2M),\\
(1,\nu^\circ,0) & \text{if } (n,m)=(2N,2M+1),\\
(0,\nu^\circ,0) & \text{if } (n,m)=(2N+1,2M),\\
(1,\nu^\circ,1) & \text{if } (n,m)=(2N+1,2M+1).
\end{cases}$$
Indeed, we have
\begin{align*}
\sharp \{i\mid \nu^\circ_{\mathrm{ext}}(i)\neq \nu^\circ_{\mathrm{ext}}(i+1)\}&=\sharp \{i\mid \nu^\circ_{\mathrm{ext}}(i)> \nu^\circ_{\mathrm{ext}}(i+1)\}+\sharp \{i\mid \nu^\circ_{\mathrm{ext}}(i)< \nu^\circ_{\mathrm{ext}}(i+1)\},\\
\text{and\quad}
\nu^\circ_{\mathrm{ext}}(1)-\nu^\circ_{\mathrm{ext}}(N'_{\mathrm{ext}})&=\sharp \{i\mid \nu^\circ_{\mathrm{ext}}(i)> \nu^\circ_{\mathrm{ext}}(i+1)\}-\sharp \{i\mid \nu^\circ_{\mathrm{ext}}(i)< \nu^\circ_{\mathrm{ext}}(i+1)\}.
\end{align*}
where $N'_{\mathrm{ext}}\geq N'$ is the length of $\nu^\circ_{\mathrm{ext}}$.
For a fixed $L$, it is easy to see that the number $c_L$ of $\nu^\circ$ such that $\sharp \{i\mid \nu^\circ_{\mathrm{ext}}(i)> \nu^\circ_{\mathrm{ext}}(i+1)\}=L$ is given by
\[c_L=
\begin{cases}
\binom{M}{L}\binom{N-M}{L} & \text{if } (n,m)=(2N,2M), \\[1ex]
\binom{M}{L-1}\binom{N-M-1}{L-1} & \text{if } (n,m)=(2N,2M+1), \\[1ex]
\binom{M-1}{L-1}\binom{N-M+1}{L} & \text{if } (n,m)=(2N+1,2M), \\[1ex]
\binom{M+1}{L}\binom{N-M-1}{L-1} & \text{if } (n,m)=(2N+1,2M+1).
\end{cases}
\]
Therefore, the number $\sharp\cAm^{\sm}$ can be computed using the following identities.
$$
\begin{array}{c|c}
(n,m) & \text{identity}\\ \hline
(2N,2M) &
\sum\limits_{L=0}^{M}\binom{M}{L}\binom{N-M}{L}\binom{N+L}{N}
=\binom{N}{M}^2
\\
(2N,2M+1) &
\sum\limits_{L=1}^{M+1}\binom{M}{L-1}\binom{N-M-1}{L-1}\binom{N-1+L}{N-1}
=\binom{N}{M+1}\binom{N}{M}
\\
(2N+1,2M) &
\sum\limits_{L=1}^{M}\binom{M-1}{L-1}\binom{N-M+1}{L}\binom{N+L}{N}
=\binom{N+1}{M}\binom{N}{M}
\\
(2N+1,2M+1) &
\sum\limits_{L=1}^{M+1}\binom{M+1}{L}\binom{N-M-1}{L-1}\binom{N+L}{N}
=\binom{N+1}{M+1}\binom{N}{M}
\end{array}
$$
All four identities follow from repeatedly applying Vandermonde's identity.
Expanding $\binom{N'+L}{N'} = \sum_{K=0}^{N'} \binom{N'}{K}\binom{L}{K}$ and interchanging the summations yields the right-hand sides.
Note that the right-hand side coincides with the formula in Table \ref{count-small} as desired.

\subsection{Type $D_n,{}^2D_n$ and $\mu=\omega_{n-1}^\vee,\omega_n^\vee$}
\label{type D}
We assume that $G$ is an absolutely simple group of adjoint type and $b\in \Omega\cap \vp^\mu W_0$.
Let $n\geq 5$.
The (absolute) root systems of type $D_n$ or ${}^2D_n$ is embedded in $V\coloneqq\bigoplus_{i=1}^n \mathbb{R} e_i$.
We identify $V$ with its dual via the standard Euclidean inner product. Under this identification, the minus simple roots and coroots coincide, and are given by $\alpha_i = e_{i+1}-e_i$ for $1 \leq i \leq n-1$ and $\alpha_n = -(e_{n-1} + e_n)$. 
The highest positive root is
$$
\alpha_0\coloneqq e_1+e_2=-\alpha_1-2\alpha_2-\cdots-2\alpha_{n-2}-\alpha_{n-1}-\alpha_n.
$$
The fundamental coweights are:
$$\omega_i^\vee = e_1 + \dots + e_i \quad (1 \leq i \leq n-2),$$$$\omega_{n-1}^\vee = \frac{1}{2}(e_1 + \dots + e_{n-1} - e_n),\quad\omega_n^\vee = \frac{1}{2}(e_1 + \dots + e_n).$$
Note that $W_0$ acts on $V$ by permuting the coordinates and changing an even number of signs.
Consequently, the orbit $W_0\mu$ consists of all vectors $(\pm \frac{1}{2}, \dots, \pm \frac{1}{2}) \in V$ having an odd (resp. even) number of negative coordinates if $\mu = \omega_{n-1}^\vee$ (resp. $\mu = \omega_n^\vee$).
The coweight lattice $\Y$ is given as $\Y = \bigoplus_{i=1}^n \mathbb{Z} e_i + \mathbb{Z}\left(\frac{1}{2}\sum_{i=1}^n e_i\right).$

Let $\tau_1, \tau_{n-1}, \tau_n \in \Omega$ be the length-zero elements in the Iwahori-Weyl group corresponding to the minuscule fundamental coweights $\omega_1^\vee, \omega_{n-1}^\vee$ and $\omega_n^\vee$, respectively.
These elements satisfy the relations $\tau_1 = \tau_n^2=\tau_{n-1}^2$ when $n$ is odd, and $\tau_1 = \tau_{n-1}\tau_n = \tau_n\tau_{n-1}$ when $n$ is even.
Thus, we have $\Omega \cong \mathbb{Z}/4\mathbb{Z}$ when $n$ is odd, and $\Omega \cong \mathbb{Z}/2\mathbb{Z} \times \mathbb{Z}/2\mathbb{Z}$ when $n$ is even.
The finite parts $p(\tau_i)$ act on $e_j$ as follows:
$$
\begin{gathered}
p(\tau_1)(e_j)=
\begin{cases}
-e_j & (j=1,n),\\
e_j & (1< j<n),
\end{cases}
\\[1ex]
\begin{aligned}
p(\tau_{n-1})(e_j)&=
\begin{cases}
e_n & (j=1),\\
-e_{n+1-j} & (1< j<n),\\
(-1)^n e_1 & (j=n),
\end{cases}
\quad
p(\tau_n)(e_j)=
\begin{cases}
-e_{n+1-j} & (1\le j<n),\\
(-1)^{n-1}e_1 & (j=n).
\end{cases}
\end{aligned}
\end{gathered}
$$
In type ${}^2D_n$, we have
\begin{align*}
\sigma(e_i)&=e_i \quad (1\le i\le n-1), & \sigma(e_n)&=-e_n,\\
\sigma(\alpha_i)&=\alpha_i \quad (1\le i\le n-2), &
\sigma(\alpha_{n-1})&=\alpha_n,\quad \sigma(\alpha_n)=\alpha_{n-1}.
\end{align*}
In particular, we have $\sigma(\tau_{n-1})=\tau_n$, $\sigma(\tau_n)=\tau_{n-1}$ and $\Omega\cap \J_b=\Omega^\sigma=\{1,\tau_1\}$.

Let $\mu=\omega_{n-1}^\vee$ or $\omega_n^\vee$ and $N=\lfloor \frac{n}{2}\rfloor$.
The $p(\tau_n\sigma)$-orbits on $\Pi$ are given as follows:
\begin{align*}
D_{2N}:&\quad
\{\alpha_1,\alpha_{2N-1}\},\ \{\alpha_{2N},\alpha_0\},\ \{\alpha_N\},\ 
\{\alpha_k,\alpha_{2N-k}\}\ (2\le k\le N-1),\\
D_{2N+1}:&\quad
\{\alpha_1,\alpha_{2N},\alpha_{2N+1},\alpha_0\},\ 
\{\alpha_k,\alpha_{2N+1-k}\}\ (2\le k\le N),\\
{}^2D_{2N}:&\quad
\{\alpha_1,\alpha_{2N-1},\alpha_{2N},\alpha_0\},\ \{\alpha_N\},\ 
\{\alpha_k,\alpha_{2N-k}\}\ (2\le k\le N-1),\\
{}^2D_{2N+1}:&\quad
\{\alpha_1,\alpha_{2N}\},\ \{\alpha_{2N+1},\alpha_0\},\ 
\{\alpha_k,\alpha_{2N+1-k}\}\ (2\le k\le N).
\end{align*}
The $p(\tau_{n-1}\sigma)$-orbits can be obtained by interchanging $\alpha_{n-1}$ and $\alpha_n$ in the above list.

For type ${}^2D_n$, we have $\nu^\diamond = (\nu(1), \dots, \nu(n-1), 0)$.
The condition $\nu^\diamond \le \nu_b=0$ requires $\sum_{i=1}^k \nu(i) \le 0$ for all $1 \le k \le n-1$. 
Since the final entry $\nu(n)$ can always be chosen to satisfy the parity condition of $W_0\mu$, it suffices to count the initial sequences of length $n-1$.
The number of such sequences with exactly $r\le \lfloor \frac{n-1}{2} \rfloor$ entries equal to $\frac{1}{2}$ among the $n-1$ indices is given by $\binom{n-1}{r} - \binom{n-1}{r-1}$.
Indeed, if a sequence fails the condition, then there is a first index $k$ where the number of $\frac{1}{2}$'s exceeds the number of $-\frac{1}{2}$'s. 
Exchanging $\frac{1}{2}$ and $-\frac{1}{2}$ in the initial segment up to that index $k$ gives a bijection with sequences having $r-1$ entries equal to $\frac{1}{2}$ and $n-r$ entries equal to $-\frac{1}{2}$.
Summing these terms over all $0 \le r \le \lfloor \frac{n-1}{2} \rfloor$ yields $\binom{n-1}{\lfloor \frac{n-1}{2} \rfloor}$.
Thus $\#\{\nu \in W_0\mu \mid \nu^\diamond \le \nu_b\} = \binom{n-1}{\lfloor \frac{n-1}{2} \rfloor}$ for type ${}^2D_n$.

For type $D_n$, we have $\nu^\diamond = \nu$.
The condition $\nu \le \nu_b=0$ requires $\sum_{i=1}^k \nu(i) \le 0$ for $1 \le k \le n-1$ and $\sum_{i=1}^{n-1} \nu(i)\le -|\nu(n)|=-\frac{1}{2}$. 
The parity condition uniquely determines $\nu(n)$, so again we only count the initial sequences of length $n-1$.
If $n$ is even (resp.\ odd), the number of $\nu$ satisfying $\sum_{i=1}^k \nu(i) \le 0$ for $1 \le k \le n-1$ and $\sum_{i=1}^{n-1} \nu(i)=0$ is $0$ (resp.\ $\binom{n-1}{\frac{n-1}{2}}-\binom{n-1}{\frac{n-3}{2}}$).
Thus, by the ${}^2D_n$ case, we have $\#\{\nu \in W_0\mu \mid \nu^\diamond \le \nu_b\} = \binom{n-1}{\lfloor \frac{n-2}{2} \rfloor}$ for type $D_n$.

In the sequel, we will use the following identities in each case.
$$
\begin{array}{c|c}
\text{type} & \text{identity} \\
\hline
D_{2N} &
\sum\limits_{L=0}^{N-2}\binom{N-2}{L}\left(\binom{N+\lfloor \frac{L+1}{2} \rfloor}{N} + 2\binom{N+\lfloor \frac{L}{2} \rfloor}{N} + \binom{N+\lfloor \frac{L-1}{2} \rfloor}{N}\right)= \binom{2N-1}{N-1} \\
D_{2N+1} &
\sum\limits_{L=0}^{\lfloor \frac{N-1}{2}\rfloor} \binom{N}{2L+1} \binom{N+L}{N-1}=\binom{2N}{N-1}\\
{}^2D_{2N} &
\sum\limits_{L=0}^{\lfloor \frac{N}{2}\rfloor}
\binom{N}{2L}\binom{N+L-1}{N-1}=\binom{2N-1}{N-1} \\
{}^2D_{2N+1} &
\sum\limits_{L=0}^{N-1}\binom{N-1}{L}
\left(
\binom{N+\lfloor \frac{L+1}{2}\rfloor}{N}
+\binom{N+\lfloor \frac{L}{2}\rfloor}{N}
\right)
=
\binom{2N}{N}
\end{array}
$$
Let $S_{2N},{}^2S_{2N+1}$ denote the left-hand side of the identity for $D_{2N},{}^2D_{2N+1}$.
Then
\begin{align*}
S_{2N}
&= \sum_{K=0}^{\lfloor \frac{N-1}{2} \rfloor} \bigg( \binom{N-2}{2K-1} + 3\binom{N-2}{2K} + 3\binom{N-2}{2K+1} + \binom{N-2}{2K+2} \bigg) \binom{N+K}{N} \\
&= \sum_{K=0}^{\lfloor \frac{N-1}{2} \rfloor} \binom{N+1}{2K+2} \binom{N+K}{N},\\
{}^2S_{2N+1}&= \sum_{K=0}^{\lfloor \frac{N}{2} \rfloor} \bigg( \binom{N-1}{2K-1} + 2\binom{N-1}{2K} + \binom{N-1}{2K+1} \bigg) \binom{N+K}{N} \\
&= \sum_{K=0}^{\lfloor \frac{N}{2} \rfloor} \binom{N+1}{2K+1} \binom{N+K}{N}.
\end{align*}
In both cases, the first equality is obtained by collecting the terms that yield the same integer $K$ from the floor functions and factoring out the common binomial coefficient $\binom{N+K}{N}$. The second equality follows from repeatedly applying Pascal's identity $\binom{n}{r-1} + \binom{n}{r} = \binom{n+1}{r}$, which simplifies the sums of the adjacent binomial coefficients into a single term.
Then all four identities can be proven in a unified manner by comparing the coefficients of $t^r$ in the formal power series identity:
\begin{equation*}
(1+t)^m (1-t^2)^{-m} = (1-t)^{-m} \in \mathbb{Z}[[t]]
\end{equation*}
with the following choices of parameters $(m, r)$: $(N+1, N-1)$ for $D_{2N}$, $(N, N+1)$ for $D_{2N+1}$, $(N, N)$ for ${}^2D_{2N}$, and $(N+1, N)$ for ${}^2D_{2N+1}$.
Note that the right-hand side coincides with the formula in Table \ref{count-small} as desired.

Let $\nu=\ld^\natural$ for some $\ld\in \cAm$.
We first consider the $D_{2N}$ and ${}^2D_{2N+1}$ cases.
For each $0\le k\le N$, we fix $\beta_k\in\{\alpha_k,\alpha_{n-k}\}$ such that $\la \beta_k,\nu\ra\geq 0$.
It follows from $\lambda_{\alpha_0} + \lambda_{\alpha_1} + 2\sum_{k=2}^{n-2} \lambda_{\alpha_k} + \lambda_{\alpha_{n-1}} + \lambda_{\alpha_n} = -1$ and Lemma \ref{ld} (1)
that
$$
2(-\lambda_{\beta_0})+2(-\lambda_{\beta_1})
+4\sum_{k=2}^{N-1}(-\lambda_{\beta_k})
+2(-\lambda_{\alpha_N})
=
\la\beta_0,\nu\ra+\la\beta_1,\nu\ra
+2\sum_{k=2}^{N-1}\la\beta_k,\nu\ra+1
$$
in the $D_{2N}$ case, and
$$
2(-\lambda_{\beta_0})+2(-\lambda_{\beta_1})
+4\sum_{k=2}^{N}(-\lambda_{\beta_k})
=
\la\beta_0,\nu\ra+\la\beta_1,\nu\ra
+2\sum_{k=2}^{N}\la\beta_k,\nu\ra+1
$$
in the ${}^2D_{2N+1}$ case.
Note that we must have $\la\beta_0,\nu\ra+\la\beta_1,\nu\ra=1$.
Note also that $\la\alpha_{n-k},\nu\ra=-\la\alpha_k,\nu\ra$.
Set $L=\sum_{k=2}^{N-1}\la\beta_k,\nu\ra$ in the $D_{2N}$ case, and $L=\sum_{k=2}^{N}\la\beta_k,\nu\ra$ in the ${}^2D_{2N+1}$ case.
Write $l_k=-\ld_{\beta_k}$ for $0\le k\le N$.
Then we have
$$
l_0+l_1+2\sum_{k=2}^{N-1}l_k+l_N=L+1,\qquad
l_0+l_1+2\sum_{k=2}^{N}l_k=L+1
$$
in the $D_{2N}$ and ${}^2D_{2N+1}$ cases, respectively.
For fixed $\nu$, the number of small $\ld$ with $\ld^\natural=\nu$ is equal to the number of $l_k\ge 0$ for $0\le k\le N$ satisfying these equations.
In the $D_{2N}$ case, write $r_i\in\{0,1\}$ for the remainder of $l_i$ modulo $2$ for $i=0,1,N$, and set $R=r_0+r_1+r_N$.
Then
$$
2\bigl(\frac{l_0-r_0}{2}+\frac{l_1-r_1}{2}+\sum_{k=2}^{N-1}l_k+\frac{l_N-r_N}{2}\bigr)+R=L+1.
$$
For each $R\in\{0,1,2,3\}$ with $R\equiv L+1\pmod 2$, there are $\binom{3}{R}$ possibilities for $(r_0,r_1,r_N)$, and the remaining variables form a decomposition of $\frac{L+1-R}{2}$ into $N+1$ nonnegative integers.
Therefore the number of small $\ld$ with $\ld^\natural=\nu$ is
$$
\sum_{\substack{0\le R\le 3\\ R\equiv L+1\!\!\!\!\pmod 2}}
\binom{3}{R}\binom{N+\frac{L+1-R}{2}}{N}
=
\binom{N+\lfloor \frac{L+1}{2}\rfloor}{N}
+2\binom{N+\lfloor \frac{L}{2}\rfloor}{N}
+\binom{N+\lfloor \frac{L-1}{2}\rfloor}{N}.
$$
In the ${}^2D_{2N+1}$ case, setting $R=r_0+r_1$, the number of small $\ld$ with $\ld^\natural=\nu$ is
$$
\sum_{\substack{0\le R\le 2\\ R\equiv L+1\!\!\!\!\pmod 2}}
\binom{2}{R}\binom{N+\frac{L+1-R}{2}}{N}
=
\binom{N+\lfloor \frac{L+1}{2}\rfloor}{N}
+\binom{N+\lfloor \frac{L}{2}\rfloor}{N}.
$$

On the other hand, for fixed $L$, the number of possibilities for $\nu$ modulo the action of $p(\Omega\cap \J_b)$ is $\binom{N-2}{L}$ in the $D_{2N}$ case and $\binom{N-1}{L}$ in the ${}^2D_{2N+1}$ case.
Indeed, the values $\la\beta_k,\nu\ra=|\la\alpha_k,\nu\ra|\in\{0,1\}$ for $2\le k\le N$ determine whether two adjacent entries of $\nu$ are equal or not.
The action of $p(\tau_1)$ changes only the signs of the first and last entries of $\nu$, namely
$p(\tau_1)\nu=(-\nu(1),\nu(2),\ldots,\nu(n-1),-\nu(n))$.
Note that we have $p(b)\nu=-\nu\in W_0\mu$ in the $D_{2N}$ case, although $-\nu\notin W_0\mu$ in the ${}^2D_{2N+1}$ case.
Thus $\nu$ is uniquely determined by these values up to the action of $p(\Omega\cap \J_b)$.
Since $L$ is the number of $k$ for which $\la\beta_k,\nu\ra=1$, the number of possibilities for $\nu$ is as claimed.
Thus, the number $\sharp\bigl((\Omega\cap \J_b)\backslash\cAm^{\sm}\bigr)$ is given by the identity above.

We next consider the $D_{2N+1}$ and ${}^2D_{2N}$ cases.
For each $2\le k\le N$, we fix $\beta_k\in\{\alpha_k,\alpha_{n-k}\}$ such that $\la \beta_k,\nu\ra\geq 0$.
It follows from Lemma \ref{ld} (1) that $\la\alpha_1+\alpha_{n-1}+\alpha_n+\alpha_0,\nu\ra=\sum_{i=0}^3\la p(b\sigma)^{-i}(\alpha_1),\nu\ra=0$.
By $\la\alpha_1+\alpha_0,\nu\ra=2\nu(2)$ and $\la\alpha_{n-1}+\alpha_n,\nu\ra=-2\nu(n-1)$, we deduce that $\nu(2)=\nu(n-1)$.
Since $\nu(2),\nu(n-1)\in \{\pm\frac{1}{2}\}$, the numbers $\langle\alpha_1,\nu\rangle$, $\langle\alpha_{n-1},\nu\rangle$, $\langle\alpha_n,\nu\rangle$ and $\langle\alpha_0,\nu\rangle$ are $1$, $-1$, $0$ and $0$ in some order.
Let $\beta_1\in\{\alpha_1,\alpha_{n-1},\alpha_n,\alpha_0\}$ such that $\la \beta_1,\nu\ra=1$.
It follows from $\lambda_{\alpha_0} + \lambda_{\alpha_1} + 2\sum_{k=2}^{n-2} \lambda_{\alpha_k} + \lambda_{\alpha_{n-1}} + \lambda_{\alpha_n} = -1$ and Lemma \ref{ld} (1)
that
$$ 4\sum_{k=1}^{N} (-\lambda_{\beta_k})= 3\langle\beta_1, \nu\rangle + 2\langle p(b\sigma)^{-1}\beta_1, \nu\rangle + \langle p(b\sigma)^{-2}\beta_1, \nu\rangle + 2\sum_{k=2}^{N} \langle\beta_k, \nu\rangle + 1 $$
in the $D_{2N+1}$ case, and
$$ 4\sum_{k=1}^{N-1} (-\lambda_{\beta_k}) + 2(-\lambda_{\alpha_N}) = 3\langle\beta_1, \nu\rangle + 2\langle p(b\sigma)^{-1}\beta_1, \nu\rangle + \langle p(b\sigma)^{-2}\beta_1, \nu\rangle + 2\sum_{k=2}^{N-1} \langle\beta_k, \nu\rangle + 1 $$
in the ${}^2D_{2N}$ case.
By $\la \beta_1,\nu\ra=1$, we must have $\langle p(b\sigma)^{-2}\beta_1, \nu\rangle=0$ and either $\langle p(b\sigma)^{-1}\beta_1, \nu\rangle=0$ or $-1$.
In the $D_{2N+1}$ case, there exists $L\geq 0$ such that $2L+1=(1+\langle p(b\sigma)^{-1}\beta_1, \nu\rangle)+\sum_{k=2}^{N} \langle\beta_k, \nu\rangle$.
Write $l_k=-\lambda_{\beta_k}$ for $1\le k\le N$.
Then we have $\sum_{k=1}^{N}l_k=L+1$ in the $D_{2N+1}$ case.
For fixed $\nu$, the number of small $\ld$ with $\ld^\natural=\nu$ is equal to the number of $l_k\ge 0$ for $1\le k\le N$ satisfying this equation.
It follows easily that this number is $\binom{N+L}{N-1}$.
In the $D_{2N+1}$ case, up to the action of $p(\Omega)$, we may assume that $\langle \alpha_{2N+1}, \nu \rangle = 1$. 
Thus, for fixed $L$, the number of possibilities for $\nu$ modulo the action of $p(\Omega)$ is $\binom{N}{2L+1}$, which corresponds to choosing exactly $2L+1$ terms equal to $1$ among the $N$ terms in $(1+\langle p(b\sigma)^{-1}\alpha_{2N+1}, \nu\rangle)+\sum_{k=2}^{N} \langle\beta_k, \nu\rangle$.
Hence the number $\sharp\bigl((\Omega\cap \J_b)\backslash\cAm^{\sm}\bigr)$ is given by the identity above.

In the ${}^2D_{2N}$ case, there exists $L\geq 0$ such that $(1+\langle p(b\sigma)^{-1}\beta_1, \nu\rangle)+\sum_{k=2}^{N-1} \langle\beta_k, \nu\rangle$ is $2L$ or $2L-1$.
Accordingly, we have
$$
2\sum_{k=1}^{N-1}l_k+l_N=2L+1
\quad\text{or}\quad
2\sum_{k=1}^{N-1}l_k+l_N=2L,
$$
respectively.
It follows easily that for fixed $\nu$, the number of small $\ld$ with $\ld^\natural=\nu$ is equal to $\binom{N+L-1}{N-1}$ in both cases.
The values $\langle\beta_k,\nu\rangle=|\langle\alpha_k,\nu\rangle|$ for $2\le k\le N-1$ determine $(\nu(2),\ldots,\nu(n-1))$ up to sign.
Although we have $(\nu(1),-\nu(2),\ldots,-\nu(n-1),\nu(n))\in W_0\mu$, the value $\langle p(b\sigma)^{-1}\beta_1, \nu\rangle$ uniquely determines the sign.
Up to the action of $p(\Omega\cap \J_b)$, we may assume that $\nu(1)=\frac{1}{2}$.
Thus, for fixed $2L$ or $2L-1$, the number of possibilities for $\nu$ modulo the action of $p(\Omega\cap \J_b)$ is $\binom{N-1}{2L}+\binom{N-1}{2L-1}=\binom{N}{2L}$.
Hence the number $\sharp\bigl((\Omega\cap \J_b)\backslash\cAm^{\sm}\bigr)$ is given by the identity above.

\subsection{The Case of Coxeter Type and the Exceptional Cases}
Although the bijectivity of $\flat$ and the formula in Table \ref{count-small} can be verified easily by a direct computation in the case of Coxeter type (cf.\ Table \ref{coxeter-type} in \S\ref{weakly HN section} or \cite[Theorem 5.1.2]{GH15}), we may also argue as follows.
In this case, the Bruhat-Tits stratification coincides with the $\J$-stratification by \cite[Theorem 3.4]{Gortz19}.
By \cite[Corollary 4.6.2 and \S6]{GH15}, there exists a unique $\J_b$-orbit of Bruhat-Tits strata of a given dimension $d$, except for type ${}^2D_n$ with $d=n=\dim X_\mu(b)$; this exception is covered by the Chen-Zhu conjecture.
It is straightforward to check that $\sharp\{\nu\in W_0\mu\mid \nu^\diamond\le \nu_b\}$ is given by the formula in Table \ref{count-small} (see also \cite[\S7]{ZZ20}).
By Proposition \ref{dim} and Theorem \ref{main theo}, this implies that the map $\flat$ is bijective and yields the formula in Table \ref{count-small}.

The remaining cases are $D_4,{}^2D_4, {}^3D_4, E_6,{}^2E_6$ and $E_7$.
In these cases, one can verify Theorem \ref{flat bijective} by computer.
The root system of type $D_4$ is the same as in \S\ref{type D}. 
In type $D_4,{}^2D_4$ and ${}^3D_4$, the distinction between $3$ and $5$ in the third column of Table \ref{count-small} can be explained as follows.
We denote by $\sigma_{13},\sigma_{14},\sigma_{34}$ the involutions exchanging $\alpha_1$ and $\alpha_3$, $\alpha_1$ and $\alpha_4$, and $\alpha_3$ and $\alpha_4$, respectively.
We also denote by $\sigma_{134}$ and $\sigma_{143}$ the automorphisms of order $3$ given by
$$
\sigma_{134}\colon \alpha_1\mapsto\alpha_3\mapsto\alpha_4\mapsto\alpha_1,
\qquad
\sigma_{143}\colon \alpha_1\mapsto\alpha_4\mapsto\alpha_3\mapsto\alpha_1,
$$
and fixing $\alpha_2$.
There are $18$ pairs $(\mu,\sigma)$ in total. Among them, $9$ cases have $3$ small $\lambda$ up to $\Omega\cap \J_b$, and the other $9$ cases have $5$. Fix $\mu=\omega_i^\vee$ with $i\in\{1,3,4\}$. Then there are $3$ small $\lambda$ up to $\Omega\cap \J_b$ for $\sigma=\mathrm{id}$ and for the two involutions $\sigma$ whose subscripts contain $i$. There are $5$ small $\lambda$ up to $\Omega\cap \J_b$ for the remaining involution and for the two automorphisms $\sigma_{134},\sigma_{143}$ of order $3$.

\section{Weakly Fully Hodge-Newton Decomposability}
\label{weakly HN section}
We keep the assumption in \S\ref{numerical}.
See \cite[\S2]{SV25} for the notion of $\depth(G,\mu)\in \Q_{\ge 0}$.
The results of this section are summarized as follows.

\begin{theo}
\label{weakly HN}
Assume that $\mu$ is not central.
The following assertions are equivalent:
\begin{enumerate}[(1)]
\item  $\cS_{\mu, b}^{\pa}=\cS_{\mu, b}$;
\item  $\depth(G,\mu)<2$ or $b$ is superbasic;
\item $(G,\mu)$ is weakly fully Hodge-Newton decomposable.
\end{enumerate}
\end{theo}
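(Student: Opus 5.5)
The plan is to treat $(2)\Leftrightarrow(3)$ as a comparison of classifications and to prove $(1)\Leftrightarrow(2)$ by a case-by-case analysis built on \S\ref{HN}.

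For $(2)\Leftrightarrow(3)$, the classification of weakly fully Hodge-Newton decomposable pairs is known by \cite[Proposition 2.14]{CT22} and \cite[Theorem 2.7]{SV25}. Since $G$ is absolutely simple adjoint with $\mu$ minuscule, the pairs with $\depth(G,\mu)<2$ are also tabulated in \cite{SV25}. So I only need to check that the union of this list with the superbasic cases (type $A_{n-1}$ with $\gcd(m,n)=1$) is exactly the Chen--Tong list. This is a comparison of tables and needs no new argument.

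For $(1)\Rightarrow(2)$, I argue contrapositively. Assume $b$ is not superbasic and $\depth(G,\mu)\ge 2$. By Corollary \ref{par necessity}, it suffices to exhibit some $\ld\in\cAm$ that is not of the form $\inv_K(1,w)$ with $w\in\tW\cap\J_b$.
\begin{itemize}
\item Every element $\inv_K(1,w)$ equals $w\ast 0$ up to $\Omega\cap\J_b$, and it lies in the $(W_a\cap\J_b)$-orbit of a minuscule cocharacter.
\item I therefore look for a $\ld\in\cAm$ whose dominant conjugate is too large for such an orbit. A natural candidate is a small $\ld$ with $\dim\Xl>0$, i.e.\ $\ld^\flat\neq$ antidominant, that is not reachable from $0$ by the Demazure action.
\item I construct it explicitly in each type outside the list: $A_{n-1}$ with $\gcd(m,n)>1$ and depth $\ge2$, ${}^2A$, $C_n$, $D_n$, ${}^2D_n$, and the exceptional types. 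The parametrizations of \S\ref{numerical} (the $l_k$ and $\beta_k$) serve as the toolkit.
\end{itemize}
A cleaner alternative may be available. When depth $\ge 2$, one can likely find $\ld\in\cAm^{\sm}$ and a simple reflection $\wa$ of $W_a\cap\J_b$ with $\alpha\notin\Pi(\ld)$ such that $\wa\ast\ld\neq\wa\ld$ while $\wa\ld\in\cAm$. Such a pair would produce a stratum inside $X_\mu^{\wa\ld}(b)$ that is not parahoric, by the minimality argument of Lemma \ref{J-str small}.

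For $(2)\Rightarrow(1)$, the superbasic case is already recorded in \S\ref{HN}. If $\depth(G,\mu)\le1$, we are in the Coxeter/Bruhat--Tits situation: every $\J$-stratum is a Bruhat--Tits stratum by \cite{Gortz19}, and its stabilizer is parahoric by \cite{GH15}. For $1<\depth(G,\mu)<2$, I apply Lemma \ref{every parahoric}. I enumerate $(\Omega\cap\J_b)\backslash\cAm^{\sm}$ using \S\ref{numerical}; in these cases the sets are small, for example $A_{n-1}$ with $m=2$, or $C_2$, $C_3$. For each $\ld$ and each minimal coset representative $w$ of $(W_a\cap\J_b)/(W_{\Pi(\ld)}\cap\J_b)$, I verify that $w\ast\ld=w\ld$. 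By the Demazure characterization, this amounts to checking that along a reduced word $w=w_{\alpha_1}\cdots w_{\alpha_l}$, each successive $w_{\alpha_i}$ only acts on entries $\ld'_\beta\le 0$ for $\beta\in\cO_{\alpha_i}$, in the sense of the case analysis in Proposition \ref{unique S}. The \cite{SV25} description of $X_\mu(b)$ in this range should keep the verification finite.

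The main obstacle I expect is the $(1)\Rightarrow(2)$ direction: producing a uniform witness of non-parahoricity for every pair with depth $\ge 2$. I would first try to reduce to a minimal bad configuration, namely two $\Pi$-orbits interacting through a case-(2) $\wa$, or a consecutive value set $\{\ld_\beta\}$ containing both $1$ and $-1$. I would then embed this configuration into each remaining type, falling back on a computer check for the exceptional groups.
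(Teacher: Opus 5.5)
Your treatment of $(2)\Leftrightarrow(3)$ and of $(2)\Rightarrow(1)$ is in line with the paper. The paper cites \cite{CV18}, \cite{Gortz19}, \cite{Shimada5} and \cite{ST24}, and uses Lemma \ref{every parahoric} for the type $D$ entries. One caveat there: $W_a\cap\J_b$ can be infinite (e.g.\ of type $\tilde A_1$), so checking $w\ast\ld=w\ld$ needs a pattern argument, not a finite enumeration.

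The genuine gap is in $(1)\Rightarrow(2)$. Your main route needs some $\ld\in\cAm$ outside $\{\inv_K(1,w)\mid w\in\tW\cap\J_b\}=(\tW\cap\J_b)\cdot 0$. But Corollary \ref{par necessity} is only a necessary condition for (1), and it holds in many cases where (1) fails.

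Take $G=\GL_9$, $\mu=\omega_3^\vee$, $b=\tau^3$; this pair has depth $\ge 2$ and is not superbasic. The condition $\ld^\natural\in W_0\mu$ says exactly that each triple $(\ld(j),\ld(j+3),\ld(j+6))$, $j=1,2,3$, has the shape $(a^{(k)},(a-1)^{(3-k)})$ with $1\le k\le 3$. So $\ld$ is determined by its three class sums. Consider the element of $\tW$ acting as the length-zero generator of $\GL_3$ on the coordinates $j,j+3,j+6$ and trivially elsewhere. These elements, together with the permutations of the three classes, commute with $b$. They act on the class sums as $\Z^3\rtimes\mathfrak S_3$, which is transitive. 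Hence $\cAm=(\tW\cap\J_b)\cdot 0$, and no witness of the kind you seek exists.

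Your proposed candidate does not help either. The positive-dimensional small cocharacter $(1,0,1,0,0,1,0,0,0)$ (with $\sharp R=3$) lies in this orbit, and it is exactly the one the paper uses to show that (1) fails here. This is why the paper uses Corollary \ref{par necessity} as a direct obstruction only in type $A$ with $\min\{m,n-m\}\nmid n$.

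Your alternative starts where the paper starts, namely with some $\ld_1\in\cAm^{\sm}$ violating the hypothesis of Lemma \ref{every parahoric}. However, the claim that this yields a non-parahoric stratum "by the minimality argument of Lemma \ref{J-str small}" does not follow. Lemma \ref{every parahoric} is only a sufficient criterion. The minimality argument merely moves a stratum into some $\Xl$ with $\ld$ small. A violation $w\ast\ld_1\neq w\ld_1$ only says that $w$ carries the standard parahoric stratum of $X_\mu^{\ld_1}(b)$ into a different piece; combinatorially, that is compatible with all strata being parahoric.

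The missing ingredient is geometric, and the paper supplies it as follows.
\begin{enumerate}[(i)]
\item Pick $\ld$ with $\sharp R(\ld)=\sharp R(\ld_1)+1$ and assume (1).
\item Use Proposition \ref{unique S} and Corollary \ref{par necessity} to list the only small cocharacters that strata of $X_\mu^{\ld_1}(b)$ can be attached to. Then $X_\mu^{\ld_1}(b)$, minus explicit $(I\cap\J_b)$-translates of lower-dimensional pieces, is the standard parahoric stratum.
\item Inside the smooth variety $\Xl$, two $(W_a\cap\J_b)$-translates of $\sharp R(\ld_1)$-dimensional pieces meet in dimension at least $2\sharp R(\ld_1)-\sharp R(\ld)$, by Lemma \ref{intersection}.
\item Proposition \ref{unique S}, together with relations in the Coxeter group $W_a\cap\J_b$, confines that intersection to the lower-dimensional parts. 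This forces a strictly smaller dimension, a contradiction.
\end{enumerate}
In type $D$ with $w_{\alpha_0}=s_0s_{n-1}$ and $n\ge 6$, the paper uses a separate argument with $U_{\alpha_2}(\cO)\vp^{\ld_1}K/K$.

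Without an argument of this kind, the failure of (1) outside the list is not established. This includes the exceptional groups: a computer can check the combinatorics there, but not the parahoricity of strata.
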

In our setting, $\depth(G,\mu)\le 1$ if and only if $(G,\mu,K)$ is of Coxeter type (cf.\ Table \ref{coxeter-type}).
The classification of (2) and (3) is already known in \cite[Proposition 2.14]{CT22} and \cite[Theorem 2.7]{SV25} (cf.\ Table \ref{weakly HN classification}), which proves their equivalence.
Except for the type $D$ cases in Table \ref{weakly HN classification}, the implication $(2)\Rightarrow (1)$ follows from \cite[Proposition 3.4]{CV18}, \cite[Theorem 3.4]{Gortz19}, \cite[Theorem 3.4]{Shimada5} and \cite[Theorem 3.4]{ST24}.
The type $D$ cases in Table \ref{weakly HN classification} can be easily handled using Lemma \ref{every parahoric}.
Indeed, the Coxeter group $W_a\cap \J_b$ is of type $\tilde A_1$ in these cases.
Although one can also verify other cases using Lemma \ref{every parahoric}, we omit the details as it is straightforward.

Outside the classification of (2) and (3), it remains to show that (1) fails.
In the sequel, we show this case by case for each Dynkin type.
We will use the following lemma without further mention.
\begin{lemm}
\label{intersection}
If $V$ and $W$ are two irreducible subvarieties of a smooth variety $X$, then
$$\dim(V\cap W)\geq \dim(V)+\dim(W)-\dim(X).$$
\end{lemm}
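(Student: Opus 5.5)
This inequality is the standard dimension estimate for intersections in a smooth ambient variety. I would reduce it to Krull's principal ideal theorem by passing to the diagonal. In the mixed characteristic case everything is the perfection of a variety, and perfection changes neither topology nor dimension. So, as the paper does throughout, we may replace $X$, $V$, $W$ by deperfections: $X$ smooth (this is where Proposition \ref{Xl} provides a smooth deperfection in our application), and $V$, $W$ irreducible closed subvarieties. The statement is only meaningful when $V\cap W\neq\emptyset$, and then each irreducible component $Z$ of $V\cap W$ is to be shown to satisfy the bound. The ambient dimension may be taken locally: we work near a point $x\in Z$ and replace $X$ by the connected component through $x$, which has pure dimension $\dim X$ or less, and less only strengthens the inequality.

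\textbf{Step 1 (diagonal).} Identify $V\cap W$ with $(V\times W)\cap\Delta_X$ inside $X\times X$. The product $V\times W$ is irreducible of dimension $\dim V+\dim W$, since we are over the algebraically closed field $\aFq$.

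\textbf{Step 2 (local complete intersection).} Since $X$ is smooth of dimension $n$, the diagonal $\Delta_X\subset X\times X$ is a regular embedding of codimension $n$. Concretely, near $(x,x)$ choose étale coordinates $t_1,\dots,t_n$ on an open $U\ni x$. Then $\Delta_X\cap(U\times U)$ is cut out, locally around $(x,x)$, by the $n$ equations $t_i\otimes 1-1\otimes t_i$. The point requiring care is that these equations cut out $\Delta_U$ only up to a union with other components away from the diagonal. Shrinking to a neighbourhood of $(x,x)$ handles this, because $\Delta_U$ is open and closed in the zero locus: the projection $U\to\A^n$ is étale, hence unramified, so the diagonal is open in the fibre product.

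\textbf{Step 3 (Krull).} Restrict the $n$ equations to $V\times W$. By Krull's Hauptidealsatz applied $n$ times, every irreducible component of the zero locus through $(x,x)$ has dimension at least $\dim V+\dim W-n$. Combined with Step 2, this component lies in $(V\times W)\cap\Delta$, which is identified with $V\cap W$, so $\dim Z\geq\dim V+\dim W-\dim X$.

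I expect no real obstacle beyond the bookkeeping in Step 2 and the passage through perfections. If preferred, the whole argument can simply be cited from standard references, e.g.\ Hartshorne, Prop.~I.7.1 in the affine-space case together with its usual extension to smooth varieties via the diagonal.
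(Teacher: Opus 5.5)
Your proof is correct. It is the classical reduction to the diagonal: when $X$ is smooth, $\Delta_X\subset X\times X$ is locally cut out by $\dim X$ equations, and Krull's principal ideal theorem applied on the irreducible variety $V\times W$ then bounds every component of $(V\times W)\cap\Delta_X\cong V\cap W$ from below.

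The paper gives no argument of its own. It cites Serre, \emph{Alg\`ebre locale, multiplicit\'es}, Chapitre V (B) \S6, which is Serre's dimension inequality for arbitrary regular local rings: if $\mathfrak r$ is a minimal prime over $\mathfrak p+\mathfrak q$ in a regular local ring, then $\mathrm{ht}\,\mathfrak r\le\mathrm{ht}\,\mathfrak p+\mathrm{ht}\,\mathfrak q$. Apply this in $\mathcal O_{X,Z}$ at the generic point of a component $Z$ of $V\cap W$. Combined with $\dim X=\dim Z+\mathrm{codim}(Z,X)$ for varieties, this gives the lemma.

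What each route buys: Serre's theorem is far more general (no base field, unequal characteristic allowed), but it is proved by heavier local-algebra methods. Your route needs only Krull's theorem and the local complete intersection property of the diagonal. That suffices here, because every space involved is (the perfection of) a variety over $\aFq$.

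Your side remarks are accurate and supply what the citation leaves implicit:
\begin{enumerate}
\item The inequality is meant for each component of a \emph{nonempty} $V\cap W$; for disjoint $V,W$ it can fail.
\item Passing to a smooth deperfection is harmless, since perfection is a universal homeomorphism.
\item The \'etale-coordinate equations cut out $\Delta_U$ only near $(x,x)$.
\end{enumerate}
Finally, your argument is local at $x\in Z$, so it applies verbatim to locally closed $V,W$. This covers the translates $wX_\mu^{\ld_1}(b)$ used in \S\ref{weakly HN section}, so restricting to closed subvarieties costs you nothing.
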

\begin{proof}
See \cite[Chapitre V (B) \S6]{Serre65} for example.
\end{proof}
Our strategy is to find an explicit $\ld_1\in \cAm^{\sm}$ violating the assumption of Lemma \ref{every parahoric}.
Then typically, there exists another $\ld\in \cAm^{\sm}$ such that $\sharp R(\ld)=\sharp R(\ld_1)+1$.
If $\cS_{\mu, b}^{\pa}=\cS_{\mu, b}$, then every $\J$-stratum in $X_\mu^{\ld_1}(b)$ has an associated small cocharacter.
In this case, we can explicitly remove from $X_\mu^{\ld_1}(b)$ the $\J$-strata of codimension $\geq 1$.
The remaining subset must be a single $\J$-stratum.
Then we deduce a contradiction by using Lemma \ref{intersection} and the existence of $w\in W_a\cap \J_b$ such that $w\ast \ld_1\neq \ld_1$.
Note that $\Xl$ admits a smooth deperfection by Proposition \ref{Xl}.

\begin{table}[htbp]
\centering
\begin{minipage}{0.48\textwidth}
\centering
\renewcommand{\arraystretch}{1.05}
\setlength{\tabcolsep}{5pt}
\[
\begin{array}{|c|c|}
\hline
\text{Type of }G & \text{Minuscule coweight }\mu \\
\hline
A_n, {}^2A_n
& \omega_1^\vee,\ \omega_n^\vee\\
\hline
B_n
& \omega_1^\vee \\
\hline
D_n, {}^2D_n
& \omega_1^\vee\\
\hline
D_4, {}^2D_4
& \omega_m^\vee \text{ s.t.\ }\omega_m^\vee= \sigma(\omega_m^\vee)  \\
\hline
A_3,{}^2A_3
& \omega_2^\vee \\
\hline
C_2
& \omega_2^\vee \\
\hline
\end{array}
\]
\caption{the case of Coxeter type}
\label{coxeter-type}
\end{minipage}\hfill
\begin{minipage}{0.48\textwidth}
\centering
\renewcommand{\arraystretch}{1.05}
\setlength{\tabcolsep}{5pt}
\[
\begin{array}{|c|c|}
\hline
\text{Type of }G & \text{Minuscule coweight }\mu \\
\hline
A_n(n\geq 4)
& \omega_2^\vee, \omega_{n-1}^\vee\\
\hline
A_n(n=5,6,7)
& \omega_3^\vee, \omega_{n-2}^\vee \\
\hline
C_3
& \omega_3^\vee \\
\hline
D_5
& \omega_5^\vee \\
\hline
{}^2D_4
& \omega_m^\vee \text{ s.t.\ }\omega_m^\vee\neq \sigma(\omega_m^\vee)\\
\hline
\end{array}
\]
\caption{the $1<\depth(G,\mu)<2$ cases}
\label{weakly HN classification}
\end{minipage}
\end{table}

\subsection{Type $A_{n-1}$}
Without loss of generality, we may assume that $G=\GL_n$, $\mu=(1^{(m)},0^{(n-m)})$ and $b=\tau^m$.
If $\min\{m,n-m\}\nmid n$, then $\chi_{g,n}^\vee\in \cAm^{\sm}$ and $\chi_{g,n}^\vee\notin \{\inv_K(1,w)\mid w\in \tW\cap \J_b\}$.
By Corollary \ref{par necessity}, we may also assume that $\min\{m,n-m\}\mid n$ and $\min\{m,n-m\}\geq 3$.
Let $\alpha_i=\chi_{i+1,i}$.

If $\min\{m,n-m\}=3$ and $n\geq 9$, then we set
$$\ld_1=(1,0,1,0,0,1,0^{(n-6)})\in \cAm^{\sm}\text{\quad and\quad } \ld_2=(0,1,1,0,1,0,0^{(n-6)})\in \cAm^{\sm}.$$
Set $\ld=w_{\alpha_1}\ld_1=w_{\alpha_2}\ld_2=(0,1,1,0,0,1,0^{(n-6)})\in \cAm^{\sm}$.
We have $\sharp R(\ld_1)=\sharp R(\ld_2)=3$ and $\sharp R(\ld)=4$.
Inside $X_\mu^\ld(b)$, we have $\dim(w_{\alpha_1}X_\mu^{\ld_1}(b)\cap w_{\alpha_2}X_\mu^{\ld_2}(b))=2$.
Suppose that $\cS_{\mu, b}^{\pa}=\cS_{\mu, b}$.
Then, by Proposition \ref{unique S} and Corollary \ref{par necessity}, the small cocharacter associated with a $\J$-stratum contained in $X_\mu^{\ld_1}(b)$ must be one of $\ld_1,w_{\alpha_2}\ld_1,w_{\alpha_1}w_{\alpha_2}\ld_1$ or $w_{\alpha_0}w_{\alpha_1}w_{\alpha_2}\ld_1=(1^{(3)},0^{(n-3)})$.
Thus $X_\mu^{\ld_1}(b)\setminus (I\cap \J_b)w_{\alpha_2}X_\mu^{\ld_3}(b)$ is a standard parahoric $\J$-stratum, where $\ld_3=w_{\alpha_2}\ld_1=(1,1,0,0,1,0,0^{(n-6)})\in \cAm^{\sm}$.
Note that $\sharp R(\ld_3)=2$.
Again by Proposition \ref{unique S}, we have
$$w_{\alpha_1}X_\mu^{\ld_1}(b)\cap w_{\alpha_2}X_\mu^{\ld_2}(b)=w_{\alpha_1}(I\cap \J_b)w_{\alpha_2}X_\mu^{\ld_3}(b)\cap w_{\alpha_2}X_\mu^{\ld_2}(b).$$
Let $S$ be the unique standard parahoric $\J$-stratum in $X_\mu^{\ld_3}(b)$.
Then $$(I\cap \J_b)w_{\alpha_2}w_{\alpha_1}w_{\alpha_2}S=(I\cap \J_b)w_{\alpha_1}w_{\alpha_2}w_{\alpha_1}S=(I\cap \J_b)w_{\alpha_1}w_{\alpha_2}S\subset \Xl.$$
This implies that $\dim(w_{\alpha_1}(I\cap \J_b)w_{\alpha_2}X_\mu^{\ld_3}(b)\cap w_{\alpha_2}X_\mu^{\ld_2}(b))\le 1$, which is a contradiction.
Hence we must have $\cS_{\mu, b}^{\pa}\neq\cS_{\mu, b}$.

If $\min\{m,n-m\}\geq 4$, then we set
$$\ld_1=(1,0,1,0,1,0,0^{(n-6)})\in \cAm^{\sm}\quad\text{and}\quad \ld=w_{\alpha_3}\ld_1=(1,0,0,1,1,0,0^{(n-6)})\in \cAm^{\sm}.$$
We have $\sharp R(\ld_1)=3$ and $\sharp R(\ld)=4$.
Inside $X_\mu^\ld(b)$, we have $\dim(w_{\alpha_3}X_\mu^{\ld_1}(b)\cap w_{\alpha_2}w_{\alpha_4}w_{\alpha_3}X_\mu^{\ld_1}(b))=2$.
Suppose that $\cS_{\mu, b}^{\pa}=\cS_{\mu, b}$.
Then, by Proposition \ref{unique S} and Corollary \ref{par necessity}, the small cocharacter associated with a $\J$-stratum contained in $X_\mu^{\ld_1}(b)$ must be one of $\ld_1,w_{\alpha_2}\ld_1,w_{\alpha_4}\ld_1,w_{\alpha_2}w_{\alpha_4}\ld_1$ or $w_{\alpha_3}w_{\alpha_2}w_{\alpha_4}\ld_1=(1^{(3)},0^{(n-3)})$.
Thus $X_\mu^{\ld_1}(b)\setminus \bigl((I\cap \J_b)w_{\alpha_2}X_\mu^{\ld_2}(b)\cup  (I\cap \J_b)w_{\alpha_4}X_\mu^{\ld_3}(b)\bigr)$ is a standard parahoric $\J$-stratum, where $\ld_2=w_{\alpha_2}\ld_1=(1,1,0,0,1,0,0^{(n-6)})\in \cAm^{\sm}$ and $\ld_3=w_{\alpha_4}\ld_1=(1,0,1,1,0,0,0^{(n-6)})\in \cAm^{\sm}$.
Note that $\sharp R(\ld_2)=\sharp R(\ld_3)=2$.
Again by Proposition \ref{unique S}, we have
\begin{align*}
&w_{\alpha_3}X_\mu^{\ld_1}(b)\cap w_{\alpha_2}w_{\alpha_4}w_{\alpha_3}X_\mu^{\ld_1}(b)\\
=&\bigl(w_{\alpha_3}(I\cap \J_b)(w_{\alpha_2}X_\mu^{\ld_2}(b)\cup w_{\alpha_4}X_\mu^{\ld_3}(b))\bigr)\cap w_{\alpha_2}w_{\alpha_4}w_{\alpha_3}X_\mu^{\ld_1}(b).
\end{align*}
Let $S\subset X_\mu^{\ld_2}(b)$ and $S’\subset X_\mu^{\ld_3}(b)$ be standard parahoric $\J$-strata.
Then \begin{align*}
(I\cap \J_b)w_{\alpha_3}w_{\alpha_4}w_{\alpha_2}w_{\alpha_3}w_{\alpha_2}S\subset \Xl,\quad
(I\cap \J_b)w_{\alpha_3}w_{\alpha_4}w_{\alpha_2}w_{\alpha_3}w_{\alpha_4}S'\subset \Xl.
\end{align*}
This implies that $\dim\bigl(\bigl(w_{\alpha_3}(I\cap \J_b)(w_{\alpha_2}X_\mu^{\ld_2}(b)\cup w_{\alpha_4}X_\mu^{\ld_3}(b))\bigr)\cap w_{\alpha_2}w_{\alpha_4}w_{\alpha_3}X_\mu^{\ld_1}(b)\bigr)\le 1$, which is a contradiction.
Hence we must have $\cS_{\mu, b}^{\pa}\neq\cS_{\mu, b}$.

\subsection{Type $C_n$}
Let $n\geq 4$.
Without loss of generality, we may assume that $G=\GSp_{2n}$, $\mu=\omega_n^\vee=(1^{(n)},0^{(n)})$ and $b=\vp^\mu(1\ n+1)\cdots (n\ 2n)$ as in \S\ref{Cn}.
We set $$\ld_1=(0,1,0^{(2n-4)},-1,0)\in \cAm^{\sm}\text{\quad and\quad} \ld=w_{\alpha_0}\ld_1=(1,1,0^{(2n-4)},-1,-1)\in \cAm^{\sm},$$
where $w_{\alpha_0}=s_0s_n$.
We have $\sharp R(\ld_1)=2$ and $\sharp R(\ld)=3$.
Inside $\Xl$, we have $\dim(w_{\alpha_0}X_\mu^{\ld_1}(b)\cap w_{\alpha_1}w_{\alpha_0}X_\mu^{\ld_1}(b))=1$, where $w_{\alpha_1}=s_1s_{n-1}$.
Suppose that $\cS_{\mu, b}^{\pa}=\cS_{\mu, b}$.
Then, by Proposition \ref{unique S} and Corollary \ref{par necessity}, the small cocharacter associated with a $\J$-stratum contained in $X_\mu^{\ld_1}(b)$ must be one of $\ld_1,w_{\alpha_1}\ld_1$ or $w_{\alpha_0}w_{\alpha_1}\ld_1=0$.
Thus $X_\mu^{\ld_1}(b)\setminus (I\cap \J_b)w_{\alpha_1}X_\mu^{\ld_2}(b)$ is a standard parahoric $\J$-stratum, where $\ld_2=w_{\alpha_1}\ld_1=(1,0^{(2n-2)},-1)\in \cAm^{\sm}$.
Note that $\sharp R(\ld_2)=1$.
Again by Proposition \ref{unique S}, we have
$$w_{\alpha_0}X_\mu^{\ld_1}(b)\cap w_{\alpha_1}w_{\alpha_0}X_\mu^{\ld_1}(b)=w_{\alpha_0}(I\cap \J_b)w_{\alpha_1}X_\mu^{\ld_2}(b)\cap w_{\alpha_1}w_{\alpha_0}X_\mu^{\ld_1}(b).$$
Let $S$ be the unique standard parahoric $\J$-stratum in $X_\mu^{\ld_2}(b)$.
Then $$(I\cap \J_b)w_{\alpha_0}w_{\alpha_1}w_{\alpha_0}w_{\alpha_1}S=(I\cap \J_b)w_{\alpha_1}w_{\alpha_0}w_{\alpha_1}w_{\alpha_0}S=(I\cap \J_b)w_{\alpha_1}w_{\alpha_0}w_{\alpha_1}S\subset \Xl.$$
This implies that $\dim(w_{\alpha_0}(I\cap \J_b)w_{\alpha_1}X_\mu^{\ld_2}(b)\cap w_{\alpha_1}w_{\alpha_0}X_\mu^{\ld_1}(b))= 0$, which is a contradiction.
Hence we must have $\cS_{\mu, b}^{\pa}\neq\cS_{\mu, b}$.

\subsection{Type ${}^2A_{n-1}$}
We follow the description in \S\ref{2An}.

If $m=2$ and $n=5$, then $w_{\alpha_0}=s_0s_2$, $w_{\alpha_3}=s_3s_4s_3$ and we set
$$\ld_1=(1, 0, -1, 0, 0)\in \cAm^{\sm}\quad\text{and}\quad\ld=w_{\alpha_0}\ld_1=(1, -1, 0, 0, 0)\in \cAm^{\sm}.$$
We have $\sharp R(\ld_1)=2$ and $\sharp R(\ld)=3$.
Inside $X_\mu^\ld(b)$, we have $\dim(w_{\alpha_0}X_\mu^{\ld_1}(b)\cap w_{\alpha_3}w_{\alpha_0}X_\mu^{\ld_1}(b))=1$.
In this case, one can prove that $\cS_{\mu,b}^{\pa}\neq \cS_{\mu,b}$ as in the $C_n$ case.

If $m=3$ and $n=5$, then $w_{\alpha_0}=s_0s_3$, $w_{\alpha_1}=s_1s_2s_1$ and we set
$$\ld_1=(0, 0, 1, 0, -1)\in \cAm^{\sm}\quad\text{and}\quad\ld=w_{\alpha_0}\ld_1=(0, 0, 0, 1, -1)\in \cAm^{\sm}.$$
We have $\sharp R(\ld_1)=2$ and $\sharp R(\ld)=3$.
Inside $X_\mu^\ld(b)$, we have $\dim(w_{\alpha_0}X_\mu^{\ld_1}(b)\cap w_{\alpha_1}w_{\alpha_0}X_\mu^{\ld_1}(b))=1$.
In this case, one can prove that $\cS_{\mu,b}^{\pa}\neq \cS_{\mu,b}$ as in the $C_n$ case.

If $m=2$ and $n\geq 6$, then $w_{\alpha_0}=s_0s_2$, $w_{\alpha_1}=s_1$, $w_{\alpha_3}=s_3s_{n-1}$ and we set
\begin{align*}
\ld_1&=(1,0,0,1,0^{(n-6)},-1,-1)\in \cAm^{\sm}\\
\text{and}\quad\ld&=w_{\alpha_0}w_{\alpha_1}\ld_1=(0,0,1,1,0^{(n-6)},-1,-1)\in \cAm^{\sm}.
\end{align*}
We have $\sharp R(\ld_1)=3$ and $\sharp R(\ld)=4$.
Inside $X_\mu^\ld(b)$, we have $\dim(w_{\alpha_0}w_{\alpha_1}X_\mu^{\ld_1}(b)\cap w_{\alpha_3}w_{\alpha_1}w_{\alpha_0}w_{\alpha_1}X_\mu^{\ld_1}(b))=2$.
In this case, one can prove that $\cS_{\mu,b}^{\pa}\neq \cS_{\mu,b}$ as in the $A_{n-1}$ case with $\min\{m,n-m\}\geq 4$.

If $m=n-2\geq 4$, then $w_{\alpha_0}=s_0s_{n-2}$, $w_{\alpha_{n-3}}=s_1s_{n-3}$, $w_{\alpha_{n-1}}=s_{n-1}$ and we set
\begin{align*}
\ld_1&=(1,1,0^{(n-6)},-1,0,0,-1)\in \cAm^{\sm}\\
\text{and}\quad\ld&=w_{\alpha_0}w_{\alpha_{n-1}}\ld_1=(1,1,0^{(n-6)},-1,-1,0,0)\in \cAm^{\sm}.
\end{align*}
We have $\sharp R(\ld_1)=3$ and $\sharp R(\ld)=4$.
Inside $X_\mu^\ld(b)$, we have $\dim(w_{\alpha_0}w_{\alpha_{n-1}}X_\mu^{\ld_1}(b)\cap w_{\alpha_{n-3}}w_{\alpha_{n-1}}w_{\alpha_0}w_{\alpha_{n-1}}X_\mu^{\ld_1}(b))=2$.
In this case, one can prove that $\cS_{\mu,b}^{\pa}\neq \cS_{\mu,b}$ as in the $A_{n-1}$ case with $\min\{m,n-m\}\geq 4$.

If $m=3$ and $n\geq 5$, then $w_{\alpha_0}=s_0s_3$, $w_{\alpha_1}=s_1s_2s_1$ and we set
$$\ld_1=(0,0,1,0^{(n-4)},-1)\in \cAm^{\sm}\quad\text{and}\quad\ld=w_{\alpha_0}\ld_1=(0,0,0,1,0^{(n-5)},-1)\in \cAm^{\sm}.$$
We have $\sharp R(\ld_1)=2$ and $\sharp R(\ld)=3$.
Inside $X_\mu^\ld(b)$, we have $\dim(w_{\alpha_0}X_\mu^{\ld_1}(b)\cap w_{\alpha_1}w_{\alpha_0}X_\mu^{\ld_1}(b))=1$.
In this case, one can prove that $\cS_{\mu,b}^{\pa}\neq \cS_{\mu,b}$ as in the $C_n$ case.

If $m=n-3\geq 2$, then $w_{\alpha_0}=s_0s_{n-3}$, $w_{\alpha_{n-1}}=s_{n-2}s_{n-1}s_{n-2}$ and we set
$$\ld_1=(1,0^{(n-4)},-1,0,0)\in \cAm^{\sm}\quad\text{and}\quad\ld=w_{\alpha_0}\ld_1=(1,0^{(n-5)},-1,0,0,0)\in \cAm^{\sm}.$$
We have $\sharp R(\ld_1)=2$ and $\sharp R(\ld)=3$.
Inside $X_\mu^\ld(b)$, we have $\dim(w_{\alpha_0}X_\mu^{\ld_1}(b)\cap w_{\alpha_{n-1}}w_{\alpha_0}X_\mu^{\ld_1}(b))=1$.
In this case, one can prove that $\cS_{\mu,b}^{\pa}\neq \cS_{\mu,b}$ as in the $C_n$ case.

If $\min\{m,n-m\}\geq 4$, then we set
$$\ld_1=(0,1,0^{(n-4)},-1,0)\in \cAm^{\sm}\quad\text{and}\quad\ld=w_{\alpha_0}\ld_1=(1,1,0^{(n-4)},-1,-1)\in \cAm^{\sm}.$$
We have $\sharp R(\ld_1)=3$ and $\sharp R(\ld)=4$.
Inside $X_\mu^\ld(b)$, we have $\dim(w_{\alpha_0}X_\mu^{\ld_1}(b)\cap w_{\alpha_{n-1}}w_{\alpha_1}w_{\alpha_0}X_\mu^{\ld_1}(b))=2$.
In this case, one can prove that $\cS_{\mu,b}^{\pa}\neq \cS_{\mu,b}$ as in the $A_{n-1}$ case with $\min\{m,n-m\}\geq 4$.

\subsection{Type $D_n,{}^2D_n$ and $\mu=\omega_{n-1}^\vee,\omega_n^\vee$}
We assume that $G$ is an absolutely simple group of adjoint type, $\mu=\omega_{n-1}^\vee$ or $\omega_n^\vee$, and $b\in \Omega\cap \vp^\mu W_0$.
We follow the description of the root system of type $D_n$ or ${}^2D_n$ in \S\ref{type D}.
For $1\le i\le n-1$, $s_i$ swaps the $i$-th and $(i+1)$-th coordinates, while $s_n$ swaps the $(n-1)$-th and $n$-th coordinates and multiplies them by $-1$. The affine reflection $s_0$ swaps the first and second coordinates, multiplies them by $-1$, and then adds $e_1+e_2$.

If $n=5$ and $w_{\alpha_0}=s_0s_{n-1}$, we set
$$\ld_1=(0,0,0,1,0)\in\cAm^{\sm},\quad \ld=w_{\alpha_1}\ld_1\in \cAm^{\sm}\text{\quad and\quad} \ld_2=w_{\alpha_2}\ld_1\in \cAm^{\sm}.$$
We have $\sharp R(\ld_1)=2,\sharp R(\ld)=3,\sharp R(\ld_2)=1$ and $\dim(w_{\alpha_1}X_\mu^{\ld_1}(b)\cap w_{\alpha_2}w_{\alpha_1}X_\mu^{\ld_1}(b))=1$ inside $\Xl$.
In this case, one can prove that $\cS_{\mu,b}^{\pa}\neq \cS_{\mu,b}$ as in the $C_n$ case.

If $n\geq 6$, we set
$$\ld_1=(0,0,1,0^{(n-3)})\in\cAm^{\sm},\quad \ld=w_{\alpha_0}\ld_1\in \cAm^{\sm},\text{\quad and\quad} \ld_2=w_{\alpha_2}\ld_1\in \cAm^{\sm}.$$
Then $\sharp R(\ld_1)=2,\sharp R(\ld)=3$ and $\sharp R(\ld_2)=1$.
If $w_{\alpha_0}=s_0s_1s_{n-1}s_n$, we have $\dim(w_{\alpha_0}X_\mu^{\ld_1}(b)\cap w_{\alpha_2}w_{\alpha_0}X_\mu^{\ld_1}(b))=1$ inside $\Xl$.
In this case, one can prove that $\cS_{\mu,b}^{\pa}\neq \cS_{\mu,b}$ as in the $C_n$ case.

Assume that $n\geq 6$ and $w_{\alpha_0}=s_0s_{n-1}$.
We adopt another approach in this case.
Note that $$\dim\bigl(U_{\alpha_2}(\cO)\vp^{\ld_1}K/K\cap (I\cap\J_b)w_{\alpha_2}X_\mu^{\ld_2}(b)\bigr)=0.$$
In particular, the intersection of $U_{\alpha_2}(\cO)\vp^{\ld_1}K/K$ and $X_\mu^{\ld_1}(b)\setminus (I\cap \J_b)w_{\alpha_2}X_\mu^{\ld_2}(b)$ is non-empty.
On the other hand, we have $w_{\alpha_3}w_{\alpha_0}w_{\alpha_2}w_{\alpha_1}w_{\alpha_3}(\alpha_2,0)=(\alpha_0+\alpha_1+\alpha_2,1)\in \tilde\Phi$ if $n\geq 7$, and $w_{\alpha_0}w_{\alpha_3}w_{\alpha_2}w_{\alpha_1}w_{\alpha_0}(\alpha_2,0)=(\alpha_1+\alpha_2+\alpha_3,0)$ if $n=6$.
This implies that
\begin{align*}
&w_{\alpha_3}w_{\alpha_0}w_{\alpha_2}w_{\alpha_1}w_{\alpha_3}U_{\alpha_2}(\cO)\vp^{\ld_1}K/K\subset I\vp^\ld K/K
&&\text{if } n\geq 7,\\
\text{and}\quad&w_{\alpha_0}w_{\alpha_3}w_{\alpha_2}w_{\alpha_1}w_{\alpha_0}U_{\alpha_2}(\cO)\vp^{\ld_1}K/K\subset I\vp^\ld K/K
&&\text{if } n=6.
\end{align*}
Let $S$ be the unique standard parahoric $\J$-stratum in $X_\mu^{\ld_1}(b)$.
Then
\begin{align*}
&w_{\alpha_3}w_{\alpha_0}w_{\alpha_2}w_{\alpha_1}w_{\alpha_3}S\subset I\vp^{w_{\alpha_3}\ld}K/K
&&\text{if } n\geq 7,\\
\text{and}\quad&w_{\alpha_0}w_{\alpha_3}w_{\alpha_2}w_{\alpha_1}w_{\alpha_0}S\subset I\vp^{w_{\alpha_3}\ld}K/K
&&\text{if } n=6.
\end{align*}
Hence $X_\mu^{\ld_1}(b)\setminus (I\cap \J_b)w_{\alpha_2}X_\mu^{\ld_2}(b)$ is not a standard parahoric $\J$-stratum, where $\ld_2=(0,1,0^{(n-2)})\in\cAm^{\sm}$.
Thus $\cS_{\mu,b}^{\pa}\neq \cS_{\mu,b}$.

\subsection{The Exceptional Cases}
The remaining cases are ${}^3D_4, E_6,{}^2E_6$ and $E_7$.
In the ${}^3D_4$ (resp.\ $E_6$ or ${}^2E_6$, resp.\ $E_7$) case, let $\ld_1$ be the unique small cocharacter up to the action of $\Omega\cap \J_b$ such that $\sharp R(\ld_1)=2$ (resp.\ $3$, resp.\ $4$).
Using this $\ld_1$, all of these cases can be handled in the same way as in the $C_n$ case.
We omit the details as it is straightforward.

\bibliographystyle{myamsplain}
\bibliography{reference}
\end{document}